\documentclass[11pt]{amsart}
\usepackage{amsmath,amsthm,amssymb,amsfonts,mathtools}
\usepackage{hyperref}
\usepackage{cleveref}
\usepackage[margin=1in]{geometry}
\usepackage{mathrsfs}
\usepackage{tikz-cd}
\usepackage{pdflscape}
\usepackage{makecell}
\usepackage{xcolor}
\usepackage{multirow}
\usepackage{hyperref}
\usepackage{booktabs,longtable,array}
\usepackage{needspace}
\usepackage{enumitem}
\hypersetup{
    colorlinks=true,
    linkcolor=red,     
    citecolor=blue,     
    urlcolor=blue       
}

\newtheorem{theorem}{Theorem}[section]
\newtheorem{lemma}[theorem]{Lemma}
\newtheorem{proposition}[theorem]{Proposition}
\newtheorem{corollary}[theorem]{Corollary}
\newtheorem{conjecture}[theorem]{Conjecture}

\newtheorem{convention}[theorem]{Convention}

\theoremstyle{definition}
\newtheorem{definition}[theorem]{Definition}
\newtheorem{example}[theorem]{Example}

\theoremstyle{definition}
\newtheorem{remark}[theorem]{Remark}
\newtheorem*{notation}{Notation}

\newenvironment{itemlist}
  {\begin{itemize}[leftmargin=3.4em,labelsep=0.6em,itemsep=1.5pt,topsep=2pt]}%
  {\end{itemize}}

\newcommand{\triv}{{\unskip\nobreak\hfil\penalty50\hskip2em\hbox{}\nobreak\hfil$(\ast)$\parfillskip=0pt\finalhyphendemerits=0\par}}
\newcommand{\kk}{\mathbf{k}}

\newcommand{\Z}{\mathbb{Z}}
\newcommand{\Q}{\mathbb{Q}}
\newcommand{\R}{\mathbb{R}}
\newcommand{\A}{\mathbb{A}}
\newcommand{\PP}{\mathbb{P}}
\newcommand{\OO}{\mathcal{O}}
\newcommand{\half}{\frac{1}{2}}
\newcommand{\eps}{\epsilon}
\newcommand{\NE}{\overline{\mathrm{NE}}}
\newcommand{\cL}{\mathcal{L}}
\newcommand{\cN}{\mathcal{N}}
\newcommand{\cM}{\mathcal{M}}
\newcommand{\cX}{\mathcal{X}}
\newcommand{\cD}{\mathcal{D}}
\newcommand{\cE}{\mathfrak{E}}
\DeclareMathOperator{\Supp}{Supp}
\DeclareMathOperator{\Pic}{Pic}
\newcommand{\sq}{\sim_{\Q}}
\newcommand{\Aut}{\operatorname{Aut}}
\newcommand{\uX}{\underline{X}}
\newcommand{\Deps}{\Delta_{\epsilon}}
\DeclareMathOperator{\Nef}{Nef}
\DeclareMathOperator{\chr}{char}

\begin{document}

\title[Del Pezzo orbifolds]{Classification of low degree del Pezzo orbifolds II}

\author{Saptarshi Dandapat}
\address{Graduate School of Mathematics, Nagoya University, Furocho Chikusa-ku, Nagoya, 464-8601, Japan}
\email{saptarshi.dandapat.d2@math.nagoya-u.ac.jp}

\date{\today}

\setcounter{tocdepth}{1}

\begin{abstract}
    Del Pezzo orbifolds are del Pezzo surfaces equipped with a strict normal crossing divisor, called boundary, such that the log anticanonical divisor 
    is ample.   
    We classify del Pezzo orbifolds of degree less than $6$ in terms of components of the boundary, in the sense of Campana. This completes our previous
    work on classification of del Pezzo orbifolds with irreducible boundaries.     
\end{abstract}

\maketitle

\tableofcontents

\section{Introduction}\label{sec:intro}

Campana orbifolds were introduced by Campana in his work on special manifolds and on the birational classification of compact K\"ahler manifolds \cite{campana2004orbifolds, Campana2011, Campana2011Survey}. A Campana orbifold consists of a variety together with a boundary, that is a $\Q$-divisor whose coefficients are of the form $1-\frac{1}{m}$, for some integer $m$, and it interpolates between a projective variety, where the boundary is empty, and a quasi-projective variety, where the boundary is reduced. Such pairs arise naturally as bases of fibrations with multiple fibres, and Campana's decomposition of projective varieties by canonical fibrations, into pieces whose canonical class has a definite sign in this enlarged category precisely in order to take multiple fibres into account \cite{Campana2011Survey}.

Over the last few years, Campana orbifolds have moved to the centre of two closely related lines of research: in arithmetic geometry, through \emph{Campana points}, which interpolate between rational and integral points \cite{campana2005fibres, abramovich2009birational, campana2015special, pieropan2021campana}, and in the geometry of rational curves, through \emph{Campana curves} and \emph{Campana rational connectedness} \cite{Campana2011Survey, chen2026campana, dandapat2026campana}. In both settings the orbifold analogues of Fano varieties play a distinguished role, and the two-dimensional ones whose underlying surface is a del Pezzo surface : the \emph{del Pezzo orbifolds}.

This paper is devoted to the classification of del Pezzo orbifolds of low degree. In \cite{dandapat2026classification, dandapat2026campana} we classified the del Pezzo orbifolds whose boundary is irreducible. Here we treat the complementary case of \emph{reducible} boundaries, thereby completing the classification of del Pezzo orbifolds of degree at most five.

\subsection{Campana orbifolds and del Pezzo orbifolds}\label{subsec:definitions}

Throughout the paper we work over an algebraically closed field $\kk$, in this introduction we assume for simplicity that $\chr\kk=0$.

We follow the conventions of \cite{chen2026campana} and \cite{dandapat2026campana, dandapat2026classification}.

\begin{definition}[Campana orbifolds]\label{def:campana-orbifold}
Let $\uX$ be a smooth projective variety and let $\Delta=\sum_{i=1}^{r}D_i$ be a reduced strict normal crossings (SNC) divisor on $\uX$ with irreducible 
components $D_1,\dots,D_r$. We denote by $X$ the log scheme associated with the pair $(\uX,\Delta)$. To each component $\Delta_i$ we assign a 
\emph{multiplicity} $m_i\in\Z_{\ge 1}$ and the weight $\epsilon_i=1-\frac{1}{m_i}$, and we put
\[
\Deps=\sum_{i=1}^{r}\epsilon_i D_i=\sum_{i=1}^{r}\Bigl(1-\frac{1}{m_i}\Bigr)D_i.
\]
The pair $(X,\Deps)$ is called a \emph{\textup{(}klt\textup{)} Campana orbifold} \cite[Section~1.2.1]{campana2004orbifolds}, \cite[Definition~5.1]{chen2026campana}.
\end{definition}

Since every coefficient of $\Deps$ is smaller than one, the pair $(\uX,\Deps)$ has klt singularities. One may also allow $m_i=\infty$, that is 
$\epsilon_i=1$, which leads to non-klt Campana orbifolds and, in the arithmetic setting, to integrality conditions along $D_i$ 
\cite{pieropan2021campana, chen2026campana}. As in \cite{dandapat2026campana, dandapat2026classification}, all multiplicities in this paper are finite. 
We shall always assume that $m_i\ge 2$ for every $i$.

\begin{definition}\label{def:dP-orbifold}
A \emph{del Pezzo orbifold} is a Campana orbifold $(X,\Deps)$ such that $\uX$ is a smooth del Pezzo surface and the $\Q$-divisor
\[
-(K_{\uX}+\Deps)
\]
is ample. The \emph{degree} of $(X,\Deps)$ is the degree $d=K_{\uX}^{2}$ of the underlying del Pezzo surface. We say that the boundary of $(X,\Deps)$ is \emph{irreducible} if $r=1$ and \emph{reducible} if $r\ge 2$.
\end{definition}

In other words, del Pezzo orbifolds are the two-dimensional klt Fano orbifolds in the sense of \cite[Conjecture~1.4]{chen2026campana} whose underlying surface is itself a del Pezzo surface.

\subsection{The classification problem}\label{subsec:problem}

Let $\uX$ be a del Pezzo surface of degree $d\le 7$. Its cone of curves is generated by the classes of $(-1)$-curves 
(see \cite[Lemma~2.1]{dandapat2026classification} and the references therein), so by Kleiman's criterion a $\Q$-divisor on $\uX$ is ample if and only 
if it has positive degree on every $(-1)$-curve. Consequently, whether a Campana orbifold $(X,\Deps)$ on $\uX$ is a del Pezzo orbifold depends only 
on the classes of the components $D_i$ in $\Pic(\uX)$, on their intersection numbers with the $(-1)$-curves, and on the multiplicities $m_i$.

For $d\ge 6$ the surface $\uX$ is toric, and boundaries contained in the toric boundary can be studied by toric methods, see, e.g., \cite[Section~8]{chen2026campana} and \cite{pieropan2024hyperbola}. If $d\le 5$, then $\uX$ is the blow-up of $\PP^2$ in $9-d\ge 4$ points in general position: its automorphism group is finite, it contains $10$, $16$, $27$, $56$ and $240$ $(-1)$-curves for $d=5,4,3,2,1$, respectively, whose configuration is governed by the root systems $A_4$, $D_5$, $E_6$, $E_7$ and $E_8$, and for $d\le 4$ it varies in a moduli space of dimension $10-2d$ \cite{manin1986cubic, dolgachev2012classical}. In this range no group action is available to organise the possible boundaries, which therefore have to be determined directly. At the other extreme, log del Pezzo surfaces with a non-zero reduced SNC boundary (formally, $m_i=\infty$ for all $i$) were classified by Maeda \cite{maeda1986classification}, and the regime in which all $m_i$ are large was studied by Cheltsov and Rubinstein \cite{cheltsov2015asymptotically}, see \S\ref{subsec:motivation}.

In \cite{dandapat2026campana, dandapat2026classification} we classified all del Pezzo orbifolds of with irreducible boundary. Reducible boundaries behave quite differently. The multiplicities of different components interact through the $(-1)$-curves meeting several of them, which produces new arithmetic constraints on the multiplicities, moreover, the SNC condition is no longer automatic, so the answer may depend on the position of $\uX$ in its moduli space. The following example illustrates both phenomena.

\begin{example}[A triangle of lines on a cubic surface]\label{ex:triangle}
Let $\uX$ be a smooth cubic surface and let $\ell_1,\ell_2,\ell_3\subset\uX$ be three coplanar lines, 
so that $\ell_1+\ell_2+\ell_3\in|-K_{\uX}|$. Put $D_i=\ell_i$. The divisor $\ell_1+\ell_2+\ell_3$ is SNC if and only if the three lines are not concurrent, i.e.\ they do not meet in an Eckardt point of $\uX$. In this case
\[
-(K_{\uX}+\Deps)\equiv\frac{1}{m_1}\,\ell_1+\frac{1}{m_2}\,\ell_2+\frac{1}{m_3}\,\ell_3 .
\]
Every other line of $\uX$ meets exactly one of the $\ell_i$, and the right-hand side has degree $\frac{1}{m_i}>0$ on it, whereas its degree on $\ell_i$ equals $\frac{1}{m_j}+\frac{1}{m_k}-\frac{1}{m_i}$ for $\{i,j,k\}=\{1,2,3\}$. Hence $(X,\Deps)$ is a del Pezzo orbifold if and only if $\frac{1}{m_1},\frac{1}{m_2},\frac{1}{m_3}$ satisfy the strict triangle inequalities. For example, the multiplicity vector $(2,2,m)$ is admissible for every $m\ge 2$, whereas $(2,3,m)$ is admissible if and only if $m\le 5$. Note also that the set of admissible multiplicity vectors is not closed under lowering multiplicities: $(3,4,11)$ is admissible, but $(2,4,11)$ is not.
\end{example}

\subsection{Main results}\label{subsec:results}

In this paper we classify all del Pezzo orbifolds of degree at most five with reducible boundary. An elementary observation (Remark~\ref{rem:bounds}) shows that their reduced boundary has anticanonical degree at most $2d-1$. 
For each $d\in\{1, 2,3,4,5\}$ we then determine:
\begin{enumerate}
\item the curves that can occur as components of the boundary, by the bound just mentioned they have low anticanonical degree, and we describe them using the 
classification of curves of low anticanonical degree obtained in \cite{dandapat2026classification},
\item the possible configurations of the boundary, recorded by the classes of its components and their mutual intersection numbers,
\item for each configuration, the set of all choices of multiplicities $m_i\ge 2$ for which the divisor $-(K_{\uX}+\Deps)$ is ample. (See Theorem \ref{thm:deg1},
\ref{thm:main-deg2}, \ref{thm:main3}, \ref{thm:dP}, \ref{thm:main5}.)
\end{enumerate}

\begin{remark}\label{rem:bounds}
Let $(X,\Deps)$ be a del Pezzo orbifold of degree $d$. Since $-K_{\uX}$ and $-(K_{\uX}+\Deps)$ are both ample, their intersection number is positive, 
that is,
\[
\sum_{i=1}^{r}\epsilon_i\,\bigl(-K_{\uX}\cdot D_i\bigr)<d .
\]
As $\epsilon_i\ge\frac12$ and $-K_{\uX}\cdot D_i\ge 1$ for all $i$, the reduced boundary satisfies $-K_{\uX}\cdot\Delta\le 2d-1$, in particular 
$r\le 2d-1$. Hence, for fixed $d$, the pairs $(\uX,\Delta)$ underlying del Pezzo orbifolds of degree $d$ form a bounded family, although the 
multiplicities need not be bounded (Example~\ref{ex:triangle}).  
\end{remark}

\subsection{Motivation: connections with other conjectures}\label{subsec:motivation}

We now explain how the classification enters several conjectures on Fano orbifolds, both geometric and arithmetic.

\subsubsection*{Campana's rational connectedness conjecture}
Let $(X,\Deps)$ be a Campana orbifold. A \emph{Campana curve} on $(X,\Deps)$ is a morphism $f\colon C\to\uX$ from a smooth projective curve whose image is not contained in $\Delta$ and such that, for every $i$ and every point $p\in f^{-1}(\Delta_i)$, the multiplicity of $f^{*}\Delta_i$ at $p$ is at least $m_i$. Chen, Lehmann and Tanimoto \cite{chen2026campana} developed the deformation theory of Campana curves by means of log geometry and moduli stacks of stable log maps. Following \cite[Definition~1.2]{chen2026campana}, $(X,\Deps)$ is \emph{Campana rationally connected} if two general points of $\uX$ are connected by genus-zero Campana curves belonging to a family that dominates $\uX$, this notion is equivalent to Campana's orbifold rational connectedness \cite{Campana2011Survey}, see \cite[Remark~7.3]{chen2026campana}. The following conjecture is the orbifold counterpart of the rational connectedness of Fano manifolds \cite{campana1992connexite, kollar1992rational} and of klt log Fano pairs \cite{zhang2006rational, hacon2007shokurov}.

\begin{conjecture}[Campana, see {\cite[Section~9]{Campana2011Survey}}, {\cite[Conjecture~1.4]{chen2026campana}}]\label{conj:campana}
Assume that $\chr\kk=0$. Every klt Campana orbifold $(X,\Deps)$ such that $-(K_{\uX}+\Deps)$ is ample is Campana rationally connected.
\end{conjecture}

Conjecture~\ref{conj:campana} is known only in a few situations: when all multiplicities are equal to some $m$ and $\Delta$ is divisible by $m$ in 
$\Pic(\uX)$, by a cyclic covering argument due to Campana \cite[Example~1.5]{chen2026campana}, for toric Campana orbifolds with torus-invariant 
boundary \cite[Theorem~1.8]{chen2026campana} (see \cite{feng2025campana} for non-klt toric orbifolds), and in some cases admitting $\PP^1$-fibrations 
\cite[Section~7]{chen2026campana}. In \cite{dandapat2026campana} we prove Campana rational connectedness for del Pezzo orbifolds of degree less than $7$, 
amd higher degree del Pezzo orbifolds with irreducible boundaries. In general, even the existence of covering families of orbifold rational curves is unknown, which led Campana to introduce a slope-theoretic substitute for rational connectedness of orbifolds \cite{campana2016orbifold}. Reducible boundaries are a genuine source of new phenomena: already in the logarithmic limit $m_i=\infty$, the projective plane with the union of two lines as boundary is log Fano but not $\A^1$-connected \cite[Section~1.3]{chen2026campana}, cf.\ \cite{keel1999rational, ChenZhu2019}.

For del Pezzo orbifolds, Conjecture~\ref{conj:campana} we have a good strategy. Since $\uX$ is rationally connected, 
\cite[Corollary~1.7]{chen2026campana} shows that $(X,\Deps)$ is Campana rationally connected as soon as it is \emph{strongly Campana uniruled}, 
that is, as soon as it carries a free Campana curve whose class lies in the interior of the cone $\Nef_1(\uX)$ of nef curve classes 
\cite[Definition~5.9]{chen2026campana}. Constructing such curves requires precise knowledge of the boundary and its components, their configuration 
and the multiplicities. Moreover, a Campana curve for $(X,\Deps)$ remains a Campana curve after any of the multiplicities is lowered, so 
Campana rational connectedness passes to every orbifold structure on the same boundary with smaller multiplicities, even to those that are not 
del Pezzo (cf.\ Example~\ref{ex:triangle}). For a fixed configuration it therefore suffices to treat the multiplicity vectors that are maximal 
among the admissible ones, whenever such maximal vectors exist.

\subsubsection*{Weak approximation for Campana sections and higher-dimensional orbifolds}
Over the function field of a complex curve $B$, Chen, Lehmann and Tanimoto proved a Campana analogue of the theorems of Graber-Harris-Starr and 
Hassett-Tschinkel \cite{graber2003families, hassett2005}: for a klt Campana fibration over $B$ whose general fibre is rationally connected and 
strongly Campana uniruled, every finite collection of Campana jets supported in distinct fibres of good reduction is induced by a Campana section 
\cite[Theorem~1.6]{chen2026campana}. Fibrations whose general fibre is a del Pezzo orbifold, for instance, families of del Pezzo surfaces of 
degree at most five equipped with a SNC divisor with standard coefficients are natural next cases after the toric one 
\cite[Corollary~1.10]{chen2026campana}, and the classification determines precisely which orbifold structures can occur on their general fibres.
\cite{dandapat2026campana} proves weak approximation for Campana sections at all places for del Pezzo orbifolds of degree $3 \leq d \leq 7$, and higher
degree del Pezzo orbifolds with irreducible boundaries. 
More generally, one approach to Conjecture~\ref{conj:campana} in higher dimensions is to run the minimal model program for the pair $(\uX,\Deps)$, 
which produces Mori fibre spaces whose general fibres are orbifold Fano pairs of lower dimension. Whenever such a fibre is a smooth del Pezzo surface 
on which the boundary restricts to an SNC divisor, it is a del Pezzo orbifold in the sense of Definition~\ref{def:dP-orbifold}.

\subsubsection*{Campana points of bounded height}
Let $F$ be a number field, let $S$ be a finite set of places of $F$ containing the archimedean places, and let $(\cX,\cD_\epsilon)$ be a model of 
$(X,\Deps)$ over the ring of $S$-integers $\OO_{F,S}$. A rational point of $\uX$ outside $\Delta$ is a \emph{Campana point} if, at every place 
$v\notin S$, its local intersection multiplicity with each $\cD_i$ is either zero or at least $m_i$ \cite[Definition~3.4]{pieropan2021campana}, see 
also \cite{campana2005fibres, abramovich2009birational, abramovich2018campana, campana2015special}. Pieropan, Smeets, Tanimoto and V\'arilly-Alvarado 
formulated a Manin-type conjecture for klt Campana points on Fano orbifolds \cite[Conjecture~1.1]{pieropan2021campana}: for the height associated with 
$-(K_{\uX}+\Deps)$ and under suitable hypotheses, it predicts that, after the removal of a thin set, the number of Campana points of height at most 
$T$ grows like $cT(\log T)^{b-1}$, where $b$ is an explicit geometric invariant. The conjecture has been established for equivariant compactifications 
of vector groups \cite{pieropan2021campana}, for toric varieties \cite{pieropan2024hyperbola, shute2024semi} and for biequivariant compactifications 
of the Heisenberg group \cite{xiao2022campana}, a variant of it has been proved for wonderful compactifications of semisimple groups of adjoint type 
\cite{chow2026campana}, and it is supported by results on hyperplanes and norm forms 
\cite{browning2012sums, van2012squareful, browning2021arithmetic, streeter2022campana}. At the same time, the leading constant and the exceptional 
thin set are subtle \cite{shute2022leading, chow2026campana}. Many of these results rely on the action of an algebraic group with a dense orbit, 
whereas a del Pezzo surface of degree at most five has a finite automorphism group. Even for rational points, Manin's conjecture for smooth del Pezzo 
surfaces of degree at most five is known only in special cases (see, e.g., \cite{dlB02, Browning2008ManinsCF}) and remains open for smooth cubic 
surfaces. Del Pezzo orbifolds of low degree are therefore natural next test cases. Our classification determines exactly for which boundaries and 
multiplicities the log-anticanonical version of the conjecture applies to a del Pezzo surface of degree at most five, and it is the starting point 
for identifying the relevant exceptional sets (cf.\ \cite{lehmann2022geometric} for rational points), such as the contribution of Campana points 
lying on $(-1)$-curves, which depends on how these curves meet the boundary. Del Pezzo orbifolds also provide natural two-dimensional examples of 
Fano Deligne-Mumford stacks, for which Batyrev-Manin-type conjectures \cite{ellenberg2023heights, darda2024batyrev} and a theory of Brauer groups 
\cite{loughran2024malle} have recently been developed, see also \cite{mitankin2024semi} for the closely related Darmon points.

\subsubsection*{Campana curves over function fields}
Over global function fields $\mathbb{F}_q(t)$, Campana points correspond to Campana sections, and counting them is closely related to the geometry 
of moduli spaces of Campana curves. Manin's conjecture for Campana rational curves has recently been established for split toric varieties 
\cite{chen2026maninsconjecturesemiintegralcurves} (see \cite{faisant2025motivic} for a motivic version), while, for rational curves without boundary 
conditions, the homological sieve of \cite{das2025homological} yielded a version of Manin's conjecture for rational curves on quartic del Pezzo surfaces, 
see \cite{lehmann2019geometric, lehmann2022geometric} for Geometric Manin's Conjecture and \cite{testa2009, Tanimoto2025} for the spaces of rational 
curves on del Pezzo surfaces. The del Pezzo orbifolds of degree four classified here and in \cite{dandapat2026classification} are natural non-toric 
test cases for extending these methods to Campana curves. \cite{lehmann2026fano} proves that Fano orbifolds admit free Campana curves.

\subsubsection*{Local-global principles and special orbifolds}
Qualitative arithmetic questions are equally open. Campana conjectured that an orbifold is special if and only if its Campana points are potentially 
dense over number fields, and if and only if its orbifold Kobayashi pseudometric vanishes identically 
\cite{campana2004orbifolds, Campana2011, campana2015special}, see also \cite{rousseau2021nonspecial}. Del Pezzo orbifolds lie at the positive extreme 
of this set up, and their Campana points are expected to be abundant. For the Manin-type conjecture above we need that the Campana points are not thin. 
Nakahara and Streeter showed that Campana weak weak approximation implies this Campana Hilbert property, and exhibited Campana orbifolds whose Campana 
points are not thin \cite{nakahara2024weak}. Mitankin, Nakahara and Streeter developed a semi-integral version of the Brauer-Manin obstruction and 
determined the status of local-global principles for two families of orbifolds associated with quadrics \cite{mitankin2024semi}, see also 
\cite{moerman2024generalized} for toric varieties and \cite{loughran2024malle} for Brauer groups of stacks. For rational points on del Pezzo surfaces, 
the Hasse principle and weak approximation always hold in degree at least five, but may fail in degree at most four, and Colliot-Th\'el\`ene conjectured 
that all such failures are explained by the Brauer-Manin obstruction (see \cite{colliot2003points, colliot2021brauer}). It is 
therefore natural to ask to what extent the semi-integral Brauer-Manin obstruction controls the Hasse principle and weak approximation for Campana 
points on del Pezzo orbifolds. The del Pezzo orbifolds of degree at most four classified here and in \cite{dandapat2026classification} provide a 
non-toric test case.

\subsection*{Conventions}
A \emph{del Pezzo surface} is a smooth projective surface $\uX$ with ample anticanonical divisor, its degree is $d=K_{\uX}^{2}$. A \emph{$(-1)$-curve} 
is a smooth rational curve with self-intersection $-1$, and a \emph{conic} is a smooth rational curve with self-intersection $0$, that is, a smooth 
fibre of a conic fibration $\uX\to\PP^1$. The \emph{anticanonical degree} of a curve $C\subset\uX$ is $-K_{\uX}\cdot C$. When $d\le 7$ we fix a 
birational morphism $\beta\colon\uX\to\PP^2$ realising $\uX$ as the blow-up of $\PP^2$ in $9-d$ points in general position, and write $H$ for the 
pullback of the class of a line and $E_1,\dots,E_{9-d}$ for the exceptional curves, so that $-K_{\uX}=3H-\sum_i E_i$. For a Campana orbifold 
$(X,\Deps)$ we write $\Delta=\sum_i D_i$ for the reduced boundary and $\epsilon_i=1-\frac{1}{m_i}, m_i \in \Z_{\geq 2}$.

\subsection*{Acknowledgements}

The author would like to thank his supervisor Prof. Sho Tanimoto for his enthusiastic comments on this project and sharing the problem in 2025. 
He would also like to thank Qile Chen, Brian Lehmann and Sho Tanimoto for their comments and sharing their unpublished works on classification of del Pezzo orbifolds of 
degree $3$. The author was supported by MEXT Scholarship for Doctoral studies from the Government of Japan, recommended by the Embassy of Japan in New 
Delhi, India. Claude Opus 5 and 5.5 was used to generate Python scripts to crosscheck the results in this paper, to organise the representation of the 
tables used in this paper, and to proofread the document.  However, the idea of writing the programs is due to the author. The paper itself was written 
by the author who is responsible for correctness.   

\section{Surfaces of degree 1}
\label{sec:dp1}

Let $S$ be a del Pezzo surface of degree $1$ and $\Delta_{\epsilon}$ be a $\Q$-divisor on it. We are interested in the case where $-(K_{S} + \Delta_{\epsilon})$ is ample. In this case, $\Delta_{\epsilon}$ can have at most $2$ components and each component has an anti-canonical degree at most $2$. 

\subsection{The possible components.} 
We record the intersection numbers with $(-1)$-curves. Let $D'$ be an irreducible curve which is a component of $\Delta$.
\begin{itemize}
    
    \item If $D'$ is a $(-1)$-curve. Then $D'^2 = -1$, $D' \cdot E = 0$ for all disjoint $(-1)$-curves (there are 56 such curves), $D' \cdot E = 1$ for all $(-1)$-curves which it meets at one point (there are 126 such curves), $D' \cdot E = 2$ for all $(-1)$-curves which it meets twice (there are 56 such curves), and there is a unique $(-1)$-curve such that $D' \cdot E = 3$.

    \item If $D'$ is a member of the anti-canonical class. Then $D' \cdot E = 1$ for any $(-1)$-curve $E$.

    \item If $D'$ is a smooth fiber of a conic fibration. Then there are 14 $(-1)$-curves $E$ such that $D' \cdot E = 0$, there are 64 $(-1)$-curves $E$ such that $D' \cdot E = 1$, there are 84 $(-1)$-curves $E$ such that $D' \cdot E = 2$, there are 64 $(-1)$-curves such that $D' \cdot E = 3$, and there are 14 $(-1)$-curves such that $D' \cdot E = 4$.

    \item If $D'$ is pullback of a member in the anti-canonical class of a degree 2 del Pezzo surface via a birational morphism. Then $D' \cdot E = 0$ for the contracted $(-1)$-curve. There are 56 $(-1)$-curves disjoint from the contracted curve such that $D' \cdot E = 1$, there are 126 $(-1)$-curves that meet the contracted curve at one point such that $D' \cdot E = 2$, there are 56 $(-1)$-curves that meet the contracted curve twice such that $D' \cdot E = 3$, and there is an unique $(-1)$-curve $E$ which meet the contracted curve thrice such that $D' \cdot E = 4$.

    \item If $D'$ which is member of $\lvert -2K_{S}\rvert$. Then $D' \cdot E = 2$ for all $(-1)$-curves $E$.

\end{itemize}

Now we are going to classify when $-(K_{S} + \Delta_{\epsilon})$ is nef.

\begin{theorem}
\label{thm:deg1}
    Let $S$ be a del Pezzo surface of degree 1 and $\Delta_{\epsilon}$ be a $\Q$-divisor on it. Then $-(K_{S} + \Delta_{\epsilon})$ is nef if and only if $\Delta_{\epsilon} = \frac{1}{2}D_1 + \frac{1}{2}D_2$, where $D_1$ and $D_2$ both are member of the anti-canonical class $\lvert -K_{S}\rvert$.
\end{theorem}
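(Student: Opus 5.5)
The argument is a degree count against $-K_S$, then a numerical-triviality argument, then a check of the finitely many component types. Throughout, the boundary is reducible, $r\ge 2$, as in the setting of the paper. (For $r=1$ and $D_1\in|-K_S|$, the divisor $-(K_S+\Deps)\equiv (1-\epsilon_1)(-K_S)$ is ample, so the statement has to be read for $r\ge 2$.)

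\textbf{Step 1: bounding the boundary.} Since $-K_S$ is ample and $-(K_S+\Deps)$ is nef,
\[
0\le -K_S\cdot\bigl(-(K_S+\Deps)\bigr)=1-\sum_{i=1}^r\epsilon_i\,(-K_S\cdot D_i).
\]
Each $\epsilon_i\ge\frac12$ and each $-K_S\cdot D_i\ge 1$, so $r\ge2$ forces $r=2$, $\epsilon_1=\epsilon_2=\frac12$ and $-K_S\cdot D_1=-K_S\cdot D_2=1$. It also forces equality: $-K_S\cdot(-(K_S+\Deps))=0$.

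\textbf{Step 2: numerical triviality.} Put $N=-(K_S+\Deps)$, a nef divisor with $N\cdot(-K_S)=0$. Nefness gives $N^2\ge0$. The Hodge index theorem, applied to the ample class $-K_S$, gives $N^2\le 0$, with equality only if $N\equiv 0$. Hence $N\equiv0$, that is, $D_1+D_2\equiv -2K_S$. Since $S$ is rational, numerical and linear equivalence coincide, so $D_1+D_2\in|-2K_S|$.

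\textbf{Step 3: component types.} By the list of possible components above, a component of anticanonical degree $1$ is either a member of $|-K_S|$ or a $(-1)$-curve.
\begin{itemize}
\item If $D_1\in|-K_S|$, then $D_2\sim -K_S$, which is the configuration in the statement.
\item Conversely, if both $D_i\in|-K_S|$, then $-(K_S+\Deps)\equiv 0$, which is nef. Such a pair is SNC when the two members meet transversally at the base point of $|-K_S|$.
\item If $D_1=E$ is a $(-1)$-curve, then $D_2\sim -2K_S-E$. This class has $D_2^2=-1$ and $-K_S\cdot D_2=1$, so $D_2$ is a $(-1)$-curve $E'$, namely the Bertini image of $E$. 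It is the unique $(-1)$-curve with $E\cdot E'=3$, as in the list of components.
\end{itemize}

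\textbf{Main obstacle.} The difficulty is the last case, the Bertini pair $E+E'\in|-2K_S|$. Step 2 does not exclude it numerically: $-(K_S+\frac12E+\frac12E')\equiv0$ is nef. One would have to show that $E+E'$ is never SNC. A first check in the model $E=E_1$, $E'=6H-3E_1-2\sum_{i\ge2}E_i$ suggests the opposite: the sextic meets $E_1$ in three points, the tangent directions of its triple point, and for points in general position these look transverse. Two curves meeting transversally in three distinct points form an SNC divisor.

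So the plan is to examine this configuration directly. If it cannot be ruled out, the statement must be amended to include $\Deps=\frac12E+\frac12E'$ with $E,E'$ a Bertini-conjugate pair of $(-1)$-curves.
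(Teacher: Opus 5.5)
Your Steps 1--3 are correct, and the remaining case you flag is a genuine counterexample. So the gap lies in the statement and in the paper's proof, not in your reduction; as you note, the statement also needs $r\ge2$.

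\textbf{Comparison of routes.} The paper runs through all component types of degree $\le2$ and tests intersection numbers with $(-1)$-curves. You instead use the degree bound and the Hodge index theorem to get $r=2$, $\epsilon_1=\epsilon_2=\frac12$ and $D_1+D_2\in|-2K_S|$ at once. That leaves only two possibilities: two members of $|-K_S|$, or a Bertini pair $E+\iota(E)$ with $\iota(E)=-2K_S-E$.

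\textbf{Where the paper's proof fails.} In its Case~2 the paper excludes two $(-1)$-curves by saying that $D_2\cdot E\ge2$ for more than one $(-1)$-curve $E$, hence $-(K_S+\Deps)$ is not nef. With coefficient $\frac12$ this only gives $\Deps\cdot E\ge1$, which is allowed. For $D_2=\iota(D_1)$, every such $E\neq D_1$ has $D_1\cdot E=2-D_2\cdot E=0$, so $\Deps\cdot E=1$; indeed $\Deps\sim_{\Q}-K_S$. The same inference is valid in Case~1 only because there $D_1\cdot E=1$ for every $E$. Since the statement speaks only of a $\Q$-divisor, the Bertini pair already contradicts it on every $S$. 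Under the SNC convention you still need transversality.

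\textbf{Closing the SNC check.} This goes in your favour. Write $S$ as $w^2=z^3+f_4(x,y)z+f_6(x,y)$ in $\PP(1,1,2,3)$, so that $-K_S=\mathcal{O}_S(1)$. Choose binary forms $g_2,g_3$ of degrees $2,3$ and put $f_6=g_3^2-g_2^3-f_4g_2$.
\begin{itemize}
\item Then $\{z=g_2\}\cap S=E+\iota(E)$, where $E=\{z=g_2,\ w=g_3\}$ and $\iota(E)=\{z=g_2,\ w=-g_3\}$. Here $E$ is a smooth rational curve with $-K_S\cdot E=1$, hence a $(-1)$-curve, and $E+\iota(E)\in|-2K_S|$.
\item The two curves meet only over the zeros of $g_3$ in $\PP^1_{(x:y)}$.
\item At such a point, $E$ meets the member of the pencil $|-K_S|$ through it with multiplicity one, hence at a smooth point of that member. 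So $\partial_z(z^3+f_4z+f_6)\neq0$ there, and $(t,w)$, with $t$ an affine coordinate on $\PP^1$, are local coordinates on $S$.
\item In these coordinates the two curves are the graphs $w=\pm g_3(t)$. Hence the local intersection number is the order of vanishing of $g_3$.
\end{itemize}
Thus $E+\iota(E)$ is SNC exactly when $g_3$ has three distinct roots. For example, take $g_2=0$, $g_3=xy(x-y)$ and $f_4$ general, which also makes $S$ smooth.

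\textbf{Corrected statement.} For $r\ge2$, $-(K_S+\Deps)$ is nef if and only if $\Deps=\frac12(D_1+D_2)$ with $D_1+D_2\in|-2K_S|$. That is, either both $D_i\in|-K_S|$, or $\{D_1,D_2\}$ is a Bertini pair. In both cases $-(K_S+\Deps)\equiv0$. Your Step~1, run with strict inequality, shows that no del Pezzo orbifold of degree $1$ has reducible boundary, so that consequence of the paper is unaffected.
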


\begin{proof}
We have the following cases.
    \begin{itemize}
        \item \textbf{Case 1.} Let $D_1$ be a component that is a member of the anti-canonical class.  
        
        Suppose we have another component $D_2$ which is also a member of the anti-canonical class. Looking at the numerology, the only option is $\frac{1}{2}D_1 + \frac{1}{2}D_2$. 
    
        Suppose we have another component $D_2$ which is either a $(-1)$-curve, or a smooth fiber of a conic fibration, or is a member of $\lvert -2K_{S}\rvert$, then there is at least one $(-1)$-curve $E$ such that $D_2 \cdot E = 2$. Hence in these cases $-(K_{S} + \Delta_{\epsilon})$ is not nef.
    
        Suppose we have another component $D_2$ which is pullback of a member in the anti-canonical class of a del Pezzo surface of degree 2. Then there are more than one $(-1)$-curves such that $D_2 \cdot E \geq 2$. Hence in this case $-(K_{S} + \Delta_{\epsilon})$ is not nef.   
    
        \item \textbf{Case 2.} Let $D_1$ be a component that is a $(-1)$-curve. 
    
        If $D_2$ is another component which is either a $(-1$-curve, or a smooth fiber of a conic fibration, or pullback of a member in the anti-canonical class of a del Pezzo surface of degree 2, or is a member of $\lvert-2K_{S}\rvert$, there are more than one $(-1)$-curve such that $D_2 \cdot E \geq 2$. Hence $-(K_{S} + \Delta_{\epsilon})$ is not nef. 
        
        \item \textbf{Case 3.} Let $D_1$ is a component which is smooth fiber of a conic fibration. Since there are 78 $(-1)$-curves such that $D_1 \cdot E \geq 3$, this case is not possible for any $D_2$.
        
        \item \textbf{Case 4.} Let $D_1$ be a component which is pullback of a member in the anti-canonical class of a del Pezzo surface of degree $2$. Since there are 57 $(-1)$-curves such that $D_1 \cdot E \geq 3$, this case is also not possible for any $D_2$. 
        
        \item \textbf{Case 5.} Let $D_1$ be a component which is a member of $\lvert-2K_{S}\rvert$. 
        
        If $D_2$ is any of the above type of curve, then there exists at least one $(-1)$-curve $E$ such that $D_2 \cdot E \geq 1$. So this case is also not possible.
    \end{itemize}
\end{proof}

\section{Surfaces of degree 2}
\label{sec:dp2}

Let $S$ be a smooth del Pezzo surface with $K_S^2=2$ and 
$$\Delta_{\epsilon}\,=\,\sum_{i=1}^{n}\epsilon_i D_i, \qquad \epsilon_i=1-\frac{1}{m_i},\quad m_i\in\mathbb Z_{\ge 2},$$
is a SNC $\Q$-divisor: the $D_i$ are pairwise distinct irreducible curves on $S$. In particular $\tfrac12\le \epsilon_i<1$ for every $i$.
We call $\Delta_\epsilon$ \emph{reducible} if $n\ge 2$. We write
$$\mathcal L=\{\,E\subset S \,:\, E \text{ is a } (-1)\text{-curve}\,\},$$
and $d_i:=-K_S\cdot D_i$, $\deg\Delta_\epsilon:=-K_S\cdot\Delta_\epsilon=\sum_i\epsilon_id_i$. 

\begin{lemma}\label{lem:nef-test}
    $-(K_S+\Delta_\epsilon)$ is nef if and only if
    $$\Delta_\epsilon\cdot E\,\le\,1\qquad\text{for every }E\in\mathcal L.$$
\end{lemma}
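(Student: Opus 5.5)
The plan is to reduce nefness to intersection numbers with $(-1)$-curves using the structure of the cone of curves. The one computation needed is that on a del Pezzo surface of degree $2$, adjunction gives $-K_S\cdot E=1$ for every $E\in\mathcal L$. Hence, for any $E\in\mathcal L$,
\[
-(K_S+\Delta_\epsilon)\cdot E \;=\; 1-\Delta_\epsilon\cdot E .
\]
So the condition $\Delta_\epsilon\cdot E\le 1$ is exactly the statement that $-(K_S+\Delta_\epsilon)$ has nonnegative degree on $E$.

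For the direction ``nef $\Rightarrow$ inequality'', I simply apply the definition of nef to the curves $E\in\mathcal L$ and use the displayed identity.

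For the converse, I invoke the fact recalled in \S\ref{subsec:problem} (via \cite[Lemma~2.1]{dandapat2026classification}): since $d=2\le 7$, the cone $\NE(S)$ is generated by the classes of the finitely many $(-1)$-curves, of which there are $56$. The argument then runs as follows.
\begin{enumerate}
\item Any effective curve class is a nonnegative real combination of classes $[E]$ with $E\in\mathcal L$, and likewise any class in the closure $\NE(S)$, since the cone is finitely generated and hence closed.
\item Pairing $-(K_S+\Delta_\epsilon)$ with such a combination gives a nonnegative combination of the numbers $1-\Delta_\epsilon\cdot E\ge 0$.
\item Hence $-(K_S+\Delta_\epsilon)$ is nonnegative on $\NE(S)$, that is, it is nef.
\end{enumerate}

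The only point requiring care is the cone theorem input. One must make sure the generation statement applies in degree $2$ (it does: the cited lemma covers all $d\le 7$), and that it is being used for a $\Q$-divisor, which is harmless by linearity of intersection numbers. No SNC or multiplicity hypotheses are needed for this argument.

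As a remark, the same argument with strict inequalities, combined with Kleiman's criterion, gives ampleness if and only if $\Delta_\epsilon\cdot E<1$ for all $E\in\mathcal L$.
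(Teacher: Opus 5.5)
Your proposal is correct and follows essentially the same route as the paper: it uses that $\overline{\mathrm{NE}}(S)$ is generated by the classes of the $(-1)$-curves (since $K_S^2\le 7$), together with $-K_S\cdot E=1$, to reduce nefness of $-(K_S+\Delta_\epsilon)$ to the inequalities $\Delta_\epsilon\cdot E\le 1$ for all $E\in\mathcal L$. Your additional remarks on the closedness of the finitely generated cone and on linearity for $\Q$-divisors only make explicit what the paper's one-line argument takes for granted.
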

 
\begin{proof}
    Since $K_S^2\le 7$, the Mori cone $\overline{\mathrm{NE}}(S)$ is generated by the classes of the $(-1)$-curves.  Hence $-(K_S+\Delta_\epsilon)$ is 
    nef if and only if $(-K_S-\Delta_\epsilon)\cdot E\ge 0$ for all $E\in\mathcal L$, and $-K_S\cdot E=1$.
\end{proof}

We are interested in the case when 
$-(K_S + \Delta_{\epsilon})$ is ample. 

\begin{lemma}[Geiser conjugation]\label{lem:geiser}
For $E\in\mathcal L$ set $\overline E:=-K_S-E$.  Then:
\begin{enumerate}
\item $\overline E\in\mathcal L$, $\overline{\overline E}=E$ and $E\cdot\overline E=2$.
      Thus $\mathcal L$ is partitioned into $28$ pairs $\{E,\overline E\}$, namely the
      orbits of the Geiser involution, and $E+\overline E\sim -K_S$.
\item For distinct $E,E'\in\mathcal L$ one has $E\cdot E'\in\{0,1,2\}$, and
      $E\cdot E'=2$ if and only if $E'=\overline E$.
\item For $E'\in\mathcal L$, $E'\neq E$, one has $E'\cdot\overline E=1-E\cdot E'$.
\end{enumerate}
\end{lemma}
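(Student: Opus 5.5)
The plan is a direct intersection-theoretic computation on $S$, using only $K_S^2=2$, adjunction, and the fact that every $E\in\mathcal L$ satisfies $E^2=-1$ and $-K_S\cdot E=1$.

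\textbf{Step 1: properties of $\overline E$.} Let $E\in\mathcal L$ and put $\overline E=-K_S-E$. I would first compute
\[
\overline E^2=K_S^2+2K_S\cdot E+E^2=2-2-1=-1,\qquad -K_S\cdot\overline E=2-1=1 .
\]
To conclude $\overline E\in\mathcal L$, I need effectivity and irreducibility. Riemann--Roch gives $\chi(\overline E)=1+\tfrac12(\overline E^2-K_S\cdot\overline E)=1$. Since $K_S-\overline E$ has negative anticanonical degree, it is not effective, so $h^2(\overline E)=0$ and hence $h^0(\overline E)\ge 1$.

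Next take an effective member $C\in|\overline E|$. Every component of $C$ has positive anticanonical degree, and $-K_S\cdot C=1$, so $C$ is a single integral curve. By adjunction $p_a(C)=1+\tfrac12(C^2+K_S\cdot C)=0$, so $C$ is a smooth rational curve with self-intersection $-1$. Thus $\overline E\in\mathcal L$.

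The remaining parts of (1) are formal: $\overline{\overline E}=-K_S-(-K_S-E)=E$, and $E\cdot\overline E=-K_S\cdot E-E^2=1+1=2$. Also $\overline E\neq E$, since otherwise $E\cdot\overline E=-1$. So the involution $E\mapsto\overline E$ is fixed-point-free, and the $56$ lines split into $28$ pairs with $E+\overline E\sim -K_S$. This is the Geiser involution, induced by the anticanonical double cover $S\to\PP^2$, but the numerical argument above does not need that.

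\textbf{Step 2: part (3).} For $E'\neq E$ I would simply compute
\[
E'\cdot\overline E=E'\cdot(-K_S)-E'\cdot E=1-E\cdot E'.
\]

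\textbf{Step 3: part (2).} Distinct irreducible curves meet non-negatively, so $E\cdot E'\ge 0$. The upper bound is the only step needing care.

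\emph{Case $E'\neq\overline E$.} Then $E'$ and $\overline E$ are distinct irreducible curves, so $\overline E\cdot E'\ge 0$. By (3) this means $1-E\cdot E'\ge 0$, hence $E\cdot E'\le 1$.

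\emph{Case $E'=\overline E$.} Then $E\cdot E'=2$ by (1).

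This gives both $E\cdot E'\in\{0,1,2\}$ and the characterization that $E\cdot E'=2$ exactly when $E'=\overline E$.

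An alternative route to the bound would be the Hodge index theorem: apply it to $E+E'$ and $-K_S$, or note that $(E-E')^2=-2-2E\cdot E'$ must satisfy the Hodge bound on the orthogonal complement of $K_S$. The route through (3) is cleaner and fully elementary, so I would use it.

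I expect no real obstacle. The only delicate point is ensuring $\overline E$ is an irreducible $(-1)$-curve rather than merely a numerical class, and this is handled by the Riemann--Roch and degree-one argument in Step 1.
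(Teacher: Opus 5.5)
Your proof is correct, but it differs from the paper's in two places.

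For (1), the paper simply quotes the general fact that on a del Pezzo surface every class $D$ with $D^2=K_S\cdot D=-1$ is represented by a unique irreducible $(-1)$-curve. You instead prove this directly for $\overline E$, using Riemann--Roch, Serre duality, and the observation that an effective divisor of anticanonical degree $1$ is integral. This makes the lemma self-contained. You could add one line: since $C^2<0$, the curve $C$ is the only effective member of $|\overline E|$, so $\overline E$ is a well-defined element of $\mathcal L$ and $\overline{\overline E}=E$ holds for curves, not just classes.

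For (2), the paper applies the Hodge index theorem to $-K_S$ and $E+E'$. It gets $E\cdot E'\le 2$ from $2(2E\cdot E'-2)\le 4$. It then uses the equality case (numerical proportionality to $-K_S$, together with numerical and linear equivalence agreeing) to conclude $E'=\overline E$. You instead prove (3) first and deduce the bound from $\overline E\cdot E'\ge 0$ for $E'\ne\overline E$. The characterization of the value $2$ then falls out of your case split, with no equality-case analysis.

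Your route is more elementary, but it needs (1) to have produced an actual irreducible curve $\overline E$, so the order (1) $\Rightarrow$ (3) $\Rightarrow$ (2) is essential. The paper's Hodge index argument does not depend on (1). It also works uniformly in every degree $d$: it gives $E\cdot E'\le 1+2/d$ for distinct $(-1)$-curves on a del Pezzo surface of degree $d$.
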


\begin{proof}
(1) $\overline E{}^2=(-K_S-E)^2=K_S^2+2K_S\cdot E+E^2=2-2-1=-1$ and
$-K_S\cdot\overline E=2-1=1$.  On a del Pezzo surface every class $D$ with
$D^2=D\cdot K_S=-1$ is represented by a unique irreducible $(-1)$-curve, so
$\overline E\in\mathcal L$. The remaining assertions are immediate.

(2) $E\cdot E'\ge 0$ because the curves are distinct and irreducible.  Applying the
Hodge index theorem to the ample class $-K_S$ and to $E+E'$ gives
$(E+E')^2\cdot K_S^2\le\bigl(K_S\cdot(E+E')\bigr)^2$, i.e.
$2(-2+2E\cdot E')\le 4$, whence $E\cdot E'\le 2$, equality forces
$E+E'\equiv -K_S$, i.e. $E'=\overline E$.

(3) $E'\cdot\overline E=E'\cdot(-K_S-E)=1-E\cdot E'$.
\end{proof}

\begin{lemma}\label{lem:degree}
Assume $-(K_S+\Delta_\epsilon)$ is nef.  Then
\[
  \sum_{i}\epsilon_i d_i\,\le\,2 ,
\]
hence $\sum_i d_i\le 4$ and $n\le 4$, if moreover $n\ge 2$ then every component
satisfies $d_i\le 3$.  Furthermore the following are equivalent:
\begin{enumerate}
\item $\sum_i\epsilon_i d_i=2$,
\item $\Delta_\epsilon\sim -K_S$,
\item $-(K_S+\Delta_\epsilon)\sim 0$.
\end{enumerate}
\end{lemma}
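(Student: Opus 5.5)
The plan is to derive the degree bound by summing the nef test of Lemma~\ref{lem:nef-test} over Geiser pairs, and then to read off the equality cases from the same computation.

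\textbf{Degree bound.} Fix any $E\in\mathcal L$. By Lemma~\ref{lem:geiser}(1), $\overline E\in\mathcal L$ and $E+\overline E\sim -K_S$. Lemma~\ref{lem:nef-test} applied to both curves gives
\[
\sum_i\epsilon_i d_i=\Delta_\epsilon\cdot(-K_S)=\Delta_\epsilon\cdot E+\Delta_\epsilon\cdot\overline E\le 2 .
\]
One could instead intersect the nef class $-(K_S+\Delta_\epsilon)$ with the ample class $-K_S$. That only yields $\sum\epsilon_i d_i\le 2$ with equality iff $-(K_S+\Delta_\epsilon)\equiv 0$, so it proves the same thing.

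\textbf{Consequences.} Since $\epsilon_i\ge\tfrac12$, we get $\sum_i d_i\le 2\sum_i\epsilon_i d_i\le 4$. Since each $d_i\ge1$ (as $-K_S$ is ample), it follows that $n\le 4$. If $n\ge2$, then for any $j$,
\[
\epsilon_j d_j\le 2-\sum_{i\ne j}\epsilon_i d_i\le 2-\tfrac12 ,
\]
so $d_j\le 3/(2\epsilon_j)\le 3$.

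\textbf{Equivalences.} The implication (2)$\Leftrightarrow$(3) is immediate, since $-(K_S+\Delta_\epsilon)\sim -K_S-\Delta_\epsilon$. Here linear equivalence of $\Q$-divisors is understood as $\Q$-linear equivalence; on a rational surface numerical and $\Q$-linear equivalence coincide because $\Pic S$ is torsion-free and $\equiv$ agrees with $\sim_\Q$.

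For (3)$\Rightarrow$(1), intersect with $-K_S$: $(-K_S-\Delta_\epsilon)\cdot(-K_S)=0$ gives $\sum\epsilon_i d_i=K_S^2=2$.

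For (1)$\Rightarrow$(3), put $N=-(K_S+\Delta_\epsilon)$. This class is nef and satisfies $N\cdot(-K_S)=2-\sum\epsilon_i d_i=0$. Since the Mori cone is generated by $(-1)$-curves and $-K_S$ is a positive combination of them, $N\cdot E\ge0$ for all $E$ together with $N\cdot(E+\overline E)=0$ forces $N\cdot E=0$ for every $E\in\mathcal L$. The classes of the $(-1)$-curves span $N^1(S)_\Q$, so $N\equiv0$, and hence $N\sim_\Q 0$. Alternatively, one can apply the Hodge index theorem: $N$ nef gives $N^2\ge0$, and $N\cdot(-K_S)=0$ with $-K_S$ ample then forces $N\equiv 0$.

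\textbf{Main subtlety.} The only delicate point is the meaning of $\sim$ for $\Q$-divisors. I would state explicitly that it means $\Q$-linear (equivalently numerical) equivalence, using that $\Pic S\cong\Z^8$ is torsion-free. Everything else is a one-line computation.
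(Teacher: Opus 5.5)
Your proof is correct; it differs from the paper's only in how positivity is extracted.

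The paper intersects the nef class $N=-(K_S+\Delta_\epsilon)$ with the ample class $-K_S$ to get $0\le N\cdot(-K_S)=2-\sum_i\epsilon_id_i$. In the equality case it argues that $N^2=0$ (otherwise $N$ would be nef and big). The Hodge index theorem then gives $N\equiv 0$, hence $N\sim 0$ on a rational surface. This is the route you mention as your alternative.

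Your main route instead writes $-K_S\sim E+\overline E$ via the Geiser involution. This reduces both the inequality and the equality case to the line test of Lemma~\ref{lem:nef-test}. Equality forces $N\cdot E=0$ for every $E\in\mathcal L$, and you conclude because the $56$ lines span $\Pic(S)_{\Q}$.

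Your argument is more elementary, since it needs neither the Hodge index theorem nor bigness. It also gives the extra information that $\Delta_\epsilon\cdot E=1$ for every line in the equality case. On the other hand, it relies on the Geiser decomposition, which is specific to degree $2$. The paper's argument is degree-independent, and essentially the same argument reappears in degrees $3$ and $4$ (Lemmas~\ref{lem:deg} and~\ref{lem:criterion4}).

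Your derivation of $d_j\le 3$ from $\epsilon_jd_j\le\frac32$ is fine, as is your explicit reading of $\sim$ as $\Q$-linear equivalence.
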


\begin{proof}
Put $N:=-(K_S+\Delta_\epsilon)$.  As $N$ is nef and $-K_S$ is ample,
$0\le N\cdot(-K_S)=2-\sum_i\epsilon_id_i$.  Since $\epsilon_i\ge\half$ and
$d_i\ge1$ this gives $\half\sum_id_i\le 2$, so $\sum_id_i\le4$ and $n\le4$,
if $n\ge2$ then $d_i\le 4-(n-1)\le 3$.

If $N\cdot(-K_S)=0$ then $N^2=0$: otherwise $N$ would be nef and big, forcing
$N\cdot(-K_S)>0$.  Now $N^2=0$ and $N\cdot(-K_S)=0$ give $N\equiv 0$ by the Hodge
index theorem, and $N\sim 0$ because numerical and linear equivalence agree on a
rational surface.  The converse implications are clear.
\end{proof}

\begin{lemma}\label{lem:key}
Assume $-(K_S+\Delta_\epsilon)$ is nef and let $E\in\mathcal L$ be a component of
$\Delta_\epsilon$, with coefficient $\epsilon_E$.  If $\overline E$ is \emph{not}
a component of $\Delta_\epsilon$, then
\[
  \epsilon_E=\tfrac12
  \qquad\text{and}\qquad
  D\cdot\overline E=0 \quad\text{for every other component } D \text{ of } \Delta_\epsilon .
\]
\end{lemma}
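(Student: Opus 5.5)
The plan is to apply the nef test of Lemma~\ref{lem:nef-test} to the single $(-1)$-curve $\overline E$, and to read off its intersections with the boundary components from Lemma~\ref{lem:geiser}.

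\textbf{Step 1: the contribution of $E$.} Since $\overline E$ is not a component of $\Delta_\epsilon$, it is distinct from every $D_i$. Hence $D_i\cdot\overline E\ge 0$ for every component $D_i$. By Lemma~\ref{lem:geiser}(1) we have $E\cdot\overline E=2$.

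\textbf{Step 2: the nef inequality.} Lemma~\ref{lem:nef-test} gives
\[
1\ \ge\ \Delta_\epsilon\cdot\overline E\ =\ 2\epsilon_E+\sum_{D\neq E}\epsilon_D\,(D\cdot\overline E).
\]
Every term in the sum is nonnegative and $\epsilon_E\ge\tfrac12$. Therefore
\[
2\epsilon_E\le 1,
\qquad\text{so}\qquad
\epsilon_E=\tfrac12 .
\]

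\textbf{Step 3: the other components.} Substituting $\epsilon_E=\tfrac12$ back into the inequality leaves
\[
\sum_{D\neq E}\epsilon_D\,(D\cdot\overline E)\le 0 .
\]
Each term is a product of $\epsilon_D\ge\tfrac12>0$ and a nonnegative integer. Hence every term vanishes, so $D\cdot\overline E=0$ for every other component $D$.

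There is no real obstacle. The only point that needs care is that $D\cdot\overline E\ge 0$ requires $\overline E$ to differ from each component, and this is exactly the hypothesis. If $\overline E$ were a component, then $\overline E\cdot\overline E=-1$ and the argument would break down.
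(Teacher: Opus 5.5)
Your proposal is correct and follows the paper's proof essentially verbatim: apply Lemma~\ref{lem:nef-test} at $\overline E$, use $E\cdot\overline E=2$ together with $D\cdot\overline E\ge 0$ (valid because $\overline E$ is not a component), conclude $2\epsilon_E\le 1$, hence $\epsilon_E=\tfrac12$, and then the remaining sum must vanish termwise.
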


\begin{proof}
By Lemma \ref{lem:nef-test},
\[
  1\,\ge\,\Delta_\epsilon\cdot\overline E
   \,=\,\epsilon_E\,(E\cdot\overline E)+\sum_{D\neq E}\epsilon_D\,(D\cdot\overline E)
   \,=\,2\epsilon_E+\sum_{D\neq E}\epsilon_D\,(D\cdot\overline E).
\]
Since $\overline E$ is not a component, every $D\neq E$ is an irreducible curve
different from $\overline E$, so $D\cdot\overline E\ge0$.  Hence $2\epsilon_E\le1$,
which forces $\epsilon_E=\half$, and then the sum must vanish termwise.
\end{proof}

\subsection{The possible components}

We recall from \cite{dandapat2026classification} the irreducible curves of small anticanonical degree.

\begin{lemma}\label{lem:components}
Let $D\subset S$ be an irreducible curve and $d=-K_S\cdot D$.
\begin{enumerate}
\item $d=1$ if and only if $D\in\mathcal L$, then $D^2=-1$.
\item $d=2$: then $D^2\in\{0,2\}$.  If $D^2=0$, then $D$ is a fibre of a conic
      fibration, if $D^2=2$, then $D\in|-K_S|$.
\item $d=3$: then $D^2\in\{1,3\}$.  If $D^2=1$, then $D=\pi^{*}\ell$ for a
      birational morphism $\pi\colon S\to\mathbb P^2$ and a line $\ell$,
      if $D^2=3$, then $D\sim -K_S+E_0=\pi^{*}(-K_{S'})$, where
      $\pi\colon S\to S'$ contracts a single $E_0\in\mathcal L$ onto a smooth
      cubic surface $S'$.
\end{enumerate}
\end{lemma}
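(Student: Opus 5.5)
The plan is to work entirely with the intersection form on $\Pic(S)\cong\Z^{8}=\Z H\oplus\bigoplus_{i=1}^{7}\Z E_i$, where $-K_S=3H-\sum E_i$. For an irreducible curve $D$ with $d=-K_S\cdot D$, adjunction gives $D^2+K_S\cdot D=2p_a(D)-2\ge -2$, so $D^2\ge d-2$. Hodge index applied to the ample class $-K_S$ gives $D^2\cdot K_S^2\le (K_S\cdot D)^2$, i.e.\ $D^2\le d^2/2$. In addition, $D^2\equiv D\cdot K_S=-d \pmod 2$, because $D^2+D\cdot K_S$ is even. I will run through $d=1,2,3$ using these three constraints.

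For $d=1$ they give $-1\le D^2\le 1/2$ with $D^2$ odd, so $D^2=-1$ and $p_a=0$. Then $D$ is a $(-1)$-curve, and conversely every $(-1)$-curve has $d=1$. For $d=2$ we get $0\le D^2\le 2$ with $D^2$ even, so $D^2\in\{0,2\}$. If $D^2=2$, equality holds in Hodge index, so $D\equiv -K_S$, and hence $D\sim -K_S$ since $S$ is rational. If $D^2=0$, then $p_a=0$ and $D$ is a smooth rational curve of self-intersection $0$. Riemann--Roch gives $h^0(D)\ge 2$, and $|D|$ is base point free because $D^2=0$ and $D$ is irreducible and moves. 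So $|D|$ defines a morphism $S\to\PP^1$ whose general fibre is a smooth rational curve with $-K_S\cdot D=2$, which is a conic fibration with $D$ as a fibre. For $d=3$ we get $1\le D^2\le 9/2$ with $D^2$ odd, so $D^2\in\{1,3\}$, and in both cases $p_a=0$.

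\textbf{Case $D^2=1$, $d=3$.} I will show that the class of $D$ is $\pi^*\ell$ for some blow-down $\pi\colon S\to\PP^2$. The approach is via the Weyl group $W(E_7)$ acting on $K_S^\perp$. The class $D+K_S$ has $(D+K_S)\cdot K_S=-1$ and $(D+K_S)^2=1-6+2=-3$. Alternatively, and more directly, the seven classes $E$ with $E\in\mathcal L$ and $E\cdot D=0$ should be pairwise disjoint, and contracting them realises $D$ as the pullback of a line. Concretely, I will enumerate the classes in $\Pic(S)$ with $D^2=1$ and $-K_S\cdot D=3$ (the analogues of $H$, namely $H$, $2H-E_i-E_j-E_k$, $3H-2E_i-\dots$, and so on) and use the transitivity of $W(E_7)$ on such "exceptional configurations", a standard fact from Manin/Dolgachev. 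Since $W(E_7)$ maps the set of blow-down structures to itself, every such class is $\pi^*\ell$ for some $\pi$.

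\textbf{Case $D^2=3$, $d=3$.} Put $E_0:=D+K_S$. Then $E_0^2=3-6+2=-1$ and $-K_S\cdot E_0=3-2=1$, so $E_0$ is a $(-1)$-class and is therefore represented by a $(-1)$-curve. Hence $D\sim -K_S+E_0$. Contracting $E_0$ via $\pi\colon S\to S'$ gives a cubic surface with $-K_S=\pi^*(-K_{S'})-E_0$, so $D\sim\pi^*(-K_{S'})$, as required.

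The main obstacle is the $D^2=1$ case, specifically identifying the class with $\pi^*\ell$ for an honest contraction. The cleanest route I would try first is to note that $D-E$ for suitable $E$ gives conic classes and to invoke transitivity of $W(E_7)$ on classes with $D^2=1$ and $D\cdot K_S=-3$. Failing that, I would cite \cite{dandapat2026classification} directly, since the lemma is stated there.
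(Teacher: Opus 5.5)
The paper gives no proof of this lemma; it recalls it from \cite{dandapat2026classification}. So your fallback of citing that paper coincides with what the paper does, and the rest of your proposal goes further. Your constraints $d-2\le D^2\le d^2/2$ and $D^2\equiv d \pmod 2$ correctly settle (1), (2), and the case $D^2=3$ of (3). One slip: for $D^2=3$ one has $p_a(D)=1$, not $0$, since $D$ is the pullback of an elliptic anticanonical curve of $S'$. You never use it, so it is harmless.

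The case $d=3$, $D^2=1$ is not proved, however. You assert without argument that the $(-1)$-curves orthogonal to $D$ are exactly seven and pairwise disjoint. The appeal to $W(E_7)$ is circular as it stands. Simple transitivity on exceptional configurations shows that every class in the $W(E_7)$-orbit of $H$ is of the form $\pi^*\ell$. It does not show that every class with $D^2=1$, $-K_S\cdot D=3$ lies in that orbit, and that is precisely the claim. You could close it by a count: the orbit has $|W(E_7)|/|S_7|=576$ elements, and you would then need to check by enumeration that there are no other such classes. You do not do this.

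The missing idea is a short geometric argument.
\begin{itemize}
\item Since $p_a(D)=0$, you have $D\cong\PP^1$ with $\mathcal{O}_D(D)\cong\mathcal{O}_{\PP^1}(1)$.
\item From $0\to\mathcal{O}_S\to\mathcal{O}_S(D)\to\mathcal{O}_D(D)\to 0$ and $h^1(\mathcal{O}_S)=0$, you get $h^0(\mathcal{O}_S(D))=3$ and surjectivity of restriction to $D$. Hence $|D|$ has no base points on $D$, and therefore none at all.
\item The resulting morphism $\pi\colon S\to\PP^2$ satisfies $\pi^*\mathcal{O}(1)=\mathcal{O}_S(D)$ with $D^2=1$. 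So it is birational, and $D=\pi^*\ell$ for a line $\ell$.
\item A birational morphism of smooth projective surfaces is a composition of point blow-ups. Blowing up an infinitely near point would produce an irreducible curve $C$ with $C^2\le -2$. This is impossible on a del Pezzo surface, since adjunction would give $2p_a(C)-2\le -3$.
\item So $\pi$ is the blow-up of seven distinct points, as required.
\end{itemize}
The same restriction sequence gives $h^0(\mathcal{O}_S(D))=2$ in the conic case. This makes your step from base point freeness to a morphism $S\to\PP^1$ immediate.
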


For later use we record the intersection numbers with $(-1)$-curves.  Let
$D'$ be an irreducible curve.
\begin{itemize}
    \item If $D'\in\mathcal L$, then $D'^2=-1$, $D'\cdot E=0$ for $27$ curves,
        $D'\cdot E=1$ for $27$ curves, and $D'\cdot E=2$ for the unique curve
        $E=\overline{D'}$.
    \item If $D'$ is a fibre of a conic fibration, then $D'\cdot E=0$ for $12$
        curves, $D'\cdot E=1$ for $32$ curves and $D'\cdot E=2$ for $12$ curves.
    \item If $D'\in|-K_S|$, then $D'\cdot E=1$ for every $E\in\mathcal L$.
    \item If $D'=\pi^{*}\ell$ is the pullback of a line from $\mathbb P^2$, then
        $D'\cdot E=0,1,2,3$ for respectively $7,21,21,7$ curves.
    \item If $D'=\pi^{*}(-K_{S'})$ is the pullback of an anticanonical divisor from a
        smooth cubic surface, contracting $E_0\in\mathcal L$, then
        $D'\cdot E_0=0$, $D'\cdot E=1$ for the $27$ curves disjoint from $E_0$,
        $D'\cdot E=2$ for the $27$ curves meeting $E_0$ once, and $D'\cdot E=3$ for
        the unique curve $\overline{E_0}$.
\end{itemize}

\begin{remark}\label{rem:deg4}
    Irreducible curves of anticanonical degree $4$ (pullbacks of $(1,1)$ from
    $\mathbb P^1\times\mathbb P^1$, pullbacks of anticanonical divisor from a quartic del Pezzo
    surface, curves $\sim -K_S+F$ with $F$ a conic, and members of $|-2K_S|$) cannot
    occur as components of a \emph{reducible} boundary, by Lemma \ref{lem:degree}.
    Of these only a member $B\in|-2K_S|$ occurs at all: $\Delta_\epsilon=\half B$
    gives $\Delta_\epsilon\sim -K_S$, and this boundary is irreducible.  Each of the
    other three classes $D$ satisfies $D\cdot E\ge 3$ for at least two $E\in\mathcal L$,
    so $\half D\cdot E\ge\frac32>1$ and $D$ never occurs.
\end{remark}

\subsection{Classification}

\begin{theorem}\label{thm:main-deg2}
    Let $S$ be a smooth del Pezzo surface of degree $2$ and let
    $\Delta_\epsilon=\sum_{i=1}^n\epsilon_iD_i$ be a Campana boundary with
    $n\ge 2$.  Then $-(K_S+\Delta_\epsilon)$ is nef if and only if $\Delta_\epsilon$
    appears in the following list.
    \begin{enumerate}
    \item[\rm(A)] $\Delta_\epsilon=\epsilon_1E+\epsilon_2\overline E$ for some
    $E\in\mathcal L$, with
    \[
        2\epsilon_1-\epsilon_2\le 1
        \quad\text{and}\quad
        2\epsilon_2-\epsilon_1\le 1,
        \qquad\text{equivalently}\qquad
        \tfrac12\,\le\,\tfrac{m_1}{m_2}\,\le\,2 .
    \]
    \item[\rm(B)] $\Delta_\epsilon=\tfrac12(E_1+E_2)$ with $E_1,E_2\in\mathcal L$ and
    $E_1\cdot E_2=1$.
    \item[\rm(C)] $\Delta_\epsilon=\tfrac12 B$, where $B=D_1+\dots+D_n\in|-2K_S|$ is
    reduced with $n\ge2$ irreducible components.  Explicitly, $B$ is of exactly one
    of the following seven forms, indexed by the multiset $(d_1,\dots,d_n)$ (where $d_i$ is anticanonical degree of $D_i$, for all $i$):
    \begin{enumerate}
    \item[\rm(C1)] $(3,1)$: $B=D+E$ with $E\in\mathcal L$ and
        $D\sim -2K_S-E=-K_S+\overline E$, i.e. $D=\pi^{*}(-K_{S'})$ for the
        contraction $\pi\colon S\to S'$ of $\overline E$ onto a cubic surface.
    \item[\rm(C2)] $(2,2)$: $B=A_1+A_2$ with $A_1\neq A_2$ irreducible members of
        $|-K_S|$.
    \item[\rm(C3)] $(2,2)$: $B=F_1+F_2$ with $F_1,F_2$ irreducible conics such that
        $F_1+F_2\sim -2K_S$ (then $F_1\cdot F_2=4$).
    \item[\rm(C4)] $(2,1,1)$: $B=E_1+E_2+F$ with $E_1\cdot E_2=1$ and $F$ an
        irreducible conic in $|-2K_S-E_1-E_2|$.
    \item[\rm(C5)] $(2,1,1)$: $B=E+\overline E+A$ with $E\in\mathcal L$ and $A$ an
        irreducible member of $|-K_S|$.
    \item[\rm(C6)] $(1,1,1,1)$: $B=E_1+E_2+E_3+E_4$ with $E_i\cdot E_j=1$ for all
        $i<j$.
    \item[\rm(C7)] $(1,1,1,1)$: $B=E_1+\overline{E_1}+E_2+\overline{E_2}$ for two
        distinct Geiser pairs.
    \end{enumerate}
    \end{enumerate}
    All seven forms in {\rm(C)} occur on every smooth del Pezzo surface of degree $2$.
\end{theorem}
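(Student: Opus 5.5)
The proof will be a case analysis on the multiset of anticanonical degrees $(d_1,\dots,d_n)$. Lemma~\ref{lem:degree} allows only the multisets with $\sum_i d_i\le 4$ and $n\ge2$, namely
\[
(1,1),\ (2,1),\ (3,1),\ (2,2),\ (1,1,1),\ (2,1,1),\ (1,1,1,1).
\]
Nefness will be tested with Lemma~\ref{lem:nef-test}, i.e.\ $\Delta_\epsilon\cdot E\le1$ for every $E\in\mathcal L$. Lemma~\ref{lem:components} and the table of intersection numbers identify the possible components. First we split according to whether $\sum_i\epsilon_i d_i=2$ or $<2$. In the equality case Lemma~\ref{lem:degree} gives $\Delta_\epsilon\sim -K_S$, so $\Delta_\epsilon\cdot E=1$ for all $E$ and nefness is automatic.

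\textbf{Equality case.} Here all $\epsilon_i=\tfrac12$ and $\sum d_i=4$, except possibly in type $(1,1)$. Indeed, $\sum\epsilon_id_i=2$ with $\epsilon_i\ge\tfrac12$ and $\sum d_i\le4$ forces $\epsilon_i=\tfrac12$ unless $\sum d_i<4$. For $\sum d_i\le3$ one would need some $\epsilon_i>\tfrac12$, and each such subcase is killed below. Hence $\Delta_\epsilon=\tfrac12B$ with $B\sim-2K_S$ reduced, and we enumerate the decompositions of $-2K_S$ into irreducible classes of the allowed degrees, using Lemma~\ref{lem:geiser}:
\begin{itemize}
\item $(3,1)$: the component $E$ forces $D\sim-K_S+\overline E$, and Lemma~\ref{lem:components}(3) identifies $D$; this is (C1).
\item $(2,2)$: either two anticanonical curves, (C2); or two conics with $F_1+F_2\sim-2K_S$, (C3). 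A mixed conic plus anticanonical curve would force $F\sim-K_S$, which is impossible since $F^2=0\neq2$.
\item $(2,1,1)$: if $E_1+E_2$ is a Geiser pair, the remaining component lies in $|-K_S|$, giving (C5). Otherwise $E_1\cdot E_2\in\{0,1\}$ and the remaining class $-2K_S-E_1-E_2$ has square $0$, so it is a conic, giving (C4). If $E_1\cdot E_2=0$ the square is $-2\cdot$ wait: $(-2K_S-E_1-E_2)^2=8-4-4+(-2+2E_1\cdot E_2)=-2+2E_1\cdot E_2$, so only $E_1\cdot E_2=1$ survives.
\item $(1,1,1,1)$: we need $(\sum E_i)^2=8$, i.e.\ $\sum_{i<j}E_i\cdot E_j=6$ with each term in $\{0,1,2\}$. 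Using Lemma~\ref{lem:geiser}(3) to relate the pairings, the possibilities reduce to all pairings equal to $1$, which is (C6), or two Geiser pairs, which is (C7).
\end{itemize}
Existence on every surface comes from explicit choices in the blow-up model:
\begin{itemize}
\item (C6): $E_1,E_2,E_3,E_4$ given by $H-E_1-E_2$, $H-E_1-E_3$, $H-E_1-E_4$ and $E_1$-type configurations.
\item (C2) and (C5): general members of $|-K_S|$, which is base-point free of dimension $2$ and whose general member is smooth.
\item (C1), (C3), (C4): general members of the corresponding base-point-free pencils or nets, by Bertini.
\end{itemize}
SNC-ness can then be checked by genericity.

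\textbf{Strict inequality case.} This is where (A) and (B) arise, and the main tool is Lemma~\ref{lem:key}. If some component is a $(-1)$-curve $E$ with $\overline E$ not a component, then $\epsilon_E=\tfrac12$ and every other component $D$ has $D\cdot\overline E=0$.
\begin{itemize}
\item $(1,1)$: either $\{E,\overline E\}$, where testing $\overline E$ and $E$ gives exactly the two inequalities of (A), and other $(-1)$-curves $E'$ give $\epsilon_1E\cdot E'+\epsilon_2(1-E\cdot E')\le1$ automatically. Or two non-conjugate curves, where Lemma~\ref{lem:key} forces both coefficients to be $\tfrac12$ and $E_2\cdot\overline{E_1}=0$, i.e.\ $E_1\cdot E_2=1$, giving (B).
\item Degree-$2$ components: a conic $F$ has $F\cdot E=2$ for $12$ curves $E$, so $\epsilon_F\cdot2\le1$ forces $\epsilon_F=\tfrac12$, and the other components must be disjoint from all $12$ of these curves. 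An anticanonical component $A$ contributes $\epsilon_A>\tfrac12$ or $\tfrac12$ to every $E$.
\end{itemize}
Running these constraints through types $(2,1)$ and $(1,1,1)$, and $(3,1)$, $(2,2)$ and so on with $\sum\epsilon_id_i<2$, we aim to show each yields either a contradiction or forces $\sum\epsilon_id_i=2$. For $(1,1,1)$, a third curve must satisfy Lemma~\ref{lem:key} against the non-conjugate ones, and then some $(-1)$-curve meets all three, since three pairwise-meeting or conjugate-containing triples are tested against a suitable $E'$ found via the $E_7$ configuration. This would give $\Delta_\epsilon\cdot E'\ge\tfrac32$.

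I expect the main obstacle to be this last step: producing, for each exotic configuration with coefficients above $\tfrac12$ (e.g.\ $E+\overline E$ plus an anticanonical curve, or conics with a $(-1)$-curve), an explicit $(-1)$-curve witnessing $\Delta_\epsilon\cdot E'>1$. I would first try to handle it uniformly by averaging: summing $\Delta_\epsilon\cdot E$ over a Geiser pair gives $\Delta_\epsilon\cdot(-K_S)=\sum\epsilon_id_i$. This bounds the average, and asymmetric intersection counts from the table should then force some individual value to exceed $1$. Where averaging fails, I would fall back on explicit blow-up coordinates.
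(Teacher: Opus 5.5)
Your treatment of $\sum_i d_i=2$ (cases (A), (B)) and of $\sum_i d_i=4$ (the enumeration (C1)--(C7)) matches the paper's Steps 1 and 3. The proof has a genuine gap at $\sum_i d_i=3$, however: the shapes $(2,1)$ and $(1,1,1)$ are never actually excluded.

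Your equality/strict split makes this circular. The equality case defers $\sum_i d_i\le 3$ to ``below'', and below you allow the outcome ``forces $\sum_i\epsilon_id_i=2$''. Lemma~\ref{lem:key} needs no such split, so these shapes should simply be treated for all coefficients.

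Two subcases are immediate from Lemma~\ref{lem:key}:
$A+E$ with $A\in|-K_S|$, since $A\cdot\overline E=1\neq 0$;
$E_1+\overline{E_1}+E_3$, since $(E_1+\overline{E_1})\cdot\overline{E_3}=1$.
The other two are $\tfrac12F+\tfrac12E$ with $F\cdot\overline E=0$, and the triangle $\tfrac12(E_1+E_2+E_3)$ with all $E_i\cdot E_j=1$ (coefficients $\tfrac12$, which you correctly force). Both pass every test on the components and their Geiser conjugates. In the triangle, $\Delta_\epsilon\cdot E_i=\tfrac12$ and $\Delta_\epsilon\cdot\overline{E_i}=1$; in the conic case, $\Delta_\epsilon\cdot E=\tfrac12$ and $\Delta_\epsilon\cdot\overline E=1$. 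So averaging over those Geiser pairs cannot produce a violation. A specific witness must be constructed, and your proposal does not construct one.

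The missing ideas are as follows.

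For the triangle, the class $E_0:=E_1+E_2+E_3+K_S$ satisfies $E_0^2=-1$ and $-K_S\cdot E_0=1$, so it is a $(-1)$-curve. It satisfies $E_i\cdot E_0=0$, hence $\overline{E_0}$ meets each $E_i$ once and $\Delta_\epsilon\cdot\overline{E_0}=\tfrac32$. Your averaging idea works once you have this $E_0$, because $\Delta_\epsilon\cdot E_0=0$.

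For the conic, note that $F\cdot\overline E=E\cdot F'$ with $F'=-2K_S-F$ a conic class. So Lemma~\ref{lem:key} places $E$ in a singular fibre $F'\sim E+E''$ of $|F'|$. Then $E''$ is not a component, $E\cdot E''=1$, and $F\cdot E''=2-F'\cdot E''=2$. This gives $\Delta_\epsilon\cdot E''\ge 2\epsilon_F+\tfrac12\ge\tfrac32$. The paper reaches the same $E''$ via the nef class $F'$.

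Two smaller points:
\begin{enumerate}
\item Your existence example for (C6) is wrong. The curves $H-E_1-E_2$, $H-E_1-E_3$, $H-E_1-E_4$ are pairwise disjoint, since lines through a common blown-up point are separated on the blow-up. Instead, for any three pairwise meeting $(-1)$-curves, the curve $\overline{E_0}$ above completes a (C6) configuration. For example, start from the exceptional curve $E_7$ together with $H-E_6-E_7$ and $2H-E_1-E_2-E_3-E_4-E_7$ in blow-up coordinates.
\item For (B) you still need the converse check $(E_1+E_2)\cdot E\le 2$ for all $E\in\mathcal L$. This follows from Lemma~\ref{lem:geiser}, since $E_2\cdot\overline{E_1}=0$.
\end{enumerate}
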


\begin{proof}
    By Lemma \ref{lem:degree} we have $\sum_i d_i\le 4$ and $d_i\le 3$ for all $i$,
    so by Lemma \ref{lem:components} each component is a $(-1)$-curve, a conic, a
    member of $|-K_S|$, a pullback $\pi^*\ell$ of a line, or a pullback
    $\pi^*(-K_{S'})$ from a cubic surface.  We argue according to $\sum_id_i\in\{2,3,4\}$.

    \medskip
    \noindent\textbf{Step 1: $\sum_i d_i=2$.}
    Then $n=2$ and $\Delta_\epsilon=\epsilon_1E_1+\epsilon_2E_2$ with
    $E_1,E_2\in\mathcal L$ distinct.

    Suppose first $E_2=\overline{E_1}$.  For $E\in\mathcal L\setminus\{E_1,E_2\}$ we
    have $E\cdot E_1+E\cdot E_2=E\cdot(-K_S)=1$ with both summands $\ge0$, so
    $\Delta_\epsilon\cdot E\in\{\epsilon_1,\epsilon_2\}\subset[\tfrac12,1)$.  The only
    remaining conditions are
    \[
    \Delta_\epsilon\cdot E_1=-\epsilon_1+2\epsilon_2\le 1,
    \qquad
    \Delta_\epsilon\cdot E_2=-\epsilon_2+2\epsilon_1\le 1 .
    \]
    Substituting $\epsilon_i=1-1/m_i$ turns these into $m_2\le 2m_1$ and
    $m_1\le 2m_2$.  This is case (A).

    Suppose now $E_2\neq\overline{E_1}$, then neither $\overline{E_1}$ nor
    $\overline{E_2}$ is a component.  Lemma \ref{lem:key} applied to $E_1$ gives
    $\epsilon_1=\half$ and $E_2\cdot\overline{E_1}=0$, i.e. $E_1\cdot E_2=1$ by
    Lemma \ref{lem:geiser}(3), applied to $E_2$ it gives $\epsilon_2=\half$.
    Conversely, assume $E_1\cdot E_2=1$ and put $\Delta_\epsilon=\half(E_1+E_2)$.
    For $E\in\mathcal L$ we claim $(E_1+E_2)\cdot E\le2$.  Indeed, if $E=E_i$ then
    $(E_1+E_2)\cdot E=-1+1=0$, if $E=\overline{E_1}$ then
    $(E_1+E_2)\cdot E=2+(1-E_1\cdot E_2)=2$, and symmetrically for
    $E=\overline{E_2}$, in all other cases $E_i\cdot E\le 1$ for $i=1,2$ by
    Lemma \ref{lem:geiser}(2).  Hence $\Delta_\epsilon\cdot E\le 1$ always and
    $-(K_S+\Delta_\epsilon)$ is nef.  This is case (B).

    \medskip
    \noindent\textbf{Step 2: $\sum_i d_i=3$.}
    We show this is impossible.  There are two forms: $(d_1,d_2)=(1,2)$ and
    $(d_1,d_2,d_3)=(1,1,1)$.

    \emph{Shape $(1,2)$.}  Write $\Delta_\epsilon=\epsilon_1E+\epsilon_2D$ with
    $E\in\mathcal L$ and $-K_S\cdot D=2$.  As $\overline E$ has degree $1\neq 2$ it
    is not a component, so Lemma \ref{lem:key} gives $\epsilon_1=\half$ and
    $D\cdot\overline E=0$.

    If $D\in|-K_S|$ then $D\cdot\overline E=1\neq0$, a contradiction.

    If $D=F$ is a conic, set $F':=-2K_S-F$.  Then $F'^2=8+4K_S\cdot F+F^2=0$ and
    $-K_S\cdot F'=2$, and $F'$ is nef: it is effective by Riemann-Roch, and
    $-K_S-\half F=\half F'$ is nef because $F\cdot E\le 2$ for all $E\in\mathcal L$.
    Now
    \[
    0\,\le\,-(K_S+\Delta_\epsilon)\cdot F'
    \,=\,\bigl(\tfrac12F'-\tfrac12E-(\epsilon_2-\tfrac12)F\bigr)\cdot F'
    \,=\,-\tfrac12\,E\cdot F'-4\bigl(\epsilon_2-\tfrac12\bigr),
    \]
    using $F\cdot F'=F\cdot(-2K_S-F)=4$.  Since $F'$ is nef and $\epsilon_2\ge\half$,
    both terms are $\le0$, so $\epsilon_2=\half$ and $E\cdot F'=0$.  Thus $E$ is
    contained in a fibre of the conic fibration $|F'|$, being a $(-1)$-curve it is a
    component of a reducible fibre, say $F'\sim E+E''$ with $E''\in\mathcal L$,
    $E\cdot E''=1$ and $F'\cdot E''=0$.  But then
    \[
    -(K_S+\Delta_\epsilon)\cdot E''
    =\bigl(\tfrac12F'-\tfrac12E\bigr)\cdot E''
    =0-\tfrac12<0,
    \]
    contradicting nefness.

    \emph{Shape $(1,1,1)$.}  Write $\Delta_\epsilon=\sum_{i=1}^{3}\epsilon_iE_i$ with
    $E_i\in\mathcal L$ pairwise distinct.

    Assume first that $E_2=\overline{E_1}$.  Then $\overline{E_3}\notin\{E_1,E_2,E_3\}$:
    indeed $\overline{E_3}=E_1$ would give $E_3=\overline{E_1}=E_2$, and
    $\overline{E_3}=E_2$ would give $E_3=E_1$.  Lemma \ref{lem:key} applied to $E_3$
    yields $E_1\cdot\overline{E_3}=E_2\cdot\overline{E_3}=0$, whereas
    $(E_1+E_2)\cdot\overline{E_3}=-K_S\cdot\overline{E_3}=1$, a contradiction.

    Hence no $\overline{E_i}$ is a component.  Lemma \ref{lem:key} applied to each
    $E_i$ gives $\epsilon_i=\half$ and $E_j\cdot\overline{E_i}=0$ for $j\ne i$,
    i.e. $E_i\cdot E_j=1$ for all $i\ne j$.  Put
    $E_0:=E_1+E_2+E_3+K_S$.  Then
    \[
    E_0^2=3+2K_S\cdot(E_1+E_2+E_3)+K_S^2=3-6+2=-1,
    \qquad -K_S\cdot E_0=3-2=1,
    \]
    so $E_0\in\mathcal L$.  Moreover $E_i\cdot E_0=E_i\cdot(E_1+E_2+E_3)+E_i\cdot K_S=0$,
    hence $E_i\cdot\overline{E_0}=1$ and in particular
    $\overline{E_0}\notin\{E_1,E_2,E_3\}$.  Finally
    \[
    \Delta_\epsilon\cdot\overline{E_0}
    =\tfrac12(-K_S+E_0)\cdot(-K_S-E_0)
    =\tfrac12\bigl(K_S^2-E_0^2\bigr)=\tfrac32>1,
    \]
    contradicting Lemma \ref{lem:nef-test}.

    \medskip
    \noindent\textbf{Step 3: $\sum_i d_i=4$.}
    Since $\epsilon_i\ge\half$ and $\sum_i\epsilon_id_i\le2$, all $\epsilon_i=\half$
    and $\sum_i\epsilon_id_i=2$.  By Lemma \ref{lem:degree},
    $\Delta_\epsilon\sim-K_S$ and $-(K_S+\Delta_\epsilon)\sim0$, conversely any
    reduced $B=\sum_iD_i\in|-2K_S|$ with $n\ge2$ components yields
    $\Delta_\epsilon=\half B\sim-K_S$, hence a nef (indeed trivial)
    $-(K_S+\Delta_\epsilon)$.  So it remains to enumerate the possible forms of $B$,
    according to the partition of $4=\sum_id_i$.

    \emph{$(3,1)$.}  Here $B=D+E$ with $E\in\mathcal L$ and $D\sim -2K_S-E$, so
    \[
    D^2=(-2K_S-E)^2=8+4K_S\cdot E+E^2=8-4-1=3 .
    \]
    By Lemma \ref{lem:components}(3), $D=\pi^{*}(-K_{S'})=-K_S+E_0$ with
    $E_0=D+K_S=-K_S-E=\overline E$.  This is (C1), note that the case $D^2=1$
    (pullback of a line) is excluded, which disposes of that class of components
    entirely.  Conversely, for any $E\in\mathcal L$ the linear system
    $|-K_S+\overline E|=|\pi^{*}(-K_{S'})|$ contains irreducible members (pull back a
    general member of $|-K_{S'}|$), so (C1) occurs.

    \emph{$(2,2)$.}  Here $D_2\sim-2K_S-D_1$ and $D_1^2\in\{0,2\}$ by
    Lemma \ref{lem:components}(2).  If $D_1^2=2$ then $D_1\sim-K_S$ and hence
    $D_2\sim-K_S$: this is (C2), which occurs because $|-K_S|$ is a net whose general
    member is irreducible.  If $D_1^2=0$ then
    $D_2^2=8+4K_S\cdot D_1+D_1^2=0$, so both are conics with
    $D_1+D_2\sim-2K_S$ and $D_1\cdot D_2=D_1\cdot(-2K_S-D_1)=4$: this is (C3), which
    occurs since for any conic class $F$ the class $-2K_S-F$ is again a conic class
    and both pencils have irreducible general members.  A mixed pair is impossible,
    since $D_1\sim-K_S$ forces $D_2\sim-K_S$.

    \emph{$(2,1,1)$.}  Here $B=E_1+E_2+D$ with $E_i\in\mathcal L$ and
    $D\sim-2K_S-E_1-E_2$, so
    \[
    D^2=8+4K_S\cdot(E_1+E_2)+(E_1+E_2)^2=-2+2\,E_1\cdot E_2 .
    \]
    As $D$ is irreducible of degree $2$ we need $D^2\in\{0,2\}$, i.e.
    $E_1\cdot E_2\in\{1,2\}$, the value $E_1\cdot E_2=0$ is excluded.  If
    $E_1\cdot E_2=1$ then $D$ is a conic, giving (C4), if $E_1\cdot E_2=2$, i.e.
    $E_2=\overline{E_1}$, then $D\sim-2K_S-(E_1+\overline{E_1})=-K_S$, giving (C5).
    Both occur: for (C4) take any two $(-1)$-curves meeting once and a general member
    of the conic pencil $|-2K_S-E_1-E_2|$, for (C5) take any Geiser pair and a general
    member of $|-K_S|$.

    \emph{$(1,1,1,1)$.}  Here $B=E_1+E_2+E_3+E_4$ with $\sum_iE_i\sim-2K_S$.
    Intersecting with $E_i$ gives, for each $i$,
    \[
    \sum_{j\neq i}E_i\cdot E_j=E_i\cdot(-2K_S)-E_i^2=2+1=3 ,
    \]
    a sum of three elements of $\{0,1,2\}$.  Hence for each $i$ the multiset
    $\{E_i\cdot E_j\}_{j\ne i}$ is either $\{1,1,1\}$ or $\{0,1,2\}$.
    If some $E_i\cdot E_j=2$, say $E_2=\overline{E_1}$, then
    $E_1+E_2\sim-K_S$ forces $E_3+E_4\sim-K_S$, i.e. $E_4=\overline{E_3}$: this is
    (C7).  Otherwise all $E_i\cdot E_j=1$: this is (C6).

    Conversely, two distinct Geiser pairs always sum to $-2K_S$, so (C7) occurs.  For
    (C6), if $E_1,E_2,E_3\in\mathcal L$ pairwise meet in one point, then as in Step 2
    the class $E_0=E_1+E_2+E_3+K_S$ lies in $\mathcal L$, and
    $E_4:=\overline{E_0}$ satisfies $E_i\cdot E_4=1$ for $i=1,2,3$ and
    $\sum_{i=1}^4E_i=(-K_S+E_0)+(-K_S-E_0)=-2K_S$.  Such triples exist: writing
    $S$ as the blow-up of $\mathbb P^2$ at $p_1,\dots,p_7$ in general position with
    $\mathrm{Pic}(S)=\langle L,E_1,\dots,E_7\rangle$, the curves
    \[
    E_7,\qquad L-E_6-E_7,\qquad 2L-E_1-E_2-E_3-E_4-E_7,\qquad
    3L-E_1-\dots-E_7-E_5
    \]
    are pairwise meeting $(-1)$-curves summing to $6L-2\sum_{i}E_i=-2K_S$.
    Alternatively, (C6) is exactly the configuration obtained from (C3) by replacing
    both conics by reducible members of their pencils, and (C4) by replacing one of
    them.  Similarly (C5) and (C7) arise from (C2) by replacing one, resp. both,
    anticanonical members by a Geiser pair.

    This exhausts all cases and proves the theorem.
\end{proof}

\subsection{Positivity in each case}

\begin{corollary}\label{cor:positivity}
In the notation of Theorem \ref{thm:main-deg2}:
\begin{enumerate}
\item In case {\rm(A)},
  \[
    -(K_S+\Delta_\epsilon)=\tfrac1{m_1}E+\tfrac1{m_2}\overline E
  \]
  is big and nef, with
  $\bigl(-(K_S+\Delta_\epsilon)\bigr)^2=\frac{4}{m_1m_2}-\frac1{m_1^2}-\frac1{m_2^2}>0$,
  it is ample if and only if $m_1<2m_2$ and $m_2<2m_1$.  Moreover
  $\lceil\Delta_\epsilon\rceil=E+\overline E\sim-K_S$, so
  $-(K_S+\lceil\Delta_\epsilon\rceil)$ is trivial.
\item In case {\rm(B)},
  $-(K_S+\Delta_\epsilon)=\half\bigl(-2K_S-E_1-E_2\bigr)$ is nef with
  self-intersection $0$, it is neither big nor ample.  Here
  $-(K_S+\lceil\Delta_\epsilon\rceil)=-K_S-E_1-E_2$ has anticanonical degree $0$
  and is nonzero, hence is not pseudo-effective.
\item In case {\rm(C)}, $-(K_S+\Delta_\epsilon)$ is trivial and
  $-(K_S+\lceil\Delta_\epsilon\rceil)=K_S$ is antiample.
\end{enumerate}
In particular, $-(K_S+\Delta_\epsilon)$ is ample for a reducible boundary exactly
in case {\rm(A)} with $m_1<2m_2$ and $m_2<2m_1$, and then
$-(K_S+\lceil\Delta_\epsilon\rceil)$ is trivial.
\end{corollary}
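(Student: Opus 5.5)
The plan is to treat the three cases of Theorem~\ref{thm:main-deg2} separately. In each case we first compute $N:=-(K_S+\Delta_\epsilon)$ explicitly, using $-K_S\sim E+\overline E$ (Lemma~\ref{lem:geiser}). Ampleness is then decided by Kleiman's criterion in the form of Lemma~\ref{lem:nef-test}: $N$ is ample if and only if $N\cdot E'>0$ for every $E'\in\mathcal L$. This works because the Mori cone is closed and finitely generated by $\mathcal L$, so strict positivity on the generators suffices.

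\textbf{Case (A).} Writing $1-\epsilon_i=\frac1{m_i}$ gives
\[
N=(E+\overline E)-\epsilon_1E-\epsilon_2\overline E=\tfrac1{m_1}E+\tfrac1{m_2}\overline E.
\]
Using $E^2=\overline E^2=-1$ and $E\cdot\overline E=2$, we get $N^2=\frac4{m_1m_2}-\frac1{m_1^2}-\frac1{m_2^2}$. The nef constraint $\frac12\le m_1/m_2\le 2$ shows $N^2>0$: it equals $\frac{1}{m_1^2m_2^2}\bigl(4m_1m_2-m_1^2-m_2^2\bigr)$, and with $t=m_1/m_2\in[\frac12,2]$ the quantity $4t-t^2-1$ is positive there, since its roots are $2\pm\sqrt3$. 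Hence $N$ is nef and big.

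For ampleness, recall from Step~1 of the proof of the theorem that every $E'\in\mathcal L\setminus\{E,\overline E\}$ meets exactly one of $E,\overline E$, and meets it once. So $N\cdot E'\in\{\frac1{m_1},\frac1{m_2}\}$, which is positive. The only remaining tests are
\[
N\cdot E=\tfrac2{m_2}-\tfrac1{m_1}>0\iff m_1<2m_2,\qquad N\cdot\overline E=\tfrac2{m_1}-\tfrac1{m_2}>0\iff m_2<2m_1.
\]
Finally $\lceil\Delta_\epsilon\rceil=E+\overline E\sim -K_S$, which gives the claim about $-(K_S+\lceil\Delta_\epsilon\rceil)$.

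\textbf{Case (B).} Here $N=-K_S-\frac12(E_1+E_2)=\frac12(-2K_S-E_1-E_2)$. The class $F=-2K_S-E_1-E_2$ is the conic class that appears in (C4): expanding gives $F^2=8-4-2+2E_1\cdot E_2=0$, and $-K_S\cdot F=2$. So $N^2=0$, which means $N$ is not big and hence not ample. Nefness is already part of the theorem.

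For the ceiling, $\lceil\Delta_\epsilon\rceil=E_1+E_2$ and $M:=-K_S-E_1-E_2$ satisfies $-K_S\cdot M=2-2=0$. The class $M$ is nonzero, since otherwise $E_1+E_2\sim -K_S$ and $E_2=\overline{E_1}$, contradicting $E_1\cdot E_2=1$. A nonzero effective (or pseudo-effective) class has positive degree against the ample class $-K_S$, so $M$ is not pseudo-effective.

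\textbf{Case (C).} By Lemma~\ref{lem:degree}, $N\sim0$. Moreover $\lceil\Delta_\epsilon\rceil=B\sim-2K_S$, so $-(K_S+B)\sim K_S$, which is antiample.

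Putting the three cases together gives the final ``in particular''. The only step that is not purely formal is the description of $N\cdot E'$ for $E'\neq E,\overline E$ in case (A), and that was already established in the proof of the theorem. The positivity of $N^2$ is a one-line quadratic check, so I do not expect a genuine obstacle.
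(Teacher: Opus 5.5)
Your proof is correct and follows essentially the same route as the paper: the explicit form of $N$, testing on $(-1)$-curves, the quadratic $-t^2+4t-1$ with roots $2\pm\sqrt3$, $F'^2=0$ in case (B), and $\Delta_\epsilon\sim-K_S$ in case (C); you apply Kleiman's criterion directly on the polyhedral Mori cone where the paper cites Nakai--Moishezon, which amounts to the same check. There are two harmless slips. First, $N\cdot E=\frac{2}{m_2}-\frac{1}{m_1}>0$ is equivalent to $m_2<2m_1$ rather than $m_1<2m_2$, and vice versa for $\overline E$; this does not affect the conclusion, since the conjunction of the two conditions is symmetric. Second, the cross term in $F^2$ is $4K_S\cdot(E_1+E_2)=-8$, so the expansion should read $8-8-2+2E_1\cdot E_2=0$.
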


\begin{proof}
(1) For $E'\in\mathcal L\setminus\{E,\overline E\}$ we have
$E\cdot E'+\overline E\cdot E'=1$ with both terms $\ge0$, so
$-(K_S+\Delta_\epsilon)\cdot E'\in\{1/m_1,1/m_2\}>0$, and
$-(K_S+\Delta_\epsilon)\cdot E=-\frac1{m_1}+\frac2{m_2}$,
$-(K_S+\Delta_\epsilon)\cdot\overline E=-\frac1{m_2}+\frac2{m_1}$.
Writing $a=1/m_1$, $b=1/m_2$, the self-intersection is $-a^2+4ab-b^2$, which is
positive exactly when $2-\sqrt3<a/b<2+\sqrt3$. The nefness constraints
$\half\le m_1/m_2\le2$ give $a/b\in[\half,2]$, so bigness always holds and
by Nakai-Moishezon criterion we get the ampleness.

(2) $-(K_S+\Delta_\epsilon)=\half F'$ with $F'=-2K_S-E_1-E_2$ a conic class, so
$F'^2=0$.  A nonzero pseudo-effective class has positive degree against the ample
class $-K_S$, whence the last assertion.

(3) Immediate from $\Delta_\epsilon\sim-K_S$ and
$\lceil\Delta_\epsilon\rceil=B\sim-2K_S$.
\end{proof}

\begin{remark}
    Cases {\rm(A)} and {\rm(B)} are precisely the reducible boundaries with
    $\deg\Delta_\epsilon<2$, every other reducible boundary is half of a reduced
    member of $|-2K_S|$.  This is the exact analogue, for $K_S^2=2$, of
    Proposition \ref{thm:deg1} for $K_S^2=1$, where by the inequality $\deg\Delta_\epsilon\le1$
    we have $\half(D_1+D_2)$ with $D_i\in|-K_S|$.
\end{remark}

\section{Surfaces of degree 3}

\begin{convention}\label{conv:setup}
Throughout, $S$ is a smooth cubic surface, i.e.\ a del Pezzo surface of degree
$K_S^2=3$. Since $S$ is rational, $\Pic S\cong\Z^7$ and numerical and
linear equivalence coincide on $S$, we write $\sim$ for both, and $\sq$ for
$\Q$-divisors. For a divisor (class) $D$ we put $\deg D:=-K_S\cdot D$, every curve has
positive degree because $-K_S$ is ample. We consider boundaries
\[
  \Delta=\sum_{i=1}^{r}\eps_iD_i,\qquad
  \eps_i=1-\frac{1}{m_i}\in\cE:=\Bigl\{1-\frac1m : m\in\Z_{\ge2}\Bigr\}\cup\{1\},
\]
where $D_1,\dots,D_r$ are pairwise distinct irreducible curves and
$m_i\in\Z_{\ge2}\cup\{\infty\}$. Thus $\half\le\eps_i\le1$, and $\half$ is the only
element of $\cE$ smaller than $\tfrac23$. We put $\deg\Delta:=\sum_i\eps_i\deg D_i$.
We classify the boundaries with $r\ge2$ for which $-(K_S+\Delta)$ is nef, for irreducible
boundaries see \cite{dandapat2026classification}.
\end{convention}

\begin{notation}[Lines]\label{not:lines}
$\cL$ is the set of the $27$ lines on $S$, i.e.\ of the $(-1)$-curves. Two distinct
lines $E\neq E'$ satisfy $E\cdot E'\in\{0,1\}$, accordingly they are
\emph{disjoint}, or they \emph{meet}. For $E\in\cL$ let $\cN(E):=\{F\in\cL : E\cdot F=1\}$ be the set of
\emph{neighbours} of $E$. We say a line meeting two given lines is a \emph{common neighbour} of
them. Also, we say that three lines with $L_1+L_2+L_3\sim-K_S$ form a \emph{tritangent triangle}. If $E,E'$ meet, the plane spanned by $E\cup E'$ cuts out
$E+E'+E''$ for a line $E''$, the \emph{third line} of $E,E'$. Thus $E+E'+E''\sim-K_S$.
\end{notation}

\begin{notation}\label{not:model}
A standard model of $S$ is a birational morphism $\pi\colon S\to\PP^2$. It is the
blow-up of six points $p_1,\dots,p_6$ in general position. We denote
$H:=\pi^*\mathcal{O}_{\PP^2}(1)$ and denote by $E_1,\dots,E_6$ the exceptional curves, so
that $-K_S \sim 3H-\sum_iE_i$. The $27$ lines are
\[
  E_i,\qquad F_{ij}\sim H-E_i-E_j\ \ (1\le i<j\le 6),\qquad
  G_j\sim 2H-\textstyle\sum_{i\neq j}E_i ,
\]
the strict transforms of the points, of the lines $\overline{p_ip_j}$ and of the conics
through $\{p_i\}_{i\ne j}$. For distinct lines we have
\begin{gather*}
  E_i\cdot E_j=G_i\cdot G_j=0,\qquad E_i\cdot F_{jk}=1\iff i\in\{j,k\},\qquad
  E_i\cdot G_j=1\iff i\neq j,\\
  F_{ij}\cdot F_{kl}=1\iff \{i,j\}\cap\{k,l\}=\emptyset,\qquad
  F_{ij}\cdot G_k=1\iff k\in\{i,j\},
\end{gather*}
and $H\cdot E_i=0$, $H\cdot F_{ij}=1$, $H\cdot G_j=2$.
\end{notation}

The following lemma is known to the experts, we state it for completion.

\begin{lemma}\label{lem:nef}
The cone $\NE(S)$ is spanned by the classes of the $27$ lines. Hence a $\Q$-divisor
$N$ on $S$ is nef \textup{(}resp.\ ample\textup{)} if and only if $N\cdot E\ge0$
\textup{(}resp.\ $N\cdot E>0$\textup{)} for every $E\in\cL$. In particular $-(K_S+\Delta)$
is nef \textup{(}resp.\ ample\textup{)} if and only if $\Delta\cdot E\le1$
\textup{(}resp.\ $\Delta\cdot E<1$\textup{)} for every $E\in\cL$.
\end{lemma}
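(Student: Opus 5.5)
The argument has two parts: first show that the $27$ lines generate the Mori cone, then apply Kleiman's criterion together with $-K_S\cdot E=1$.

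\emph{Step 1: the lines generate $\NE(S)$.} We show that every irreducible curve $C\subset S$ is numerically a nonnegative combination of lines. If $C^2<0$, adjunction gives $C^2+K_S\cdot C=2p_a(C)-2\ge -2$, while $-K_S\cdot C\ge1$ by ampleness. Hence $C^2=-1$ and $K_S\cdot C=-1$, so $C$ is a line. If $C^2\ge0$, write $D:=-K_S\sim L_1+L_2+L_3$ for a tritangent triangle of lines. Then $C\cdot L_j\ge0$ for each $j$ (as $C$ is not a line here), and we want to decompose $C$. The cleanest route is to treat the rational polyhedral cone $P$ spanned by the lines. Its dual $P^\vee$ consists of classes meeting every line nonnegatively, and we check that $P^\vee\subseteq\Nef(S)$. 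Let $N\in P^\vee$ and let $C$ be an irreducible curve. If $C$ is a line then $N\cdot C\ge0$ by definition. If $C^2\ge0$, then $C$ is nef, hence lies in the closure of the ample cone. We therefore need $N\cdot C\ge0$ for nef $C$, which is equivalent to showing that every nef class lies in $P$.

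To prove that nef classes lie in $P$, we use the standard model from Notation \ref{not:model}. The nef cone of a cubic surface is generated by $H$, by the conic classes $H-E_i$ and their images, and by the classes $2H-\sum_{i\le 4}E_i$, $-K_S$, and so on, up to the $W(E_6)$-symmetry. It suffices to express each of these generators as a nonnegative sum of lines. For example:
\begin{gather*}
H\sim F_{12}+E_1+E_2,\qquad H-E_1\sim F_{12}+E_2,\\
-K_S\sim E_1+G_1+F_{12}+\dots
\end{gather*}
(precisely, $-K_S\sim E_i+G_j+F_{ij}$). Alternatively, and more uniformly, we can cite the classical result quoted in \S\ref{subsec:problem}, namely \cite[Lemma~2.1]{dandapat2026classification}, that for $d\le7$ the cone of curves is generated by $(-1)$-curves. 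This is the path we would actually take, with the direct verification above given as a sanity check. The cone is closed because it is finitely generated.

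\emph{Step 2: Kleiman.} Since $\NE(S)$ is the finite rational polyhedral cone spanned by $\cL$, a class $N$ is nef if and only if $N\cdot E\ge0$ for all $E\in\cL$. Kleiman's criterion says that $N$ is ample if and only if $N$ is positive on $\NE(S)\setminus\{0\}$. Because every nonzero element of the cone is a nonnegative combination of lines with at least one positive coefficient, this is equivalent to $N\cdot E>0$ for all lines $E$. Finally, we take $N=-(K_S+\Delta)$ and use $-K_S\cdot E=1$, which turns the conditions into $\Delta\cdot E\le1$ and $\Delta\cdot E<1$ respectively.

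The only nontrivial point is Step 1, the generation of $\NE(S)$ by lines; we would rely on the cited lemma rather than reprove the cone theorem here. An alternative direct argument is the following. By the cone theorem, $\NE(S)=\NE_{K_S\ge0}+\sum\R_{\ge0}[\ell_i]$ with extremal rays spanned by curves satisfying $-K_S\cdot\ell\le3$. The part $\NE_{K_S\ge0}$ is zero because $-K_S$ is ample, and extremal curves on a surface with Picard number $7>2$ must have negative self-intersection (otherwise the contraction would be to a point or a curve, forcing $\rho\le2$). Such curves are lines by the adjunction computation above.
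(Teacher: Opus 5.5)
Your proof is correct and follows the paper's approach. The paper states this lemma without proof as known to experts; elsewhere (the introduction and the degree~$2$ and degree~$5$ sections) it uses the same two ingredients as you: $\NE(S)$ is generated by $(-1)$-curves for $K_S^2\le 7$, proved in degree~$5$ by exactly your cone-theorem argument with $\rho(S)\ge 3$, and then Kleiman's criterion with $-K_S\cdot E=1$. Only your nef-cone sketch is shaky: it lists the ample, non-extremal class $-K_S$ as a generator, stops at ``and so on'', and its first expression $E_1+G_1+F_{12}$ is not $-K_S$. The cone-theorem route makes that sketch unnecessary.
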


\begin{lemma}\label{lem:deg}
If $-(K_S+\Delta)$ is nef, then $\deg\Delta\le3$, and $\deg\Delta=3$ if and only if
$K_S+\Delta\sq0$. 
\end{lemma}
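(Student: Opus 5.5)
I would follow the same argument as in Lemma~\ref{lem:degree} for degree $2$, adapted to $K_S^2=3$. Put $N:=-(K_S+\Delta)$, assumed nef. Since $-K_S$ is ample and $N$ is nef, $N\cdot(-K_S)\ge0$. Expanding,
\[
N\cdot(-K_S)=K_S^2-(-K_S\cdot\Delta)=3-\deg\Delta ,
\]
so $\deg\Delta\le3$. Here $\deg\Delta=\sum_i\eps_i\deg D_i$ is computed by linearity, and $\eps_i=1$ is allowed in Convention~\ref{conv:setup}; nothing in this step depends on the coefficients being $<1$.

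For the equivalence, first note that if $K_S+\Delta\sq0$ then $N\sq0$, so $N\cdot(-K_S)=0$ and $\deg\Delta=3$. This direction is immediate.

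Conversely, suppose $\deg\Delta=3$, so that $N\cdot(-K_S)=0$.
\begin{itemize}
\item I first claim $N^2=0$. The class $N$ is nef, so $N^2\ge0$. If $N^2>0$, then $N$ is nef and big. A nef and big class has positive intersection with every ample class: for instance, by the Hodge index theorem $(N\cdot A)^2\ge N^2A^2>0$, and $N\cdot A\ge0$ by nefness. Applying this with $A=-K_S$ contradicts $N\cdot(-K_S)=0$.
\item With $N^2=0$ and $N\cdot(-K_S)=0$, the Hodge index theorem applies. It says the intersection form is negative definite on the orthogonal complement of the ample class $-K_S$ in $\Pic(S)\otimes\R$, and $N$ lies in that complement. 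Hence $N\equiv0$.
\item Finally, numerical and linear equivalence agree on the rational surface $S$ (Convention~\ref{conv:setup}). So some multiple of $N$ is linearly trivial, that is, $K_S+\Delta\sq0$.
\end{itemize}

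No step is a real obstacle. The only point needing care is the passage from $N^2=0$ together with $N\cdot(-K_S)=0$ to $N\equiv0$. I would state explicitly the negative definiteness on $(-K_S)^{\perp}$, which comes from the Hodge index theorem since the lattice has signature $(1,6)$.
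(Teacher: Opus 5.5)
Your proof is correct and follows the paper's argument: $N\cdot(-K_S)=3-\deg\Delta\ge0$, and when equality holds, nefness gives $N^2\ge0$ while the Hodge index theorem gives $N^2\le0$ with equality only for $N\equiv0$, so $N\sq0$. Your first bullet ruling out $N^2>0$ is redundant, since negative definiteness on $(-K_S)^{\perp}$ already gives $N^2\le0$ directly, but it does no harm.
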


\begin{proof}
Put $N:=-(K_S+\Delta)$. Since $N$ is nef and $-K_S$ is ample,
$0\le N\cdot(-K_S)=3-\deg\Delta$. If equality holds, then $N^2\ge0$ because $N$ is nef,
while $N^2\le0$ by the Hodge index theorem because $N\cdot(-K_S)=0$, with equality only if
$N\equiv0$. Hence $N\sq0$. The converse is clear.
\end{proof}

In particular, since $\eps_i\ge\half$, we always have $\sum_i\deg D_i\le 6$, and when
$\deg\Delta=3$ the nefness condition is equivalent to $\Delta\sq-K_S$.

\begin{lemma}\label{lem:local}
$-(K_S+\Delta)$ is nef if and only if for every line $E$:
\begin{enumerate}[leftmargin=3em]
\item[\textup{(N1)}] if $E\not\subset\Supp\Delta$, then
      $\sum_i\eps_i\,(D_i\cdot E)\le1$,
\item[\textup{(N2)}] if $E=D_j$, then $\sum_{i\neq j}\eps_i\,(D_i\cdot E)\le1+\eps_j$.
\end{enumerate}
Consequently, if $-(K_S+\Delta)$ is nef and $E\in\cL$ is not a component of $\Delta$, then:
\begin{enumerate}[label=\textup{(\alph*)}]
\item $\sum_iD_i\cdot E\le2$,
\item if $\sum_iD_i\cdot E=2$, every component meeting $E$ has coefficient $\half$,
\item if $\eps_j(D_j\cdot E)>\half$ for some $j$ \textup{(}e.g.\ if $D_j\cdot E\ge2$, or
      if $D_j\cdot E\ge1$ and $\eps_j\ge\frac23$\textup{)}, then $E$ meets no component
      other than $D_j$.
\end{enumerate}
\end{lemma}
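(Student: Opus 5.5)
The plan is to read off (N1) and (N2) directly from the nefness criterion of Lemma~\ref{lem:nef}, which says that $-(K_S+\Delta)$ is nef if and only if $\Delta\cdot E\le1$ for every $E\in\cL$. For a fixed line $E$ there are two cases.

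\emph{Case 1: $E$ is not a component of $\Delta$.} Then $\Delta\cdot E=\sum_i\eps_i(D_i\cdot E)$, and the condition $\Delta\cdot E\le1$ is exactly (N1).

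\emph{Case 2: $E=D_j$ for some $j$.} Using $E^2=-1$,
\[
\Delta\cdot E=-\eps_j+\sum_{i\ne j}\eps_i(D_i\cdot E),
\]
so $\Delta\cdot E\le 1$ becomes $\sum_{i\ne j}\eps_i(D_i\cdot E)\le 1+\eps_j$, which is (N2).

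Since every line falls into exactly one of these cases, the equivalence follows. No Hodge index or degree argument is needed for this part.

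For the consequences, fix a line $E$ that is not a component of $\Delta$. Every $D_i$ is an irreducible curve distinct from $E$, so $D_i\cdot E\ge0$. This nonnegativity is the one point that needs care: it is what lets us bound individual terms by the whole sum. Combining it with $\eps_i\ge\half$ and (N1),
\[
\tfrac12\sum_i D_i\cdot E\;\le\;\sum_i\eps_i(D_i\cdot E)\;\le\;1,
\]
which gives (a).

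For (b), suppose $\sum_i D_i\cdot E=2$. Then both inequalities in the display are equalities. Since each term $(\eps_i-\half)(D_i\cdot E)$ is nonnegative, each must vanish, so $\eps_i=\half$ whenever $D_i\cdot E>0$.

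For (c), suppose $\eps_j(D_j\cdot E)>\half$. If some $k\ne j$ had $D_k\cdot E\ge1$, then
\[
\sum_i\eps_i(D_i\cdot E)\;\ge\;\eps_j(D_j\cdot E)+\eps_k\;>\;\tfrac12+\tfrac12=1,
\]
contradicting (N1). For the parenthetical examples:
\begin{itemize}
\item If $D_j\cdot E\ge2$, then $\eps_j(D_j\cdot E)\ge 2\eps_j\ge1>\half$.
\item If $D_j\cdot E\ge1$ and $\eps_j\ge\frac23$, then $\eps_j(D_j\cdot E)\ge\frac23>\half$.
\end{itemize}

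Everything here is routine. The only point to state explicitly is that $E\not\subset\Supp\Delta$ guarantees $D_i\cdot E\ge 0$ for all $i$.
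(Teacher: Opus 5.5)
Your proposal is correct and follows the paper's argument. (N1) and (N2) are read off from Lemma~\ref{lem:nef} using $E^2=-1$, and (a)--(c) follow from $\eps_i\ge\half$ together with $D_i\cdot E\ge0$ for a line $E$ not in the support; you simply write out the term-by-term estimates that the paper compresses into a single sentence.
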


\begin{proof}
The first part is Lemma~\ref{lem:nef} together with $E^2=-1$. Since every $\eps_i\ge\half$,
each component meeting $E$ contributes at least $\half(D_i\cdot E)$ to the left hand side
of (N1), this gives (a)-(c).
\end{proof}

\subsection{Lines and curves of low degree}\label{sec:curves}

\begin{lemma}[Configurations of lines]\label{lem:lines3}
The following facts are classical and known to experts. We include
short proofs for completion.
\begin{enumerate}[label=\textup{(\alph*)}]
\item Any $k\le4$ pairwise disjoint lines are among the exceptional curves of a standard
      model, and any six pairwise disjoint lines are the exceptional curves of a standard
      model.
\item Every line $E$ meets exactly ten lines. They form five pairs $\{F,F'\}$ with
      $E+F+F'\sim-K_S$ \textup{(}the five tritangent planes through $E$\textup{)}, and two
      lines of $\cN(E)$ meet if and only if they form such a pair. There are $45$
      tritangent triangles.
\item If $L_1+L_2+L_3\sim-K_S$, then every line $F\notin\{L_1,L_2,L_3\}$ meets exactly one
      of $L_1,L_2,L_3$.
\item Two meeting lines have exactly one common neighbour, namely their third line. In
      particular no four lines meet pairwise, three pairwise meeting lines form a
      tritangent triangle, and two distinct tritangent triangles share at most one line.
\item Two disjoint lines $E,E'$ have exactly five common neighbours $F_1,\dots,F_5$. These
      are pairwise disjoint and lie in the five different tritangent planes through $E$.
      Moreover $E$ and $E'$ are the only lines meeting at least four of $F_1,\dots,F_5$.
\item Three pairwise disjoint lines have exactly three common neighbours, and these are
      pairwise disjoint. Six pairwise disjoint lines have no common neighbour. For a
      standard model and each $j$, the only line meeting all $G_i$ with $i\neq j$ is
      $E_j$.
\end{enumerate}
In particular, any two distinct lines have a common neighbour.
\end{lemma}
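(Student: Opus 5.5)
The whole argument runs in the intersection lattice of $\Pic S$, using only the numerical facts $E^2=-1$, $-K_S\cdot E=1$, $K_S^2=3$, the Hodge index theorem, and the explicit table of Notation~\ref{not:model}. I will prove the items in the order (b), (c), (d), (e), (a), (f), since the later ones use the earlier ones.

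\textbf{Items (b) and (c).} For a line $E$, the class $-K_S-E$ has self-intersection $3-2-1=0$ and anticanonical degree $2$, so it is a conic class. Its pencil has degenerate members $F+F'$, where $F,F'$ are lines meeting $E$ and each other. Conversely, if $F\cdot E=1$, then $F':=-K_S-E-F$ satisfies $F'^2=-1$ and $-K_S\cdot F'=1$, so it is a line. Counting in the model with $E=E_1$ gives the ten neighbours $F_{1j}$ and $G_j$ ($j\ne1$). The five pairs are $\{F_{1j},G_j\}$, and one reads off the table that $F_{1j}\cdot G_k=1$ only for $k=j$. Counting triangles gives $27\cdot5/3=45$. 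For (c), if $F\notin\{L_1,L_2,L_3\}$, then $F\cdot(L_1+L_2+L_3)=1$ with nonnegative summands, so exactly one summand equals $1$.

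\textbf{Items (d) and (e).} For (d), a common neighbour $G$ of two meeting lines $E,E'$ with $G\ne E''$ would contradict (c) applied to the triangle $E+E'+E''$. The consequences about four pairwise meeting lines and about two triangles sharing two lines follow at once. For (e), by transitivity of the Weyl group $W(E_6)$ on pairs of disjoint lines, I may take $E=E_1$ and $E'=E_2$. The common neighbours are then $F_{12}$ and $G_3,\dots,G_6$. These are pairwise disjoint by the table, and they lie in the five distinct tritangent planes through $E_1$ by (b). For the last claim, I check the table: a line other than $E_1,E_2$ meets at most three of them. For instance $E_k$ with $k\ge3$ meets $G_j$ for $j\ne k$, which is three of the $G$'s, and misses $F_{12}$; each $F_{ij}$ meets at most two of them.

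\textbf{Items (a) and (f).} For (a), I use the fact that $W(E_6)$ acts transitively on ordered $k$-tuples of pairwise disjoint lines for $k\le4$ and on $6$-tuples, and that $W(E_6)$ is realized by changes of standard model. Rather than quoting this, I would argue by contracting the given disjoint lines. For $k=6$, contraction gives a surface with Picard number $1$ and $K^2=9$, hence $\PP^2$, and the image points are in general position because $S$ is del Pezzo. For $k\le4$, I contract the $k$ lines to obtain a del Pezzo surface of degree $3+k\le7$, choose a model of it as a blow-up of $\PP^2$ avoiding the images, and extend. (For $k=5$ the quotient could be $\PP^1\times\PP^1$, which is why the statement stops at $4$.) For (f), take the three lines to be $E_1,E_2,E_3$. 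Their common neighbours are $G_4,G_5,G_6$, which are pairwise disjoint, and no line meets all of $E_1,\dots,E_6$. The final assertion is again a table lookup. The closing remark then follows from (d) for meeting lines and from (e) for disjoint lines.

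\textbf{Main obstacle.} I expect the hardest part to be (a), namely justifying that the disjoint lines can be taken as exceptional curves of a standard model. I would handle it via the contraction argument above, and otherwise cite Manin or Dolgachev.
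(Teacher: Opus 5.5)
Your proposal is correct and follows essentially the paper's route: contraction of the given disjoint lines for (a), then reduction to $E=E_1$ (resp.\ $E_1,E_2$ or $E_1,E_2,E_3$) in a standard model with the table of Notation~\ref{not:model} read off directly, and (c), (d) from $F\cdot(L_1+L_2+L_3)=1$. Two small corrections. In (e), a line $F_{ab}$ with $\{a,b\}\subset\{3,\dots,6\}$ meets three of the five common neighbours ($F_{12},G_a,G_b$), not at most two; the bound three still suffices. In (a), no special choice of model of $S'$ is needed, since a $(-1)$-curve of $S'$ through a contracted point would have strict transform of self-intersection $\le-2$ on the del Pezzo surface $S$, which is impossible.
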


\begin{proof}
(a) Contracting pairwise disjoint lines $L_1,\dots,L_k$ gives a birational morphism
$\sigma\colon S\to S'$ onto a smooth surface with $K_{S'}^2=3+k$ and
$\sigma^*(-K_{S'})=-K_S+\sum_iL_i$. For a curve $\Gamma\subset S'$ we get
$-K_{S'}\cdot\Gamma=(-K_S+\sum_iL_i)\cdot\sigma^*\Gamma=-K_S\cdot\sigma^*\Gamma>0$, so
$-K_{S'}$ is ample by Nakai-Moishezon criterion and $S'$ is a del Pezzo surface of degree $3+k$.
If $k\le4$, then $3+k\le7$, so $S'$ is the blow-up of $\PP^2$ in $6-k$ points. Let $\Gamma_1,\dots,\Gamma_{6-k}$ be the exceptional curves. The
strict transform $\tilde\Gamma_l\subset S$ is irreducible, so
$\tilde\Gamma_l^2\ge-1=\Gamma_l^2$. Hence $\Gamma_l$ avoids the points $\sigma(L_i)$, and
$\tilde\Gamma_l=\sigma^*\Gamma_l$ is a line disjoint from $L_1,\dots,L_k$ and from the other
$\tilde\Gamma_{l'}$. The composite $S\to S'\to\PP^2$ is a standard model with
exceptional curves $L_1,\dots,L_k,\tilde\Gamma_1,\dots,\tilde\Gamma_{6-k}$. If $k=6$,
then $S'$ is a del Pezzo surface of degree $9$, i.e.\ $S'\cong\PP^2$.

(b) By (a) we may assume $E=E_1$ in a standard model. By Notation~\ref{not:model},
$\cN(E_1)=\{F_{1j},G_j : 2\le j\le6\}$, $E_1+F_{1j}+G_j\sim-K$, and among these ten
lines $F_{1j}\cdot G_k=1$ iff $k=j$, while $F_{1j}\cdot F_{1k}=G_j\cdot G_k=0$ for
$j\neq k$. Counting gives $27\cdot5/3=45$ tritangent triangles.

(c) $F\cdot(L_1+L_2+L_3)=-K_S\cdot F=1$ and $F\cdot L_i\ge0$.

(d) If $E,E'$ meet with third line $E''$, then by (c) every line
$F\notin\{E,E',E''\}$ meets only one of $E,E',E''$, so $E''$ is the only common
neighbour. The remaining assertions follow at once.

(e) By (a) we may assume $E=E_1$, $E'=E_2$. By Notation~\ref{not:model} the common
neighbours are $F_{12},G_3,G_4,G_5,G_6$. They are pairwise disjoint and lie in the
tritangent planes $E_1+F_{12}+G_2$ and $E_1+F_{1j}+G_j$ ($3\le j\le 6$). A line $E_k$,
$k\ge3$, meets exactly three of them (the $G_j$ with $j\neq k$). A line $F_{ab}$ meets
$G_j$ iff $j\in\{a,b\}$, and meets $F_{12}$ iff $\{a,b\}\cap\{1,2\}=\emptyset$, hence
meets at most three of them. A line $G_k$ meets at most $F_{12}$. So only $E_1,E_2$ meet
four or more of them.

(f) By (a), three pairwise disjoint lines can be taken to be $E_1,E_2,E_3$, whose common
neighbours are $G_4,G_5,G_6$. Six pairwise disjoint lines can be taken to be
$E_1,\dots,E_6$, and no line meets all of them, since $F_{ab}$ meets two and $G_j$ meets
five of them. Finally, for $i\neq j$ the line $E_i$ does not meet $G_i$, a line $F_{ab}$
meets only $G_a,G_b$, and $G_k$ meets no $G_i$, while $E_j$ meets every $G_i$, $i\ne j$.
\end{proof}

We recall the following result from \cite{dandapat2026classification}. 

\begin{lemma}[Curves of low degree]\label{lem:curves3}
Let $D\subset S$ be an irreducible curve of degree $d$. Then
\begin{equation}\label{eq:bounds}
  d-2\ \le\ D^2\ \le\ \frac{d^2}{3},\qquad D^2\equiv d\pmod 2,
\end{equation}
and $3D^2=d^2$ only if $D\sim\frac d3(-K_S)$. More precisely, for $d\le5$:
\begin{enumerate}[label=\textup{(\roman*)}]
\item $d=1$ if and only if $D\in\cL$, then $D^2=-1$.
\item If $d=2$ \textup{(}$D$ is a \emph{conic}\textup{)}, then $D^2=0$ and $D\sim-K_S-R$ for a
      unique line $R$, the \emph{residual line} of $D$ \textup{(}the plane of $D$ cuts out
      $D+R$\textup{)}. The linear system $\lvert-K_S-R\rvert$ is a base point free pencil \textup{(}the
      residual conics of the planes through $R$\textup{)}, whose singular members are
      the five sums $L+L'$ with $R+L+L'\sim-K_S$. We write $C_R$ for an irreducible member
      of $\lvert-K-R\rvert$.
\item If $d=3$, then $D^2\in\{1,3\}$. If $D^2=3$, then $D\in\lvert-K_S\rvert$ is a hyperplane section.
      If $D^2=1$, then $D\sim H$ for a standard model, i.e.\ $D$ is the pullback of a
      line under a birational morphism $S\to\PP^2$.
\item If $d=4$, then $D^2\in\{2,4\}$. If $D^2=4$, then $D\sim-K_S+R$ for a line $R$,
      i.e.\ $D$ is the pullback of $-K_{S'}$ under the contraction $S\to S'$ of $R$ onto
      a del Pezzo surface of degree $4$. If $D^2=2$, then $D\sim-2K_S-B_1-B_2$ for two
      disjoint lines $B_1,B_2$, i.e.\ $D$ is the pullback of the hyperplane class under
      the birational morphism $\psi=(f_1,f_2)\colon S\to\PP^1\times\PP^1$ defined by the
      conic bundles $f_i$ given by $\lvert-K_S-B_i\rvert$.
\item If $d=5$, then $D^2\in\{3,5,7\}$, and if $D^2=7$, then $D\sim-2K-R$ for a line $R$.
\end{enumerate}
The descriptions in \textup{(ii), (iv), (v)} hold for all divisor classes with the given
degree and self-intersection. Conversely, each of the classes $-K_S-R$, $H$, $-K_S$, $-K_S+R$,
$-2K_S-B_1-B_2$ \textup{(}$B_1\cdot B_2=0$\textup{)} and $-2K_S-R$ contains smooth irreducible
curves.
\end{lemma}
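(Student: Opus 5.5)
The plan is to get the numerical constraints \eqref{eq:bounds} from adjunction and the Hodge index theorem. After that, each case $d\le5$ is settled by producing an auxiliary class of degree $1$ or $2$ with the right self-intersection, and identifying that class through Lemma~\ref{lem:lines3} and Riemann-Roch.

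\emph{Step 1: the bounds.} Since $D$ is irreducible, adjunction gives $D^2+K_S\cdot D=2p_a(D)-2\ge-2$, so $D^2\ge d-2$. It also gives $D^2-d$ even, which is the parity statement. The Hodge index theorem applied to $D$ and the ample class $-K_S$ gives $3D^2=D^2\,K_S^2\le(K_S\cdot D)^2=d^2$. Equality holds only if $D\equiv\frac d3(-K_S)$, hence $D\sim\frac d3(-K_S)$, since numerical and linear equivalence agree on $S$. Listing the integers allowed by these bounds and the parity gives the stated ranges:
\[
d=1:\ \{-1\},\qquad d=2:\ \{0\},\qquad d=3:\ \{1,3\},\qquad d=4:\ \{2,4\},\qquad d=5:\ \{3,5,7\}.
\]

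\emph{Step 2: residual classes.} For any class $D$ with the given $(d,D^2)$ I form a residual class and check that it has degree $1$ and self-intersection $-1$. Such a class is represented by a unique line, as recalled in the proof of Lemma~\ref{lem:geiser}, which is equally valid on a cubic surface. The residual classes are:
\begin{itemize}
\item $R:=-K_S-D$ for $(2,0)$: $R^2=3-4+0=-1$ and $\deg R=1$.
\item $R:=D+K_S$ for $(4,4)$: $R^2=4-8+3=-1$ and $\deg R=1$.
\item $R:=-2K_S-D$ for $(5,7)$: $R^2=12-20+7=-1$ and $\deg R=1$.
\end{itemize}
Case $d=1$ is immediate, and $(3,3)$ is the Hodge equality case, so $D\in|-K_S|$.

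For the conic pencil $|-K_S-R|$, Riemann-Roch gives $h^0\ge2$. The pencil is base point free because its general member is irreducible with self-intersection $0$. Its singular members have degree $2$ and square $0$ but are reducible, hence are sums of two meeting lines; these are exactly the five pairs of Lemma~\ref{lem:lines3}(b).

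For $(4,2)$, put $B:=-2K_S-D$. Then $B^2=12-16+2=-2$ and $\deg B=2$. Riemann-Roch gives $\chi(B)=\tfrac12(B^2-K_S\cdot B)+1=1$, and $h^2(B)=h^0(K_S-B)=0$ since $K_S-B$ has negative degree. Hence $B$ is effective. An effective degree-$2$ class is either a conic or a sum of two lines. A conic has square $0$, and a sum of two meeting lines also has square $0$, so $B^2=-2$ forces $B=B_1+B_2$ with $B_1\cdot B_2=0$. The two pencils $|-K_S-B_i|$ then give $\psi\colon S\to\PP^1\times\PP^1$ with $\psi^*\OO(1,1)=D$. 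Since all of these arguments use only numerics and Riemann-Roch, they give the claim for arbitrary divisor classes in (ii), (iv) and (v).

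\emph{Step 3: the case $(3,1)$.} This is the step I expect to be the main obstacle, because there is no residual line. Here I use irreducibility of $D$: it gives $p_a(D)=0$, so $D\cong\PP^1$. Riemann-Roch and $D^2=1$ give $h^0(D)=3$, and $|D|$ is base point free since it restricts to $|\OO_{\PP^1}(1)|$ on $D$. Since $D^2=1$, the induced morphism $S\to\PP^2$ is birational. It contracts exactly the curves $C$ with $D\cdot C=0$, which have $C^2<0$ by Hodge index and are therefore lines. This exhibits a standard model with $D\sim H$.

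\emph{Existence.} The classes $H$, $-K_S$ and $-K_S+R$ are pullbacks of very ample or base point free classes under birational morphisms to $\PP^2$, to $S$ itself, or to a degree $4$ del Pezzo surface. The class $-2K_S-B_1-B_2$ is the pullback of $\OO(1,1)$ under $\psi$, and $-2K_S-R$ is $-2K_{S'}$ pulled back from the degree $4$ surface. In each case Bertini gives smooth irreducible general members. I would take the general member avoiding the finitely many points blown up, so that it maps isomorphically to its image.
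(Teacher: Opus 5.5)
\textbf{Gap in the existence argument for $-2K_S-R$.} Most of your argument works: the bounds from adjunction and the Hodge index theorem, the residual-line computations for $(d,D^2)=(2,0),(4,4),(5,7)$, the Riemann--Roch argument producing $B_1+B_2$ for $(4,2)$, and your treatment of $(3,1)$ through the morphism given by $|D|$. The paper does not prove this lemma; it recalls it from \cite{dandapat2026classification}, so your argument has to stand on its own.

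The failing step is your justification that $-2K_S-R$ contains smooth irreducible curves. It is \emph{not} the pullback of $-2K_{S'}$ under the contraction $\sigma\colon S\to S'$ of a line. Since $K_S=\sigma^*K_{S'}+R$, one has $\sigma^*(-2K_{S'})=-2K_S+2R$. That class has degree $8$ and is orthogonal to $R$. By contrast, $-2K_S-R$ has degree $5$ and satisfies $(-2K_S-R)\cdot R=3$, so it is not pulled back from $S'$ at all. Your Bertini argument for this class therefore has no basis.

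The repair is short. Write $-2K_S-R=(-K_S)+(-K_S-R)$, the sum of the very ample class $-K_S$ and the base point free conic class. This sum is very ample, so Bertini gives smooth irreducible members. In a standard model with $R=G_6$, this is the system $|4H-E_1-\dots-E_5-2E_6|$.

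\textbf{Smaller omissions.} These each need a sentence.
\begin{itemize}
\item Your existence paragraph omits the class $-K_S-R$. For an arbitrary line $R$, which you need because (ii) is claimed for all classes, your base-point-freeness argument presupposes an irreducible member of the pencil. This follows from what you already show: the only reducible members are the five divisors $L+L'$, while a pencil has infinitely many members. The irreducible ones have $p_a=0$ and are therefore smooth.
\item Calling $|-K_S-R|$ a pencil requires $h^0(-K_S-R)=2$, not merely $\ge2$. This follows from Kodaira vanishing, because $(-K_S-R)-K_S=-2K_S-R$ is ample.
\item In the $(4,2)$ case, the birationality of $\psi$ has to be checked. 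It follows from $(-K_S-B_1)\cdot(-K_S-B_2)=3-1-1+0=1$.
\end{itemize}
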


\begin{lemma}[Intersection numbers with lines]\label{lem:intersections}
Let $R,B_1,B_2\in\cL$ with $B_1\cdot B_2=0$, and let $E\in\cL$. Then:
\begin{center}
\renewcommand{\arraystretch}{1.2}
\begin{tabular}{@{}lll@{}}
\toprule
curve & class & $D\cdot E$\\
\midrule
conic $C_R$ & $-K_S-R$ & $2$ if $E=R$,\ $0$ if $E\in\cN(R)$ \textup{(}$10$ lines\textup{)},\
                        $1$ otherwise \textup{(}$16$ lines\textup{)}\\
$A$ & $-K_S$ & $1$ for all $E$\\
$T$ & $H$ & $0$ for $E=E_i$,\ $1$ for $E=F_{ij}$,\ $2$ for $E=G_j$\\
$P$ & $-K_S+R$ & $0$ if $E=R$,\ $2$ if $E\in\cN(R)$,\ $1$ otherwise\\
$Q$ & $-2K_S-B_1-B_2$ & $3$ if $E\in\{B_1,B_2\}$,\ $2-B_1\cdot E-B_2\cdot E$ otherwise\\
$D_5$ & $-2K_S-R$ & $3$ if $E=R$,\ $1$ if $E\in\cN(R)$,\ $2$ otherwise\\
\bottomrule
\end{tabular}
\end{center}
Thus $Q\cdot E=0$ for the five common neighbours of $B_1,B_2$, $Q\cdot E=1$ for the ten
lines meeting exactly one of $B_1,B_2$, and $Q\cdot E=2$ for the remaining ten lines.
For conics, $C_R\cdot C_{R'}=1+R\cdot R'$ if $R\neq R'$, and $C\cdot C'=0$ for
$C,C'\in\lvert-K_S-R\rvert$.
\end{lemma}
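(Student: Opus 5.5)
The statement to prove is Lemma~\ref{lem:intersections}. The plan is to reduce every entry of the table to intersection numbers between lines, using only the class descriptions in Lemma~\ref{lem:curves3} and $-K_S\cdot E=1$, $E^2=-1$. Throughout I use the identity $D\cdot E = a(-K_S\cdot E)+\sum(\pm)\,L\cdot E$ for a class $D=a(-K_S)\pm\sum L$.

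\textbf{Classes built from $-K_S$ and lines.} For $C_R\sim -K_S-R$ we have $C_R\cdot E=1-R\cdot E$. This equals $2$ if $E=R$, $0$ if $E\in\cN(R)$, and $1$ otherwise. The counts $10$ and $16$ follow from Lemma~\ref{lem:lines3}(b), since $27-1-10=16$. The same computation handles the other classes of this type:
\begin{itemize}
\item $A\sim -K_S$ gives $A\cdot E=1$ for every line.
\item $P\sim -K_S+R$ gives $P\cdot E=1+R\cdot E$.
\item $D_5\sim -2K_S-R$ gives $D_5\cdot E=2-R\cdot E$.
\item $Q\sim -2K_S-B_1-B_2$ gives $Q\cdot E=2-B_1\cdot E-B_2\cdot E$. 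For $E=B_i$ this is $2+1-0=3$, using $B_1\cdot B_2=0$.
\end{itemize}

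\textbf{The count for $Q$.} For $E\notin\{B_1,B_2\}$, the value of $Q\cdot E$ depends on how many of $B_1,B_2$ the line $E$ meets. By Lemma~\ref{lem:lines3}(e), exactly $5$ lines meet both, and these give $Q\cdot E=0$. Each $B_i$ has $10$ neighbours, so $10-5=5$ lines meet $B_1$ only and $5$ meet $B_2$ only. That gives $10$ lines with $Q\cdot E=1$. The remaining $27-2-5-10=10$ lines meet neither and give $Q\cdot E=2$.

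\textbf{The class $T\sim H$.} Here $H$ is read from Notation~\ref{not:model} directly: $H\cdot E_i=0$, $H\cdot F_{ij}=1$, $H\cdot G_j=2$.

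\textbf{Conic–conic intersections.} For $R\neq R'$,
$$(-K_S-R)\cdot(-K_S-R')=3-1-1+R\cdot R'=1+R\cdot R'.$$
Two members of the same pencil satisfy $C\cdot C'=(-K_S-R)^2=3-2-1=0$.

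\textbf{Main obstacle.} The only real difficulty is bookkeeping in the $Q$ counts, which rests on Lemma~\ref{lem:lines3}(e). Everything else is linear algebra in $\Pic S$.
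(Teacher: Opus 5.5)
Your proposal is correct and follows the paper's proof. The paper likewise expands the classes of Lemma~\ref{lem:curves3} against lines using $-K_S\cdot E=1$ and $E^2=-1$, reads off $T\sim H$ from Notation~\ref{not:model}, and takes the counts from Lemma~\ref{lem:lines3}(b),(e).
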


\begin{proof}
Immediate from Lemma~\ref{lem:curves3}, $-K_S \cdot E=1$, $E^2=-1$,
Notation~\ref{not:model} and Lemma~\ref{lem:lines3}(b),(e).
\end{proof}

\subsection{The classification}\label{sec:main}

\begin{theorem}\label{thm:main3}
Let $\Delta=\sum_{i=1}^r\eps_iD_i$ be as in Convention~\ref{conv:setup} with $r\ge2$.
Then $-(K_S+\Delta)$ is nef if and only if, up to renumbering, $\Delta$ is one of the
following. Here $R,R_i,B_i,L_i$ denote lines, $C_R$ denotes an irreducible conic in
$\lvert-K_S-R\rvert$ \textup{(}Lemma~\textup{\ref{lem:curves3}(ii))}, and all curves occurring in one
item are distinct. The items marked $(\ast)$ are exactly those with $K_S+\Delta\sq0$.

{\upshape
\medskip\noindent\textbf{Case 0} (a component of degree $5$).
\begin{itemlist}
\item[(0.a)] $\half D+\half R$ with $D\in\lvert-2K_S-R\rvert$. \triv
\end{itemlist}

\smallskip\noindent\textbf{Case 1} (a component $P\in\lvert-K_S+R\rvert$ of degree $4$ with $P^2=4$,
i.e.\ the pullback of the anticanonical class of the quartic del Pezzo surface obtained
by contracting $R$).
\begin{itemlist}
\item[(1.a)] $\half P+\half C_R$. \triv
\item[(1.b)] $\half P+\half L_1+\half L_2$ with $R+L_1+L_2\sim-K_S$. \triv
\end{itemlist}

\smallskip\noindent\textbf{Case 2} (a component $Q\in\lvert-2K_S-B_1-B_2\rvert$, $B_1\cdot B_2=0$,
of degree $4$ with $Q^2=2$, i.e.\ the pullback of the hyperplane class of a smooth
quadric).
\begin{itemlist}
\item[(2.a)] $\half Q+\half B_1+\half B_2$. \triv
\end{itemlist}

\smallskip\noindent\textbf{Case 3} (an anticanonical component $A\in\lvert-K_S\rvert$).
\begin{itemlist}
\item[(3.a)] $\half A+\half A'$ with $A'\in\lvert-K_S\rvert$. \triv
\item[(3.b)] $\half A+\half C_R+\half R$. \triv
\item[(3.c)] $\half A+\half L$, with $L$ an arbitrary line.
\item[(3.d)] $\half A+\half(L_1+L_2+L_3)$ with $L_1+L_2+L_3\sim-K_S$. \triv
\end{itemlist}

\smallskip\noindent\textbf{Case 4} (a component $T$ of degree $3$ with $T^2=1$, i.e.\ the
pullback of a line under a birational morphism $S\to\PP^2$, and no anticanonical
component). Choose a standard model with $H\sim T$ (Lemma~\ref{lem:curves3}(iii)) and
use its lines $E_i,F_{ij},G_j$ (Notation~\ref{not:model}).
\begin{itemlist}
\item[(4.a)] $\half T+\half T'$ with $T'\in\lvert-2K_S-T\rvert=|5H-2\sum_iE_i|$. \triv
\item[(4.b)] $\half T+\half G_j+\half C_{E_j}$ for some $j$. \triv
\item[(4.c)] $\half(T+G_i+G_j+F_{ij})$ for some $i\neq j$. \triv
\item[(4.d)] $\half T+\half G_j$ for some $j$.
\end{itemlist}

\smallskip\noindent\textbf{Case 5} (all components of degree $\le2$, at least one of them
a conic).
\begin{itemlist}
\item[(5.a)] $\eps_1C_R+\eps_2R$ with $2\eps_1-\eps_2\le1$, this is $(\ast)$ only for
             $\eps_1=\eps_2=1$.
\item[(5.b)] $\half C_R+\half L$ with $L\neq R$ and $L\cdot R=0$.
\item[(5.c)] $\half C_R+\half R+\half L$ with $L\cdot R=1$.
\item[(5.d)] $\half C_R+\half R+\half L$ with $L\cdot R=0$.
\item[(5.e)] $\half C_R+\half L_1+\half L_2$ with $L_1\cdot L_2=1$ and
             $L_1\cdot R=L_2\cdot R=0$ \textup{(}then the third line of $L_1,L_2$ meets $R$,
             by Lemma~\textup{\ref{lem:lines3}(c)}\textup{)}.
\item[(5.f)] $\half C_R+R+\half L_1+\half L_2$ with $R+L_1+L_2\sim-K_S$. \triv
\item[(5.g)] $\half C_R+\half R+\half(L_1+L_2+L_3)$ with $L_1+L_2+L_3\sim-K_S$. \triv
\item[(5.h)] $\half C+\half(L_1+L_2+L_3+L_4)$, where $L_1,\dots,L_4$ form a
             \emph{$4$-cycle} \textup{(}$L_i\cdot L_{i+1}=1$ for $i\in\Z/4$ and
             $L_1\cdot L_3=L_2\cdot L_4=0$\textup{)} and
             $C\in\lvert-2K_S-L_1-L_2-L_3-L_4\rvert$, a conic class. \triv
\item[(5.i)] $\half C_{R_1}+\half C_{R_2}$ with $R_1\cdot R_2=1$.
\item[(5.j)] $\half(C_{R_1}+C_{R_2}+C_{R_3})$ with $R_1+R_2+R_3\sim-K_S$. \triv
\item[(5.k)] $\half C+\half C'+R$ with $C\neq C'$ in $\lvert-K_S-R\rvert$. \triv
\item[(5.l)] $\half(C_{R_1}+C_{R_2}+R_1+R_2)$ with $R_1\neq R_2$ \textup{(}meeting or
             disjoint\textup{)}. \triv
\item[(5.m)] $\half(C_{R_1}+C_{R_2}+L_1+L_2)$ with $R_1+R_2+R_3\sim L_1+L_2+R_3\sim-K_S$
             and $\{L_1,L_2\}\neq\{R_1,R_2\}$. \triv
\end{itemlist}

\smallskip\noindent\textbf{Case 6} (all components are lines).
\begin{itemlist}
\item[(6.a)] $\half L_1+\half L_2$ \textup{(}meeting or disjoint\textup{)}.
\item[(6.b)] $\eps_1L_1+\eps_2L_2+\eps_3L_3$ with $L_1+L_2+L_3\sim-K_S$ and
             $\eps_i+\eps_j-\eps_k\le1$ for $\{i,j,k\}=\{1,2,3\}$, this is $(\ast)$ only
             for $\eps_1=\eps_2=\eps_3=1$.
\item[(6.c)] $\half(L_1+L_2+L_3)$ with $L_1\cdot L_2=L_2\cdot L_3=1$ and $L_1\cdot L_3=0$.
\item[(6.d)] $\half(L_1+L_2+L_3+L_4)$ with $L_1+L_2+L_3\sim-K_S$ and $L_3\cdot L_4=1$.
\item[(6.e)] $\half(L_1+L_2+L_3+L_4)$ with $L_1,\dots,L_4$ a $4$-cycle.
\item[(6.f)] $R+\half(L_1+L_2+L_3+L_4)$ with $R+L_1+L_2\sim R+L_3+L_4\sim-K_S$. \triv
\item[(6.g)] $\half(L_1+\dots+L_6)$, where either $L_1,L_2,L_3$ are pairwise disjoint and
             $L_4,L_5,L_6$ are their three common neighbours, or
             $L_1+L_2+L_3\sim L_4+L_5+L_6\sim-K_S$. \triv
\end{itemlist}}

\smallskip\noindent
In every item other than \textup{(5.a)} and \textup{(6.b)} the coefficients are uniquely
determined, and every item occurs on every smooth cubic surface.
\end{theorem}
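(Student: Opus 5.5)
The plan is to mirror the degree-$2$ argument. The first step is a reduction. Lemma~\ref{lem:deg} together with $\eps_i\ge\half$ gives $\sum_i\deg D_i\le 6$, and since $r\ge2$ every component has degree at most $5$. Lemma~\ref{lem:curves3} then lists the possible components: lines, conics $C_R$, curves $A\in|-K_S|$, $T\sim H$, $P\sim-K_S+R$, $Q\sim-2K_S-B_1-B_2$, and degree-$5$ curves. For degree $5$ the budget $\half\cdot 5+\half\le 3$ forces exactly one further line with all coefficients $\half$ and $\deg\Delta=3$. Lemma~\ref{lem:deg} then forces $\Delta\sim-K_S$, so the class is $-2K_S-R$ and the pair is (0.a); the cases $D^2=3,5$ are excluded because they are not of this class.

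The same principle runs through the whole proof. Whenever $\deg\Delta=3$, nefness is equivalent to $\Delta\sq-K_S$, so the classification reduces to enumerating reduced members of $|-2K_S|$ (all $\eps=\half$) or mixed combinations such as $R+\half(\dots)$. Each such enumeration is a solution of a class equation, handled with Lemma~\ref{lem:intersections} and Lemma~\ref{lem:lines3}. When $\deg\Delta<3$ we use the local tests of Lemma~\ref{lem:local}. We organize the cases by the largest-degree component, following the headings Case 0 through Case 6.

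\emph{Degree-$4$ components} ($P$ or $Q$). The remaining budget is degree $\le2$, with all coefficients $\half$. A single extra line is ruled out by a line $E$ with $P\cdot E=2$ (resp.\ $Q\cdot E=3$) that also meets the line, using Lemma~\ref{lem:local}(c). So $\deg\Delta=3$ and the class equations give $\Delta\sim -K_S$. For $P$ this means $C_R$ or $L_1+L_2$ with $R+L_1+L_2\sim-K_S$; for $Q$ it means $B_1+B_2$.

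\emph{Degree-$3$ components.} For $A$ we have $A\cdot E=1$ for all $E$. So a line $L$ alone is fine; otherwise we need $\deg\Delta=3$, which gives (3.a), (3.b) and (3.d). Coefficients larger than $\half$ are excluded by Lemma~\ref{lem:local}(c) applied to a neighbour. For $T=H$ the lines $G_j$ with $T\cdot G_j=2$ force, by Lemma~\ref{lem:local}(b),(c), that every other component avoid all $G_j$ that are not components. Checking the finitely many additions of degree $\le3$ then yields (4.d), or the anticanonical-sum cases (4.a)--(4.c).

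\emph{Components of degree $\le 2$.} This is Cases 5 and 6, and it is the main obstacle because of the combinatorial explosion. Here we allow $\eps=1$ and large coefficients. The key inputs are Lemma~\ref{lem:local}(a)--(c) and the line-configuration facts of Lemma~\ref{lem:lines3}.

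For pure-line boundaries, consider any $E$ not in the support. If it meets two components, every coefficient meeting it is $\half$, and it can meet at most two. Lemma~\ref{lem:lines3}(d),(e) then severely restricts configurations of more than two lines. I would first prove that a coefficient $>\half$ on a line $L_1$ forces every neighbour of $L_1$ outside the support to meet no other component. This pins down (6.b) and (6.f), and the triangle inequalities come from (N2) as in Example~\ref{ex:triangle}. Then I would classify coefficient-$\half$ configurations by the intersection graph, giving (6.a) and (6.c)--(6.g). Degree-$3$ configurations with $\deg\Delta<3$ and $4$-line configurations are tested against the third lines and common neighbours. In the $\deg=3$ case the configuration must sum to $-2K_S$, which forces the two options in (6.g).

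The conic cases are analogous, using $C_R\cdot R=2$ and $C_R\cdot E=0$ on $\cN(R)$. The coefficient condition $2\eps_1-\eps_2\le1$ in (5.a) comes from (N2) on $R$. The tedious part is exhaustively checking each combinatorial type (5.b)--(5.m) against all $27$ lines; I would do this in a standard model.

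Existence on every cubic surface follows from Lemma~\ref{lem:curves3} (irreducible members exist) and from the transitivity of the line configurations given by Lemma~\ref{lem:lines3}(a).
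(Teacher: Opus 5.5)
Your overall strategy is the paper's: bound $\sum_i\deg D_i\le 6$ by Lemma~\ref{lem:deg}, split by the component of largest degree, turn every case with $\deg\Delta=3$ into the class equation $\Delta\sim_{\Q}-K_S$, and use Lemma~\ref{lem:local} otherwise. Cases 0--4 are sound as sketched, with two small repairs.
\begin{itemize}
\item For $Q$ with a single extra line $L\in\{B_1,B_2\}$, there is no line $E$ with $Q\cdot E=3$ meeting $L$, because $B_1\cdot B_2=0$. Argue instead that $\frac12 Q\cdot B_i=\frac32>1$ forces both $B_i$ into the support.
\item In Case 3, the claim ``otherwise $\deg\Delta=3$'' needs two exclusions: $A+C_R$ with $R\not\subset\Supp\Delta$ (then $\Delta\cdot R\ge\frac32$), and $A$ plus two lines (use a common neighbour).
\end{itemize}
Your Case 6 plan is a valid reorganisation of the paper's $r$-by-$r$ graph analysis. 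If $L_1$ has coefficient $\ge\frac23$, a component disjoint from $L_1$ would force all five common neighbours into the support, so $r\ge 7$. A component meeting $L_1$ forces their third line into the support. So the support is one or two tritangent triangles through $L_1$, giving (6.b) or (6.f).

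The genuine gap is Case 5, which you yourself call the main obstacle. Checking (5.b)--(5.m) against the $27$ lines only proves that the listed boundaries are nef. It gives no reason why no other multiset of conics and lines, or other coefficient vector, is nef, and ``analogous'' does not supply one. The paper's mechanism has two parts.
\begin{itemize}
\item \emph{A conic with large coefficient.} Suppose some conic $C_R$ has coefficient $\eps_C\ge\frac23$. Then $2\eps_C>1$ forces $R\subset\Supp\Delta$. The remaining degree is at most $\frac76$, so at most one more conic or at most two more lines. Each of the $16$ lines $F$ disjoint from $R$ has $\eps_C\,C_R\cdot F>\frac12$, so by Lemma~\ref{lem:local}(c) none of them outside the support may meet another component. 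Any further conic meets at least six, and any further line at least five, of these $16$ lines, and at most one of them can be a component. Hence $\Delta=\eps_1C_R+\eps_2R$.
\item \emph{All conics with coefficient $\frac12$.} Here $\frac12C_k\cdot R_k=1$ shows that a residual line $R_k$ outside the support meets no other component. So the residual lines of all other conics meet $R_k$, and all line components are disjoint from it. With $s+\sum_j\lambda_j\le 3$ ($s$ conics, line coefficients $\lambda_j$), this leaves finitely many subcases in $(s,t)$ with $s\le 3$, $t\le 4$. In these, Lemma~\ref{lem:lines3}(c)--(e) and (N2) yield exactly (5.a)--(5.m), including the forced coefficient $1$ on $R$ in (5.f) and (5.k).
\end{itemize}
Without this argument, the ``only if'' direction in Case 5 is not established. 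The alternative is a genuinely exhaustive enumeration over \emph{all} multisets of line and conic classes, allowing two distinct conics in one pencil, with a linear-programming check of the coefficients.
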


\begin{center}
\small
\renewcommand{\arraystretch}{1.05}
\begin{longtable}{@{}llllll@{}}
\caption{Summary of Theorem~\ref{thm:main3}. Component types: $L$ line, $C$ conic,
$A\in\lvert-K_S\rvert$, $T$ \textup{(}degree $3$, $T^2=1$\textup{)}, $P$ \textup{(}degree $4$,
$P^2=4$\textup{)}, $Q$ \textup{(}degree $4$, $Q^2=2$\textup{)}, $D_5$ \textup{(}degree $5$,
$D^2=7$\textup{)}. The column $\#$ is the number of configurations of classes
$\{[D_1],\dots,[D_r]\}$ of the given type, each entry, and each summand of an entry $a+b$,
is a single $W(E_6)$-orbit. In total there are $13,105$ configurations in $35$ orbits.}
\label{tab:summary}\\
\toprule
Item & Components & Coefficients & $\deg\Delta$ & $K+\Delta\sq0$ & $\#$\\
\midrule
\endfirsthead
\toprule
Item & Components & Coefficients & $\deg\Delta$ & $K+\Delta\sq0$ & $\#$\\
\midrule
\endhead
\bottomrule
\endfoot
(0.a) & $D_5,L$      & $\half,\half$ & $3$ & yes & $27$\\
(1.a) & $P,C$        & all $\half$   & $3$ & yes & $27$\\
(1.b) & $P,L,L$      & all $\half$   & $3$ & yes & $135$\\
(2.a) & $Q,L,L$      & all $\half$   & $3$ & yes & $216$\\
(3.a) & $A,A$        & all $\half$   & $3$ & yes & $1$\\
(3.b) & $A,C,L$      & all $\half$   & $3$ & yes & $27$\\
(3.c) & $A,L$        & all $\half$   & $2$ & no  & $27$\\
(3.d) & $A,L,L,L$    & all $\half$   & $3$ & yes & $45$\\
(4.a) & $T,T$        & all $\half$   & $3$ & yes & $36$\\
(4.b) & $T,L,C$      & all $\half$   & $3$ & yes & $432$\\
(4.c) & $T,L,L,L$    & all $\half$   & $3$ & yes & $1080$\\
(4.d) & $T,L$        & all $\half$   & $2$ & no  & $432$\\
(5.a) & $C,L$        & $2\eps_1-\eps_2\le1$ & $2\eps_1+\eps_2$ & iff $\eps_1=\eps_2=1$ & $27$\\
(5.b) & $C,L$        & all $\half$   & $\tfrac32$ & no & $432$\\
(5.c) & $C,L,L$      & all $\half$   & $2$ & no  & $270$\\
(5.d) & $C,L,L$      & all $\half$   & $2$ & no  & $432$\\
(5.e) & $C,L,L$      & all $\half$   & $2$ & no  & $1080$\\
(5.f) & $C,L,L,L$    & $\half$ except $1$ on $R$ & $3$ & yes & $135$\\
(5.g) & $C,L^4$      & all $\half$   & $3$ & yes & $1080$\\
(5.h) & $C,L^4$      & all $\half$   & $3$ & yes & $1080$\\
(5.i) & $C,C$        & all $\half$   & $2$ & no  & $135$\\
(5.j) & $C,C,C$      & all $\half$   & $3$ & yes & $45$\\
(5.k) & $C,C,L$      & $\half,\half,1$ & $3$ & yes & $27$\\
(5.l) & $C,C,L,L$    & all $\half$   & $3$ & yes & $135+216$\\
(5.m) & $C,C,L,L$    & all $\half$   & $3$ & yes & $540$\\
(6.a) & $L,L$        & all $\half$   & $1$ & no  & $135+216$\\
(6.b) & $L,L,L$      & $\eps_i+\eps_j-\eps_k\le1$ & $\eps_1+\eps_2+\eps_3$ & iff all $\eps_i=1$ & $45$\\
(6.c) & $L,L,L$      & all $\half$   & $\tfrac32$ & no & $1080$\\
(6.d) & $L^4$        & all $\half$   & $2$ & no  & $1080$\\
(6.e) & $L^4$        & all $\half$   & $2$ & no  & $1080$\\
(6.f) & $L^5$        & $\half$ except $1$ on $R$ & $3$ & yes & $270$\\
(6.g) & $L^6$        & all $\half$   & $3$ & yes & $360+720$\\
\end{longtable}
\end{center}

\needspace{8\baselineskip}
\begin{remark}[Structure of the list]\label{rem:structure}
We record some features of Theorem~\ref{thm:main3}.
\begin{enumerate}[label=\textup{(\alph*)}]
\item The items (5.k), (5.f), (6.f) are all of the form $R+\half(Z_1+Z_2)$, where $Z_1\neq
      Z_2$ are members of the conic bundle $\lvert-K_S-R\rvert$, both smooth, one singular, or both
      singular. They are exactly the items in which some coefficient is forced to be $1$.
\item Consequently, if only finite multiplicities $m_i<\infty$ are allowed, the items
      (5.f), (5.k), (6.f) disappear, and in (5.a), (6.b) the extreme points
      $\eps=(1,1)$, $(1,1,1)$ are excluded. In terms of multiplicities (with
      $1/\infty=0$), the condition in (5.a) implies $\frac1{m_2}\le\frac2{m_1}$, i.e.\
      $m_1\le2m_2$ for finite multiplicities ($m_1$ belonging to the conic, $m_2$ to
      $R$), and the condition in (6.b) says that $\frac1{m_1},\frac1{m_2},\frac1{m_3}$
      satisfy the triangle inequalities.
\item Only (5.a) and (6.b) allow a non-trivial family of coefficients, in all other items
      every coefficient is $\half$, except for the coefficient $1$ of $R$ in the three
      items listed in (a).
\item Apart from (5.a) and (6.b), one has $K_S+\Delta\not\sim_{\Q}0$ only in the items
      (3.c), (4.d), (5.b)-(5.e), (5.i), (6.a), (6.c)-(6.e), and in all of them
      $\deg\Delta\le2$.
\end{enumerate}
\end{remark}

\begin{corollary}[Ampleness]\label{cor:ample}
Let $r\ge2$. Then $-(K_S+\Delta)$ is ample if and only if either
\begin{enumerate}[label=\textup{(\roman*)}]
\item $\Delta=\eps_1C_R+\eps_2R$ as in \textup{(5.a)} with $2\eps_1-\eps_2<1$ and
      $\eps_2<1$, i.e.\ $m_1<2m_2<\infty$, or
\item $\Delta=\eps_1L_1+\eps_2L_2+\eps_3L_3$ as in \textup{(6.b)} with all $\eps_i<1$ and
      $\eps_i+\eps_j-\eps_k<1$, i.e.\ all $m_i<\infty$ and
      $\frac1{m_k}<\frac1{m_i}+\frac1{m_j}$ for $\{i,j,k\}=\{1,2,3\}$.
\end{enumerate}
In particular, $\Supp\Delta$ is then a hyperplane section of $S\subset\PP^3$.
\end{corollary}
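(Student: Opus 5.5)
The corollary follows from Theorem~\ref{thm:main3} together with the strict version of Lemma~\ref{lem:nef}: $-(K_S+\Delta)$ is ample if and only if $\Delta\cdot E<1$ for every line $E$. Ampleness implies nefness, so $\Delta$ must be one of the items of Theorem~\ref{thm:main3}. We first discard items, then analyse the two families that survive.

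\textbf{Step 1: items with $K_S+\Delta\sq0$.} Every item marked $(\ast)$ has $-(K_S+\Delta)\sq0$, which is not ample. This disposes of all items with $\deg\Delta=3$.

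\textbf{Step 2: the remaining rigid items.} These are (3.c), (4.d), (5.b)--(5.e), (5.i), (6.a) and (6.c)--(6.e). By Remark~\ref{rem:structure}(d) all coefficients there equal $\half$. In each case I will exhibit a line $E$ with $\Delta\cdot E=1$, using Lemma~\ref{lem:intersections}. The witnesses I expect are as follows.
\begin{itemize}
\item (3.c): take $E\in\cN(L)$. Then $\Delta\cdot E=\half+\half=1$.
\item (4.d): take $E=F_{ij}$ with $i\ne j$. Then $T\cdot E=1$ and $G_j\cdot E=1$.
\item (5.b) and (5.c): take $E=R$. In (5.c) one gets $\half\cdot2-\half+\half=1$; in (5.b) one gets $C_R\cdot R/2=1$.
\item (5.d) and (5.e): again take $E=R$ or a suitable neighbour.
\item (5.i): take $E=R_1$. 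Then $\half\cdot2+\half\cdot C_{R_2}\cdot R_1=1+0$, since $R_1\in\cN(R_2)$.
\item (6.a): take a common neighbour of $L_1,L_2$, which exists by Lemma~\ref{lem:lines3}.
\item (6.c)--(6.e): take a neighbour meeting two components, or the third line of two meeting components, computed in a standard model.
\end{itemize}
This case check is the main, though routine, obstacle. For each shape I would pick a standard model via Lemma~\ref{lem:lines3}(a) and read off the intersections.

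\textbf{Step 3: item (5.a).} Here $\Delta=\eps_1C_R+\eps_2R$. The conditions are:
\begin{itemize}
\item on $R$: $\Delta\cdot R=2\eps_1-\eps_2<1$;
\item on the $16$ lines with $C_R\cdot E=1$, $R\cdot E=0$: $\Delta\cdot E=\eps_1<1$;
\item on the $10$ lines in $\cN(R)$: $\Delta\cdot E=\eps_2<1$.
\end{itemize}
Since $2\eps_1-\eps_2<1$ with $\eps_2<1$ already forces $\eps_1<1$, these reduce to exactly the stated conditions. In multiplicities they become $2/m_2-1/m_1>0$ with $m_2<\infty$, i.e.\ $m_1<2m_2<\infty$.

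\textbf{Step 4: item (6.b).} Here $\Delta=\eps_1L_1+\eps_2L_2+\eps_3L_3$. By Lemma~\ref{lem:lines3}(c), every other line meets exactly one $L_i$, which gives the condition $\eps_i<1$. On $L_i$ itself one gets $\eps_j+\eps_k-\eps_i<1$. Substituting $\eps=1-1/m$ turns this into the strict triangle inequality of Example~\ref{ex:triangle}.

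The final claim follows because in both surviving cases $C_R+R\sim-K_S$ and $L_1+L_2+L_3\sim-K_S$, so $\Supp\Delta$ is a plane section.
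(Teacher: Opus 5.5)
Your proposal follows the paper's proof: you use Lemma~\ref{lem:nef} to reduce to the items of Theorem~\ref{thm:main3}, discard the $(\ast)$ items and the rigid items by exhibiting a line with $\Delta\cdot E=1$ (your $F_{ij}$ in (4.d) works as well as the paper's $G_i$, $i\neq j$), and compute (5.a) and (6.b) directly. Two small corrections: in (5.d) one has $\Delta\cdot R=\half$, so the witness must be a common neighbour of $R$ and $L$; and in Step~3 the inequality $2\eps_1-\eps_2<1$ becomes $\frac{2}{m_1}-\frac{1}{m_2}>0$ (not $\frac{2}{m_2}-\frac{1}{m_1}>0$), which is what gives $m_1<2m_2$.
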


\begin{proof}
By Lemma~\ref{lem:nef}, $-(K+\Delta)$ is ample iff $\Delta\cdot E<1$ for all $E\in\cL$.
In the items marked $(\ast)$, $\Delta\cdot E=1$ for every line. In the other items
with fixed coefficients, a line $E$ with $\Delta\cdot E=1$ is: any $E\in\cN(L)$ in
(3.c), $G_i$, $i\neq j$, in (4.d), $R$ in (5.b), (5.c), (5.e), $R_1$ in (5.i), a common
neighbour of $R$ and $L$ in (5.d), a common neighbour of $L_1,L_2$ in (6.a), the third
line of $L_1,L_2$ in (6.c), (6.e), the third line of $L_3,L_4$ in (6.d). In (5.a) one
has $\Delta\cdot R=2\eps_1-\eps_2$, $\Delta\cdot E=\eps_2$ for $E\in\cN(R)$ and
$\Delta\cdot E=\eps_1\le\frac{1+\eps_2}2$ otherwise, in (6.b), $\Delta\cdot
L_i=\eps_j+\eps_k-\eps_i$, and $\Delta\cdot E=\eps_i$ for the lines $E\notin\{L_1,L_2,
L_3\}$ meeting $L_i$. These computations are justified in
Propositions~\ref{prop:case5a}-\ref{prop:case6} below.
\end{proof}

\begin{remark}[Computer verification]\label{rem:computer}
Independently of the proof in \S\ref{sec:proof}, the classification was re-derived by
Python 3 scripts at the level of divisor classes, in terms of rational arithmetic.
Taking Lemmas~\ref{lem:nef}
and~\ref{lem:deg} and adjunction as input, and writing classes as $aH-\sum_ib_iE_i$, the
computation
\begin{enumerate}[label=\textup{(\roman*)}]
\item lists the $27$ lines and the classes of all irreducible curves of degree $2,\dots,5$,
      namely the $1018$ nef classes with $D^2\ge\deg D-2$ \textup{(}cf.\
      Lemma~\ref{lem:curves3}\textup{)},
\item runs through all $1,523,161$ multisets of these classes with $r\ge2$ and total degree
      $\le6$ \textup{(}a line at most once\textup{)}, and discards those for which some line
      $E$ has $\Delta\cdot E>1$ for \emph{every} $\eps\in[\half,1]^r$, this leaves $13,105$
      supports,
\item exhibits for each of them a coefficient vector with all $m_i\le10$ or $m_i=\infty$
      satisfying the $27$ inequalities of Lemma~\ref{lem:nef},
\item computes the $W(E_6)$-orbits of these supports: there are $35$, and they are exactly
      the items of Theorem~\ref{thm:main3}, generated from their descriptions, with the sizes
      given in Table~\ref{tab:summary},
\item determines, by exact linear programming, the polytope
      $\{\eps\in[\half,1]^r : \Delta\cdot E\le1 \text{ for all } E\in\cL\}$ of each orbit: it
      is the stated point in all items except (5.a) and (6.b), and the stated region there.
\end{enumerate}
Since (ii) and (v) concern arbitrary coefficients in $[\half,1]$, this covers all $m_i$.
The same scripts also check Lemmas~\ref{lem:lines3}-\ref{lem:intersections}, the marks
$(\ast)$ and Corollary~\ref{cor:ample}.
\end{remark}

\subsection{Proof of Theorem~\ref{thm:main3}}\label{sec:proof}

\subsubsection{Sufficiency and first reductions}
For each item marked $(\ast)$ we check directly, using Lemma~\ref{lem:curves3} and
Notation~\ref{not:lines}, that $\Delta\sq-K_S$, hence $-(K_S+\Delta)\sq0$ is nef. For
instance, in (4.b) $H+G_j+(-K_S-E_j)\sim H+(2H-\sum_iE_i+E_j)+(3H-\sum_iE_i-E_j)=-2K_S$, in
(4.c) $H+G_i+G_j+F_{ij}\sim6H-2\sum_iE_i=-2K_S$, in (5.h) this is the definition of $C$,
which is a conic class by Lemma~\ref{lem:curves3}(ii) because
$(-2K_S-\sum_iL_i)^2=12-16+4=0$ and $\deg(-2K_S-\sum_iL_i)=2$, in (5.m)
$L_1+L_2\sim R_1+R_2\sim-K_S-R_3$, for (6.g) see the end of the proof of
Proposition~\ref{prop:case6}. The remaining items are shown to be nef in
Propositions~\ref{prop:case3}-\ref{prop:case6}.

It remains to show that every $\Delta$ with $-(K_S+\Delta)$ nef occurs in the list. From
now on we assume that $-(K_S+\Delta)$ is nef and $r\ge2$, we write $d_i:=\deg D_i$, and we
number the components so that $d_1=\max_id_i$.

\begin{lemma}\label{lem:reduction}
Under these assumptions the following hold.
\begin{enumerate}[label=\textup{(\alph*)}]
\item $\sum_id_i\le6$, hence $d_1\le5$.
\item If $d_1\ge4$, then $\eps_1=\half$ and $\sum_{i\ge2}\eps_id_i\le3-\frac{d_1}2$.
\item Exactly one of the following holds:
  \textbf{Case 0}: $d_1=5$,
  \textbf{Case 1}: $d_1=4$ and $D_1^2=4$,
  \textbf{Case 2}: $d_1=4$ and $D_1^2=2$,
  \textbf{Case 3}: $d_1=3$ and some component lies in $\lvert-K_S\rvert$,
  \textbf{Case 4}: $d_1=3$ and no component lies in $\lvert-K_S\rvert$ \textup{(}so every component of
  degree $3$ has self-intersection $1$\textup{)},
  \textbf{Case 5}: $d_1=2$,
  \textbf{Case 6}: $d_1=1$.
\end{enumerate}
\end{lemma}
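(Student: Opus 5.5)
The plan is to derive all three parts from the degree bound of Lemma~\ref{lem:deg} and the curve classification of Lemma~\ref{lem:curves3}. No line-by-line analysis is needed at this stage.

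For (a), Lemma~\ref{lem:deg} gives $\sum_i\eps_id_i=\deg\Delta\le3$. Every $\eps_i$ lies in $\cE$, so $\eps_i\ge\half$, and therefore $\half\sum_id_i\le3$, i.e.\ $\sum_id_i\le6$. Since $r\ge2$, there is at least one other component, of degree $\ge1$, which gives $d_1\le6-1=5$.

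For (b), assume $d_1\ge4$ and suppose $\eps_1>\half$. Since $\half$ is the only element of $\cE$ below $\frac23$, this forces $\eps_1\ge\frac23$, hence $\eps_1d_1\ge\frac83$. Every other component contributes at least $\half$, so $\deg\Delta>3$ as soon as $\frac83+\half>3$, which holds; this contradicts Lemma~\ref{lem:deg}. Hence $\eps_1=\half$. The bound $\sum_{i\ge2}\eps_id_i\le3-\frac{d_1}2$ is then just Lemma~\ref{lem:deg} rewritten.

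For (c), the split is by $d_1\in\{1,\dots,5\}$ together with Lemma~\ref{lem:curves3}. When $d_1=4$, part (iv) gives $D_1^2\in\{2,4\}$, which yields exactly Cases 1 and 2. When $d_1=3$, the dichotomy is whether some component lies in $\lvert-K_S\rvert$. If none does, part (iii) says that every degree-$3$ component has $D^2=1$, which is the parenthetical remark in Case 4. When $d_1\le2$ nothing needs to be checked. The cases are mutually exclusive and exhaust all possibilities: $d_1$ is a single well-defined number, and within $d_1=4$ and $d_1=3$ the subcases are complementary conditions.

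The only step needing care is (b). One must use the gap in $\cE$ rather than just $\eps_1\ge\half$, and include the remaining components' contribution (at least $\half$, since $r\ge2$) to reach a strict contradiction.
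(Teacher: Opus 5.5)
Your proof is correct and is essentially the paper's argument: (a) and (b) come from Lemma~\ref{lem:deg} together with $\eps_i\ge\half$ and the gap $\cE\cap(\half,\frac23)=\emptyset$, and (c) is read off from Lemma~\ref{lem:curves3}. In (b) the paper writes $\eps_1\le\frac{5}{2d_1}\le\frac58<\frac23$ directly rather than arguing by contradiction. The paper also notes that, by (b), the degree-$4$ component is unique in Cases~1 and~2, so the case does not depend on how components of maximal degree are numbered. With a fixed numbering, as in your argument, this is not needed for the statement itself.
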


\begin{proof}
(a) $\half\sum_id_i\le\deg\Delta\le3$ by Lemma~\ref{lem:deg}, and $d_1\le6-1$ since
$r\ge2$. (b) The other components contribute at least $\half$ to $\deg\Delta\le3$, so
$\eps_1\le\frac5{2d_1}\le\frac58<\frac23$, whence $\eps_1=\half$, the second assertion
follows. (c) follows from Lemma~\ref{lem:curves3}, in Cases~1 and~2 the component of degree
$4$ is unique by (b).
\end{proof}

\subsubsection{A component of degree at least \texorpdfstring{$4$}{4}}

\begin{proposition}[Case 0]\label{prop:case0}
If $d_1=5$, then $\Delta$ is as in \textup{(0.a)}.
\end{proposition}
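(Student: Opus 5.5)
By Lemma~\ref{lem:reduction}(a),(b), if $d_1=5$ then $\eps_1=\half$ and $\sum_{i\ge2}\eps_id_i\le 3-\frac52=\half$. Since every $\eps_i\ge\half$ and every $d_i\ge1$, this forces $r=2$, $d_2=1$ and $\eps_2=\half$. So $\Delta=\half D_1+\half R$ with $R=D_2\in\cL$, and $\deg\Delta=\frac52+\half=3$. Lemma~\ref{lem:deg} then gives $K_S+\Delta\sq0$, that is, $D_1+R\sim-2K_S$. Hence $D_1\in\lvert-2K_S-R\rvert$, which is exactly item (0.a).

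As a consistency check, $D_1^2=(-2K_S-R)^2=12-4-1=7$, which agrees with Lemma~\ref{lem:curves3}(v). This means the cases $D_1^2\in\{3,5\}$ never arise; we do not need to exclude them separately, since the linear-equivalence conclusion already determines the class. Sufficiency is immediate: $\Delta\sq-K_S$, so $-(K_S+\Delta)\sq0$ is nef. The curve $D_1$ can be taken smooth and irreducible by the converse part of Lemma~\ref{lem:curves3}.

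The only step needing any care is the equality case of Lemma~\ref{lem:deg}. It upgrades $\deg\Delta=3$ to $\Delta\sq-K_S$ via the Hodge index theorem, and because $\Pic S$ is torsion free this gives the integral relation $D_1+R\sim-2K_S$. Since that lemma is already proved, there is no real obstacle; the argument is purely numerical.
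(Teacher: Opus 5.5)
Your proposal is correct and follows the paper's proof essentially verbatim: Lemma~\ref{lem:reduction}(b) forces $\eps_1=\half$ and $\sum_{i\ge2}\eps_id_i\le\half$, hence $r=2$ with a single line $R$ of coefficient $\half$, so $\deg\Delta=3$ and the equality case of Lemma~\ref{lem:deg} gives $D_1\sim-2K_S-R$. The extra remarks (the check $D_1^2=7$, torsion-freeness of $\Pic S$, sufficiency) are accurate but not needed for this proposition.
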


\begin{proof}
By Lemma~\ref{lem:reduction}(b), $\eps_1=\half$ and $\sum_{i\ge2}\eps_id_i\le\half$. Hence
$r=2$, $R:=D_2$ is a line and $\eps_2=\half$. Then $\deg\Delta=3$, so $\Delta\sq-K_S$ by
Lemma~\ref{lem:deg}, i.e.\ $D_1\sim-2K_S-R$.
\end{proof}

\begin{proposition}[Case 1]\label{prop:case1}
If $d_1=4$ and $D_1^2=4$, then $\Delta$ is as in \textup{(1.a)} or \textup{(1.b)}.
\end{proposition}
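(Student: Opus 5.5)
We have $D_1=P\in|-K_S+R|$. By Lemma~\ref{lem:reduction}(b), $\eps_1=\half$ and $\sum_{i\ge2}\eps_id_i\le3-2=1$. Since every $\eps_i\ge\half$, the remaining components have total degree at most $2$. Hence they form one of three configurations: a single conic $C$ with coefficient $\half$; two lines $L_1,L_2$, both with coefficient $\half$; or a single line $L$ with $\eps_2\le 1$. In the first two configurations $\deg\Delta=3$, so Lemma~\ref{lem:deg} gives $\Delta\sq-K_S$, i.e.\ the other components sum to a divisor linearly equivalent to $-2K_S-P=-K_S-R$. If that divisor is an irreducible conic $C$, then $C\sim-K_S-R$ is a conic of the pencil $|-K_S-R|$, which gives (1.a). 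If it is $L_1+L_2\sim-K_S-R$, then $R+L_1+L_2\sim-K_S$, so $\{L_1,L_2\}$ is one of the five pairs completing $R$ to a tritangent triangle, which gives (1.b). One should also check that $L_i\neq R$; this is automatic because $R+L_1+L_2\sim -K_S$ with $L_i$ lines forces $L_i\ne R$.

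The main step is to exclude the third configuration, $\Delta=\half P+\eps_2 L$ with $L$ a line. By Lemma~\ref{lem:intersections}, $P\cdot E=2$ for the ten lines $E\in\cN(R)$. Lemma~\ref{lem:local}(b) applies to any such $E$ that is not a component of $\Delta$: since $\half P\cdot E=1$, condition (N1) forces $L\cdot E=0$. So it suffices to find some $E\in\cN(R)$ with $E\neq L$ and $L\cdot E\ge1$. We split according to the position of $L$.

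\begin{itemize}
\item If $L=R$, take $E\in\cN(R)$; then $\Delta\cdot E=1+\eps_2>1$.
\item If $L\in\cN(R)$, let $L'$ be the third line of $R$ and $L$. Then $L'\in\cN(R)$, $L'\neq L$ and $L\cdot L'=1$.
\item If $L\cdot R=0$, then $L\neq R$, and by Lemma~\ref{lem:lines3}(e) $L$ and $R$ have five common neighbours, all of which lie in $\cN(R)$.
\end{itemize}

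In each case some $E\in\cN(R)$ not equal to $L$ meets $L$, so $\Delta\cdot E\ge1+\eps_2>1$, and nefness fails.

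Sufficiency of (1.a) and (1.b) is already covered by the $(\ast)$ check. This completes the plan. I expect no real obstacle; the only care needed is the case split on the position of $L$ relative to $R$ in the third configuration.
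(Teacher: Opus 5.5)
Your proof is correct and follows essentially the same route as the paper. You reduce via Lemma~\ref{lem:reduction}(b) to $\Gamma=\Delta-\half P$ of degree at most $1$, identify (1.a) and (1.b) from $\deg\Delta=3$ and Lemma~\ref{lem:deg}, and rule out a single line $L$ by finding a line of $\cN(R)$ that meets $L$; your three-way split on the position of $L$ is just the paper's ``common neighbour of $R$ and $L$, or any $F\in\cN(R)$ if $L=R$''. The only slip is a citation: the exclusion step follows from (N1) directly, or from Lemma~\ref{lem:local}(a) or (c), not from Lemma~\ref{lem:local}(b).
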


\begin{proof}
By Lemma~\ref{lem:curves3}(iv), $P:=D_1\sim-K_S+R$ for a line $R$, by
Lemma~\ref{lem:reduction}(b), $\eps_1=\half$ and $\Gamma:=\Delta-\half P$ satisfies
$\deg\Gamma\le1$. Thus $\Gamma$ is $\eps C$ for a conic $C$, or $\eps L$ for a line $L$, or
$\eps L_1+\eps'L_2$ for lines $L_1,L_2$. By Lemma~\ref{lem:intersections}, $P\cdot F=2$
for every $F\in\cN(R)$, so by Lemma~\ref{lem:local}(c):
\begin{equation}\label{eq:case1}
  \text{no line } F\in\cN(R) \text{ with } F\not\subset\Supp\Delta
  \text{ meets a component of }\Gamma.
\end{equation}
If $\Gamma=\eps L$, let $F$ be a common neighbour of $R$ and $L$ (Lemma~\ref{lem:lines3},
any $F\in\cN(R)$ if $L=R$). Then $F$ is not a component, $F\in\cN(R)$, and $F$ meets
$L$, contradicting \eqref{eq:case1}. In the other two cases $\deg\Gamma\ge1$, so
$\deg\Gamma=1$, all coefficients of $\Gamma$ are $\half$, $\deg\Delta=3$, and by
Lemma~\ref{lem:deg}, $\Gamma\sq-K_S-\half P\sq\half(-K_S-R)$. Thus either $\Gamma=\half C$
with $C\sim-K_S-R$, which is (1.a), or $\Gamma=\half(L_1+L_2)$ with $L_1+L_2\sim-K_S-R$, which
is (1.b).
\end{proof}

\begin{proposition}[Case 2]\label{prop:case2}
If $d_1=4$ and $D_1^2=2$, then $\Delta$ is as in \textup{(2.a)}.
\end{proposition}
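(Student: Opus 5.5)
By Lemma~\ref{lem:curves3}(iv), $Q:=D_1\sim-2K_S-B_1-B_2$ for two disjoint lines $B_1,B_2$. Lemma~\ref{lem:reduction}(b) gives $\eps_1=\half$, and $\Gamma:=\Delta-\half Q$ then satisfies $\deg\Gamma\le1$. So, exactly as in Proposition~\ref{prop:case1}, $\Gamma$ is one of three things: $\eps C$ for a conic $C$, or $\eps L$ for a single line, or $\eps L+\eps' L'$ for two lines. In the last two shapes a degree count already pins things down, so the work is to locate the lines.

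The key input is Lemma~\ref{lem:intersections}: $Q\cdot B_1=Q\cdot B_2=3$. Hence $\half Q\cdot B_i=\frac32>1$. If some $B_i$ were not a component of $\Delta$, this alone would violate (N1) of Lemma~\ref{lem:local}, since all other contributions to $\Delta\cdot B_i$ are nonnegative. Therefore $B_1$ and $B_2$ are both components of $\Gamma$. This excludes $\Gamma=\eps C$ and $\Gamma=\eps L$, and it forces $\Gamma=\eps B_1+\eps' B_2$.

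Next, $\deg\Gamma=\eps+\eps'\le1$ together with $\eps,\eps'\ge\half$ gives $\eps=\eps'=\half$. Then $\deg\Delta=3$, so Lemma~\ref{lem:deg} says $\Delta\sim_{\Q}-K_S$. This is consistent, because $\half(Q+B_1+B_2)\sim-K_S$. The result is exactly (2.a).

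As a sanity check, we can also verify (N2) directly. On $B_1$ we have $\half Q\cdot B_1+\half B_2\cdot B_1=\frac32\le1+\half$, so the condition holds with equality, and the same computation works for $B_2$.

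I do not expect a real obstacle here. The only point needing care is the classification of the residual $\Gamma$. It rests on Lemma~\ref{lem:curves3}(i),(ii): every curve of degree $\le1$ is a line, and every degree $2$ curve is a conic. Once that is in place, the observation $Q\cdot B_i=3$ closes the argument immediately.
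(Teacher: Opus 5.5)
Your proof is correct and follows the paper's argument: Lemma~\ref{lem:reduction}(b) gives $\eps_1=\half$, then $\half\,Q\cdot B_i=\frac32>1$ and (N1) force $B_1,B_2\subset\Supp\Delta$, and the degree bound leaves only $\half(Q+B_1+B_2)$. The paper skips your case split on $\Gamma$ and simply notes that $\deg\Delta\ge 2+\half+\half=3$, so equality holds.
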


\begin{proof}
By Lemma~\ref{lem:curves3}(iv), $Q:=D_1\sim-2K_S-B_1-B_2$ with $B_1\cdot B_2=0$, and
$\eps_1=\half$. Since $\half(Q\cdot B_i)=\frac32>1$, condition (N1) forces
$B_1,B_2\subset\Supp\Delta$. Then $\deg\Delta\ge2+\half+\half=3$, so equality holds and
$\Delta=\half(Q+B_1+B_2)$.
\end{proof}

\subsubsection{A component of degree \texorpdfstring{$3$}{3}}

\begin{proposition}[Case 3]\label{prop:case3}
If $d_1=3$ and $A:=D_1\in\lvert-K_S\rvert$, then $\Delta$ is as in \textup{(3.a)}-\textup{(3.d)},
conversely, the boundaries in \textup{(3.c)} are nef.
\end{proposition}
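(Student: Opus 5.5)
Since $A\cdot E=1$ for every line $E$ (Lemma~\ref{lem:intersections}), the first step is to control $\eps_1$ and the residual boundary $\Gamma:=\Delta-\eps_1A$. By Lemma~\ref{lem:deg}, $3\eps_1+\deg\Gamma\le3$ with $\deg\Gamma\ge\half$. This gives $\eps_1\le\frac56<1$, hence (as $\half$ is the only element of $\cE$ below $\frac23$, and $\frac23\cdot3+\half\le3$ still allows $\eps_1=\frac23$) we must argue a bit more. Take any line $E$ not in $\Supp\Delta$ meeting a component of $\Gamma$; such a line exists since any curve meets some of the 27 lines and only finitely many lines are components. Condition (N1) gives $\eps_1+\half\le1$, so $\eps_1=\half$. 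Then $\deg\Gamma\le\frac32$, and every line $E\not\subset\Supp\Delta$ satisfies $\Gamma\cdot E\le\half$. In particular every component of $\Gamma$ meeting such an $E$ has coefficient $\half$ and meets $E$ with multiplicity $1$, and $E$ meets no other component of $\Gamma$.

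Next I will enumerate $\Gamma$ by its reduced degree $\sum_{i\ge2}d_i\le3$. Components of degree $3$ give $\Gamma=\half A'$ with $A'$ either in $|-K_S|$, which is (3.a), or with $A'\sim H$. The latter is excluded because $H\cdot G_j=2$ for a line $G_j$ that is not a component. For a single conic $C_R$, the condition $C_R\cdot R=2$ is violated unless $R$ is a component. If $\deg\Gamma<\frac32$, the coefficient of $C_R$ would have to exceed $\half$ with nothing else present, which is again impossible. So $\Gamma=\half C_R+\half R$, which is (3.b) (degree $3$, hence $\sim$-trivial). Conic plus a line $L\ne R$ fails at $R$. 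For lines only, $\Gamma=\eps L$ gives (3.c), where $\eps=\half$ is forced by testing a neighbour of $L$. For two lines $L_1,L_2$, I test a common neighbour $E$ (Lemma~\ref{lem:lines3}), which gives $\Gamma\cdot E=\eps_1+\eps_2\ge1>\half$ unless $E$ is a component. Since $\deg\Gamma\le\frac32$, the remaining possibility with two lines has all coefficients $\half$, and $E$ must be a third component. This yields three lines, and degree $3$ forces $\Gamma\sim_\Q\half(-K_S)$, so $L_1+L_2+L_3\sim-K_S$, which is (3.d). The case of three lines not forming a triangle I will kill with Lemma~\ref{lem:deg}: total degree $3$ forces $\sum L_i\sim-K_S$.

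For the converse in (3.c), I will check $\Delta\cdot E\le1$ for all lines. We have $\Delta\cdot L=\half-\half=0$, and for $E\ne L$, $\Delta\cdot E=\half+\half(L\cdot E)\le1$ since $L\cdot E\le1$. The items marked $(\ast)$ are already settled as $\sim_\Q$-trivial.

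The main obstacle is the two-line subcase with unequal coefficients, where a common neighbour might itself be a component. The plan there is to exploit that two lines have several common neighbours when disjoint (five, by Lemma~\ref{lem:lines3}(e)), so one of them lies outside the support, while in the meeting case the unique common neighbour is the third line. Degree bookkeeping then pins everything down.
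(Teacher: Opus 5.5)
Your proposal is correct and follows essentially the paper's route. Both use the bound $\Gamma\cdot E\le1-\eps_1\le\half$ for lines $E\not\subset\Supp\Delta$ (from $A\cdot E=1$), then split into a degree-$3$ component (3.a), a conic forcing its residual line $R$ into the support (3.b), and one, two or three lines: a common neighbour excludes the two-line case, and $\deg\Delta=3$ with Lemma~\ref{lem:deg} gives (3.d).

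Two small remarks. First, the existence of a non-component line meeting a component of $\Gamma$ should be justified by $\sum_{i\ge2}d_i\le3$: at most three lines are components, while each component of $\Gamma$ meets many lines. Finiteness of the set of lines does not give this, and the paper in any case never forces $\eps_1=\half$ up front. Second, the ``main obstacle'' you describe does not arise: when $\Gamma$ consists of exactly two lines, no other line is a component, so a common neighbour automatically lies outside the support.
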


\begin{proof}
Put $\Gamma:=\Delta-\eps_1A\neq0$. Since $\deg\Delta\le3$, we get $\eps_1\le\frac56$ and
$\deg\Gamma\le3-3\eps_1\le\frac32$. As $A\cdot E=1$ for every line, (N1) gives
\begin{equation}\label{eq:case3}
  \Gamma\cdot E\le1-\eps_1\le\half\qquad\text{for every line }E\not\subset\Supp\Delta.
\end{equation}
(i) If $\Gamma$ has a component $D$ of degree $3$, then $\deg\Gamma\ge\frac32$ forces
$\eps_1=\half$, $\Gamma=\half D$ and $\deg\Delta=3$, by Lemma~\ref{lem:deg}, $D\sim-K_S$:
(3.a).

(ii) If $\Gamma$ has a conic component $C\sim-K_S-R$, then $C\cdot R=2$ and
\eqref{eq:case3} force $R\subset\Supp\Delta$, now $\deg\Gamma\ge1+\half$ forces
$\Delta=\half(A+C+R)$: (3.b).

(iii) Otherwise $\Gamma=\sum_{j\ge2}\eps_jL_j$ with lines $L_j$, and $r-1\le3$ because
$\sum_{j\ge2}\eps_j\le\frac32$. If $r-1=3$, then $\deg\Gamma\ge\frac32$ forces all
coefficients to be $\half$ and $\deg\Delta=3$, so $L_2+L_3+L_4\sim-K_S$: (3.d). If
$r-1=2$, a common neighbour $F$ of $L_2,L_3$ is not a component, and
$\Gamma\cdot F\ge\eps_2+\eps_3\ge1$ contradicts \eqref{eq:case3}. If $r-1=1$, then
\eqref{eq:case3} for $F\in\cN(L_2)$ gives $\eps_2\le1-\eps_1$, so $\eps_1=\eps_2=\half$:
(3.c).

Conversely, for $\Delta=\half(A+L)$ we have $\Delta\cdot F=\half+\half L\cdot F\le1$ for
all lines $F\neq L$, and $\Delta\cdot L=0$.
\end{proof}

\begin{proposition}[Case 4]\label{prop:case4}
If $d_1=3$ and no component lies in $\lvert-K_S\rvert$, then $\Delta$ is as in
\textup{(4.a)}-\textup{(4.d)}, conversely, the boundaries in \textup{(4.d)} are nef.
\end{proposition}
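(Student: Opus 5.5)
Throughout, $T:=D_1\sim H$ in a standard model, so that by Lemma~\ref{lem:intersections} we have $T\cdot E_i=0$, $T\cdot F_{ij}=1$ and $T\cdot G_j=2$. Put $\Gamma:=\Delta-\eps_1T\neq0$. The plan is to first pin down $\eps_1$ and then exploit the six lines $G_j$, on which $T$ has degree $2$.

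\textbf{Fixing $\eps_1$ and the key constraint.} Since $\sum_id_i\le6$, at most three of the $G_j$ can be components, so some $G_j\not\subset\Supp\Delta$. Condition (N1) at this $G_j$ gives $2\eps_1\le1$, hence $\eps_1=\half$ and $\deg\Gamma\le\frac32$. Lemma~\ref{lem:local}(c) then yields the key constraint
\[
(\dagger)\qquad \text{every } G_j\not\subset\Supp\Delta \text{ is disjoint from every component of }\Gamma.
\]
The rest of the proof is a case split on the largest degree of a component of $\Gamma$. By hypothesis no component is anticanonical, and by $\deg\Gamma\le\frac32$ every component of $\Gamma$ has degree at most $3$.

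\textbf{Case split.}
\begin{itemize}
\item[(i)] \emph{$\Gamma$ has a degree-$3$ component $T'$.} Then $\Gamma=\half T'$ and $\deg\Delta=3$. Lemma~\ref{lem:deg} gives $T'\sim-2K_S-T$, which is (4.a).
\item[(ii)] \emph{$\Gamma$ contains a conic $C_R$.} Besides $C_R$, $\Gamma$ has room for at most one line, with coefficient $\half$. Hence at least five $G_j$ are not components, and by $(\dagger)$ we need $C_R\cdot G_j=0$ for them. By Lemma~\ref{lem:intersections} this means $G_j\in\cN(R)$, so $R$ meets at least five of the $G_i$. Lemma~\ref{lem:lines3}(f) then forces $R=E_j$, and the sixth line $G_j$ must be a component, since $C_{E_j}\cdot G_j=1$. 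Degree counting gives $\Delta=\half(T+G_j+C_{E_j})$, which is (4.b).
\item[(iii)] \emph{$\Gamma$ consists of at most three lines.} I will classify the lines $L$ compatible with $(\dagger)$:
\begin{itemize}
\item[$\circ$] $L=E_k$ meets five of the $G_i$, which is impossible;
\item[$\circ$] $L=F_{ab}$ meets exactly $G_a$ and $G_b$, so both of them must be components;
\item[$\circ$] $L=G_k$ is always allowed.
\end{itemize}
If some $F_{ab}$ occurs, then $\Gamma=\half(G_a+G_b+F_{ab})$, giving (4.c). Otherwise $\Gamma$ is a sum of $G$'s. If there are two, $G_i$ and $G_j$, then (N1) at $F_{ij}$ gives $\half+\eps_i+\eps_j>1$, a contradiction. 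Three $G$'s contain two, so the same argument applies. With a single $G_j$, (N1) at a line $F_{jk}$ (which meets $G_j$ and $T$ once each) forces $\eps=\half$, giving (4.d).
\end{itemize}

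\textbf{Converse and main obstacle.} For (4.d) I will check $\Delta\cdot E\le1$ line by line using Notation~\ref{not:model}:
\begin{itemize}
\item[$\circ$] $\Delta\cdot E_i\le\half$;
\item[$\circ$] $\Delta\cdot F_{ab}\le\half+\half$;
\item[$\circ$] $\Delta\cdot G_k=1$ for $k\neq j$, while $\Delta\cdot G_j=1-\half$.
\end{itemize}
The main obstacle I expect is the careful coefficient bookkeeping in (iii). A line could a priori carry a coefficient $\ge\frac23$, so at each step I will invoke (N1) at an explicit line (some $F_{jk}$, or $F_{ij}$) rather than only degree counting. I must also make sure that (N2) does not cut out any case when a $G_j$ is itself a component.
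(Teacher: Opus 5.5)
Your proof is correct and follows essentially the same route as the paper. You get $\eps_1=\half$ from (N1) at a non-component $G_j$, use the constraint that non-component $G_j$'s meet no component of $\Gamma$, and identify $R=E_k$ via Lemma~\ref{lem:lines3}(f) in the conic case. The only difference is the order of the cases: you treat the conic case first and exclude $E_k$ only when $\Gamma$ consists of lines, while the paper excludes $E_k$ and isolated $F_{ab}$ first and handles the conic last. This changes nothing of substance.
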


\begin{proof}
By Lemma~\ref{lem:curves3}(iii), $T:=D_1\sim H$ for a standard model. We use its lines
$E_i,F_{ij},G_j$, so that $T\cdot E_i=0$, $T\cdot F_{ij}=1$ and $T\cdot G_j=2$. Put
$\Gamma:=\Delta-\eps_1T$. Then $\deg\Gamma\le3-3\eps_1\le\frac32$, so $\Gamma$ has at
most three line components. Hence some $G_j$ is not a component, and (N1) at $G_j$ gives
$2\eps_1\le1$, i.e.\ $\eps_1=\half$. Now by (N1) 
\begin{equation}\label{eq:case4}
  \Gamma\cdot G_j=0\quad\text{for every } G_j\not\subset\Supp\Delta,
  \ \text{i.e.\ such a } G_j \text{ meets no component of }\Gamma.
\end{equation}
(i) If $\Gamma$ has a component $T'$ of degree $3$, then $\deg\Gamma\le\frac32$ forces
$\Gamma=\half T'$, and $\deg\Delta=3$ gives $T'\sim-2K_S-T$ by Lemma~\ref{lem:deg}: (4.a).
Note that $-2K_S-H\sim5H-2\sum_iE_i$ is the class $H'$ of the standard model contracting
$G_1,\dots,G_6$ (Lemma~\ref{lem:lines3}(a)), indeed $-K_S\sim3H'-\sum_jG_j$.

From now on all components of $\Gamma$ are lines or conics.

(ii) No $E_k$ is a component: otherwise, by \eqref{eq:case4}, the five lines $G_j$,
$j\neq k$, would be components.

(iii) If some $F_{ab}$ is a component, then by \eqref{eq:case4} so are $G_a$ and $G_b$,
as $\deg\Gamma\le\frac32$, we get $\Gamma=\half(F_{ab}+G_a+G_b)$: (4.c).

(iv) Otherwise every line component of $\Gamma$ is some $G_k$, and at most one occurs:
if $G_a,G_b$ \textup{(}$a\ne b$\textup{)} were components, then $F_{ab}$ would not be a
component and $\Delta\cdot F_{ab}\ge\half+\half+\half$. Suppose $\Gamma$ has a conic
component $C\sim-K_S-R$. By \eqref{eq:case4}, $0=C\cdot G_j=1-R\cdot G_j$, i.e.\ $R$ meets
$G_j$, for every $G_j$ that is not a component. By Lemma~\ref{lem:lines3}(f), $R$ cannot
meet all six $G_j$, hence exactly one $G_k$ is a component, $R$ meets the other five, and
$R=E_k$ by Lemma~\ref{lem:lines3}(f). Since $\deg\Gamma\le\frac32$, we obtain
$\Gamma=\half(C+G_k)$ with $C\in\lvert-K_S-E_k\rvert$: (4.b). If $\Gamma$ has no conic component,
then $\Gamma=\eps G_k$, and (N1) at $F_{kj}$ ($j\ne k$) gives $\half+\eps\le1$, so
$\eps=\half$: (4.d).

Conversely, for $\Delta=\half(H+G_k)$ we have $\Delta\cdot E_i\le\half$,
$\Delta\cdot F_{ij}=\half+\half F_{ij}\cdot G_k\le1$, $\Delta\cdot G_j=1$ for $j\neq k$,
and $\Delta\cdot G_k=\half$.
\end{proof}

\subsubsection{Conics and lines}
In Propositions~\ref{prop:case5a} and~\ref{prop:case5b} all components have degree
$\le2$ and at least one is a conic. For a conic $C$ with residual line $R$ recall from
Lemma~\ref{lem:intersections} that $C\cdot R=2$, $C\cdot E=0$ for $E\in\cN(R)$, and
$C\cdot E=1$ for the $16$ lines $E$ disjoint from $R$. By Convention~\ref{conv:setup}, a
conic component has coefficient $\half$ unless its coefficient is $\ge\frac23$.

\begin{proposition}[Case 5, a conic with coefficient $\ge\frac23$]\label{prop:case5a}
Suppose all components have degree $\le2$ and some conic component $C$, with residual
line $R$, has coefficient $\eps_C\ge\frac23$. Then $\Delta=\eps_CC+\eps R$ with
$2\eps_C-\eps\le1$, as in \textup{(5.a)}. Conversely, for every conic $C$ with residual
line $R$ and all $\eps_1,\eps_2\in\cE$, the divisor $-(K_S+\eps_1C+\eps_2R)$ is nef if and
only if $2\eps_1-\eps_2\le1$.
\end{proposition}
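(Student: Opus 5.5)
The plan has two parts. First I prove the converse by direct computation. Then I show the structural claim by exploiting that a conic with large coefficient leaves almost no room for other components near $R$.

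\emph{Converse.} Put $\Delta=\eps_1C+\eps_2R$ with $C\sim-K_S-R$. Lemma~\ref{lem:intersections} together with $R^2=-1$ and $C\cdot R=2$ gives the following values:
\begin{itemize}
\item $\Delta\cdot R=2\eps_1-\eps_2$;
\item $\Delta\cdot E=\eps_2\le1$ for the ten lines $E\in\cN(R)$, since $C\cdot E=0$ and $R\cdot E=1$;
\item $\Delta\cdot E=\eps_1$ for the sixteen lines disjoint from $R$.
\end{itemize}
By Lemma~\ref{lem:nef}, nefness is therefore equivalent to $2\eps_1-\eps_2\le1$ and $\eps_1\le1$, and the latter always holds. (If $\eps_1=1$, the condition forces $\eps_2=1$, which is allowed in $\cE$.)

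\emph{Structure.} Assume all components have degree $\le2$ and the conic $C=D_1$ has $\eps_C\ge\frac23$.

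The first step is to show that $R$ is a component. If it were not, (N1) at $R$ would give $\Delta\cdot R\ge 2\eps_C\ge\frac43>1$, a contradiction. So write $\Delta=\eps_CC+\eps R+\Gamma$, where $\Gamma$ collects the remaining components.

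Next, take any line $E\notin\Supp\Delta$ that is disjoint from $R$. There $C\cdot E=1$, so $\eps_C(C\cdot E)\ge\frac23>\half$. Lemma~\ref{lem:local}(c) then says $E$ meets no component other than $C$. In particular, every component of $\Gamma$ has zero intersection with every line disjoint from $R$ that is not itself a component.

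I then rule out each possible type of component of $\Gamma$.

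\emph{A line $L\neq R$.} If $L\cdot R=0$, Lemma~\ref{lem:lines3}(e) gives five common neighbours of $R$ and $L$, and more relevantly $L$ meets ten lines, of which those disjoint from $R$ are the ones to use. A simpler route is Lemma~\ref{lem:lines3}(c) applied to a tritangent triangle through $L$ not containing $R$: it produces lines meeting $L$ but not $R$. Since $\deg\Delta\le3$ bounds the number of components, at least one such line is not a component, which contradicts the previous step. If $L\cdot R=1$, a line meeting $L$ but not $R$ exists as well, for instance one from another tritangent plane through $L$, and the same argument applies.

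\emph{A conic $C'\sim-K_S-R'$.} Here $C'\cdot E=1$ for the sixteen lines disjoint from $R'$. Among the lines disjoint from both $R$ and $R'$, there are plenty that are not components, giving a contradiction.

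\emph{Degree bound.} I also expect to use $\deg\Delta\le3$: with $\eps_C\ge\frac23$ we get $\deg\Gamma\le3-\frac43-\half<\frac32$. This limits $\Gamma$ to at most two lines or one conic, so the non-component witnesses are easy to find.

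Once $\Gamma=0$, the converse computation already forces $2\eps_C-\eps\le1$.

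\emph{Main obstacle.} The hardest step is producing, for each possible extra component, a line that is disjoint from $R$, meets that component, and is not itself a component. I would handle this with a standard model in which $R=E_1$, using Lemma~\ref{lem:lines3}(a) and the explicit intersection table of Notation~\ref{not:model}. Given that $\Gamma$ has so few components, this should reduce to a short case check.
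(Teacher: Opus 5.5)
Your proposal is correct and follows essentially the paper's argument: (N1) at $R$ forces $R$ into the support, the degree bound $\deg\Gamma\le\frac76$ leaves only a single conic or at most two lines, Lemma~\ref{lem:local}(c) at non-component lines disjoint from $R$ rules out each of these, and the converse is the same direct computation. The one cosmetic difference is how you find witness lines for a line component $L$: you use the tritangent planes through $L$ and Lemma~\ref{lem:lines3}(c), which give five, resp.\ eight, neighbours of $L$ disjoint from $R$, whereas the paper reads the same counts off Lemma~\ref{lem:lines3}(b),(e); in particular no standard model is needed.
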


\begin{proof}
As $\eps_C(C\cdot R)=2\eps_C>1$, (N1) forces $R\subset\Supp\Delta$, let $\eps$ be its
coefficient and $\Gamma:=\Delta-\eps_CC-\eps R$. Then
$\deg\Gamma\le3-\frac43-\frac12=\frac76$, so $\Gamma$ is zero, or a single conic with its
coefficient, or it has at most two components, both lines. Let $\cM$ be the set of the
$16$ lines disjoint from $R$. Every $F\in\cM$ satisfies $\eps_C(C\cdot F)=\eps_C>\half$,
so by Lemma~\ref{lem:local}(c):
\begin{equation}\label{eq:case5a}
  \text{no line } F\in\cM \text{ that is not a component of } \Gamma
  \text{ meets a component of }\Gamma.
\end{equation}
If $\Gamma=\eps'C'$ for a conic $C'$ with residual line $R'$, then $C'\cdot F\ge1$ for
every $F\in\cM\backslash \cN(R')$, a set with at least $16-10=6$ elements,
contradicting \eqref{eq:case5a}. If $\Gamma$ has a line component $L$, then $L$ meets at
least five lines of $\cM$: if $L\in\cN(R)$, the eight lines of $\cN(L)$ other than $R$
and the third line of $R,L$ lie in $\cM$ (Lemma~\ref{lem:lines3}(b)), if $L\cdot R=0$, the
five lines of $\cN(L)$ that are not common neighbours of $L$ and $R$ lie in $\cM$
(Lemma~\ref{lem:lines3}(e)). At most one of them is a component of $\Gamma$, which
contradicts \eqref{eq:case5a}. Hence $\Gamma=0$.

Finally, for $\Delta=\eps_1C+\eps_2R$ we have $\Delta\cdot R=2\eps_1-\eps_2$,
$\Delta\cdot F=\eps_2$ for $F\in\cN(R)$, and $\Delta\cdot F=\eps_1$ for $F\in\cM$. So
$-(K+\Delta)$ is nef if and only if $2\eps_1-\eps_2\le1$.
\end{proof}

\begin{proposition}[Case 5, all conics with coefficient $\half$]\label{prop:case5b}
Suppose all components have degree $\le2$, at least one is a conic, and all conic
components have coefficient $\half$. Then $\Delta$ is as in \textup{(5.a)} \textup{(}with
$\eps_1=\half$\textup{)} or \textup{(5.b)}-\textup{(5.m)}, conversely, the boundaries in
\textup{(5.b)}-\textup{(5.e)} and \textup{(5.i)} are nef.
\end{proposition}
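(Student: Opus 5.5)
Throughout, $\Delta$ satisfies the hypotheses of the proposition. Let $C_1,\dots,C_k$ be the conic components, with residual lines $R_1,\dots,R_k$, and let $\Gamma$ be the line part of $\Delta$. Since every conic has coefficient $\half$, Lemma~\ref{lem:deg} gives $k+\deg\Gamma\le3$. In particular $k\le3$ and $\deg\Gamma\le 3-k$.

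The basic tool is a test at the residual lines. By Lemma~\ref{lem:intersections}, $C_i\cdot R_i=2$. Hence if $R_i$ is not a component, (N1) at $R_i$ gives $1+\sum_{D\neq C_i}\eps_D(D\cdot R_i)\le1$, so $R_i$ meets no other component. If instead $R_i$ is a component with coefficient $\eps$, (N2) reads $1+\sum_{D\ne C_i,R_i}\eps_D(D\cdot R_i)\le 1+\eps$. I will combine these tests with the formula $C_R\cdot E\in\{0,1\}$ for lines $E\neq R$, namely $C_R\cdot E=1-R\cdot E$, and with $C_R\cdot C_{R'}=1+R\cdot R'$. Whenever $\deg\Delta=3$, I will use that $\Delta\sq-K_S$, which by Lemma~\ref{lem:deg} turns the question into an identity of classes. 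I then split according to $k$, from $k=3$ down to $k=1$.

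\emph{The cases $k=3$ and $k=2$.} If $k=3$, then $\Gamma=0$ and $\deg\Delta=3$. So $\sum_i(-K_S-R_i)\sim-2K_S$, that is, $R_1+R_2+R_3\sim-K_S$, which is (5.j).

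If $k=2$ and $R_1=R_2=R$, then $C_1\cdot R=C_2\cdot R=2$. The residual test forces $R$ to be a component with coefficient $\eps$ satisfying $2\le1+\eps$, so $\eps=1$. This already uses up the degree budget, and we get (5.k).

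If $k=2$ and $R_1\neq R_2$, first suppose $\Gamma=0$. The test at $R_1$ gives $C_2\cdot R_1=1-R_1\cdot R_2=0$, so $R_1\cdot R_2=1$, which is (5.i). Nefness then follows from the values $C_R\cdot E$. If instead $\Gamma$ is nonzero, its total coefficient is at most $1$, so $\Gamma$ is either $\eps L$ or $\half(L_1+L_2)$.
\begin{itemize}
\item For $\Gamma=\eps L$, the residual tests force $L\in\{R_1,R_2\}$ and then produce a violation at the other residual line or at a common neighbour.
\item For $\Gamma=\half(L_1+L_2)$, we have $\deg\Delta=3$, so $L_1+L_2\sim R_1+R_2$. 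If $R_1\cdot R_2=0$, this class has the unique representative $R_1+R_2$, giving (5.l). If $R_1\cdot R_2=1$, then $R_1+R_2\sim-K_S-R_3$, whose line representatives are the five singular members of that conic pencil. This gives (5.l) or (5.m).
\end{itemize}

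\emph{The case $k=1$: setup.} Write $C=C_R$ and $\Gamma=\sum_j\eps_jL_j$ with $\sum_j\eps_j\le2$, so there are at most four lines. I branch on whether $R$ is a component.

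If $R$ is not a component, then no $L_j$ meets $R$, and every $L_j$ is disjoint from $R$ and distinct from it. I will then test (N1) at common neighbours of pairs $L_i,L_j$, using Lemma~\ref{lem:lines3}(d),(e). On such a line $C$ contributes $1-R\cdot E$, and this pins down the intersection pattern among the $L_j$. With one line we get (5.b), with two meeting lines we get (5.e), and two disjoint lines are excluded. With four lines we have $\deg\Delta=3$, and the class identity forces the $4$-cycle of (5.h). Three lines are excluded by a (N1) count at a suitable common neighbour.

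If $R$ is a component with coefficient $\eps$, then (N2) at $R$ gives $\sum_{L_j\ne R}\eps_jL_j\cdot R\le\eps$.
\begin{itemize}
\item If $\eps\ge\frac23$, the degree budget leaves only line components meeting $R$. Together with $\deg\Delta=3$ this yields (5.f), and the case with no further lines is (5.a) with $\eps_1=\half$.
\item If $\eps=\half$, at most one further line meets $R$. With one further line we get (5.c) or (5.d). With three further lines we have $\deg\Delta=3$, the class identity $L_1+L_2+L_3\sim-K_S$ holds, and this is (5.g). Two further lines are excluded by testing (N1) at a common neighbour.
\end{itemize}

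\emph{Main obstacle and converses.} I expect the main obstacle to be the $k=1$ enumeration. Excluding the non-listed three-line configurations requires choosing, in each pattern, the right common neighbour that avoids the support. The tool for this is Lemma~\ref{lem:lines3}(e),(f), which controls how many lines avoid all components. For the converses, (5.b)--(5.e) and (5.i) follow by tabulating $\Delta\cdot E$ over the orbits of lines relative to $R$ and the $L_j$, using Lemma~\ref{lem:intersections}.
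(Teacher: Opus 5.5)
The skeleton matches the paper's: you split by the number of conics, use the residual-line test, and use $\deg\Delta=3\Rightarrow\Delta\sq-K_S$. However, two steps rest on justifications that fail as written.

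\textbf{First gap ($k=2$, $R_1\neq R_2$, $\Gamma=\eps L$).} The residual tests do \emph{not} force $L\in\{R_1,R_2\}$. If $L\notin\{R_1,R_2\}$, the tests at $R_1$ and $R_2$ only give $R_1\cdot R_2=1$ and $L\cdot R_1=L\cdot R_2=0$, and this is consistent. For instance, $R_1=E_1$, $R_2=F_{12}$, $L=E_3$ gives $\Delta\cdot R_1=\Delta\cdot R_2=1$.

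The violation occurs at a neighbour of $L$ meeting neither residual line. In the example this is $F_{23}$, where $\Delta\cdot F_{23}=\half+\half+\eps>1$; it is a neighbour of $L$ alone, not a common neighbour of two of the lines involved.

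The missing idea is the count showing such a line always exists:
\begin{itemize}
\item $L$ meets the third line $R_3$ of $R_1,R_2$, by Lemma~\ref{lem:lines3}(c).
\item $\cN(L)\cap\cN(R_i)$ has five elements for $i=1,2$, by Lemma~\ref{lem:lines3}(e), and both sets contain $R_3$.
\item Hence at most nine of the ten neighbours of $L$ meet $R_1$ or $R_2$.
\end{itemize}
Only after this step does your test at the other residual line dispose of the remaining case $L=R_1$.

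\textbf{Second gap ($R$ a component with $\eps_R\ge\frac23$).} The degree budget gives at most two further lines. It neither makes them meet $R$ nor yields $\deg\Delta=3$, and you never exclude a single further line. Each step needs its own (N1) test, using that $\half C+\eps_R R$ already has intersection $\eps_R\ge\frac23$ with every line of $\cN(R)$:
\begin{itemize}
\item A further line $L$ disjoint from $R$ fails at its five common neighbours with $R$, of which at most one can be a component.
\item A single further line $L$ meeting $R$ fails at the third line of $R$ and $L$, where $\Delta$ has intersection number $\eps_R+\lambda>1$.
\item Two further lines meeting $R$ must therefore form the tritangent pair through $R$. Only then does (N2) at $R$ give $\lambda_2+\lambda_3\le\eps_R$, hence $\eps_R=1$ and (5.f).
\end{itemize}
The paper avoids this branch by splitting by $t$ rather than by $\eps_R$. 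For $t=2$, the common neighbour of $R$ and $L$ gives $\lambda_1+\lambda_2\le1$ for every value of $\lambda_1$.

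\textbf{Minor omission (three lines, $R$ not a component).} Your ``suitable common neighbour'' does not exist when the three lines form a tritangent triangle. That pattern must be excluded separately via Lemma~\ref{lem:lines3}(c): $R$ would meet one of the three lines, contradicting the residual test.
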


\begin{proof}
Let $C_1,\dots,C_s$ ($s\ge1$) be the conic components, with residual lines
$R_1,\dots,R_s$, and $L_1,\dots,L_t$ the line components, with coefficients
$\lambda_1,\dots,\lambda_t$. Then $\deg\Delta=s+\sum_j\lambda_j\le3$. Since
$\half(C_k\cdot R_k)=1$, By Lemma~\ref{lem:local}(c):
\begin{equation}\label{eq:case5b}
  \parbox{0.86\textwidth}{if $R_k$ is not a component, then no component other than
  $C_k$ meets $R_k$, in particular $R_l$ meets $R_k$ for every $l\neq k$ \textup{(}as
  $C_l\cdot R_k=1-R_l\cdot R_k$ if $R_l\neq R_k$, and $C_l\cdot R_k=2$ if $R_l=R_k$\textup{)}.}
\end{equation}

\emph{$s=3$.} Then $t=0$ and $\deg\Delta=3$, so $\half(C_1+C_2+C_3)\sq-K_S$ by
Lemma~\ref{lem:deg}, i.e.\ $R_1+R_2+R_3\sim-K_S$: (5.j).

\emph{$s=2$.} Then $\sum_j\lambda_j\le1$, so $t\le2$.

\emph{$t=0$}: By \eqref{eq:case5b}, $R_1\cdot R_2=1$: (5.i). Conversely,
$\Delta\cdot R_k=1+\half C_l\cdot R_k=1$ for $\{k,l\}=\{1,2\}$, and
$\Delta\cdot F=\half(2-R_1\cdot F-R_2\cdot F)\le1$ for all other lines $F$.

\emph{$t=1$}: Write $L:=L_1$, $\lambda:=\lambda_1$. Suppose first $L\notin\{R_1,R_2\}$.
Then by \eqref{eq:case5b}, $R_1\cdot R_2=1$ and $L\cdot R_1=L\cdot R_2=0$. Let $R_3$ be
the third line of $R_1,R_2$, then $L\neq R_3$, so $L$ meets $R_3$ by
Lemma~\ref{lem:lines3}(c). By Lemma~\ref{lem:lines3}(e), $\cN(L)\cap\cN(R_i)$ has five
elements for $i=1,2$, and $R_3$ lies in both, hence some $F\in\cN(L)$ meets neither $R_1$
nor $R_2$. Then $F$ is not a component and $\Delta\cdot F=\half+\half+\lambda>1$, a
contradiction. So $L=R_1$, say. If $R_2\neq R_1$, then \eqref{eq:case5b} at $R_2$ gives
$C_1\cdot R_2=0$ and $L\cdot R_2=0$, i.e.\ $R_1\cdot R_2=1$ and $R_1\cdot R_2=0$, which
is absurd. Hence $R_1=R_2=:R$, so $C_1,C_2$ are distinct members of $\lvert-K_S-R\rvert$, and by (N2) at
$R$ we get $2\le1+\lambda$, thus $\lambda=1$: (5.k).

\emph{$t=2$}: Now $\lambda_1+\lambda_2\le1$ forces $\lambda_1=\lambda_2=\half$ and
$\deg\Delta=3$, so $\Delta\sq-K_S$, i.e.
\begin{equation}\label{eq:case5b2}
  L_1+L_2\sim R_1+R_2.
\end{equation}
If $\{L_1,L_2\}=\{R_1,R_2\}$, this is (5.l). Otherwise we can assume
$L_1\notin\{R_1,R_2\}$, then also $L_2\notin\{R_1,R_2\}$ by \eqref{eq:case5b2}.
Intersecting \eqref{eq:case5b2} with $L_i$ gives $-1+L_1\cdot L_2=L_i\cdot R_1+L_i\cdot
R_2\ge0$, so $L_1\cdot L_2=1$ and $L_i\cdot R_k=0$ for all $i,k$, intersecting with $R_1$
then gives $0=-1+R_1\cdot R_2$. Thus $R_1\cdot R_2=1$ and, with $R_3$ the third line of
$R_1,R_2$, we get $L_1+L_2\sim R_1+R_2\sim-K_S-R_3$: (5.m).

\emph{$s=1$.} Write $C:=C_1$ and $R:=R_1$, now $\sum_j\lambda_j\le2$, so $t\le4$. If $R$
is a component, with coefficient $\lambda_R$, then for every line $F\neq R$
\begin{equation}\label{eq:case5b3}
  (\half C+\lambda_RR)\cdot F=\half+(\lambda_R-\half)\,R\cdot F\ \ge\ \half .
\end{equation}

\emph{$t=1$}: If $L_1=R$, we are in (5.a) with $\eps_1=\half$, which is nef for every
$\lambda_1$ by Proposition~\ref{prop:case5a}. Otherwise $L_1\cdot R=0$ by
\eqref{eq:case5b}, choose $F\in\cN(L_1)\backslash\cN(R)$ (Lemma~\ref{lem:lines3}(e)).
Then $F$ is not a component and $\Delta\cdot F=\half+\lambda_1$, so $\lambda_1=\half$:
(5.b). Conversely, for $\Delta=\half(C+L_1)$ we have $\Delta\cdot R=1$,
$\Delta\cdot L_1=0$, and $\Delta\cdot F\le\half+\half$ for all other lines $F$.

\emph{$t=2$, $R$ a component}: Say $L_1=R$ and put $L:=L_2$. By \eqref{eq:case5b3},
$\Delta\cdot F\ge\half+\lambda_2$ for $F\in\cN(L)\backslash\{R\}$, so
$\lambda_2=\half$, and a common neighbour $F$ of $R$ and $L$ has $C\cdot F=0$ and
$\Delta\cdot F=\lambda_1+\lambda_2$, so $\lambda_1=\half$. This is (5.c) if $L\cdot R=1$
and (5.d) if $L\cdot R=0$. Conversely, for $\Delta=\half(C+R+L)$ we have
$\Delta\cdot F=\half+\half L\cdot F\le1$ for $F\notin\{R,L\}$,
$\Delta\cdot R=\half+\half L\cdot R\le1$ and $\Delta\cdot L=0$.

\emph{$t=2$, $R$ not a component}: By \eqref{eq:case5b}, $L_1\cdot R=L_2\cdot R=0$. Every
common neighbour $F$ of $L_1,L_2$ is not a component, and $\Delta\cdot F=\half C\cdot
F+\lambda_1+\lambda_2\le1$ gives $\lambda_1=\lambda_2=\half$ and $C\cdot F=0$, i.e.\ $F$
meets $R$. If $L_1\cdot L_2=0$, then $R$ meets all five common neighbours of $L_1,L_2$,
so $R\in\{L_1,L_2\}$ by Lemma~\ref{lem:lines3}(e), which is absurd. Hence $L_1\cdot
L_2=1$ and their third line meets $R$: (5.e). Conversely, let $\Delta$ be as in (5.e). A
line $F\notin\{L_1,L_2\}$ meeting both $L_1$ and $L_2$ is their third line, for which
$C\cdot F=0$, hence $\Delta\cdot F\le1$ for all lines $F\notin\{R,L_1,L_2\}$, while
$\Delta\cdot R=1$ and $\Delta\cdot L_1=\Delta\cdot L_2=\half$.

\emph{$t=3$, $R$ a component}: Say $L_1=R$. The lines $L_2,L_3$ have a common neighbour
$F\neq R$ unless $R+L_2+L_3\sim-K_S$ (if $L_2\cdot L_3=0$ they have five common
neighbours, and if $L_2\cdot L_3=1$ their only common neighbour is their third line).
Such an $F$ is not a component, and $\Delta\cdot F\ge\half+\lambda_2+\lambda_3>1$ by
\eqref{eq:case5b3}. Hence $R+L_2+L_3\sim-K_S$, and by (N2) at $R$ we get
$1+\lambda_2+\lambda_3\le1+\lambda_1$, so $\lambda_1=1$ and $\lambda_2=\lambda_3=\half$:
(5.f).

\emph{$t=3$, $R$ not a component}: By \eqref{eq:case5b}, $L_1,L_2,L_3$ are disjoint from
$R$, so by Lemma~\ref{lem:lines3}(c) they do not form a tritangent triangle, and by
Lemma~\ref{lem:lines3}(d) two of them, say $L_1$ and $L_3$, are disjoint. At least four of
their five common neighbours are not components (only $L_2$ could be one), and, as in the
case $t=2$, each of these meets $R$. By Lemma~\ref{lem:lines3}(e), $R\in\{L_1,L_3\}$, a
contradiction.

\emph{$t=4$}: Now $\sum_j\lambda_j\le2$ forces $\lambda_j=\half$ and $\deg\Delta=3$, so
$\Delta\sq-K_S$, i.e.
\begin{equation}\label{eq:case5b4}
  L_1+L_2+L_3+L_4\sim-2K_S-C\sim-K_S+R.
\end{equation}
Intersecting with $R$ gives $\sum_jL_j\cdot R=0$. If $R$ is one of the $L_j$, say
$L_4=R$, then $L_1+L_2+L_3\sim-K_S$: (5.g). Otherwise all $L_j$ are disjoint from $R$, and
intersecting \eqref{eq:case5b4} with $L_j$ shows that $L_j$ meets exactly two of the
other three lines, so, after renumbering, $L_1,\dots,L_4$ form a $4$-cycle and
$C\in\lvert-2K_S-\sum_jL_j\rvert$: (5.h).
\end{proof}

\subsubsection{Lines only}

\begin{proposition}[Case 6]\label{prop:case6}
Suppose all components are lines, $\Delta=\sum_{j=1}^r\eps_jL_j$. Then $\Delta$ is as in
\textup{(6.a)}-\textup{(6.g)}, conversely, the boundaries in
\textup{(6.a)}-\textup{(6.e)} are nef.
\end{proposition}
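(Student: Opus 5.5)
I will organise the proof by the number $r$ of line components, which is at most $6$ because $\sum_j\eps_j\le3$ and $\eps_j\ge\half$ (Lemma~\ref{lem:deg}). The basic tool is Lemma~\ref{lem:local}, applied to lines $F$ that are not components, together with the configuration facts of Lemma~\ref{lem:lines3}. The key principle is this: if a non-component line $F$ meets two components, then both have coefficient $\half$ and $F$ meets no third component. Likewise, a component with coefficient $\ge\frac23$ forces every non-component line meeting it to avoid all other components.

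First I will treat $r=2$. The two lines always have a common neighbour $F$, namely the third line if they meet, or one of five if they are disjoint. Such an $F$ is not a component, so $\eps_1+\eps_2\le1$ and both coefficients are $\half$. Conversely, $\half(L_1+L_2)\cdot F\le1$ for every line $F$. This gives (6.a).

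Next I will treat $r=3$, splitting by the intersection graph, which has no triangle unless the three lines form a tritangent triangle (Lemma~\ref{lem:lines3}(d)).
\begin{itemize}
\item If $L_1+L_2+L_3\sim-K_S$, every other line meets exactly one $L_i$ (Lemma~\ref{lem:lines3}(c)). The remaining conditions are then (N2) at each $L_i$, namely $\eps_j+\eps_k-\eps_i\le1$, which is (6.b).
\item Otherwise, any two disjoint components have at least four common neighbours that are not components. A common neighbour meeting all three lines is excluded by Lemma~\ref{lem:local}(a).
\item Consider the path $L_1-L_2-L_3$ with $L_1\cdot L_3=0$. A common neighbour of $L_1$ and $L_3$ other than $L_2$ forces $\eps_1=\eps_3=\half$. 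The third line of $L_1,L_2$ meets $L_2$, and it forces $\eps_2=\half$. I also need to check that it does not meet $L_3$; this follows from Lemma~\ref{lem:lines3}(d). The result is (6.c).
\item For the configurations with fewer edges (three pairwise disjoint lines, or one meeting pair plus a disjoint line), I expect a contradiction. For three disjoint lines, their three common neighbours (Lemma~\ref{lem:lines3}(f)) each meet three components, contradicting Lemma~\ref{lem:local}(a). In the mixed case, I will find a line meeting all three components by an explicit computation in a standard model.
\end{itemize}

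For $r=4,5,6$ the degree bound pins down almost all coefficients. For $r=6$, and for $r=5$ apart from one coefficient, the equality $\deg\Delta=3$ gives $\Delta\sq-K_S$, and intersecting with each $L_j$ yields combinatorial equations. For instance, with all coefficients $\half$ and $r=6$, each $L_j$ meets exactly three of the other five lines. Classifying the resulting cubic graphs on six vertices, subject to Lemma~\ref{lem:lines3}, should give exactly the two cases of (6.g). For $r=5$, the coefficient sum $\le3$ forces one coefficient $\le1$ and the others equal to $\half$. If $\deg\Delta<3$ I will derive a contradiction via common neighbours. If $\deg\Delta=3$, the relation $\sum\eps_jL_j\sim-K_S$ and (N2) force the shape (6.f).

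For $r=4$ all coefficients are $\half$, since the sum is at most $3$ and any coefficient $\ge\frac23$ is excluded by a common-neighbour argument. I then run through the intersection graphs on four vertices, with at most one triangle, which must be tritangent. Lemma~\ref{lem:local}(a) requires that no non-component line meet three components. Using Lemma~\ref{lem:lines3}(e),(f) and a standard model, the survivors are the triangle plus a pendant line (6.d) and the $4$-cycle (6.e). The converse nefness checks for (6.a)--(6.e) are direct: list the non-component lines meeting two components and verify that none meets three.

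The main obstacle will be the exhaustive graph-theoretic elimination for $r=4$ and $r=6$. There one must exhibit, for each excluded configuration, a concrete non-component line meeting three components. I will do this by placing the configuration in a standard model via Lemma~\ref{lem:lines3}(a), computing with the explicit lines $E_i,F_{ij},G_j$, and relying on Remark~\ref{rem:computer} as a cross-check.
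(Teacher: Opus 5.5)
\paragraph{The gap is at $r=5$.} Your sentence ``the coefficient sum $\le3$ forces one coefficient $\le1$ and the others equal to $\half$'' is false. The slack is only $3-\frac52=\half$, but elements of $\cE$ can exceed $\half$ by as little as $\frac16$. So, for example, $(\frac23,\frac23,\frac23,\half,\half)$ and $(\frac34,\frac34,\half,\half,\half)$ both satisfy the degree bound. Your split into $\deg\Delta<3$ and $\deg\Delta=3$ with a single free coefficient therefore does not cover all boundaries with five line components, and the $r=5$ case as planned is incomplete. The rest is fine:
\begin{itemize}
\item Your treatments of $r=2$, $r=3$ and $r=6$ are sound.
\item For $r=4$ your claim that no coefficient is $\ge\frac23$ is true, by Lemma~\ref{lem:local}(c). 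Such a component $L_j$ would have to meet every other component, and the third line of each such pair would itself have to be a component. Three further lines cannot be paired off into tritangent triangles through $L_j$.
\end{itemize}

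\paragraph{How to close it.} Apply the same argument at $r=5$. Suppose $\eps_j\ge\frac23$.
\begin{itemize}
\item Every other component $L_k$ must meet $L_j$. Otherwise at least two of the five common neighbours of $L_j,L_k$ are non-components meeting both lines, contradicting Lemma~\ref{lem:local}(c).
\item The third line of $L_j,L_k$ must also be a component.
\item Hence the other four components are the two pairs completing tritangent triangles through $L_j$. This is already the shape of (6.f), and (N2) at $L_j$ then gives $\eps_j=1$.
\item A second coefficient $\ge\frac23$ on some $L_a$ in one of these triangles is impossible. A line of the other triangle through $L_j$ meets $L_j$, hence does not meet $L_a$ by Lemma~\ref{lem:lines3}(c).
\end{itemize}
This repairs the reduction. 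The all-$\half$ case still needs a configuration argument, which you only gesture at (``via common neighbours'').

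\paragraph{How the paper proceeds.} The paper avoids the coefficient bookkeeping. It first pins down the intersection graph using \eqref{eq:case6b}: for $r\le5$, no three components are pairwise disjoint. For $r=5$ this leaves a triangle or a pentagon. The pentagon and the wrong triangle configurations are killed by the third-line trick: the third line of two meeting components meets any component disjoint from both, by Lemma~\ref{lem:lines3}(c). Only then does the paper read off the coefficients, from common neighbours of disjoint pairs and from (N2) at the central line. The same third-line trick also replaces the standard-model computations you plan for the mixed case at $r=3$ and for the excluded graphs at $r=4$.
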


\begin{proof}
Since $\sum_j\eps_j\le3$, we have $r\le6$. Let $\Gamma$ be the graph with vertices
$L_1,\dots,L_r$ in which two vertices are adjacent if the lines meet. By
Lemma~\ref{lem:local}(a),(b):
\begin{equation}\label{eq:case6}
  \parbox{0.86\textwidth}{a line that is not a component meets at most two components,
  and if it meets two, both have coefficient $\half$.}
\end{equation}
By Lemma~\ref{lem:lines3}(d), every triangle of $\Gamma$ is a tritangent triangle, and
$\Gamma$ contains neither four pairwise adjacent vertices nor two triangles sharing an
edge. Moreover,
\begin{equation}\label{eq:case6b}
  \text{if } r\le5, \text{ then } \Gamma \text{ has no three pairwise non-adjacent
  vertices:}
\end{equation}
three pairwise disjoint components have three common neighbours
(Lemma~\ref{lem:lines3}(f)), and as $r\le5$ one of these is not a component, contradicting
\eqref{eq:case6}.

\emph{$r=2$.} A common neighbour of $L_1,L_2$ is not a component, so
$\eps_1=\eps_2=\half$: (6.a). Conversely, $\Delta\cdot F\le1$ for every line
$F\notin\{L_1,L_2\}$, and $\Delta\cdot L_i=-\half+\half L_1\cdot L_2\le0$.

\emph{$r=3$.} If $L_1+L_2+L_3\sim-K_S$, then every other line meets exactly one $L_i$
(Lemma~\ref{lem:lines3}(c)), so (N1) holds, and by (N2) at $L_i$ we get 
$\eps_j+\eps_k\le1+\eps_i$: (6.b). Otherwise, by \eqref{eq:case6b}, $\Gamma$ has one or two
edges. If it has one edge, say $L_1L_2$, the third line of $L_1,L_2$ is not a component
and meets $L_3$ by Lemma~\ref{lem:lines3}(c), contradicting \eqref{eq:case6}. If it has
two edges, say $L_1L_2$ and $L_2L_3$, the third lines of $L_1,L_2$ and of $L_2,L_3$ are
not components, so $\eps_1=\eps_2=\eps_3=\half$: (6.c). Conversely, in (6.c) a line
$F\not\subset\Supp\Delta$ meeting three components would meet $L_1$ and $L_2$, hence be
their third line, which does not meet $L_3$ by Lemma~\ref{lem:lines3}(c), so
$\Delta\cdot F\le1$, and $\Delta\cdot L_1=\Delta\cdot L_3=0$, $\Delta\cdot L_2=\half$.

\emph{$r=4$.} By the above, $\Gamma$ is one of: two disjoint edges, a triangle and an
isolated vertex, a path with three edges, a triangle with a pendant edge, a $4$-cycle.
(These are the graphs on four vertices without three pairwise non-adjacent vertices, with
no four pairwise adjacent vertices and no two triangles sharing an edge.)
\begin{itemize}[leftmargin=1.5em]
\item A triangle with an isolated vertex contradicts Lemma~\ref{lem:lines3}(c).
\item If $\Gamma$ consists of two disjoint edges $L_1L_2$, $L_3L_4$, or is a path
      $L_1L_2L_3L_4$, then the third line of $L_1,L_2$ is not a component, and it meets
      $L_4$, which is disjoint from $L_1,L_2$, by Lemma~\ref{lem:lines3}(c), this
      contradicts \eqref{eq:case6}.
\item If $\Gamma$ is a triangle $L_1L_2L_3$ with a pendant edge $L_3L_4$, the third line
      of $L_3,L_4$ is not a component, so $\eps_3=\eps_4=\half$. The lines $L_1,L_4$ are
      disjoint, and among their five common neighbours only $L_3$ is a component, so
      $\eps_1=\half$, and likewise $\eps_2=\half$: (6.d). Conversely, every line
      $F\not\subset\Supp\Delta$ meets exactly one of $L_1,L_2,L_3$, so $\Delta\cdot F\le1$,
      and $\Delta\cdot L_1=\Delta\cdot L_2=\half$, $\Delta\cdot L_3=1$,
      $\Delta\cdot L_4=0$.
\item If $\Gamma$ is a $4$-cycle $L_1L_2L_3L_4$, then only two of the five common
      neighbours of the disjoint lines $L_1,L_3$ are components ($L_2$ and $L_4$), so
      $\eps_1=\eps_3=\half$, and likewise $\eps_2=\eps_4=\half$: (6.e). Conversely, a line
      $F\not\subset\Supp\Delta$ meeting three components meets two adjacent ones,
      $L_j$ and $L_{j+1}$, hence is their third line, but by Lemma~\ref{lem:lines3}(c)
      this third line meets neither of the other two components, as each of them meets
      $L_j$ or $L_{j+1}$. So $\Delta\cdot F\le1$, and $\Delta\cdot L_j=\half$ for all $j$.
\end{itemize}

\emph{$r=5$.} By \eqref{eq:case6b}, $\Gamma$ contains a triangle or is a $5$-cycle (a
graph on five vertices without triangles and without three pairwise non-adjacent vertices
is a $5$-cycle). If $\Gamma$ is a $5$-cycle $L_1\cdots L_5$, the third line of $L_1,L_2$
is not a component and meets $L_4$, which is disjoint from $L_1,L_2$, by
Lemma~\ref{lem:lines3}(c), this contradicts \eqref{eq:case6}. So let
$L_1+L_2+L_3\sim-K_S$. By Lemma~\ref{lem:lines3}(c), each of $L_4,L_5$ meets exactly one of
$L_1,L_2,L_3$. If they meet different ones, say $L_4\cdot L_1=L_5\cdot L_2=1$, then
$L_4\cdot L_5=1$ by \eqref{eq:case6b} applied to $L_3,L_4,L_5$, the third line of
$L_4,L_5$ is not a component, but it meets one of $L_1,L_2,L_3$, contradicting
\eqref{eq:case6}. Hence both meet the same line, say $L_1$, then $L_4\cdot L_5=1$ by
\eqref{eq:case6b} applied to $L_2,L_4,L_5$, so $L_1+L_4+L_5\sim-K_S$ by
Lemma~\ref{lem:lines3}(d). The disjoint lines $L_2,L_4$ have $L_1$ as the only component
among their common neighbours, so $\eps_2=\eps_4=\half$, likewise $\eps_3=\eps_5=\half$.
Finally (N2) at $L_1$ implies $2\le1+\eps_1$, so $\eps_1=1$: (6.f) with $R=L_1$.

\emph{$r=6$.} Now $\sum_j\eps_j\le3$ forces $\eps_j=\half$ and $\deg\Delta=3$, so
$\sum_jL_j\sim-2K_S$, intersecting with $L_i$ shows that each $L_i$ meets exactly three of
the others, i.e.\ $\Gamma$ is $3$-regular. A $3$-regular graph on six vertices is
$K_{3,3}$ or a triangular prism: if it contains a triangle, the three remaining vertices
span $(9-3)/2=3$ edges, so they form a second triangle and each vertex has exactly one
neighbour in the other triangle, if it is triangle-free, the three neighbours of a vertex
$v$ are pairwise non-adjacent and each of them is adjacent to $v$ and to the two vertices
not adjacent to $v$, which gives $K_{3,3}$. If $\Gamma$ is a prism, $\Delta$ is as in the
second alternative of (6.g), if $\Gamma=K_{3,3}$, then $L_1,L_2,L_3$ (say) are pairwise
disjoint and $L_4,L_5,L_6$ are common neighbours of them, hence all of them
(Lemma~\ref{lem:lines3}(f)), which is the first alternative.

Conversely, in both alternatives of (6.g) the graph $\Gamma$ is $3$-regular, by
Lemma~\ref{lem:lines3}(f), resp.\ (c). Hence $(\sum_jL_j)^2=-6+2\cdot9=12$ and
$\deg\sum_jL_j=6$, so equality holds in the Hodge index inequality
$(\deg\sum_jL_j)^2\ge3(\sum_jL_j)^2$, and $\sum_jL_j\sim-2K_S$, i.e.\ $\Delta\sq-K_S$.
\end{proof}

\subsubsection{Conclusion of the proof}
By Lemma~\ref{lem:reduction}(c), every boundary $\Delta$ with $r\ge2$ and $-(K_S+\Delta)$
nef falls into one of Cases 0-6. Propositions~\ref{prop:case0}-\ref{prop:case6} show
that $\Delta$ is then one of the items of Theorem~\ref{thm:main3}, with the stated
coefficients (Case~5 being covered by Propositions~\ref{prop:case5a}
and~\ref{prop:case5b}), together with the sufficiency proved above, this gives the
equivalence and the uniqueness of the coefficients outside (5.a), (6.b). Finally, the
configurations of lines required in each item exist on every smooth cubic surface, since
the intersection graph of the $27$ lines does not depend on $S$, and the required
irreducible curves exist by Lemma~\ref{lem:curves3}, for (3.a) and (5.k) we take two
distinct irreducible members of $\lvert-K_S\rvert$, resp.\ of the pencil $\lvert-K_S-R\rvert$, which has only
five reducible members. \qed

\section{Surfaces of degree 4}

Throughout this section $S$ is a smooth del Pezzo surface of degree $4$ over an algebraically closed field of characteristic $0$, and
\[
  \Delta_\epsilon=\sum_{i=1}^{r}\epsilon_i D_i,\qquad \epsilon_i=1-\frac{1}{m_i},\quad m_i\in\mathbb Z_{\geq 2},
\]
where $D_1,\dots,D_r$ are distinct irreducible curves and $r\geq 2$. We write $D=\lceil\Delta_\epsilon\rceil=D_1+\dots+D_r$ for the support of $\Delta_\epsilon$ and $\deg C=-K_S\cdot C$ for the anticanonical degree of a curve or divisor class $C$. The pair $(S,\Delta_\epsilon)$ is a \emph{del Pezzo orbifold} if $-(K_S+\Delta_\epsilon)$ is ample.

Since every $\epsilon_i\geq\frac12$, a necessary condition is that $-(K_S+\frac12D)$ be nef (Lemma~\ref{lem:criterion4}). Theorem~\ref{thm:dP} classifies all del Pezzo orbifolds with $r\geq2$: there are only five types, and for each of them we determine the admissible weights. For completeness, and because this is what a case-by-case analysis according to the component of largest degree produces, Theorem~\ref{thm:nef} lists all reduced curves $D$ with $r\geq 2$ components for which $-(K_S+\frac12D)$ is nef: there are $106$ types.

\subsection{Notations}
Fix a birational morphism $\beta\colon S\to\PP^2$ contracting disjoint $(-1)$-curves $E_1,\dots,E_5$, put $H=\beta^*\mathcal O_{\PP^2}(1)$, and write $E_I=\sum_{i\in I}E_i$ for $I\subseteq\{1,\dots,5\}$, for instance $E_{1234}=E_1+E_2+E_3+E_4$ and $-K_S=3H-E_{12345}$. The sixteen $(-1)$-curves, which we call \emph{lines}, are
\[
  E_i\ (1\le i\le 5),\qquad L_{ij}=H-E_i-E_j\ (1\le i<j\le 5),\qquad L_0=2H-E_{12345},
\]
and we denote by $\mathcal L$ the set of lines. We call the wo lines $\ell\neq\ell'$ with $\ell\cdot\ell'=1$  \emph{neighbours}. A \emph{conic class} is a class $F$ with $F^2=0$ and $\deg F=2$, there are ten of them, namely $H-E_i$ and $2H-E_{\{1,\dots,5\}\smallsetminus\{i\}}$, and for a conic class $F$ we put $F'=-K_S-F$, which is again a conic class. A \emph{twisted cubic class} is a class $T$ with $T^2=1$ and $\deg T=3$, for instance $H$, then $\ell_T:=-K_S-T$ is a line with $T\cdot\ell_T=2$ (for instance $\ell_H=L_0$).

The following facts are standard and easily checked in the above coordinates.
\begin{enumerate}[label=\textup{(F\arabic*)},leftmargin=*]
\item\label{F1} The lines are exactly the classes $x$ with $x^2=K_S\cdot x=-1$. Two distinct lines meet in at most one point, transversally, and every line has exactly five neighbours. Moreover $\sum_{\ell\in\mathcal L}\ell=-4K_S$, and for every line $\ell$ the sum of its five neighbours is $-2K_S-3\ell$.
\item\label{F2} No three lines are pairwise neighbours: otherwise $x=\ell_1+\ell_2+\ell_3$ would satisfy $x^2=3>(\deg x)^2/K_S^2=\frac94$, contradicting the Hodge index theorem. Two disjoint lines have exactly two common neighbours, and these are disjoint from each other. Three pairwise disjoint lines have exactly one common neighbour.
\item\label{F3} If $F$ is a conic class and $\ell$ is a line, then $F\cdot\ell\in\{0,1\}$, the eight lines with $F\cdot\ell=0$ are the components of the four singular members of the conic bundle $|F|$, each of which is a pair of neighbours. If $F\neq G$ are conic classes, then $F\cdot G=2$ if $G=F'$, and $F\cdot G=1$ otherwise.
\item\label{F4} The cone of curves of $S$ is generated by the lines, and the lines span $\operatorname{Pic}(S)_{\Q}$. Hence an $\mathbb R$-divisor is nef (resp.\ ample) if and only if its intersection number with every line is $\geq0$ (resp.\ $>0$), and a divisor class is determined by its intersection numbers with the lines. Numerical and linear equivalence coincide on $S$.
\item\label{F5} The group $W=W(D_5)$ of isometries of $\operatorname{Pic}(S)$ fixing $K_S$ has order $1920$. It acts on $\mathcal L$ as the full automorphism group of the intersection graph of the lines (the Clebsch graph), and two choices of $\beta$ differ by an element of $W$.
\end{enumerate}
Two reduced curves $D=\sum_i D_i$ and $D'=\sum_i D'_i$ have the same \emph{type} if some $w\in W$ maps the multiset $\{[D_1],\dots,[D_r]\}$ of classes of components of $D$ onto that of $D'$. All conditions considered in this section only depend on the type of $D$ (and on the weights).

\subsection{Irreducible curves}

We recall the following result from \cite{dandapat2026classification}.

\begin{lemma}\label{lem:irreducible}
Let $C\subset S$ be an irreducible curve. Then $C$ is a line, or a smooth conic \textup{(}$C^2=0$, $\deg C=2$\textup{)}, or $C$ is nef with $C^2>0$. Up to $W$, the classes of irreducible curves of degree at most $8$ are the $23$ classes of Table~\ref{tab:irreducible}. Each of them other than $\ell$ is base point free, its general member is a smooth irreducible curve, and it moves in a linear system of positive dimension.
\end{lemma}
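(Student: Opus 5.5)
The statement has three parts: the trichotomy for irreducible curves, the list of $23$ classes of degree at most $8$ up to $W$, and the assertion that each non-line class is base point free with smooth irreducible general member moving in positive dimension. I will prove them in that order.

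For the trichotomy, let $C$ be irreducible. If $C^2<0$, adjunction gives $C^2+K_S\cdot C=2p_a(C)-2\ge -2$. Since $-K_S$ is ample, $K_S\cdot C\le -1$, so $C^2=K_S\cdot C=-1$ and $C$ is a line by \ref{F1}. If $C^2\ge 0$, then $C$ meets every other irreducible curve nonnegatively, so $C$ is nef. If moreover $C^2=0$, Riemann--Roch shows $C$ moves. The Hodge index inequality $(\deg C)^2\ge 4C^2$, combined with $C^2\equiv \deg C\pmod 2$ and $C^2\ge \deg C-2$, then forces $\deg C=2$ and $p_a=0$, so $C$ is a smooth conic.

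For the classification I would bound the numerical invariants of a nef class $C$ of degree $d\le 8$. Adjunction gives $d-2\le C^2$, Hodge index gives $4C^2\le d^2$, and parity gives $C^2\equiv d\pmod 2$. By \ref{F4}, nefness is equivalent to $C\cdot \ell\ge 0$ for all sixteen lines. Writing $C=aH-\sum b_iE_i$, the degree bound $3a-\sum b_i\le 8$ together with nefness bounds $a$; for instance $a\le 2d$ suffices after intersecting with suitable lines, or one can use that $C\cdot(\sum_\ell \ell)=4d$ with all terms nonnegative. This leaves a finite enumeration, which I would reduce modulo $W$ (\ref{F5}) by choosing a canonical representative. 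The natural choice is the dominant chamber $a\ge b_1+b_2+b_3$, $b_1\ge\dots\ge b_5\ge 0$, which can be reached via Cremona reflections and permutations. Listing the dominant solutions should reproduce Table~\ref{tab:irreducible}.

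For the geometric properties I would argue as follows.
\begin{enumerate}
\item Positive dimension follows from Riemann--Roch, $h^0(C)\ge 1+\tfrac12(C^2+d)\ge 2$.
\item Base point freeness follows because a nef divisor on a del Pezzo surface of degree at least $2$ is base point free. Concretely, one writes each dominant class as a nonnegative combination of $H$, $H-E_1$, $2H-E_1-E_2-E_3$, $-K_S$, and so on, each of which is visibly free, and uses that a sum of free linear systems is free.
\item Smooth irreducibility of the general member then follows from Bertini when $C^2>0$. A free system with $C^2>0$ is not composed with a pencil, because its image is a surface.
\end{enumerate}

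The step I expect to be the main obstacle is the converse direction of the enumeration. Each numerically admissible nef class must actually contain an irreducible curve, and the list must be exactly $23$. Bertini handles existence for the nef classes with $C^2>0$, but classes that pass the numerical test and yet correspond to nothing irreducible must be excluded; the classes $2F$ with $F$ a conic are the main example. That exclusion is precisely the $C^2=0$ analysis in the first step. The $W$-orbit bookkeeping is the other delicate point, and I would double-check it by counting orbit sizes against the total number of classes of each $(d,C^2)$.
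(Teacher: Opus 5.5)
Your proposal is correct and follows essentially the same route as the paper. Both arguments use adjunction and parity for the trichotomy, an a priori bound on $a$ and the $b_i$ to make the nef classes of degree $\le 8$ finite, a $W$-orbit enumeration, base point freeness by writing classes as sums of free classes, Bertini for $C^2>0$, and Riemann--Roch for positive dimension.

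The only slip is cosmetic. The class $2H-E_1-E_2-E_3$ is a twisted cubic class lying outside your dominant chamber. The nef part of that chamber is generated by $H$, $H-E_1$, $2H-E_{12}$, $3H-E_{123}$, $3H-E_{1234}$ and $-K_S$. These six classes are free and form a $\mathbb{Z}$-basis of $\operatorname{Pic}(S)$, so every dominant nef class is a nonnegative integral combination of free classes, which is what your argument needs.
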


\begin{proof}
If $C^2<0$, adjunction gives $-2\le C^2+K_S\cdot C\le C^2-1$, hence $C^2=K_S\cdot C=-1$ and $C$ is a line by \ref{F1}. Otherwise $C$ is nef. If moreover $C^2=0$, then $0\le 2p_a(C)=2+K_S\cdot C$ together with $K_S\cdot C\equiv C^2 \pmod 2$ gives $\deg C=2$ and $p_a(C)=0$, so $C$ is a smooth conic.

Conversely, let $x=aH-\sum_i b_iE_i$ be a nef class of degree $d\geq1$. For $j\neq i$ the classes $H-E_j$ and $2H-E_{\{1,\dots,5\}\smallsetminus\{j\}}$ are effective with sum $-K_S$, and $E_i$ is a component of a member of $|H-E_j|$, hence $0\le b_i=x\cdot E_i\le x\cdot(H-E_j)\le d$ and $3a=d+\sum_ib_i\le 6d$. So there are finitely many nef classes of degree $\le 8$, their $W$-orbits are computed in \S\ref{subsec:computer}, and those with $x^2>0$ are the $21$ orbits in Table~\ref{tab:irreducible} other than $\ell$ and $F$. By the first part, it remains to show that every conic class and every such class is the class of a smooth irreducible curve. The descriptions in the table exhibit each class as a sum of pullbacks of base point free classes under morphisms from $S$ to $\PP^1$, $\PP^2$, $\PP^1\times\PP^1$, $\mathbb F_1$ or to del Pezzo surfaces of degree $\geq 4$ (on which $-K$ is very ample), hence each class is base point free. If $x^2>0$, the morphism defined by $|x|$ has a surface as image, so its general member is smooth and irreducible by Bertini's theorem, for a conic class it is a smooth fibre of the conic bundle. Finally $h^0(x)\geq\chi(x)=1+\frac12(x^2+\deg x)\geq2$, since $h^2(x)=h^0(K_S-x)=0$.
\end{proof}

\subsection{The nefness criterion}

\begin{lemma}\label{lem:criterion4}
Let $D$ be a reduced curve on $S$ and put $N_D=-2K_S-D$, so that $-(K_S+\frac12D)=\frac12N_D$.
\begin{enumerate}[label=\textup{(\alph*)}]
\item $-(K_S+\frac12D)$ is nef $\iff$ $D\cdot\ell\leq 2$ for every line $\ell$ $\iff$ $N_D$ is nef. In this case $\deg D\le 8$, $\deg D\neq 7$, and $\deg D=8$ if and only if $D\sim-2K_S$.
\item $-(K_S+\frac12D)$ is ample $\iff$ $D\cdot\ell\leq 1$ for every line $\ell$ $\iff$ $N_D$ is ample.
\item If $-(K_S+\Delta_\epsilon)$ is ample for some $\epsilon_i\in[\frac12,1]$, then $-(K_S+\frac12D)$ is nef, and moreover $D\cdot\ell\leq1$ for every line $\ell\not\subset D$.
\end{enumerate}
\end{lemma}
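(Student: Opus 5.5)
The three parts all rest on the line criterion \ref{F4}; the only non-formal step is the degree statement at the end of (a).

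For (a), since $-(K_S+\frac12D)=\frac12N_D$, nefness of $-(K_S+\frac12D)$ is the same as nefness of $N_D$. By \ref{F4} this holds if and only if $N_D\cdot\ell\ge0$ for every line $\ell$. As $-2K_S\cdot\ell=2$, this reads $D\cdot\ell\le2$.

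For the degree statements, I argue as in Lemma~\ref{lem:deg}.
\begin{itemize}
\item \emph{Upper bound.} Nefness of $N_D$ and ampleness of $-K_S$ give $0\le N_D\cdot(-K_S)=8-\deg D$.
\item \emph{Equality.} If $\deg D=8$, then $N_D\cdot(-K_S)=0$ while $N_D^2\ge0$. The Hodge index theorem then gives $N_D\equiv0$, so $D\sim-2K_S$ by \ref{F4}. The converse is clear.
\item \emph{Excluding $\deg D=7$.} Here $N_D$ is a nef class of degree $1$. By the Hodge index theorem $N_D^2\le\frac{1}{4}$, and $N_D^2\ge0$ is an integer, so $N_D^2=0$. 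Adjunction gives $N_D^2+K_S\cdot N_D\equiv0\pmod 2$, i.e.\ $N_D^2\equiv\deg N_D=1\pmod2$. This contradicts $N_D^2=0$.
\end{itemize}

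For (b), the same computation with strict inequalities and the ample part of \ref{F4} gives $N_D\cdot\ell>0$, i.e.\ $D\cdot\ell<2$, i.e.\ $D\cdot\ell\le1$, since all quantities are integers.

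For (c), assume $-(K_S+\Delta_\epsilon)$ is ample with all $\epsilon_i\ge\frac12$. Let $\ell$ be a line.
\begin{itemize}
\item \emph{$\ell\not\subset D$.} Then $D_i\cdot\ell\ge0$ for every $i$, so
\[
\tfrac12 D\cdot\ell\le\Delta_\epsilon\cdot\ell<1 .
\]
Hence $D\cdot\ell\le1$, which is the second claim and in particular gives $D\cdot\ell\le2$.
\item \emph{$\ell=D_j$.} Then $D\cdot\ell=-1+\sum_{i\neq j}D_i\cdot\ell$. Ampleness gives $-\epsilon_j+\sum_{i\ne j}\epsilon_iD_i\cdot\ell<1$. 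Since $\epsilon_j\le1$ and $\epsilon_i\ge\frac12$, this yields $\frac12\sum_{i\neq j}D_i\cdot\ell<2$, so $\sum_{i\neq j}D_i\cdot\ell\le3$. Hence $D\cdot\ell\le2$.
\end{itemize}
Both cases give $D\cdot\ell\le2$ for every line, so $-(K_S+\frac12D)$ is nef by (a).
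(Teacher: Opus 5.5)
Your proof is correct and follows the paper's argument essentially step by step. You use the line criterion \ref{F4} for the equivalences in (a) and (b), and the Hodge index theorem for $\deg D\le 8$ and its equality case. You exclude $\deg D=7$ via the bound $N_D^2\le\frac14$ together with adjunction parity, and you prove (c) by the same two-case estimate: a line not in $D$ versus a line that is a component $D_j$, where $1+\epsilon_j\le 2$ is used.
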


\begin{proof}
The equivalences in (a) and (b) follow from \ref{F4}, as $N_D\cdot\ell=2-D\cdot\ell$. If $N_D$ is nef, then $\deg N_D=N_D\cdot(-K_S)\geq 0$, with equality only if $N_D\equiv0$: indeed, if $N_D\cdot(-K_S)=0$ then $N_D^2\le0$ by the Hodge index theorem, with equality only if $N_D\equiv 0$, while $N_D^2\geq0$ by nefness. Hence $\deg D=8-\deg N_D\le8$, with equality if and only if $D\sim-2K_S$ by \ref{F4}. If $\deg D=7$, then $N_D$ is nef of degree $1$, so $0\le N_D^2\le(\deg N_D)^2/K_S^2=\frac14$, i.e.\ $N_D^2=0$, but $N_D^2+K_S\cdot N_D=N_D^2-1$ must be even, a contradiction.

(c) Let $\ell$ be a line. If $\ell\not\subset D$, then $0<-(K_S+\Delta_\epsilon)\cdot\ell=1-\sum_i\epsilon_iD_i\cdot\ell\le1-\frac12D\cdot\ell$, so $D\cdot\ell\le1$. If $\ell=D_j$, then
\[
  0<-(K_S+\Delta_\epsilon)\cdot\ell=1+\epsilon_j-\sum_{i\neq j}\epsilon_iD_i\cdot\ell\le 2-\tfrac12(D-\ell)\cdot\ell=\tfrac32-\tfrac12D\cdot\ell,
\]
so $D\cdot\ell\le2$. Now apply (a).
\end{proof}

\subsection{The classification}

\begin{theorem}\label{thm:dP}
Let $D=D_1+\dots+D_r$ be a reduced curve on $S$ with $r\geq2$ components, and let $\Delta_\epsilon=\sum_i(1-\frac1{m_i})D_i$ with integers $m_i\geq2$. Then $-(K_S+\Delta_\epsilon)$ is ample if and only if $D$ is one of the following and the weights satisfy the stated condition.
\begin{enumerate}[label=\textup{(\arabic*)}]
\item\label{dP1} $D=\ell_1+\ell_2$ is a pair of neighbouring lines \textup{(type 2.1)}, no condition.
\item\label{dP2} $D=F_1+F_2$ is the sum of two smooth conics with $F_1+F_2\sim-K_S$, i.e.\ $F_1\cdot F_2=2$ \textup{(type 4.2)}, no condition.
\item\label{dP3} $D=F+\ell_1+\ell_2$, where $F$ is a smooth conic and $\ell_1+\ell_2$ is a singular member of the conic bundle $\lvert -K_S-F\rvert$ \textup{(type 4.3)}, the condition is
\[
  \Big|\frac1{m_{\ell_1}}-\frac1{m_{\ell_2}}\Big|<\frac1{m_F}.
\]
\item\label{dP4} $D=T+\ell_T$, where $T$ is a twisted cubic and $\ell_T=-K_S-T$ is the line with $T\cdot\ell_T=2$ \textup{(type 4.1)}, the condition is $m_T<2m_{\ell_T}$.
\item\label{dP5} $D=\ell_1+\ell_2+\ell_3+\ell_4$ is a cycle of four lines, i.e.\ $\ell_i\cdot\ell_{i+1}=1$ and $\ell_i\cdot\ell_{i+2}=0$ for $i\in\mathbb Z/4$ \textup{(type 4.4)}, the condition is
\[
  \frac1{m_{\ell_i}}<\frac1{m_{\ell_{i-1}}}+\frac1{m_{\ell_{i+1}}}\qquad\text{for all }i\in\mathbb Z/4 .
\]
\end{enumerate}
In cases \ref{dP2}-\ref{dP5} one has $D\sim-K_S$ and $-(K_S+\Delta_\epsilon)\sim_{\Q}\sum_i\frac1{m_i}D_i$. Each of \ref{dP1}-\ref{dP5} is a single type. In particular, if $(S,\Delta_\epsilon)$ is a del Pezzo orbifold with $r\geq2$, then $\deg D\in\{2,4\}$, and $-(K_S+\frac12D)$ is ample as well \textup{(}all conditions hold for $m_i=2$\textup{)}.
\end{theorem}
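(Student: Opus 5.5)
The proof reduces to a finite classification via Lemma~\ref{lem:criterion4}(c). If $-(K_S+\Delta_\epsilon)$ is ample, then $N_D=-2K_S-D$ is nef, and moreover $D\cdot\ell\le1$ for every line $\ell\not\subset D$. I would strengthen this to a bound on the total degree. Ampleness gives $-(K_S+\Delta_\epsilon)\cdot(-K_S)>0$, so $\sum_i\epsilon_i\deg D_i<4$, and hence $\deg D<8$. Lemma~\ref{lem:criterion4}(a) excludes degree $7$, so $\deg D\le6$. Next I would rule out $\deg D\in\{3,5,6\}$ and the degree-$4$ configurations with $D\not\sim-K_S$, leaving $\deg D\in\{2,4\}$.

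The first step is to enumerate the supports. By Lemma~\ref{lem:irreducible}, every component of degree at most $5$ is a line, a conic, a twisted cubic $T$, a class in $|-K_S|$ (degree $4$, self-intersection $4$), or one of a few degree-$4$ and degree-$5$ classes. Using only $D\cdot\ell\le1$ for lines $\ell\not\subset D$, I would run through the cases by the largest component, exactly as in the cubic case, using \ref{F2} and \ref{F3}.
\begin{itemize}
\item \emph{Two lines.} Two lines in $D$ that are disjoint have two common neighbours; these are not components when $r=2$, so they violate the bound. Hence the lines must meet, which is type~\ref{dP1}.
\item \emph{A conic $F$.} Every line $\ell$ of a singular member of $|F'|$ has $F\cdot\ell=1$. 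So each other component must avoid such lines unless they lie in $D$. This forces either $F'$ itself as a component, giving~\ref{dP2}, or a singular member $\ell_1+\ell_2$ of $|F'|$, giving~\ref{dP3}.
\item \emph{A twisted cubic $T$.} Since $T\cdot\ell_T=2$, the line $\ell_T$ is forced into $D$, giving~\ref{dP4}.
\item \emph{Components of degree at least $4$.} Such a component meets many lines with multiplicity at least $1$, so the extra components quickly violate the bound.
\end{itemize}
This case analysis would be cross-checked against the computer list of types in \S\ref{subsec:computer} and Theorem~\ref{thm:nef}. Whenever the total degree is $3$, $5$ or $6$ but not $-K_S$, I expect some line outside $D$ to meet $D$ twice. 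The natural candidate is a common neighbour, or, for a chain of three lines, the neighbour structure from \ref{F1}.

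The second step handles the surviving supports with $D\sim-K_S$. Here $-(K_S+\Delta_\epsilon)\sim_{\Q}\sum_i\frac1{m_i}D_i$. Every line $\ell\not\subset D$ satisfies $D\cdot\ell=1$, so it meets exactly one component transversally and gets positive degree $1/m_i$. Ampleness therefore reduces to the conditions on the components themselves, which I would compute directly.
\begin{itemize}
\item \emph{Type~\ref{dP4}.} On $\ell_T$ the degree is $2/m_T-1/m_{\ell_T}$, which is always positive. On $T$ it is $T^2/m_T+2/m_{\ell_T}>0$. The condition $m_T<2m_{\ell_T}$ comes from the lines meeting $T$ twice.
\item \emph{The cycle~\ref{dP5}.} On $\ell_i$ the degree is $-1/m_i+1/m_{i-1}+1/m_{i+1}$, which yields the stated triangle-type inequalities.
\item \emph{Type~\ref{dP3}.} On $\ell_1$ the degree is $-1/m_1+1/m_2+1/m_F$, since $F\cdot\ell_j=1$ and $\ell_1\cdot\ell_2=1$. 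Together with the symmetric condition on $\ell_2$, this gives the absolute-value inequality.
\item \emph{Type~\ref{dP2}.} There are no conditions, because $F_1\cdot F_2=2$ and $F_i^2=0$.
\end{itemize}
For the pair of neighbouring lines I would check directly that $\Delta_\epsilon\cdot\ell<1$. The only potential obstruction is a common neighbour, and meeting lines have none, by \ref{F2} together with the Hodge index argument. On the lines themselves the degree is $-\epsilon_i+\epsilon_j<1$.

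The main obstacle is the first step: proving that no support of degree $5$ or $6$, and no degree-$4$ support with $D\not\sim-K_S$, satisfies the strict line bound. The nef list has $106$ types, so a clean argument is needed rather than brute force. My approach would be to show that if $D$ is not a line pair and $D\not\sim-K_S$, then some line outside $D$ meets $D$ at least twice. I would try to extract this from \ref{F1}, using the fact that the neighbours of a line $\ell$ sum to $-2K_S-3\ell$, averaged over the lines. If that fails, I would fall back on the computer verification over $W(D_5)$-orbits.
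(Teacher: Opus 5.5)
The second step, checking the weights on the five surviving supports, is essentially right. The gap is in the necessity direction: you never show that no other support occurs, as you acknowledge, and your bullets do not close the cases even inside your bound $\deg D\le 6$.

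\begin{itemize}
\item The conic bullet uses only the eight lines with $F\cdot\ell=1$. That condition does not exclude $F+F'_1+F'_2$ with $F'_1,F'_2\in|F'|$, nor $F$ plus two singular members of $|F'|$ (both of degree $6$). These fail only on the eight lines with $F\cdot\ell=0$, where $D\cdot\ell=2$.
\item The twisted-cubic bullet shows $\ell_T\subset D$, not $D=T+\ell_T$; for instance $T+\ell_T+\ell$ is not excluded.
\item The bullet on components of degree $\ge4$ is only an assertion.
\item Supports made of $r\ge3$ lines never appear in Step~1. So you never derive that four lines must form a $4$-cycle, nor exclude three, five or six lines, yet \ref{dP5} shows up in Step~2 as a surviving support.
\end{itemize}

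The missing idea is a global count rather than a line-by-line search. Sum the bounds of Lemma~\ref{lem:criterion4}(c) over all sixteen lines: $D\cdot\ell\le1$ for $\ell\not\subset D$ and $D\cdot\ell\le2$ for $\ell\subset D$. By \ref{F1}, $\sum_{\ell\in\mathcal L}\ell=-4K_S$. With $k$ the number of line components and $d=\deg D$, this gives $4d\le16+k$. Non-line components have degree $\ge2$, so $k\le d$, and either $k=d$ or $k\le d-2$. Hence $d\le5$, and $d=5$ forces $k=5$. Five lines have a triangle-free dual graph by \ref{F2}, so at most $6$ edges by Mantel's theorem. Then $\sum_{\ell\subset D}D\cdot\ell\le-5+12=7$, and $20=4d\le11+7$ is a contradiction.

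Only $d\le4$ remains. There every component is a line, a conic or a twisted cubic, and short common-neighbour arguments finish. For four lines, two disjoint components force both of their common neighbours into $D$, which gives the $4$-cycle. Then $D\sim-K_S$ follows from $D^2=4$ and equality in the Hodge index theorem.

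Your treatment of \ref{dP4} also contains an error. The degree of $\frac1{m_T}T+\frac1{m_{\ell_T}}\ell_T$ on $\ell_T$ is $\frac2{m_T}-\frac1{m_{\ell_T}}$. This is \emph{not} always positive (take $m_T=10$, $m_{\ell_T}=2$). Its positivity is exactly the condition $m_T<2m_{\ell_T}$. Since $\ell_T$ is the only line meeting $T$ twice, your two sentences about \ref{dP4} contradict each other.

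Relatedly, by \ref{F4} ampleness is tested only on lines, so evaluating on $T$ or on $F_1,F_2$ is beside the point. In \ref{dP2} there is no condition because $D$ has no line components and every line meets exactly one $F_i$, once.

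Finally, the claim that each case is a single type needs the $W(D_5)$-transitivity check. The paper also settles that by computer.
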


\begin{proof}
Suppose that $-(K_S+\Delta_\epsilon)$ is ample. By Lemma~\ref{lem:criterion4}(c), $D\cdot\ell\le2$ for every line $\ell$ and $D\cdot\ell\le1$ for every line $\ell\not\subset D$. Let $d=\deg D\geq2$ and let $k$ be the number of components of $D$ that are lines. By \ref{F1},
\[
  4d=D\cdot(-4K_S)=\sum_{\ell\in\mathcal L}D\cdot\ell\le(16-k)+2k=16+k .
\]
The other components have degree $\geq2$, so $k\le d$, and $k=d$ or $k\le d-2$. Hence $3d\le16$, i.e.\ $d\le5$.

\emph{$d=5$.} Then $k\ge4$, so $k=5$ and $D$ is a sum of five lines. Their dual graph is triangle-free by \ref{F2}, so it has at most $6$ edges (Mantel's theorem), and $\sum_{\ell\subset D}D\cdot\ell=\sum_{\ell\subset D}\bigl(-1+\#\{\text{neighbours of $\ell$ in $D$}\}\bigr)\le-5+12=7$. Together with $\sum_{\ell\not\subset D}D\cdot\ell\le11$ this gives $20=4d\le18$, a contradiction.

\emph{$d=2$.} Then $D=\ell_1+\ell_2$ is a sum of two lines. If $\ell_1\cdot\ell_2=0$, then a common neighbour $m$ of $\ell_1,\ell_2$ \ref{F2} satisfies $m\not\subset D$ and $D\cdot m=2$, which is impossible. Hence $\ell_1\cdot\ell_2=1$: case \ref{dP1}.

\emph{$d=3$.} Either $D=\ell+F$ with $F$ a smooth conic, or $D$ is a sum of three lines. Suppose $D=\ell+F$. If $\ell\cdot F=1$, then $T:=\ell+F$ is a twisted cubic class, and $D\cdot\ell_T=T\cdot\ell_T=2$, since $D\cdot\ell=0$, we have $\ell_T\neq\ell$, so $\ell_T\not\subset D$, a contradiction. If $\ell\cdot F=0$, then by \ref{F1} the neighbours $m$ of $\ell$ satisfy $\sum_mF\cdot m=F\cdot(-2K_S-3\ell)=4$, so some neighbour $m$ of $\ell$ has $F\cdot m=1$, then $m\not\subset D$ and $D\cdot m=2$, a contradiction. Suppose $D$ is a sum of three lines. By \ref{F2} two of them are disjoint, of their two common neighbours at most one is the third component, and the other one meets $D$ at least twice, a contradiction.

\emph{$d=4$.} We show $D\sim-K_S$. The components are twisted cubics, smooth conics or lines (Table~\ref{tab:irreducible}), so there are four cases.
\begin{itemize}[leftmargin=*]
\item $D=T+\ell$ with $T$ a twisted cubic. If $\ell\neq\ell_T$, then $\ell_T\not\subset D$ and $D\cdot\ell_T\geq T\cdot\ell_T=2$, which is impossible. So $\ell=\ell_T$ and $D\sim T+\ell_T=-K_S$: case \ref{dP4}.
\item $D=F_1+F_2$ with smooth conics $F_1,F_2$. There are no line components, so $F_1\cdot m+F_2\cdot m\le1$ for every line $m$. By \ref{F3} each $F_i$ has intersection number $1$ with exactly eight lines and $0$ with the others, hence these two sets of eight lines are complementary, $(F_1+F_2)\cdot m=1=-K_S\cdot m$ for every line $m$, and $F_1+F_2\sim-K_S$ by \ref{F4}: case \ref{dP2}.
\item $D=F+\ell_1+\ell_2$ with $F$ a smooth conic. If $\ell_1\cdot\ell_2=0$, then a common neighbour $m$ of $\ell_1,\ell_2$ satisfies $m\not\subset D$ and $D\cdot m\geq2$, which is impossible. So $\ell_1\cdot\ell_2=1$, and $G:=\ell_1+\ell_2$ is a conic class with $G\cdot\ell_1=G\cdot\ell_2=0$. For a line $m\not\subset D$ we have $F\cdot m+G\cdot m=D\cdot m\le1$, and for $m\in\{\ell_1,\ell_2\}$ we have $G\cdot m=0$. As in the previous case, $F+G\sim-K_S$: case \ref{dP3}.
\item $D=\ell_1+\ell_2+\ell_3+\ell_4$. If two components are disjoint, then both of their common neighbours are components, since a common neighbour $m\not\subset D$ would satisfy $D\cdot m\ge2$. By \ref{F2} not all pairs of components meet, say $\ell_1\cdot\ell_3=0$. Then $\ell_2,\ell_4$ are the two common neighbours of $\ell_1,\ell_3$, and $\ell_2\cdot\ell_4=0$ by \ref{F2}. So $D$ is a cycle of four lines, $D^2=-4+8=4=(\deg D)^2/K_S^2$, and $D\equiv -K_S$ by the equality case of the Hodge index theorem: case \ref{dP5}.
\end{itemize}

It remains to decide, in each of the five cases, for which weights $-(K_S+\Delta_\epsilon)$ is ample, this also proves the converse. In case \ref{dP1}, a line $m\neq\ell_1,\ell_2$ meets at most one of $\ell_1,\ell_2$ by \ref{F2}, so $-(K_S+\Delta_\epsilon)\cdot m\geq 1-\epsilon_i>0$, while $-(K_S+\Delta_\epsilon)\cdot\ell_1=1+\epsilon_1-\epsilon_2>0$, and similarly for $\ell_2$. In cases \ref{dP2}-\ref{dP5} we have $D\sim-K_S$, hence
\[
  -(K_S+\Delta_\epsilon)\sim_{\Q} D-\sum_i\epsilon_iD_i=\sum_i\frac1{m_i}D_i .
\]
A line $m\not\subset D$ satisfies $D\cdot m=1$, so it meets exactly one component, transversally in one point, and $\sum_i\frac1{m_i}D_i\cdot m>0$. For a component $D_j$ which is a line,
\[
  \Big(\sum_i\frac1{m_i}D_i\Big)\cdot D_j=-\frac1{m_j}+\sum_{i\neq j}\frac{D_i\cdot D_j}{m_i}.
\]
In case \ref{dP2} there is no line component. In case \ref{dP3}, $F\cdot\ell_1=F\cdot\ell_2=1$ (since $F\cdot(\ell_1+\ell_2)=F\cdot F'=2$ and $F\cdot\ell_i\le1$) and $\ell_1\cdot\ell_2=1$, which gives $\frac1{m_{\ell_1}}<\frac1{m_F}+\frac1{m_{\ell_2}}$ and $\frac1{m_{\ell_2}}<\frac1{m_F}+\frac1{m_{\ell_1}}$. In case \ref{dP4}, $T\cdot\ell_T=2$ gives $\frac1{m_{\ell_T}}<\frac2{m_T}$. In case \ref{dP5}, $\ell_i$ meets exactly $\ell_{i-1}$ and $\ell_{i+1}$ among the components, once each. This proves the equivalence. 

Finally, $W$ acts transitively on the $40$ pairs of neighbouring lines, on the $5$ pairs $\{F,F'\}$, on the $40$ pairs consisting of a conic class $F$ and a singular member of $|F'|$, on the $16$ twisted cubic classes and on the $40$ cycles of four lines (this is checked by the script of \S\ref{subsec:computer}), so each case is a single type.
\end{proof}

\begin{remark}
Allowing some weights $\epsilon_i=1$ produces no further examples. Indeed, Lemma~\ref{lem:criterion4}(c) still applies, so the proof above shows that $D$ is as in \ref{dP1}-\ref{dP5}. In each of these cases every component $D_j$ satisfies $D_j\cdot m=1$ for some line $m\not\subset D$ which meets no other component (use \ref{F2}, \ref{F3}, and $D\cdot m=1$ for all lines $m\not\subset D$ in cases \ref{dP2}-\ref{dP5}), if $\epsilon_j=1$, then $-(K_S+\Delta_\epsilon)\cdot m=0$.
\end{remark}

\begin{remark}\label{rem:dP-geometry}
In case \ref{dP1}, $D=\ell_1+\ell_2$ is a singular member of a conic bundle. The four types \ref{dP2}-\ref{dP5} are exactly the reducible reduced members of $\lvert -K_S\rvert$, i.e.\ the reducible hyperplane sections of the anticanonical model $S\subset\PP^4$: a conic plus a conic, a conic plus two lines, a twisted cubic plus its bisecant line $\ell_T$, and a cycle of four lines. Choosing the members general, $D$ is a simple normal crossing divisor in all five cases, the conditions of Theorem~\ref{thm:dP} are numerical, so they do not depend on this choice.
\end{remark}

\subsection{The complete list of nef boundaries}

\begin{theorem}\label{thm:nef}
Let $D$ be a reduced curve on $S$ with $r\geq2$ components and let $d=\deg D$. Then $-(K_S+\frac12D)$ is nef if and only if $D$ is of one of the $106$ pairwise distinct types listed in Tables~\ref{tab:small}-\ref{tab:deg8}, see Table~\ref{tab:families} for an overview. More precisely:
\begin{enumerate}[label=\textup{(\alph*)}]
\item if $d\le4$, then $D$ is of one of the $19$ types of Table~\ref{tab:small}, and $-(K_S+\frac12D)$ is ample exactly for the five types of Theorem~\ref{thm:dP},
\item if $d=5$, then $D\in\lvert -K_S+\ell_0\rvert$ for a line $\ell_0$, and conversely every reduced member of $\lvert -K_S+\ell_0\rvert$ qualifies, there are $13$ reducible types \textup{(}Table~\ref{tab:deg5}\textup{)},
\item if $d=6$, then $D\in\lvert -K_S+F\rvert$ for a conic class $F$, and conversely every reduced member of $\lvert -K_S+F\rvert$ qualifies, there are $28$ reducible types \textup{(}Table~\ref{tab:deg6}\textup{)},
\item $d=7$ is impossible,
\item if $d=8$, then $D\in\lvert -2K_S\rvert$, so that $K_S+\frac12D\equiv0$, and conversely, there are $46$ reducible types \textup{(}Table~\ref{tab:deg8}\textup{)}.
\end{enumerate}
In cases \textup{(b)}, \textup{(c)}, \textup{(e)}, $-(K_S+\frac12D)$ is not ample. Every type is realised by a simple normal crossing divisor.
\end{theorem}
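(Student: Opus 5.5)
By Lemma~\ref{lem:criterion4}(a), the task is to classify reduced $D$ with $r\ge2$ such that $D\cdot\ell\le2$ for every line $\ell$, equivalently such that $N_D=-2K_S-D$ is nef. We organise the proof by $d=\deg D$. Part (d) and the bound $d\le8$ are already in Lemma~\ref{lem:criterion4}(a). For (e), the same lemma gives $D\sim-2K_S$ when $d=8$, and conversely every member of $|-2K_S|$ has $N_D=0$.

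For $d=5$ and $d=6$ we use the residual class $N_D$. When $d=5$, $N_D$ is nef of degree $3$. Hodge index gives $N_D^2\le 9/4$, and the parity of $N_D^2+K_S\cdot N_D$ forces $N_D^2\in\{1\}$. By Lemma~\ref{lem:irreducible} and the nef-class enumeration, a nef class of degree $3$ with $N_D^2=1$ is a twisted cubic class $T$. Then $D\sim-2K_S-T=-K_S+\ell_T$, which is (b) with $\ell_0=\ell_T$. When $d=6$, $N_D$ is nef of degree $2$ with $N_D^2\le1$ and even parity, so $N_D^2=0$. Hence $N_D$ is a conic class $F'$ and $D\sim-K_S+F$. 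The converses hold because $-K_S+\ell_0$ has degree $\le2$ on lines by \ref{F1}, with intersection numbers $1+\ell_0\cdot\ell\le2$. Likewise $-K_S+F$ has intersection numbers $1+F\cdot\ell\le2$ by \ref{F3}. Non-ampleness in (b), (c), (e) follows because $N_D$ is $T$, $F'$ or $0$, and each of these has degree $0$ on some line.

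For $d\le4$ we redo the case analysis from the proof of Theorem~\ref{thm:dP}, but now only with the weaker condition $D\cdot\ell\le2$. The components are lines, conics, twisted cubics, and the degree-$4$ irreducible classes from Table~\ref{tab:irreducible}. We enumerate the component multisets by degree partition: $(1,1)$, $(1,1,1)$, $(2,1)$, $(1^4)$, $(2,1,1)$, $(2,2)$, $(3,1)$. For each, the constraint $D\cdot\ell\le2$ is checked on the sixteen lines using \ref{F1}--\ref{F3}. This should give $19$ types up to $W$, and the ample ones are exactly those with $D\cdot\ell\le1$, recovering Theorem~\ref{thm:dP}.

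The main obstacle is the exhaustive enumeration of reducible members, up to $W$, of $|-K_S+\ell_0|$, $|-K_S+F|$ and $|-2K_S|$, together with the claim that the resulting counts $13$, $28$ and $46$ are pairwise distinct types. For this we first decompose each class into sums of irreducible classes, using Lemma~\ref{lem:irreducible} and requiring each summand to be an irreducible class from Table~\ref{tab:irreducible}. Then we reduce modulo the stabiliser of $\ell_0$, $F$, or all of $W$, which we would organise with the computer script of \S\ref{subsec:computer}. SNC realisability would come from choosing general members of the base-point-free component classes (Bertini) and checking, in the line-containing types, that no three components are concurrent.
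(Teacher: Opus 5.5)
Your proposal is correct and follows essentially the same route as the paper. It reduces to nefness of $N_D=-2K_S-D$ via Lemma~\ref{lem:criterion4}, identifies $N_D$ for $d=5,6$ by Hodge index plus parity, enumerates the decompositions of $-K_S+\ell_0$, $-K_S+F$ and $-2K_S$ into irreducible classes by computer modulo the relevant stabiliser, does a hand case analysis by degree partition for $d\le4$, and uses Bertini for the SNC realisations. The differences are immaterial: you pass through $T$ and $\ell_T$ where the paper checks directly that $-K_S-N_D$ is a line, and your mention of degree-$4$ components for $d\le4$ is a harmless slip, since with $r\ge2$ every component has degree $\le3$, as your own list of partitions already reflects.
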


\begin{proof}
By Lemma~\ref{lem:criterion4}, $-(K_S+\frac12D)$ is nef if and only if $N_D=-2K_S-D$ is nef, and then $2\le d\le 8$, $d\neq7$, and $d=8$ if and only if $D\sim-2K_S$. This proves (d) and (e) apart from the list of types.

\emph{$d=5$.} Then $N_D$ is nef of degree $3$, so $0\le N_D^2\le\frac94$ and $N_D^2\equiv N_D\cdot K_S\pmod 2$ is odd, hence $N_D^2=1$. The class $\ell_0:=-K_S-N_D$ satisfies $\ell_0^2=K_S\cdot\ell_0=-1$, so it is a line by \ref{F1}, and $D\sim-K_S+\ell_0$. Conversely, for every line $\ell_0$ the class $-K_S-\ell_0$ is nef, as $(-K_S-\ell_0)\cdot\ell=1-\ell_0\cdot\ell\geq0$ for $\ell\neq\ell_0$, so every reduced $D\in\lvert -K_S+\ell_0\rvert$ qualifies.

\emph{$d=6$.} Then $N_D$ is nef of degree $2$, so $0\le N_D^2\le1$ and $N_D^2$ is even, i.e.\ $N_D^2=0$. Write $N_D=aH-\sum_ib_iE_i$. Nefness gives $b_i=N_D\cdot E_i\geq0$ and $2a-\sum_ib_i=N_D\cdot L_0\geq0$, while $3a-\sum_ib_i=2$, hence $1\le a\le2$, $\sum_ib_i=3a-2$ and $\sum_ib_i^2=a^2$. Thus $N_D=H-E_i$ or $N_D=2H-E_{\{1,\dots,5\}\smallsetminus\{i\}}$, i.e.\ $N_D$ is a conic class, and $D\sim-K_S+N_D'$. Conversely, conic classes are nef.

For $d\in\{5,6,8\}$ it remains to list the reducible reduced members of $|X|$ for $X=-K_S+\ell_0$, $-K_S+F$, $-2K_S$, up to type. By Lemma~\ref{lem:irreducible}, the multisets of classes of the components of such members are exactly the multisets of classes of irreducible curves with sum $X$ in which every line occurs at most once (to realise such a multiset, choose distinct general members of the moving classes). This is a finite enumeration, carried out in \S\ref{subsec:computer}, the results are Tables~\ref{tab:deg5}, \ref{tab:deg6} and~\ref{tab:deg8}. In these three cases $N_D$ is a twisted cubic class, a conic class, or $0$, which is not ample.

\emph{$d\le4$.} The components have degree $\le3$, so they are lines, smooth conics and twisted cubics. By Lemma~\ref{lem:criterion4} we must check that $D\cdot m\le2$ for every line $m$.
\begin{itemize}[leftmargin=*]
\item $d=2$: $D=\ell_1+\ell_2$, and $D\cdot m\le2$ for every line $m$. This gives 2.1 ($\ell_1\cdot\ell_2=1$) and 2.2 ($\ell_1\cdot\ell_2=0$).
\item $d=3$, $D=\ell+F$: $D\cdot m\le\ell\cdot m+F\cdot m\le2$ for $m\neq\ell$ and $D\cdot\ell\le0$, so $D$ always qualifies, this gives 3.1 ($\ell\cdot F=1$) and 3.3 ($\ell\cdot F=0$).
\item $d=3$, $D=\ell_1+\ell_2+\ell_3$: $D\cdot\ell_i\le1$, and a line $m\not\subset D$ has $D\cdot m\geq3$ if and only if it is a common neighbour of $\ell_1,\ell_2,\ell_3$, by \ref{F2} such an $m$ exists if and only if $\ell_1,\ell_2,\ell_3$ are pairwise disjoint. As the dual graph is triangle-free, we obtain the chain $P_3$ (3.2) and $K_2\sqcup K_1$ (3.4).
\item $d=4$, $D=T+\ell$: we have $T\cdot m\le1$ for $m\neq\ell_T$, and $T\cdot m=0$ exactly for the five neighbours $m$ of $\ell_T$ (for $T=H$ these are $E_1,\dots,E_5$). If $\ell=\ell_T$ we get 4.1. Otherwise $D\cdot\ell_T=2+\ell\cdot\ell_T$, so we need $\ell\cdot\ell_T=0$, i.e.\ $T\cdot\ell=1$, conversely, if $T\cdot\ell=1$ then $D\cdot m\le2$ for all $m$. This gives 4.5.
\item $d=4$, $D=F_1+F_2$: $D$ always qualifies by \ref{F3}, this gives 4.2, 4.6, 4.11 according to $F_1\cdot F_2=2,1,0$.
\item $d=4$, $D=F+\ell_1+\ell_2$: if $\ell_1\cdot\ell_2=1$, no line is a neighbour of both \ref{F2}, so $D\cdot m\le2$ for all $m$, with the conic class $G=\ell_1+\ell_2$ this gives 4.3, 4.7, 4.12 according to $F\cdot G=2,1,0$. If $\ell_1\cdot\ell_2=0$, let $m,m'$ be their two common neighbours. Then $D\cdot m=2+F\cdot m$, $D\cdot m'=2+F\cdot m'$, and every other line meets $D$ at most twice, so $D$ qualifies if and only if $F\cdot m=F\cdot m'=0$. This gives 4.8.
\item $d=4$, $D=\ell_1+\ell_2+\ell_3+\ell_4$: here $D\cdot\ell_i\le2$ automatically, and a line $m\not\subset D$ has $D\cdot m\geq3$ if and only if it is a common neighbour of three pairwise disjoint components. By \ref{F2}, every three pairwise disjoint lines have a (unique) common neighbour. So $D$ qualifies if and only if for every triple of pairwise disjoint components, the fourth component is a neighbour of all three. Among the triangle-free graphs on four vertices, this leaves the cycle $C_4$ (4.4), the claw $K_{1,3}$ (4.9), the chain $P_4$ (4.10) and $2K_2$ (4.13), and excludes $4K_1$, $K_2\sqcup 2K_1$ and $P_3\sqcup K_1$.
\end{itemize}
The classes $N_D$ in Table~\ref{tab:small} are computed directly from the representatives. By Theorem~\ref{thm:dP} with all $m_i=2$, $-(K_S+\frac12D)$ is ample exactly for 2.1 and 4.1-4.4.

That each row of the tables is a single $W$-orbit was checked by computer (\S\ref{subsec:computer}). The data given in each row (the type, the dual graph of the line components and the remark, respectively the description in Table~\ref{tab:small}) are invariant under $W$ and distinguish the rows, so the $106$ types are pairwise distinct. Finally, lines meet pairwise transversally and no three lines pass through a point \ref{F2}, and by Bertini's theorem general members of the base point free moving classes meet all other components transversally and away from their mutual intersection points, hence every type is realised by a simple normal crossing divisor.
\end{proof}

\begin{remark}\label{rem:largest}
Let $D_1$ be a component of $D$ of maximal degree, and assume $-(K_S+\frac12D)$ nef. By \ref{F4} and Lemma~\ref{lem:criterion4}, a large part of a case-by-case analysis becomes automatic: once $\deg D\geq5$, the only condition is on the class of $D$. The tables give the following.
\begin{enumerate}[label=\textup{(\roman*)}]
\item Components of degree $8$, of type $2T$ or $\mathcal O(2,1)$ (degree $6$), and of degree $7$ other than $B_7=-K_S+T$ never occur. If $\deg D_1\in\{6,7\}$, then $D\sim-2K_S$ and $D$ is $B_7+\ell_T$ \textup{(8.1)}, $B_6+F'$ \textup{(8.2)}, $B_6+\ell_1+\ell_2$ \textup{(8.6)} or $A_6+\ell_1+\ell_2$ \textup{(8.7)}.
\item If $\deg D_1=5$, then $D$ is $A_5+\ell$ with $A_5\cdot\ell=2$ \textup{(6.1)}, $P+\ell$ with $P\cdot\ell=3$ \textup{(6.2)}, or $D\sim-2K_S$ \textup{(8.3, 8.8, 8.9, 8.15, 8.16)}.
\item If $D_1$ is an anticanonical curve, then $D$ is $A+\ell$ \textup{(5.1)}, $A+F$ \textup{(6.3)}, $A+\ell_1+\ell_2$ with $\ell_1\cdot\ell_2=1$ \textup{(6.6)}, or $D\sim-2K_S$ \textup{(8.4, 8.10, 8.11, 8.17, 8.27)}.
\item If $D_1$ is of type $Q$, then $D$ is of type 5.2, 6.4, 6.7, 6.8, or $D\sim-2K_S$ \textup{(8.5, 8.12, 8.13, 8.18, 8.19, 8.28, 8.29)}.
\end{enumerate}
In particular none of these cases gives a del Pezzo orbifold.
\end{remark}

\subsection{Computer verification}\label{subsec:computer}
All computational claims of this section are verified by Python 3 scripts. The script does the following and produces the following tables.
\begin{enumerate}[label=\textup{(\arabic*)},leftmargin=*]
\item It builds $\operatorname{Pic}(S)$ with its intersection form, the sixteen lines, and $W(D_5)$ as the group generated by the reflections in the simple roots $E_1-E_2$, $E_2-E_3$, $E_3-E_4$, $E_4-E_5$, $H-E_1-E_2-E_3$, it checks $|W|=1920$, the facts \ref{F1}-\ref{F3}, and that $W$ is transitive on each of the sets of configurations listed at the end of the proof of Theorem~\ref{thm:dP}.
\item It lists the nef classes of degree $\le8$, using the bounds in the proof of Lemma~\ref{lem:irreducible}, and hence the $1272$ classes of irreducible curves of degree $\le 8$, they form the $23$ orbits of Table~\ref{tab:irreducible}.
\item It lists the $13$ orbits of nef classes $N$ of degree $\le6$. For each of the ten classes $X$ in Table~\ref{tab:families} it enumerates all multisets of classes of irreducible curves with sum $X$ in which every line occurs at most once ($4$, $1$, $15$, $4$, $101$, $35$, $11$, $168$, $572$ and $6531$ multisets with at least two elements, respectively), and sorts them into orbits of the stabiliser of $X$ in $W$, these orbits are the rows of Tables~\ref{tab:small}-\ref{tab:deg8}. It also checks that for the remaining three orbits of nef classes $N$ (namely $3F$, $2T$ and the class $3H-2E_1-E_2$ of degree $6$) the class $-2K_S-N$ has no decomposition with at least two components.
\item Independently of $N_D$, it enumerates all reduced $D$ with $r\geq2$ and $D\cdot\ell\le2$ for every line (as multisets of classes of irreducible curves). It finds $17,230$ of them, which is exactly the total number of configurations obtained from the $106$ types by the action of $W$.
\item As a sanity check of Theorem~\ref{thm:dP}, a search over small weights ($2\le m_i\le 6$, resp.\ $m_i\le 3$ if $r\geq5$) finds weights with $-(K_S+\Delta_\epsilon)$ ample exactly for the five types of that theorem, and $m_i=2$ works for them.
\end{enumerate}

\subsection{Tables}\label{subsec:tables}

\noindent\emph{Conventions for Tables~\ref{tab:small}-\ref{tab:deg8}.} The \emph{type} of $D$ is written as a monomial in the symbols of Table~\ref{tab:irreducible}, for instance $TF\ell^3$ means one twisted cubic, one smooth conic and three lines. Components are given by representative classes in the coordinates fixed above, a class listed twice stands for two distinct members of its linear system. In the remarks, $F_1,F_2,\dots$ (resp.\ $T_1,T_2$, resp.\ $\ell_1,\ell_2,\dots$) are the conics (resp.\ twisted cubics, resp.\ lines) in the order listed, $\ell_T=-K_S-T$ is the line with $T\cdot\ell_T=2$, and $G=\ell_1+\ell_2$ when $D$ contains exactly two lines and they meet. The column ``Lines'' gives the dual graph of the line components: $K_1$ a point, $2K_1$ two points, $K_2$ an edge, $P_n$ a chain and $C_n$ a cycle with $n$ vertices, $K_{1,3}$ a claw, $C_4^{+}$ a $4$-cycle with a pendant edge, $\Theta_{a,b,c}$ two vertices joined by three internally disjoint paths of lengths $a,b,c$, $Q_3$ the cube and $V_8$ the Wagner graph (the M\"obius ladder on $8$ vertices).

{\footnotesize
\setlength{\tabcolsep}{3pt}
\begin{longtable}{@{}l r r r r l >{\raggedright\arraybackslash}p{6.3cm} >{\raggedright\arraybackslash}p{3.1cm}@{}}
\caption{Classes of irreducible curves $C$ of degree $\le 8$ on $S$, up to $W(D_5)$. The symbols in the first column are used for the ten types that occur as components of reducible $D$ with $-(K_S+\frac12D)$ nef, the other types never occur. The column ``\#'' is the number of classes in the orbit, in the last column $a^k$ means that $C\cdot\ell=a$ for exactly $k$ of the $16$ lines. On $\mathbb F_1$, $e$ is the $(-1)$-section and $f$ a fibre.}\label{tab:irreducible}\\
\toprule
 & $d$ & $C^2$ & $p_a$ & \# & Representative & Description & $\{C\cdot\ell\}$ \\
\midrule
\endfirsthead
\multicolumn{8}{@{}l}{\emph{Table~\ref{tab:irreducible}, continued}}\\
\toprule
 & $d$ & $C^2$ & $p_a$ & \# & Representative & Description & $\{C\cdot\ell\}$ \\
\midrule
\endhead
\bottomrule
\endfoot
$\ell$ & $1$ & $-1$ & $0$ & $16$ & $E_1$ & line ($(-1)$-curve) & $(-1)^{1}\,0^{10}\,1^{5}$ \\
$F$ & $2$ & $0$ & $0$ & $10$ & $H-E_1$ & conic: fibre of a conic bundle $S\to\PP^1$ & $0^{8}\,1^{8}$ \\
$T$ & $3$ & $1$ & $0$ & $16$ & $H$ & twisted cubic: $\beta'^*\mathcal O_{\PP^2}(1)$ for some $\beta'\colon S\to\PP^2$, $T=-K_S-\ell_T$ & $0^{5}\,1^{10}\,2^{1}$ \\
$Q$ & $4$ & $2$ & $0$ & $40$ & $2H-E_{12}$ & pullback of $\mathcal O(1,1)$ under some $S\to\PP^1\times\PP^1$, $F+G$ with $F\cdot G=1$ & $0^{4}\,1^{8}\,2^{4}$ \\
$A$ & $4$ & $4$ & $1$ & $1$ & $-K_S$ & anticanonical curve & $1^{16}$ \\
$P$ & $5$ & $3$ & $0$ & $80$ & $2H-E_1$ & pullback of $e+2f$ under some $S\to\mathbb F_1$ \cite[Prop.~6.1]{dandapat2026classification} & $0^{4}\,1^{5}\,2^{6}\,3^{1}$ \\
$A_5$ & $5$ & $5$ & $1$ & $16$ & $3H-E_{1234}$ & $-K_S+\ell$, pullback of $-K$ of a quintic del Pezzo surface & $0^{1}\,1^{10}\,2^{5}$ \\
- & $6$ & $4$ & $0$ & $16$ & $2H$ & $2T$: pullback of $\mathcal O_{\PP^2}(2)$ & $0^{5}\,2^{10}\,4^{1}$ \\
- & $6$ & $4$ & $0$ & $80$ & $3H-2E_1-E_2$ & pullback of $\mathcal O(2,1)$ under some $S\to\PP^1\times\PP^1$ & $0^{4}\,1^{4}\,2^{4}\,3^{4}$ \\
$A_6$ & $6$ & $6$ & $1$ & $80$ & $3H-E_{123}$ & $-K_S+\ell+\ell'$, $\ell\cdot\ell'=0$, pullback of $-K$ of a sextic del Pezzo surface & $0^{2}\,1^{6}\,2^{6}\,3^{2}$ \\
$B_6$ & $6$ & $8$ & $2$ & $10$ & $4H-2E_1-E_{2345}$ & $-K_S+F$ & $1^{8}\,2^{8}$ \\
- & $7$ & $5$ & $0$ & $80$ & $3H-2E_1$ & pullback of $e+3f$ under some $S\to\mathbb F_1$ \cite[Prop.~6.3]{dandapat2026classification} & $0^{4}\,1^{4}\,2^{1}\,3^{6}\,4^{1}$ \\
- & $7$ & $7$ & $1$ & $160$ & $3H-E_{12}$ & $-K_S+\ell+\ell'+\ell''$ (pairwise disjoint), pullback of $-K$ of a degree-$7$ del Pezzo surface & $0^{3}\,1^{3}\,2^{6}\,3^{3}\,4^{1}$ \\
- & $7$ & $9$ & $2$ & $80$ & $4H-2E_1-E_{234}$ & $-K_S+\ell+F$, $\ell\cdot F=0$ & $0^{1}\,1^{6}\,2^{5}\,3^{4}$ \\
$B_7$ & $7$ & $11$ & $3$ & $16$ & $4H-E_{12345}$ & $-K_S+T$ & $1^{5}\,2^{10}\,3^{1}$ \\
- & $8$ & $6$ & $0$ & $80$ & $4H-3E_1-E_2$ & pullback of $\mathcal O(3,1)$ under some $S\to\PP^1\times\PP^1$ & $0^{4}\,1^{4}\,3^{4}\,4^{4}$ \\
- & $8$ & $8$ & $1$ & $80$ & $3H-E_1$ & pullback of $-K_{\mathbb F_1}$ & $0^{4}\,1^{1}\,2^{4}\,3^{6}\,5^{1}$ \\
- & $8$ & $8$ & $1$ & $40$ & $4H-2E_{12}$ & pullback of $-K_{\PP^1\times\PP^1}$ & $0^{4}\,2^{8}\,4^{4}$ \\
- & $8$ & $10$ & $2$ & $240$ & $4H-2E_1-E_{23}$ & $-K_S+\ell+\ell'+F$ (pairwise orthogonal) & $0^{2}\,1^{4}\,2^{4}\,3^{4}\,4^{2}$ \\
- & $8$ & $12$ & $3$ & $80$ & $4H-E_{1234}$ & $-K_S+\ell+T$, $\ell\cdot T=0$ & $0^{1}\,1^{4}\,2^{6}\,3^{4}\,4^{1}$ \\
- & $8$ & $12$ & $3$ & $10$ & $5H-3E_1-E_{2345}$ & $-K_S+2F$ & $1^{8}\,3^{8}$ \\
- & $8$ & $14$ & $4$ & $40$ & $5H-2E_{12}-E_{345}$ & $-K_S+Q$ & $1^{4}\,2^{8}\,3^{4}$ \\
- & $8$ & $16$ & $5$ & $1$ & $-2K_S$ & $-2K_S$ & $2^{16}$ \\
\end{longtable}}

\begin{table}[ht]
\centering\small
\caption{The ten classes of reducible reduced $D$ with $-(K_S+\frac12D)$ nef, up to $W(D_5)$. $N_D=-2K_S-D$ is given by its type in Table~\ref{tab:irreducible} ($2F$ is twice a conic class). Here $G,G_1,G_2$ are conic classes, $T$ is a twisted cubic class and $\ell,\ell',\ell_0$ are lines.}\label{tab:families}
\begin{tabular}{@{}c l l c l r l@{}}
\toprule
$\deg D$ & class of $D$ & representative & $N_D$ & $-(K_S+\frac12D)$ & types & rows \\
\midrule
$2$ & $G$ & $H-E_5$ & $B_6$ & ample & $1$ & 2.1 \\
$2$ & $\ell+\ell'$, $\ell\cdot\ell'=0$ & $E_1+E_2$ & $A_6$ & nef & $1$ & 2.2 \\
$3$ & $T$ & $H$ & $A_5$ & nef & $2$ & 3.1-3.2 \\
$3$ & $\ell+G$, $\ell\cdot G=0$ & $E_1+(H-E_5)$ & $P$ & nef & $2$ & 3.3-3.4 \\
$4$ & $-K_S$ & $-K_S$ & $A$ & ample & $4$ & 4.1-4.4 \\
$4$ & $G_1+G_2$, $G_1\cdot G_2=1$ & $2H-E_{45}$ & $Q$ & nef & $6$ & 4.5-4.10 \\
$4$ & $2G$ & $2H-2E_5$ & $2F$ & nef & $3$ & 4.11-4.13 \\
$5$ & $-K_S+\ell_0$ & $-K_S+E_5$ & $T$ & nef & $13$ & 5.1-5.13 \\
$6$ & $-K_S+G$ & $-K_S+(H-E_5)$ & $F$ & nef & $28$ & 6.1-6.28 \\
$8$ & $-2K_S$ & $-2K_S$ & $0$ & $\equiv 0$ & $46$ & 8.1-8.46 \\
\midrule
 & & & & & $106$ & \\
\bottomrule
\end{tabular}
\end{table}

{\small
\setlength{\tabcolsep}{4pt}
\begin{longtable}{@{}l l >{\raggedright\arraybackslash}p{4.6cm} >{\raggedright\arraybackslash}p{6.4cm} l@{}}
\caption{Types with $\deg D\le4$. The five types marked $\star$ are those of Theorem~\ref{thm:dP}, for all others $-(K_S+\frac12D)$ is nef but not ample. The last column gives the type of $N_D=-2K_S-D$.}\label{tab:small}\\
\toprule
No. & Type & Components (representative) & Description & $N_D$ \\
\midrule
\endfirsthead
\multicolumn{5}{@{}l}{\emph{Table~\ref{tab:small}, continued}}\\
\toprule
No. & Type & Components (representative) & Description & $N_D$ \\
\midrule
\endhead
\bottomrule
\endfoot
2.1$^\star$ & $\ell^{2}$ & $E_1$,\ $L_{15}$ & $\ell_1\cdot\ell_2=1$ & $B_6$ \\
2.2 & $\ell^{2}$ & $E_1$,\ $E_2$ & $\ell_1\cdot\ell_2=0$ & $A_6$ \\
\addlinespace[4pt]
3.1 & $F\ell$ & $H-E_5$,\ $E_5$ & $F\cdot\ell=1$ \ (so $D\sim T$) & $A_5$ \\
3.2 & $\ell^{3}$ & $E_1$,\ $E_2$,\ $L_{12}$ & chain $P_3$ \ (so $D\sim T$) & $A_5$ \\
3.3 & $F\ell$ & $H-E_5$,\ $E_1$ & $F\cdot\ell=0$ & $P$ \\
3.4 & $\ell^{3}$ & $E_1$,\ $E_2$,\ $L_{25}$ & $\ell_2\cdot\ell_3=1$, $\ell_1$ disjoint from $\ell_2,\ell_3$ & $P$ \\
\addlinespace[4pt]
4.1$^\star$ & $T\ell$ & $H$,\ $L_0$ & $\ell=\ell_T$ & $A$ \\
4.2$^\star$ & $F^{2}$ & $H-E_5$,\ $2H-E_{1234}$ & $F_1+F_2\sim-K_S$ & $A$ \\
4.3$^\star$ & $F\ell^{2}$ & $H-E_5$,\ $L_{34}$,\ $L_{12}$ & $\ell_1\cdot\ell_2=1$, $\ell_1+\ell_2\sim-K_S-F$ & $A$ \\
4.4$^\star$ & $\ell^{4}$ & $E_1$,\ $L_{45}$,\ $L_{13}$,\ $L_{12}$ & cycle $C_4$ & $A$ \\
\addlinespace[4pt]
4.5 & $T\ell$ & $H$,\ $L_{45}$ & $T\cdot\ell=1$ (equivalently $\ell\cdot\ell_T=0$) & $Q$ \\
4.6 & $F^{2}$ & $H-E_5$,\ $H-E_4$ & $F_1\cdot F_2=1$ & $Q$ \\
4.7 & $F\ell^{2}$ & $H-E_5$,\ $E_1$,\ $L_{14}$ & $\ell_1\cdot\ell_2=1$, $F\cdot(\ell_1+\ell_2)=1$ & $Q$ \\
4.8 & $F\ell^{2}$ & $H-E_3$,\ $E_3$,\ $L_{45}$ & $\ell_1\cdot\ell_2=0$, and $F\cdot m=0$ for both common neighbours $m$ of $\ell_1,\ell_2$ & $Q$ \\
4.9 & $\ell^{4}$ & $E_1$,\ $E_2$,\ $L_{45}$,\ $L_{12}$ & claw $K_{1,3}$ & $Q$ \\
4.10 & $\ell^{4}$ & $E_1$,\ $E_2$,\ $L_{25}$,\ $L_{14}$ & chain $P_4$ & $Q$ \\
\addlinespace[4pt]
4.11 & $F^{2}$ & $H-E_5$,\ $H-E_5$ & $F_1\sim F_2$ & $2F$ \\
4.12 & $F\ell^{2}$ & $H-E_5$,\ $E_1$,\ $L_{15}$ & $\ell_1\cdot\ell_2=1$, $\ell_1+\ell_2\sim F$ & $2F$ \\
4.13 & $\ell^{4}$ & $E_1$,\ $E_2$,\ $L_{25}$,\ $L_{15}$ & $2K_2$: two singular members of one conic bundle & $2F$ \\
\end{longtable}}

{\small
\setlength{\tabcolsep}{4pt}
\begin{longtable}{@{}l l >{\raggedright\arraybackslash}p{6.3cm} l >{\raggedright\arraybackslash}p{4.4cm}@{}}
\caption{Reducible reduced members of $\lvert -K_S+\ell_0\rvert$, i.e.\ $\deg D=5$ and $N_D=-K_S-\ell_0$ a twisted cubic class, representatives with $\ell_0=E_5$, i.e.\ $D\sim 3H-E_{1234}$.}\label{tab:deg5}\\
\toprule
No. & Type & Components (representative) & Lines & Remark \\
\midrule
\endfirsthead
\multicolumn{5}{@{}l}{\emph{Table~\ref{tab:deg5}, continued}}\\
\toprule
No. & Type & Components (representative) & Lines & Remark \\
\midrule
\endhead
\bottomrule
\endfoot
5.1 & $A\ell$ & $-K_S$,\ $E_5$ & $K_1$ &  \\
5.2 & $Q\ell$ & $2H-E_{34}$,\ $L_{12}$ & $K_1$ &  \\
5.3 & $TF$ & $H$,\ $2H-E_{1234}$ & - &  \\
\addlinespace[4pt]
5.4 & $T\ell^{2}$ & $H$,\ $E_5$,\ $L_0$ & $K_2$ & $\ell_T\subset D$ \\
5.5 & $T\ell^{2}$ & $H$,\ $L_{34}$,\ $L_{12}$ & $K_2$ & $\ell_T\not\subset D$ \\
5.6 & $T\ell^{2}$ & $2H-E_{345}$,\ $E_5$,\ $L_{12}$ & $2K_1$ & $\ell_T\subset D$ \\
5.7 & $F^{2}\ell$ & $H-E_5$,\ $2H-E_{1234}$,\ $E_5$ & $K_1$ & $F_1+F_2\sim-K_S$ \\
5.8 & $F^{2}\ell$ & $H-E_4$,\ $H-E_3$,\ $L_{12}$ & $K_1$ & $F_1\cdot F_2=1$ \\
\addlinespace[4pt]
5.9 & $F\ell^{3}$ & $H-E_5$,\ $E_5$,\ $L_{34}$,\ $L_{12}$ & $K_2\sqcup K_1$ &  \\
5.10 & $F\ell^{3}$ & $H-E_4$,\ $E_1$,\ $L_{13}$,\ $L_{12}$ & $P_3$ & $F$ meets both ends of the chain \\
5.11 & $F\ell^{3}$ & $H-E_4$,\ $E_5$,\ $L_{35}$,\ $L_{12}$ & $P_3$ & $F$ meets the middle line and one end \\
\addlinespace[4pt]
5.12 & $\ell^{5}$ & $E_1$,\ $E_2$,\ $L_{24}$,\ $L_{13}$,\ $L_{12}$ & $C_5$ &  \\
5.13 & $\ell^{5}$ & $E_1$,\ $E_5$,\ $L_{45}$,\ $L_{13}$,\ $L_{12}$ & $C_4^{+}$ &  \\
\end{longtable}}

{\small
\setlength{\tabcolsep}{4pt}
\begin{longtable}{@{}l l >{\raggedright\arraybackslash}p{6.3cm} l >{\raggedright\arraybackslash}p{4.4cm}@{}}
\caption{Reducible reduced members of $\lvert -K_S+F\rvert$, i.e.\ $\deg D=6$ and $N_D=F'$ a conic class, representatives with $F=H-E_5$, i.e.\ $D\sim 4H-E_{1234}-2E_5$.}\label{tab:deg6}\\
\toprule
No. & Type & Components (representative) & Lines & Remark \\
\midrule
\endfirsthead
\multicolumn{5}{@{}l}{\emph{Table~\ref{tab:deg6}, continued}}\\
\toprule
No. & Type & Components (representative) & Lines & Remark \\
\midrule
\endhead
\bottomrule
\endfoot
6.1 & $A_5\ell$ & $3H-E_{2345}$,\ $L_{15}$ & $K_1$ &  \\
6.2 & $P\ell$ & $2H-E_5$,\ $L_0$ & $K_1$ &  \\
6.3 & $AF$ & $-K_S$,\ $H-E_5$ & - &  \\
6.4 & $QF$ & $2H-E_{45}$,\ $2H-E_{1235}$ & - &  \\
6.5 & $T^{2}$ & $H$,\ $3H-E_{1234}-2E_5$ & - &  \\
\addlinespace[4pt]
6.6 & $A\ell^{2}$ & $-K_S$,\ $E_1$,\ $L_{15}$ & $K_2$ &  \\
6.7 & $Q\ell^{2}$ & $2H-E_{45}$,\ $L_{35}$,\ $L_{12}$ & $K_2$ &  \\
6.8 & $Q\ell^{2}$ & $2H-E_{34}$,\ $L_{25}$,\ $L_{15}$ & $2K_1$ &  \\
6.9 & $TF\ell$ & $H$,\ $H-E_5$,\ $L_0$ & $K_1$ & $T\cdot F=1$, $\ell=\ell_T$ \\
6.10 & $TF\ell$ & $H$,\ $2H-E_{2345}$,\ $L_{15}$ & $K_1$ & $T\cdot F=2$, $\ell\neq\ell_T$ \\
6.11 & $TF\ell$ & $2H-E_{234}$,\ $H-E_5$,\ $L_{15}$ & $K_1$ & $T\cdot F=2$, $\ell=\ell_T$ \\
6.12 & $F^{3}$ & $H-E_5$,\ $H-E_5$,\ $2H-E_{1234}$ & - & $F_1\sim F_2$ \\
6.13 & $F^{3}$ & $H-E_5$,\ $H-E_4$,\ $2H-E_{1235}$ & - & $F_1\cdot F_2=1$ \\
\addlinespace[4pt]
6.14 & $T\ell^{3}$ & $H$,\ $E_1$,\ $L_{15}$,\ $L_0$ & $P_3$ & $\ell_T\subset D$ \\
6.15 & $T\ell^{3}$ & $H$,\ $L_{45}$,\ $L_{35}$,\ $L_{12}$ & $P_3$ & $\ell_T\not\subset D$ \\
6.16 & $T\ell^{3}$ & $2H-E_{234}$,\ $E_2$,\ $L_{25}$,\ $L_{15}$ & $K_2\sqcup K_1$ &  \\
6.17 & $F^{2}\ell^{2}$ & $H-E_5$,\ $H-E_5$,\ $L_{34}$,\ $L_{12}$ & $K_2$ & $F_1\sim F_2$ \\
6.18 & $F^{2}\ell^{2}$ & $H-E_5$,\ $H-E_4$,\ $L_{35}$,\ $L_{12}$ & $K_2$ & $F_1\cdot F_2=1$ \\
6.19 & $F^{2}\ell^{2}$ & $H-E_5$,\ $2H-E_{1234}$,\ $E_1$,\ $L_{15}$ & $K_2$ & $F_1+F_2\sim-K_S$, $G\sim F_1$ \\
6.20 & $F^{2}\ell^{2}$ & $H-E_4$,\ $H-E_3$,\ $L_{25}$,\ $L_{15}$ & $2K_1$ &  \\
6.21 & $F^{2}\ell^{2}$ & $H-E_4$,\ $2H-E_{1235}$,\ $E_1$,\ $L_{15}$ & $K_2$ & $F_1+F_2\sim-K_S$, $G\not\sim F_1,F_2$ \\
\addlinespace[4pt]
6.22 & $F\ell^{4}$ & $H-E_5$,\ $E_1$,\ $L_{45}$,\ $L_{13}$,\ $L_{12}$ & $C_4$ & $F$ meets two opposite lines \\
6.23 & $F\ell^{4}$ & $H-E_5$,\ $E_1$,\ $L_{34}$,\ $L_{15}$,\ $L_{12}$ & $C_4$ & $F$ meets two adjacent lines \\
6.24 & $F\ell^{4}$ & $H-E_4$,\ $E_1$,\ $L_{35}$,\ $L_{15}$,\ $L_{12}$ & $P_4$ &  \\
6.25 & $F\ell^{4}$ & $H-E_4$,\ $E_4$,\ $L_{35}$,\ $L_{25}$,\ $L_{14}$ & $K_{1,3}$ &  \\
6.26 & $F\ell^{4}$ & $2H-E_{1234}$,\ $E_1$,\ $E_2$,\ $L_{25}$,\ $L_{15}$ & $2K_2$ &  \\
\addlinespace[4pt]
6.27 & $\ell^{6}$ & $E_1$,\ $E_2$,\ $L_{45}$,\ $L_{25}$,\ $L_{13}$,\ $L_{12}$ & $\Theta_{2,2,3}$ &  \\
6.28 & $\ell^{6}$ & $E_1$,\ $E_2$,\ $L_{34}$,\ $L_{25}$,\ $L_{15}$,\ $L_{12}$ & $\Theta_{1,3,3}$ &  \\
\end{longtable}}

{\small
\setlength{\tabcolsep}{4pt}
\begin{longtable}{@{}l l >{\raggedright\arraybackslash}p{6.3cm} l >{\raggedright\arraybackslash}p{4.4cm}@{}}
\caption{Reducible reduced members of $\lvert -2K_S\rvert$, i.e.\ $\deg D=8$ and $K_S+\frac12D\equiv0$, here $-2K_S=6H-2E_{12345}$.}\label{tab:deg8}\\
\toprule
No. & Type & Components (representative) & Lines & Remark \\
\midrule
\endfirsthead
\multicolumn{5}{@{}l}{\emph{Table~\ref{tab:deg8}, continued}}\\
\toprule
No. & Type & Components (representative) & Lines & Remark \\
\midrule
\endhead
\bottomrule
\endfoot
8.1 & $B_7\ell$ & $4H-E_{12345}$,\ $L_0$ & $K_1$ &  \\
8.2 & $B_6F$ & $4H-E_{1234}-2E_5$,\ $2H-E_{1234}$ & - &  \\
8.3 & $A_5T$ & $3H-E_{2345}$,\ $3H-2E_1-E_{2345}$ & - &  \\
8.4 & $A^{2}$ & $-K_S$,\ $-K_S$ & - &  \\
8.5 & $Q^{2}$ & $2H-E_{45}$,\ $4H-2E_{123}-E_{45}$ & - &  \\
\addlinespace[4pt]
8.6 & $B_6\ell^{2}$ & $4H-E_{1234}-2E_5$,\ $L_{34}$,\ $L_{12}$ & $K_2$ &  \\
8.7 & $A_6\ell^{2}$ & $3H-E_{345}$,\ $L_{12}$,\ $L_0$ & $2K_1$ &  \\
8.8 & $A_5F\ell$ & $3H-E_{2345}$,\ $H-E_1$,\ $L_0$ & $K_1$ &  \\
8.9 & $PF\ell$ & $2H-E_5$,\ $2H-E_{1234}$,\ $L_0$ & $K_1$ &  \\
8.10 & $AT\ell$ & $-K_S$,\ $H$,\ $L_0$ & $K_1$ &  \\
8.11 & $AF^{2}$ & $-K_S$,\ $H-E_5$,\ $2H-E_{1234}$ & - &  \\
8.12 & $QT\ell$ & $2H-E_{45}$,\ $2H-E_{123}$,\ $L_0$ & $K_1$ &  \\
8.13 & $QF^{2}$ & $2H-E_{45}$,\ $2H-E_{1235}$,\ $2H-E_{1234}$ & - &  \\
8.14 & $T^{2}F$ & $H$,\ $3H-E_{1234}-2E_5$,\ $2H-E_{1234}$ & - &  \\
\addlinespace[4pt]
8.15 & $A_5\ell^{3}$ & $3H-E_{2345}$,\ $L_{45}$,\ $L_{13}$,\ $L_{12}$ & $P_3$ &  \\
8.16 & $P\ell^{3}$ & $2H-E_5$,\ $L_{34}$,\ $L_{12}$,\ $L_0$ & $K_2\sqcup K_1$ &  \\
8.17 & $AF\ell^{2}$ & $-K_S$,\ $H-E_5$,\ $L_{34}$,\ $L_{12}$ & $K_2$ &  \\
8.18 & $QF\ell^{2}$ & $2H-E_{45}$,\ $H-E_3$,\ $L_{12}$,\ $L_0$ & $2K_1$ &  \\
8.19 & $QF\ell^{2}$ & $2H-E_{45}$,\ $2H-E_{1235}$,\ $L_{34}$,\ $L_{12}$ & $K_2$ &  \\
8.20 & $T^{2}\ell^{2}$ & $H$,\ $2H-E_{345}$,\ $L_{12}$,\ $L_0$ & $2K_1$ & $\{\ell_1,\ell_2\}=\{\ell_{T_1},\ell_{T_2}\}$ \\
8.21 & $T^{2}\ell^{2}$ & $H$,\ $3H-E_{1234}-2E_5$,\ $L_{34}$,\ $L_{12}$ & $K_2$ & $\ell_i\neq\ell_{T_j}$ \\
8.22 & $T^{2}\ell^{2}$ & $2H-E_{345}$,\ $2H-E_{125}$,\ $L_{34}$,\ $L_{12}$ & $K_2$ & $\{\ell_1,\ell_2\}=\{\ell_{T_1},\ell_{T_2}\}$ \\
8.23 & $TF^{2}\ell$ & $H$,\ $H-E_5$,\ $2H-E_{1234}$,\ $L_0$ & $K_1$ & $F_1+F_2\sim-K_S$ \\
8.24 & $TF^{2}\ell$ & $H$,\ $2H-E_{2345}$,\ $2H-E_{1345}$,\ $L_{12}$ & $K_1$ & $F_1\cdot F_2=1$ \\
8.25 & $F^{4}$ & $H-E_5$,\ $H-E_5$,\ $2H-E_{1234}$,\ $2H-E_{1234}$ & - & $F_1\sim F_2$ \\
8.26 & $F^{4}$ & $H-E_5$,\ $H-E_4$,\ $2H-E_{1235}$,\ $2H-E_{1234}$ & - & $F_1\cdot F_2=1$ \\
\addlinespace[4pt]
8.27 & $A\ell^{4}$ & $-K_S$,\ $E_1$,\ $L_{45}$,\ $L_{13}$,\ $L_{12}$ & $C_4$ &  \\
8.28 & $Q\ell^{4}$ & $2H-E_{45}$,\ $L_{45}$,\ $L_{23}$,\ $L_{13}$,\ $L_{12}$ & $K_{1,3}$ &  \\
8.29 & $Q\ell^{4}$ & $2H-E_{45}$,\ $L_{35}$,\ $L_{24}$,\ $L_{13}$,\ $L_{12}$ & $P_4$ &  \\
8.30 & $TF\ell^{3}$ & $H$,\ $H-E_5$,\ $L_{34}$,\ $L_{12}$,\ $L_0$ & $K_2\sqcup K_1$ &  \\
8.31 & $TF\ell^{3}$ & $H$,\ $2H-E_{2345}$,\ $L_{45}$,\ $L_{13}$,\ $L_{12}$ & $P_3$ & $\ell_T\not\subset D$ \\
8.32 & $TF\ell^{3}$ & $2H-E_{345}$,\ $H-E_2$,\ $L_{45}$,\ $L_{13}$,\ $L_{12}$ & $P_3$ & $\ell_T\subset D$ \\
8.33 & $F^{3}\ell^{2}$ & $H-E_5$,\ $H-E_5$,\ $2H-E_{1234}$,\ $L_{34}$,\ $L_{12}$ & $K_2$ & $F_1\sim F_2$ \\
8.34 & $F^{3}\ell^{2}$ & $H-E_5$,\ $H-E_4$,\ $H-E_3$,\ $L_{12}$,\ $L_0$ & $2K_1$ &  \\
8.35 & $F^{3}\ell^{2}$ & $H-E_5$,\ $H-E_4$,\ $2H-E_{1235}$,\ $L_{34}$,\ $L_{12}$ & $K_2$ & $F_1\cdot F_2=1$ \\
\addlinespace[4pt]
8.36 & $T\ell^{5}$ & $H$,\ $E_1$,\ $L_{45}$,\ $L_{13}$,\ $L_{12}$,\ $L_0$ & $C_4^{+}$ &  \\
8.37 & $T\ell^{5}$ & $H$,\ $L_{45}$,\ $L_{35}$,\ $L_{24}$,\ $L_{13}$,\ $L_{12}$ & $C_5$ &  \\
8.38 & $F^{2}\ell^{4}$ & $H-E_5$,\ $H-E_5$,\ $L_{34}$,\ $L_{24}$,\ $L_{13}$,\ $L_{12}$ & $2K_2$ &  \\
8.39 & $F^{2}\ell^{4}$ & $H-E_5$,\ $H-E_4$,\ $L_{45}$,\ $L_{23}$,\ $L_{13}$,\ $L_{12}$ & $K_{1,3}$ &  \\
8.40 & $F^{2}\ell^{4}$ & $H-E_5$,\ $H-E_4$,\ $L_{35}$,\ $L_{24}$,\ $L_{13}$,\ $L_{12}$ & $P_4$ &  \\
8.41 & $F^{2}\ell^{4}$ & $H-E_5$,\ $2H-E_{1234}$,\ $E_1$,\ $L_{45}$,\ $L_{13}$,\ $L_{12}$ & $C_4$ & each $F_i$ meets two opposite lines \\
8.42 & $F^{2}\ell^{4}$ & $H-E_5$,\ $2H-E_{1234}$,\ $E_1$,\ $L_{34}$,\ $L_{15}$,\ $L_{12}$ & $C_4$ & each $F_i$ meets two adjacent lines \\
\addlinespace[4pt]
8.43 & $F\ell^{6}$ & $H-E_5$,\ $E_1$,\ $L_{45}$,\ $L_{23}$,\ $L_{14}$,\ $L_{13}$,\ $L_{12}$ & $\Theta_{2,2,3}$ &  \\
8.44 & $F\ell^{6}$ & $H-E_5$,\ $E_1$,\ $L_{34}$,\ $L_{24}$,\ $L_{15}$,\ $L_{13}$,\ $L_{12}$ & $\Theta_{1,3,3}$ &  \\
\addlinespace[4pt]
8.45 & $\ell^{8}$ & $E_1$,\ $E_2$,\ $L_{45}$,\ $L_{25}$,\ $L_{23}$,\ $L_{14}$,\ $L_{13}$,\ $L_{12}$ & $V_8$ &  \\
8.46 & $\ell^{8}$ & $E_1$,\ $E_2$,\ $L_{25}$,\ $L_{24}$,\ $L_{23}$,\ $L_{15}$,\ $L_{14}$,\ $L_{13}$ & $Q_3$ &  \\
\end{longtable}}

\section{Surfaces of degree 5}

We work over an algebraically closed field $k$ of characteristic $0$.
Let $S$ be a del Pezzo surface of degree $5$. All such surfaces are isomorphic,
we fix $\beta\colon S\to\PP^2$, the blow-up of four points in general position,
with $H$ the pullback of a line and $E_1,\dots,E_4$ the exceptional curves, so
$\Pic(S)=\mathbb Z H\oplus\bigoplus_i\mathbb ZE_i$, $-K_S=3H-\sum_iE_i$ and
$K_S^2=5$. For a divisor $D$ we write $\deg D:=-K_S\cdot D$.

A \emph{Campana orbifold} on $S$ is a pair $(S,\Delta_\epsilon)$ with
\[
  \Delta_\epsilon=\sum_{i=1}^r\epsilon_iD_i,\qquad
  \epsilon_i=1-\frac1{m_i},\quad 2\le m_i<\infty,
\]
where $D_1,\dots,D_r$ are \emph{pairwise distinct} irreducible curves, the boundary
is $D:=\lceil\Delta_\epsilon\rceil=D_1+\dots+D_r$. Distinct components may be
linearly equivalent (e.g.\ two fibres of one conic bundle). We call
$(S,\Delta_\epsilon)$ a \emph{del Pezzo orbifold} if $-(K_S+\Delta_\epsilon)$ is
ample. In this note we classify the del Pezzo orbifolds of degree $5$ with
$r\ge2$, the case $r=1$ is \cite{dandapat2026classification}.

\begin{remark}[Degree bound]\label{rem:degree}
If $-(K_S+\Delta_\epsilon)$ is ample then, since $-K_S$ is ample and $\epsilon_i\ge\frac12$,
\[
0<(-K_S)\cdot\bigl(-(K_S+\Delta_\epsilon)\bigr)=5-\sum_i\epsilon_i\deg D_i\le 5-\tfrac12\sum_i\deg D_i ,
\]
hence $\sum_i\deg D_i\le 9$, in particular $r\le 9$ and $\deg D_i\le 9$ for all $i$.
(Theorem~\ref{thm:main5} shows a posteriori that $r\le5$ and $\deg D\le5$.)
\end{remark}

\subsection{Lines and the Petersen graph}
The ten $(-1)$-curves (\emph{lines}) of $S$ are $E_i$ ($1\le i\le4$) and
$L_{ij}=H-E_i-E_j$ ($1\le i<j\le4$). Let $\Gamma$ be the graph whose vertices are the
lines, two lines being adjacent if they meet (then $E\cdot E'=1$, otherwise
$E\cdot E'=0$). It is the Petersen graph.

\begin{lemma}\label{lem:petersen}
\begin{enumerate}[label=\textup{(\alph*)}]
\item Every line meets exactly three other lines.
\item Two meeting lines have no common neighbour, two disjoint lines have exactly
      one common neighbour. In particular $\Gamma$ has no cycles of length $3$ or $4$.
\item A line is determined by the set of lines meeting it.
\item The maximal sets of pairwise disjoint lines have $4$ elements, there are exactly
      five of them, $I_1,\dots,I_5$, and $|I_a\cap I_b|=1$ for $a\ne b$.
\item The six lines disjoint from a given line $E$ form a hexagon. For meeting lines
      $E,E'$, the four lines meeting neither $E$ nor $E'$ form two disjoint edges.
      For a path $E_a-E_0-E_b$ in $\Gamma$ there are exactly two lines $E_x,E_y$
      neither on the path nor adjacent to it, and $\{E_a,E_b,E_x,E_y\}$ is one of
      the $I_c$.
\item $\sum_{E\text{ line}}E\sim-2K_S$, hence $\sum_E A\cdot E=2\deg A$ for every divisor $A$.
\end{enumerate}
\end{lemma}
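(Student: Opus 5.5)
The plan is to verify everything by direct computation in the fixed model, using the intersection numbers $E_i\cdot E_j=0$ for $i\ne j$, $E_i\cdot L_{jk}=1$ if and only if $i\in\{j,k\}$, and $L_{ij}\cdot L_{kl}=1$ if and only if $\{i,j\}\cap\{k,l\}=\emptyset$. These follow at once from $H^2=1$, $H\cdot E_i=0$ and $E_i^2=-1$.

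\emph{Identification with the Petersen graph.} Relabel $E_i$ as the $2$-subset $\{i,5\}$ of $\{1,\dots,5\}$, and $L_{ij}$ as $\{k,l\}$, the complement of $\{i,j\}$ in $\{1,2,3,4\}$. Then in every case two lines meet if and only if the corresponding $2$-subsets are disjoint. For example, $E_i\cdot L_{jk}=1$ exactly when $i\in\{j,k\}$, which means $\{i,5\}$ is disjoint from the complement of $\{j,k\}$. So $\Gamma$ is the Kneser graph $K(5,2)$, i.e.\ the Petersen graph.

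\emph{Parts (a)--(e).} These become statements about $2$-subsets of a $5$-set.
\begin{itemize}
\item[(a)] A $2$-subset is disjoint from exactly $\binom32=3$ others.
\item[(b)] Two disjoint $2$-subsets leave a single remaining element, so no $2$-subset is disjoint from both. Two $2$-subsets sharing one element have union of size $3$, whose complement is the unique common neighbour.
\item[(c)] The three neighbours of $\{a,b\}$ have union equal to the complement of $\{a,b\}$, which recovers $\{a,b\}$.
\item[(d)] Pairwise non-disjoint families of $2$-subsets are either stars $\{\{a,x\}\}_{x\ne a}$, of size $4$, or triangles $\binom{T}{2}$ with $|T|=3$, of size $3$. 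A triangle is never maximal, since it is contained in a star only if $|T|\le 2$; in fact no triangle extends, so the maximal independent sets of size $4$ are exactly the five stars $I_a$. Distinct stars meet in $\{a,b\}$. The paper states maximal sets have $4$ elements; I will read this as maximum and check that triangles are also maximal by inclusion, or else phrase the claim as ``maximum''.
\item[(e)] The six lines disjoint from $\{a,b\}$ are the $2$-subsets meeting it in one element. Their disjointness graph is the hexagon $\{a,c\}-\{b,d\}-\{a,e\}-\{b,c\}-\{a,d\}-\{b,e\}$. For meeting lines $\{a,b\},\{c,d\}$, the non-neighbours of both are the four subsets containing $e$ together with one of $a,b,c,d$, minus none; the disjointness among these gives the two edges $\{a,e\},\{b,e\}$ versus \dots. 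I will redo this carefully, and expect the correct count to come from excluding the neighbours $\{c,e\},\{d,e\},\{a,e\},\{b,e\}$ appropriately. For the path $\{a,b\}-\{c,d\}-\{a,e\}$, the lines that are neither on the path nor adjacent to it are $\{a,c\}$ and $\{a,d\}$. Together with $\{a,b\},\{a,e\}$ they form the star $I_a$.
\end{itemize}

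\emph{Part (f).} Sum the classes directly:
\[
\sum E_i+\sum_{i<j}(H-E_i-E_j)=6H-3\sum E_i+\sum E_i=6H-2\sum E_i=-2K_S.
\]
Intersecting with $A$ gives $\sum_E A\cdot E=2\deg A$.

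The only real obstacle is the careful bookkeeping in (d) and (e), specifically the ``maximal versus maximum'' subtlety and getting the two-edge count right. Everything else is mechanical.
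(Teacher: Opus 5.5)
Your approach is the same as the paper's. The paper simply declares (a)--(f) to be finite checks with the explicit classes, i.e.\ standard properties of the Petersen graph, and writes out only (f). Your dictionary $E_i\mapsto\{i,5\}$, $L_{ij}\mapsto\{1,2,3,4\}\smallsetminus\{i,j\}$ (meeting lines $\leftrightarrow$ disjoint $2$-subsets) is a correct and clean way to carry out those checks. Your arguments for (a)--(c), (f), the hexagon and the path in (e) are fine, with two small additions:
\begin{itemize}
\item For the path, say why your example is general: the ends are two $2$-subsets of the $3$-element complement of $E_0$, so they share exactly one point, which you call $a$.
\item For the last clause of (b), note that a triangle gives two meeting lines with a common neighbour, and a $4$-cycle gives two lines with two common neighbours.
\end{itemize}

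The middle claim of (e), however, is not proved, and the set you wrote down is the wrong one. The four subsets containing $e$ are precisely the lines \emph{meeting} $E=\{a,b\}$ or $E'=\{c,d\}$; for example $\{a,e\}$ is disjoint from $\{c,d\}$. Since they all contain $e$, they span no edge. The lines meeting neither $E$ nor $E'$ are the $2$-subsets that intersect both $\{a,b\}$ and $\{c,d\}$, namely $\{a,c\},\{a,d\},\{b,c\},\{b,d\}$. Their only disjoint pairs are $\{a,c\},\{b,d\}$ and $\{a,d\},\{b,c\}$, which gives the two disjoint edges.

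In (d) your suspicion is justified, but your text contradicts itself: ``a triangle is never maximal'' versus ``no triangle extends''. The second statement is the true one. A $2$-subset meeting all three members of $\binom{T}{2}$ must contain two points of $T$, so $\binom{T}{2}$ is maximal under inclusion. These ten triangles are exactly the neighbourhoods $\mathcal N(E)$ of the lines. For example, $L_{14},L_{24},L_{34}$ (the lines meeting $E_4$) are pairwise disjoint, and every other line meets one of them.

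So (d) is false for inclusion-maximal sets and must be read as ``maximum'' (largest cardinality). That is the only version used later: $\Lambda(F_a)=I_a$ as a $4$-element independent set, and the independence-number arguments in the proof of Theorem~\ref{thm:main5}. For that version, include the short proof of the star/triangle dichotomy you invoke. Suppose the members of an intersecting family of $2$-sets have no common point. Then the family contains $\{a,b\},\{b,c\},\{a,c\}$ for some $a,b,c$, and every further member meets all three, hence lies in $\{a,b,c\}$. Consequently a $4$-element independent set is a full star, there are exactly five of them, and $I_a\cap I_b=\{\{a,b\}\}$.
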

\begin{proof}
All statements are finite checks with the classes above, e.g.\ (f):
$\sum_iE_i+\sum_{i<j}(H-E_i-E_j)=6H-2\sum_iE_i$. They are standard properties of
the Petersen graph.
\end{proof}

\begin{lemma}\label{lem:ample}
The cone $\NE(S)$ is spanned by the ten lines. Consequently, for an $\R$-divisor
$\Delta$ on $S$, the divisor $-(K_S+\Delta)$ is ample if and only if
$\Delta\cdot E<1$ for every line $E$.
\end{lemma}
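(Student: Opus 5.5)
The plan is to first establish that the ten lines generate $\NE(S)$, and then deduce the ampleness criterion from Kleiman's criterion. The second part is formal once the first is known. If $\NE(S)$ is the closed convex cone spanned by finitely many classes $E_1,\dots,E_4,L_{ij}$, then it is rational polyhedral. By Kleiman's criterion, an $\R$-divisor $N$ is ample if and only if $N$ is positive on $\NE(S)\setminus\{0\}$. For a polyhedral cone this is equivalent to $N\cdot E>0$ for each generator $E$. Applying this to $N=-(K_S+\Delta)$ and using $-K_S\cdot E=1$ for every line gives $N\cdot E=1-\Delta\cdot E$, hence the stated criterion.

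For the cone statement I would argue directly on an irreducible curve $C\subset S$ that is not a line. The aim is to show that $[C]$ is a nonnegative combination of lines. If $C^2<0$, adjunction gives $C^2=K_S\cdot C=-1$, as in the proof of Lemma~\ref{lem:irreducible}. Hence $C$ is a $(-1)$-curve, and these are exactly the ten classes listed. Otherwise $C$ is nef. Here I would use the conic bundle $|H-E_1|$, whose reducible fibres are $E_j+L_{1j}$ for $j=2,3,4$, together with the decomposition $H=E_1+(H-E_1)=E_1+E_2+L_{12}$. The goal is to write $C\sim aH-\sum b_iE_i$ explicitly as an effective combination of lines.

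A cleaner route that avoids the case-by-case decomposition is the standard Mori cone theorem argument. Since $-K_S$ is ample, the cone theorem gives $\NE(S)=\sum\R_{\ge0}[R_j]$, where the $R_j$ are extremal rational curves with $0<-K_S\cdot R_j\le3$. On a smooth surface an extremal ray is of one of three kinds: a $(-1)$-curve, the fibre class of a conic bundle over a curve (when the Picard number is $2$), or the whole cone when $\rho=1$. Since $\rho(S)=5>2$, every extremal ray is spanned by a $(-1)$-curve. This is exactly the argument quoted in Lemma~\ref{lem:nef-test} and Lemma~\ref{lem:nef} for degrees $2$ and $3$, and in \ref{F4} for degree $4$. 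I will therefore cite it in the same way, via \cite[Lemma~2.1]{dandapat2026classification}.

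The only point needing care is the classification of extremal contractions on a surface of Picard number greater than $2$. A contraction of fibre type would force $\rho\le2$, so every extremal contraction is birational. A birational extremal contraction from a smooth surface contracts an irreducible curve $R$ with $R^2<0$ and $K_S\cdot R<0$, which adjunction forces to be a $(-1)$-curve. I expect no real obstacle: the step is standard and citable. The ampleness equivalence then follows immediately as explained in the first paragraph.
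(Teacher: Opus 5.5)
Your proposal is correct and follows the paper's own proof. It applies the cone theorem to the ample $-K_S$, uses $\rho(S)=5\ge3$ to see that every extremal ray is spanned by a $(-1)$-curve, and concludes by Kleiman's criterion with $-(K_S+\Delta)\cdot E=1-\Delta\cdot E$, so the sketched direct decomposition via $|H-E_1|$ is not needed.
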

\begin{proof}
Since $-K_S$ is ample, the cone theorem gives $\NE(S)=\sum_R R$ over finitely many
extremal rays $R$, and as $\rho(S)=5\ge3$ each $R$ is spanned by a $(-1)$-curve. By Kleiman's criterion a divisor is ample iff it is
positive on each of these finitely many generators, and
$-(K_S+\Delta)\cdot E=1-\Delta\cdot E$.
\end{proof}

\subsection{Curves of small degree}
For a nef divisor $A$ put $\Lambda(A):=\{E\text{ line}: A\cdot E>0\}$.

\begin{lemma}\label{lem:curves5}
Let $C\subset S$ be an irreducible curve that is not a line. Then $C$ is nef and
$C^2\ge0$, if $C^2=0$ then $\deg C=2$ and $C$ is a \emph{conic}, i.e.\ a fibre of
one of the five conic bundles $S\to\PP^1$. The classes of irreducible curves of
degree $\le4$ other than lines are exactly:
\begin{enumerate}[label=\textup{(\roman*)}]
\item the five \emph{conic} classes $F_1,\dots,F_5$ ($H-E_i$ and $2H-\sum_iE_i$),
      with $F_a\cdot E\in\{0,1\}$ and $\Lambda(F_a)=I_a$,
\item the five \emph{cubic} classes $h_a:=-K_S-F_a$ (pullbacks of lines under the
      five birational morphisms $S\to\PP^2$), with $h_a\cdot E=1-F_a\cdot E$, so
      $\Lambda(h_a)=\Gamma\setminus I_a$ has six elements,
\item the ten \emph{quartic} classes $q_E:=-K_S-E$ ($E$ a line, pullbacks of
      $(1,1)$-curves under the morphisms $S\to\PP^1\times\PP^1$), with
      $q_E\cdot E=2$, $q_E\cdot E'=0$ if $E'$ meets $E$, and $q_E\cdot E'=1$
      for the six lines $E'$ disjoint from $E$.
\end{enumerate}
Each of these classes contains smooth rational curves, moving in a family of
dimension $1$, $2$, $3$ respectively.
\end{lemma}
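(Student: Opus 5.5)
First we show that an irreducible $C$ which is not a line is nef with $C^2\ge0$, using the same adjunction argument as in Lemma~\ref{lem:irreducible}. If $C^2<0$, then $-2\le 2p_a(C)-2=C^2+K_S\cdot C\le C^2-1$. Hence $C^2=K_S\cdot C=-1$, and $C$ is a line, because on a del Pezzo surface every class with $x^2=K_S\cdot x=-1$ is represented by a $(-1)$-curve. Otherwise $C\cdot C'\ge0$ for every irreducible $C'\neq C$ and $C^2\ge0$, so $C$ is nef. If $C^2=0$, then $2p_a-2=-\deg C<0$ gives $p_a=0$ and $\deg C=2$. The system $|C|$ is then a base point free pencil of rational curves, i.e.\ a conic bundle, and $C$ is one of its fibres.

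Next we enumerate the nef classes $x=aH-\sum_ib_iE_i$ of degree $d=3a-\sum b_i\le4$ with $x^2\equiv d\pmod 2$ and $x^2\ge d-2$, the latter coming from $p_a\ge0$. Nefness gives $b_i=x\cdot E_i\ge0$ and $x\cdot L_{ij}=a-b_i-b_j\ge0$. Summing the second inequality over a perfect matching, e.g.\ $L_{12},L_{34}$, gives $2a\ge\sum b_i$, so $a\le d$, and the search is finite. The Hodge index theorem gives $x^2\le d^2/5$, which bounds the cases as follows.
\begin{itemize}
\item For $d=2$ we need $x^2=0$, which yields exactly $H-E_i$ and $2H-\sum E_i$.
\item For $d=3$ we need $x^2=1$, which yields $-K_S-F_a$.
\item For $d=4$ we need $x^2=2$ (since $x^2\le 16/5$ and $x^2$ is even with $x^2\ge2$), which yields the ten classes $-K_S-E$. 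Indeed, writing $x=-K_S-y$ with $y$ of degree $1$, we get $y^2=x^2-10+2\deg x-5=-1$, so $y$ is a line.
\end{itemize}
More uniformly, for $d\in\{3,4\}$ put $y=-K_S-x$, of degree $5-d$. Then $y^2=x^2-2d+5$, and the Hodge bound $x^2\le d^2/5$ together with the parity forces $y^2=5-2d+x^2$ to equal $0$ if $d=3$ and $-1$ if $d=4$. So $y$ is a conic class, respectively a line (classes with $y^2=K_S\cdot y=-1$). The conic classes themselves are pinned down from $x^2=0$, $d=2$ by the small enumeration above, or equivalently by $W(A_4)$-transitivity. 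This handling of $d=3,4$ via the complement $-K_S-x$ is the step I expect to need the most care: the parity and Hodge inequalities have to be checked to really exclude $x^2=3$ for $d=3$, where $9/5<3$ already does it, and $x^2=0$ for $d=4$, where $x^2\ge d-2=2$ does it.

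The intersection numbers then follow formally. We have $h_a\cdot E=-K_S\cdot E-F_a\cdot E=1-F_a\cdot E$. Also $q_E\cdot E'=1-E\cdot E'$ for $E'\ne E$, and $q_E\cdot E=1+1=2$; these values are $0$ on the three neighbours of $E$ and $1$ on the six lines disjoint from it, by Lemma~\ref{lem:petersen}(a). For the conics, $F_a\cdot E\in\{0,1\}$, since the singular fibres of the conic bundle are pairs of meeting lines and every other line is a section or a component of a fibre. Moreover $\Lambda(F_a)$ is the set of four lines with $F_a\cdot E=1$; these are pairwise disjoint sections (a direct check, e.g.\ $\Lambda(H-E_1)=\{E_1,L_{23},L_{24},L_{34}\}$), hence one of the $I_a$ of Lemma~\ref{lem:petersen}(d). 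Finally, for existence and smoothness we identify the morphisms: $|H-E_i|$ is a conic bundle, $h_a$ is $H$ for one of the five contractions $S\to\PP^2$, and $q_E$ is $\mathcal O(1,1)$ on $\PP^1\times\PP^1$, obtained by contracting $E$ to a quartic? no, rather by contracting the three neighbours of $E$ to get $\PP^1\times\PP^1$. We then pull back general smooth members by Bertini. The dimensions $h^0-1=\chi-1=\tfrac12(x^2+\deg x)$ equal $1,2,3$.
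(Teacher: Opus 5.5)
Your proposal is correct in substance, but it reaches the list of classes by a different route from the paper.

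\textbf{The paper's route.} It gets nefness from Lemma~\ref{lem:ample}: a non-line $C$ has $C\cdot E\ge0$ for every line, and the lines span $\NE(S)$. It uses the same bound $\deg A\ge a$ that you derive from a perfect matching, so the nef classes of degree $\le4$ form a finite set. It then reads the answer off a direct (computer) enumeration, Table~\ref{tab:dp5-types}, together with $W(A_4)$-transitivity.

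\textbf{Your route.} You get nefness from adjunction alone: a curve with negative square is a line. For $d=3,4$ you replace the enumeration by the squeeze $d-2\le x^2\le d^2/5$, $x^2\equiv d \pmod 2$, which forces $x^2=1$, resp.\ $x^2=2$. You then pass to $y=-K_S-x$, with $y^2=5-2d+x^2$.

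\textbf{Comparison.} Your argument is checkable by hand and explains why the cubic and quartic classes are exactly $-K_S-F_a$ and $-K_S-E$, which is how the lemma presents them. The paper's mechanical enumeration runs unchanged up to degree $9$, which it needs for its table. Your squeeze no longer determines $x^2$ once $d\ge5$; for $d=5$ both $x^2=3$ and $x^2=5$ survive.

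\textbf{Details to tighten.}

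The main one is the uniform step for $d=3$. The class $y=-K_S-x$ is not known to be nef, so your enumeration of \emph{nef} classes with $y^2=0$, $\deg y=2$ does not apply as stated. It is easily repaired. If $y\cdot E<0$ for some line $E$, then $\deg(y-E)=1$ and $(y-E)^2=-2\,y\cdot E-1\ge1>\frac15$, contradicting the Hodge index theorem; hence $y$ is nef. Alternatively, Cauchy--Schwarz gives $(3a-2)^2\le4a^2$, which forces $a\in\{1,2\}$ directly. Your direct enumeration of nef $x$ with $d=3$, $x^2=1$ also suffices.

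The displayed formula in the $d=4$ bullet is garbled: as written it evaluates to $-5$. The correct identity is your later $y^2=5-2d+x^2=-1$.

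Your self-corrected description of $q_E$ is right, but you should say why the degree-$8$ surface $S'$ is $\PP^1\times\PP^1$ and not $\mathbb F_1$. Here $S'$ is obtained by contracting the three pairwise disjoint neighbours $N_1,N_2,N_3$ of $E$. One has $N_1+N_2+N_3\sim-K_S-2E$, so the pullback of $-K_{S'}$ is $-K_S+N_1+N_2+N_3\sim2q_E$. Since $q_E\cdot N_i=0$, the class $q_E$ descends to $S'$. Hence $-K_{S'}$ is divisible by $2$, whereas $-K_{\mathbb F_1}$ is primitive.

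Finally, using $h^0=\chi$ in the dimension count needs $h^1(x)=0$. This follows from Kawamata--Viehweg vanishing, since $x-K_S$ is ample. It can also be seen from the explicit systems $|H-E_1|$, $|H|$, $|2H-E_3-E_4|$.
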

\begin{proof}
$C\cdot E\ge0$ for all lines, so $C$ is nef by Lemma~\ref{lem:ample} and $C^2\ge0$.
By adjunction $p_a(C)=1+\frac12(C^2-\deg C)\ge0$, so $C^2=0$ forces $\deg C\le2$,
and $\deg C$ is even and positive. For the list: if $A=aH-\sum b_iE_i$ is nef then
$b_i=A\cdot E_i\ge0$ and $a-b_i-b_j=A\cdot L_{ij}\ge0$, hence
$\deg A=3a-\sum b_i\ge a$, so for $\deg A\le4$ there are finitely many
candidates, and a direct check (Table~\ref{tab:dp5-types}, types $T_2,T_3,T_4$)
gives (i)-(iii) and the intersection numbers. Representatives are the lines
through $p_1$ ($H-E_1$), all lines ($H$) and the conics through $p_3,p_4$
($2H-E_3-E_4=q_{L_{12}}$), $\Aut(S)\cong S_5$ acts on $\Pic(S)$ as the Weyl group
$W(A_4)$ and is transitive on each of the three sets of classes.
\end{proof}

\begin{lemma}\label{lem:criterion5}
Let $D=\sum D_i$ be a boundary such that $D\cdot E\le1$ for every line $E$ which is
not a component of $D$, and let $\epsilon_i\in(0,1)$. Then $-(K_S+\Delta_\epsilon)$ is
ample if and only if for every component $D_j$ which is a line
\begin{equation}\label{eq:criterion}
  \sum_{i\ne j}\epsilon_i\,(D_i\cdot D_j)\,<\,1+\epsilon_j .
\end{equation}
\end{lemma}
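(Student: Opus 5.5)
By Lemma~\ref{lem:ample}, $-(K_S+\Delta_\epsilon)$ is ample if and only if $\Delta_\epsilon\cdot E<1$ for every line $E$. I will split the ten lines into those that are not components of $D$ and those that are, and treat the two groups separately.

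\emph{Lines $E$ that are not components of $D$.} Here every $D_i\cdot E\ge0$. The hypothesis $D\cdot E\le1$ leaves two cases. If $D\cdot E=0$, then $\Delta_\epsilon\cdot E=0<1$. Otherwise $D\cdot E=1$, so $E$ meets exactly one component $D_i$, with $D_i\cdot E=1$, and then $\Delta_\epsilon\cdot E=\epsilon_i<1$ because $\epsilon_i\in(0,1)$. Hence the ampleness inequality holds automatically at every such line. This is the only point where the hypothesis on $D$ is used.

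\emph{Lines $E=D_j$ that are components of $D$.} Using $D_j^2=-1$ we get
\[
\Delta_\epsilon\cdot D_j=-\epsilon_j+\sum_{i\ne j}\epsilon_i\,(D_i\cdot D_j).
\]
So $\Delta_\epsilon\cdot D_j<1$ is exactly inequality \eqref{eq:criterion}.

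Combining the two groups, $\Delta_\epsilon\cdot E<1$ holds for all lines if and only if \eqref{eq:criterion} holds for every line component $D_j$. This proves both directions. No step is a real obstacle; the only point needing care is that components of degree at least $2$ impose no condition of their own, since by Lemma~\ref{lem:ample} ampleness is tested only against lines.
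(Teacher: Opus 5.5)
Your proof is correct and follows the paper's argument. Like the paper, you test ampleness on the ten lines via Lemma~\ref{lem:ample}, use $D\cdot E\le1$ to get $\Delta_\epsilon\cdot E\in\{0\}\cup\{\epsilon_i\}$ for lines not in $D$, and use $D_j^2=-1$ to rewrite the condition at a line component $D_j$ as \eqref{eq:criterion}.
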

\begin{proof}
If a line $E$ is not a component, then $D_i\cdot E\ge0$ for all $i$ and
$\sum_iD_i\cdot E\le1$, so $\Delta_\epsilon\cdot E\in\{0\}\cup\{\epsilon_i\}$ is $<1$.
If $E=D_j$ then $\Delta_\epsilon\cdot E=\sum_{i\ne j}\epsilon_iD_i\cdot E-\epsilon_j$.
Conclude by Lemma~\ref{lem:ample}.
\end{proof}

\subsection{The classification}

In the statement, $E,E',E_0,E_a,\dots$ denote lines, $F,F'$ conics, $h$ a cubic and $q$
a quartic curve (Lemma~\ref{lem:curves5}), always irreducible, the weight of a
component $C$ is written $\epsilon_C$.

\begin{theorem}\label{thm:main5}
Let $S$ be a del Pezzo surface of degree $5$ and $\Delta_\epsilon=\sum_{i=1}^r\epsilon_iD_i$
a boundary with $r\ge2$ components. Then $-(K_S+\Delta_\epsilon)$ is ample if and
only if $D=\lceil\Delta_\epsilon\rceil$ is one of the following ten configurations and the
weights satisfy the stated inequalities (``none'' means that every choice of weights works). 
\begin{enumerate}[label=\textup{(\arabic*)},leftmargin=2.4em,itemsep=3pt]
\item $E+E'$ with $E\cdot E'=1$.\\ \emph{Weights:} none.
\item $E+F$ with $F\cdot E=1$.\\ \emph{Weights:} none.
\item $E+q$ with $q\in|-K_S-E|$ \textup{(}so $q\cdot E=2$\textup{)}.\\
      \emph{Weights:} $2\epsilon_q<1+\epsilon_E$.
\item $F+h$ with $h\in|-K_S-F|$ \textup{(}so $F\cdot h=2$\textup{)}.\\ \emph{Weights:} none.
\item $E_a+E_0+E_b$ with $E_a\cdot E_0=E_b\cdot E_0=1$ \textup{(}a chain of three lines\textup{)}.\\
      \emph{Weights:} $\epsilon_{E_a}+\epsilon_{E_b}<1+\epsilon_{E_0}$.
\item $E+E'+h$ with $E\cdot E'=1$ and $h\in|-K_S-E-E'|$.\\
      \emph{Weights:} $\epsilon_{E'}+\epsilon_h<1+\epsilon_E$ and
      $\epsilon_{E}+\epsilon_h<1+\epsilon_{E'}$.
\item $E+F+F'$, where $F,F'$ lie in the two conic classes with $F\cdot E=F'\cdot E=1$
      \textup{(}so $F+F'\sim-K_S-E$\textup{)}.\\
      \emph{Weights:} $\epsilon_F+\epsilon_{F'}<1+\epsilon_E$.
\item The \emph{claw} $E_0+E_a+E_b+E_c$, where $E_a,E_b,E_c$ are the three lines
      meeting $E_0$.\\
      \emph{Weights:} $\epsilon_{E_a}+\epsilon_{E_b}+\epsilon_{E_c}<1+\epsilon_{E_0}$.
\item $E_a+E_0+E_b+F$ with $E_a,E_0,E_b$ as in \textup{(5)} and
      $F\in|-K_S-E_a-E_0-E_b|$ a conic.\\
      \emph{Weights:} $\epsilon_{E_a}+\epsilon_{E_b}<1+\epsilon_{E_0}$,\quad
      $\epsilon_{E_0}+\epsilon_F<1+\epsilon_{E_a}$,\quad
      $\epsilon_{E_0}+\epsilon_F<1+\epsilon_{E_b}$.
\item The \emph{pentagon} $E_1+\dots+E_5$ with $E_i\cdot E_{i+1}=1$
      \textup{(}indices mod $5$\textup{)}.\\
      \emph{Weights:} $\epsilon_{E_{i-1}}+\epsilon_{E_{i+1}}<1+\epsilon_{E_i}$ for all $i$.
\end{enumerate}
Each configuration forms a single $\Aut(S)$-orbit of multisets of classes, explicit
representatives, orbit sizes and the inequalities in coordinates are given in
Table~\ref{tab:dp5-reducible}.
\end{theorem}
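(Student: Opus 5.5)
The proof follows the pattern of Theorem~\ref{thm:dP}: a necessary condition on the reduced boundary $D$, a finite enumeration of the configurations that satisfy it, and then a check of the weights via Lemma~\ref{lem:criterion5}.

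\textbf{Necessary condition.} Suppose $-(K_S+\Delta_\epsilon)$ is ample. Since every $\epsilon_i\ge\frac12$, Lemma~\ref{lem:ample} gives $D\cdot E\le1$ for every line $E\not\subset D$. For a component line $E=D_j$ the same estimate gives
\[
\tfrac12(D-E)\cdot E\;<\;1+\epsilon_j\;<\;2,
\]
so $D\cdot E\le2$, exactly as in Lemma~\ref{lem:criterion4}(c).

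\textbf{Degree bound.} Let $k$ be the number of line components and $d=\deg D$. Summing over the lines with Lemma~\ref{lem:petersen}(f) gives
\[
2d=\sum_E D\cdot E\le(10-k)+2k=10+k.
\]
The non-line components have degree $\ge2$, so $k\le d$, and $k=d$ or $k\le d-2$. This yields $d\le10$, and I will sharpen it in two ways. If $k=d$, then $D$ is a set of $d$ lines in the Petersen graph; its sum $\sum_{E\subset D}D\cdot E=-d+2e$ is controlled by the number $e$ of edges, and the graph has girth $5$ by Lemma~\ref{lem:petersen}(b). If $k\le d-2$, then the inequality gives $d\le 8$ outright.

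\textbf{Enumeration.} I then enumerate by degree $d=2,\dots,$ using the component list of Lemma~\ref{lem:curves5} (lines, conics, cubics $h_a$, quartics $q_E$). Components of degree $\ge5$ are excluded by the requirement $D\cdot E\le1$ off $D$ together with the degree count above; I expect to verify this from the nef-class bound $\deg A\ge a$ in the proof of Lemma~\ref{lem:curves5}. The working tool throughout is the \emph{common-neighbour argument}: two disjoint component lines have a common neighbour (Lemma~\ref{lem:petersen}(b)), and that neighbour must itself be a component, or else it meets $D$ at least twice. In the same spirit, a conic $F_a$ meets exactly the four lines of $I_a$, a cubic $h_a$ meets the six lines outside $I_a$, and $q_E$ meets $E$ twice and the six lines disjoint from $E$ once. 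Because $\Lambda(F_a)=I_a$ and these intersection sets are explicit, each case reduces to finite bookkeeping on the Petersen graph, organised by Lemma~\ref{lem:petersen}(d),(e).

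Concretely, in degree $\ge 5$ I expect $D\sim -K_S$ to be forced, in analogy with the degree-$4$ case. When $D\sim-K_S$, every line not in $D$ meets $D$ exactly once, and the surviving configurations should be precisely (3), (4), (6), (7), (8), (9), (10). The low-degree cases should give (1), (2), (5), with items (3) and (4) being the degree-$5$ anticanonical ones. Each candidate is either discarded by exhibiting a line $E\not\subset D$ with $D\cdot E\ge2$ (for instance, two disjoint components whose common neighbour is not a component), or kept.

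\textbf{Weights and orbits.} Once $D\cdot E\le1$ holds off $D$, Lemma~\ref{lem:criterion5} reduces ampleness to the inequality \eqref{eq:criterion} at each component line. Reading off the intersection numbers of the components in each configuration gives the stated weight conditions. Where no component line is met by two or more other components with large total intersection, the condition holds automatically and the item says ``none''; for example, in (1) we get $\epsilon'<1+\epsilon$. The uniqueness of each $\Aut(S)=S_5$-orbit follows from the transitivity of $W(A_4)$ on the relevant configurations, which is checked on representatives.

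\textbf{Main obstacle.} The hard step is the completeness of the enumeration in the range $5\le d\le 9$ with mixed components: showing that no configuration beyond the listed ones satisfies $D\cdot E\le1$ off $D$ and $D\cdot E\le 2$ on $D$. My first attempt is a Hodge-index argument applied to $N=-K_S-D$ when $d\ge5$, forcing $N\equiv0$, i.e.\ $D\sim-K_S$ and $d=5$. The idea is that the conditions make $N$ nearly nef. If this argument does not go through, I will fall back on a direct case split by the component of maximal degree, using the common-neighbour argument in each case.
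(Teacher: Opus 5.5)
Your preliminary steps are correct and are exactly Lemma~\ref{lem:necessary}: $D\cdot E\le1$ for lines $E\not\subset D$, $D\cdot E\le2$ for component lines, and $2d\le10+k$. (Written as $2\deg N+k\le10$, with $N$ the sum of the non-line components, this inequality already excludes components of degree $\ge5$ when $r\ge2$. No nef-class bound is needed.)

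The gap is the completeness of the enumeration, i.e.\ the entire ``only if'' direction. You leave it as an expectation, and the shortcut you propose for it does not work. Your conditions give $(-K_S-D)\cdot E\ge0$ only for lines $E\not\subset D$. On a component line they give only $(-K_S-D)\cdot E=1-D\cdot E\ge-1$, and nothing in your hypotheses rules out the value $-1$. It occurs for the claw, where $D\cdot E_0=2$ and all your necessary conditions hold. So when $d\ge5$ you cannot show that $-K_S-D$ is nef, or that $(-K_S-D)^2\ge0$, without already knowing the configurations, and the Hodge index theorem gives nothing.

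Your predicted outcome is also off. The claw (8) has degree $4$ and is not anticanonical. The anticanonical items are exactly (3), (4), (6), (7), (9), (10), while (8) belongs with (1), (2), (5) among the cases with $\deg D\le4$ (Corollary~\ref{cor:main}(b)).

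The missing idea, which is how the paper organizes the proof, is to separate the line components from the rest. Let $\cL$ be the set of component lines, $\ell=|\cL|$, $N$ the (nef) sum of the other components, and $\nu(E)$ the number of lines of $\cL$ meeting $E$. For a line $E\notin\cL$, the condition $D\cdot E=N\cdot E+\nu(E)\le1$ splits into $\nu(E)\le1$ and $N\cdot E\le1-\nu(E)$.

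The first condition alone classifies $\cL$ (Lemma~\ref{lem:lines5}):
\begin{itemize}
\item Your common-neighbour argument makes the induced graph $\Gamma_{\cL}$ connected.
\item For $\ell\le4$, girth $5$ leaves the empty set, a line, an edge, a path $E_a-E_0-E_b$ and the claw. A path of four lines fails, because its end lines are disjoint and their common neighbour is not a component.
\item For $5\le\ell\le9$, every line outside $\cL$ has at least two neighbours outside $\cL$. So the complement contains a cycle of length $\le10-\ell$, which forces $\ell=5$ and $\cL$ a pentagon.
\end{itemize}

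In each of these six cases, $\deg N\le(10-\ell)/2$ together with the sets $\Lambda(\cdot)$ finishes the job:
\begin{itemize}
\item the sets $\Lambda$ of distinct non-line components are disjoint outside $\cL$ and avoid every line with $\nu=1$;
\item $\Lambda(F_a)=I_a$ is an independent $4$-set, so it cannot lie in a set of independence number $\le3$;
\item $\Lambda(h_a)$ is the complement of some $I_a$;
\item $q_{E'}$ vanishes exactly on the neighbours of $E'$.
\end{itemize}
This short finite check yields precisely the ten items, with the claw appearing at $\ell=4$ and $N=0$. Without it, or a complete case split that you actually carry out, the classification is not proved.
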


\begin{corollary}\label{cor:main}
Let $(S,\Delta_\epsilon)$ be a del Pezzo orbifold of degree $5$ with $r\ge2$. Then:
\begin{enumerate}[label=\textup{(\alph*)}]
\item $r\le5$ and every component is a line, a conic, a cubic or a quartic, in
      particular $\deg D_i\le4$.
\item Either $D\in|-K_S|$ \textup{(}cases $(3),(4),(6),(7),(9),(10)$, which are anticanonical cycles of lengths $2,2,3,3,4,5$\textup{)}, or $\deg D\le4$
      \textup{(}cases $(1),(2),(5),(8)$\textup{)}.
\item For every configuration except the claw, all $m_i=2$ is admissible. For the
      claw, all $m_i=2$ is not admissible, while $m_{E_0}=3$, $m_{E_a}=m_{E_b}=m_{E_c}=2$ is.
\item Every line $E$ not contained in $D$ satisfies $D\cdot E\le1$.
\end{enumerate}
\end{corollary}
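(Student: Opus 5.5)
The corollary will be read off from Theorem~\ref{thm:main5} by inspecting the ten configurations. For (a), the configurations have at most five components, the maximum attained by the pentagon~(10). Every component appearing is a line, a conic $F$, a cubic $h$ or a quartic $q$ of Lemma~\ref{lem:curves5}, so $\deg D_i\le4$.

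For (b), I compute the class of $D$ in each case.
\begin{itemize}
\item In (3), $E+q\sim-K_S$ by the definition of $q_E$.
\item In (4), $F+h\sim-K_S$ by the definition $h_a=-K_S-F_a$.
\item In (6) and (9), $D\sim-K_S$ by hypothesis.
\item In (7), $F+F'\sim-K_S-E$ is part of the statement.
\item In (10), $\sum_{i=1}^5E_i\sim-K_S$ for the pentagon. Take a representative pentagon, e.g.\ $E_1,L_{12},E_2,L_{23}$-type cycle in coordinates. Alternatively use Hodge index: $D^2=-5+2\cdot5=5=K_S^2$ and $\deg D=5$, so equality in $(\deg D)^2\ge K_S^2D^2$ forces $D\equiv-K_S$.
\end{itemize}
The cycle lengths $2,2,3,3,4,5$ are just the numbers of components. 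In the remaining cases $\deg D$ equals $2,3,3,4$ respectively, since a line has degree $1$ and a conic degree $2$.

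For (c), I substitute $\epsilon_i=\half$ into each inequality of Theorem~\ref{thm:main5}.
\begin{itemize}
\item In (3), (5), (6), (7) and (9), each inequality has two weights on the left and one on the right, so it becomes $1<\frac32$ and holds.
\item In the claw (8), it becomes $\frac32<\frac32$, which fails.
\item With $m_{E_0}=3$, the claw inequality becomes $\frac32<1+\frac23$, which holds.
\item The cases marked ``none'' are automatic.
\end{itemize}

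For (d), I check each configuration using Lemma~\ref{lem:petersen} and the intersection numbers in Lemma~\ref{lem:curves5}. In the anticanonical cases, $D\cdot E=-K_S\cdot E=1$ for every line, so there is nothing to check. For (1), two meeting lines have no common neighbour by Lemma~\ref{lem:petersen}(b). For (2), $E+F\sim h$ is a cubic class: $(E+F)^2=1$ and its degree is $3$. Then $D\cdot E'=h\cdot E'\le1$ by Lemma~\ref{lem:curves5}(ii). For (5), the ends $E_a,E_b$ are disjoint, so their only common neighbour is $E_0$. A line not on the chain meets at most one of $E_a,E_0,E_b$, because $\Gamma$ has no $3$- or $4$-cycles. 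For the claw (8), a line outside $D$ that met two of $E_a,E_b,E_c$ would create a $4$-cycle through $E_0$, which is impossible; it also cannot meet $E_0$, whose three neighbours are all in $D$. Checking (d) is the only step requiring more than substitution, and even that reduces to the absence of short cycles in the Petersen graph.
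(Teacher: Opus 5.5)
Your proposal is correct, and for (a)--(c) it is how the paper obtains the corollary: everything is read off from the list in Theorem~\ref{thm:main5}. The paper's proof of that theorem already notes that all inequalities hold for $m_i=2$ except for the claw, where $m_{E_0}=3$ works.

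There are two small slips to fix.
\begin{enumerate}
\item In (c) you omit the pentagon (10). Its inequalities $\epsilon_{E_{i-1}}+\epsilon_{E_{i+1}}<1+\epsilon_{E_i}$ also reduce to $1<\tfrac32$.
\item In (b) the coordinate ``representative'' $E_1,L_{12},E_2,L_{23}$ is only a path. It closes up to a pentagon with $L_{14}$, whose sum is $3H-E_1-E_2-E_3-E_4=-K_S$. Your Hodge index argument makes this unnecessary anyway.
\end{enumerate}

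For (d) you take a longer route than needed. Instead of checking the ten configurations against the Petersen graph, use the hypothesis directly. For any del Pezzo orbifold and any line $E\not\subset D$, one has $1>\Delta_\epsilon\cdot E\ge\tfrac12\,D\cdot E$, because every $\epsilon_i\ge\tfrac12$ and every $D_i\cdot E\ge0$. Since $D\cdot E$ is an integer, this forces $D\cdot E\le1$. This is Lemma~\ref{lem:necessary}(i), and it needs no classification at all. Your configuration-by-configuration check is valid but merely re-verifies what the sufficiency part of the theorem's proof uses.
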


\begin{remark}\label{rem:realize}
Every configuration is realised with $D$ simple normal crossing: two lines meet
transversally, no three lines are concurrent (no triangles in $\Gamma$), and the
conics, cubics and quartics may be taken to be general members of base point free
linear systems, which meet the fixed curves and each other transversally at distinct
points. Non-general members (e.g.\ $q$ tangent to $E$ in case $(3)$) are allowed by
the theorem, which only concerns numerical classes.
\end{remark}

\begin{remark}[Weight $1$]\label{rem:weight-one}
If one also allows $\epsilon_i=1$ ($m_i=\infty$), nothing changes: every component of
each of the ten configurations meets a line $E\not\subset D$ with $D_i\cdot E=1$, so
Lemma~\ref{lem:ample} forces $\epsilon_i<1$. Since ampleness is an open condition the
list of supports is the same in any case. 
\end{remark}

\subsection{Proof of Theorem~\ref{thm:main5}}

\begin{notation}
Let $\cL$ be the set of components of $D$ that are lines, $\ell:=|\cL|$, and $N$ the
sum of the remaining components (all nef by Lemma~\ref{lem:curves5}). For a line
$E$ let $\nu(E):=\#\{E'\in\cL: E'\cdot E=1\}$, so that $D\cdot E=N\cdot E+\nu(E)$ if
$E\notin\cL$ and $D\cdot E=N\cdot E+\nu(E)-1$ if $E\in\cL$.
\end{notation}

\begin{lemma}\label{lem:necessary}
If $-(K_S+\Delta_\epsilon)$ is ample then
\begin{enumerate}[label=\textup{(\roman*)}]
\item $N\cdot E+\nu(E)\le1$ for every line $E\notin\cL$,
\item $N\cdot E+\nu(E)\le3$ for every $E\in\cL$,
\item $2\deg N+\ell\le10$,
\item $\deg N+\ell\le9$.
\end{enumerate}
\end{lemma}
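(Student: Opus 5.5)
The plan is to derive all four inequalities from the ampleness criterion of Lemma~\ref{lem:ample}, namely $\Delta_\epsilon\cdot E<1$ for every line $E$, using only $\frac12\le\epsilon_i<1$, and then to sum over the ten lines using Lemma~\ref{lem:petersen}(f).

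For (i), let $E\notin\cL$. Every component $D_i$ is distinct from $E$, so $D_i\cdot E\ge0$ and each $\epsilon_i\ge\frac12$. Hence
\[
1>\Delta_\epsilon\cdot E\ge\tfrac12 D\cdot E=\tfrac12\bigl(N\cdot E+\nu(E)\bigr),
\]
which gives $N\cdot E+\nu(E)<2$, i.e.\ $\le1$ since it is an integer.

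For (ii), let $E=D_j\in\cL$. Then $1>\Delta_\epsilon\cdot E\ge\frac12(D-E)\cdot E-\epsilon_j$. Since $(D-E)\cdot E=N\cdot E+\nu(E)$ and $\epsilon_j<1$, this gives $N\cdot E+\nu(E)<2(1+\epsilon_j)<4$, hence $\le3$.

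For (iii) and (iv), sum over all ten lines. Lemma~\ref{lem:petersen}(f) gives $\sum_E N\cdot E=2\deg N$. Also $\sum_E\nu(E)=3\ell$, since each line in $\cL$ has exactly three neighbours by Lemma~\ref{lem:petersen}(a). Split the sum into lines outside and inside $\cL$; by (i) and (ii),
\[
2\deg N+3\ell\le(10-\ell)\cdot1+3\ell,
\]
so $2\deg N+\ell\le10$, which is (iii).

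For (iv), I would instead use the degree bound of Remark~\ref{rem:degree}: $\sum_i\deg D_i\le9$. Since $\deg D=\deg N+\ell$, this is exactly (iv). Alternatively, (iv) follows from (iii) whenever $\deg N\ge1$, and from $\ell\le9$ when $N=0$.

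The main obstacle is bookkeeping rather than depth. One has to make sure, in (ii), that the self-intersection term $E\cdot E=-1$ is handled correctly, so that $\nu(E)$ counts only the other line components and the bound becomes $3$ rather than $2$. In the summation for (iii), the $\nu$ contributions cancel exactly. I would double-check strictness, noting that the bounds in (i) and (ii) use $\epsilon<1$ and integrality.
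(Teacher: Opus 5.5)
Your proof is correct and follows the paper's argument. Parts (i) and (ii) come from $\Delta_\epsilon\cdot E<1$ together with $\epsilon_i\ge\frac12$. Part (iii) comes from summing over the ten lines with Lemma~\ref{lem:petersen}(f); the paper sums $D\cdot E\le1$, resp.\ $\le2$, which is the same bookkeeping as your $\sum_E\nu(E)=3\ell$. Part (iv) is Remark~\ref{rem:degree}.

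Note only that your ``alternative'' route to (iv) does not avoid that remark. The case $N=0$, $\ell=10$ satisfies (i)--(iii), so the bound $\ell\le9$ there must itself come from the degree bound.
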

\begin{proof}
(i) All terms of $\Delta_\epsilon\cdot E$ are $\ge\frac12D_i\cdot E\ge0$, so
$1>\Delta_\epsilon\cdot E\ge\frac12 D\cdot E$. (ii) If $E=D_j$ then
$1>\Delta_\epsilon\cdot E\ge\frac12(D-E)\cdot E-\epsilon_j>\frac12(N\cdot E+\nu(E))-1$.
(iii) By (i), (ii) we have $D\cdot E\le1$ for $E\notin\cL$ and $D\cdot E\le2$ for
$E\in\cL$, summing over all lines and using Lemma~\ref{lem:petersen}(f),
$2(\deg N+\ell)=2\deg D\le(10-\ell)+2\ell$. (iv) is Remark~\ref{rem:degree}.
\end{proof}

\begin{lemma}\label{lem:lines5}
If $\cL$ satisfies $\nu(E)\le1$ for all lines $E\notin\cL$ and $\ell\le9$, then
$\cL$ is empty, a single line, an edge, a path $E_a-E_0-E_b$, a claw \textup{(}a line
together with the three lines meeting it\textup{)}, or a pentagon.
\end{lemma}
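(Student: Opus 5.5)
The hypothesis is purely graph-theoretic on the Petersen graph $\Gamma$: $\cL$ is a set of vertices such that every vertex outside $\cL$ has at most one neighbour in $\cL$. I will determine the induced subgraph $\Gamma[\cL]$ and then its embedding, using Lemma~\ref{lem:petersen}.

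\textbf{Step 1: edges and degrees.} By Lemma~\ref{lem:petersen}(b), two meeting lines have no common neighbour. Two disjoint lines $E,E'\in\cL$ have exactly one common neighbour $M$. That $M$ must lie in $\cL$, since otherwise $\nu(M)\ge2$. So every pair of non-adjacent vertices of $\cL$ has its unique common neighbour inside $\cL$.

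\textbf{Step 2: size bound.} The case $\ell\le1$ is trivial. Suppose $\cL$ contains a vertex $E_0$ together with some $E\in\cL$ disjoint from $E_0$. By Step 1 the connecting vertex lies in $\cL$, so the induced subgraph is connected with diameter $\le2$ in $\Gamma[\cL]$. Now count: each vertex $v\notin\cL$ has at most one neighbour in $\cL$, so the number of edges between $\cL$ and its complement is at most $10-\ell$. Since $\Gamma$ is $3$-regular, this edge count equals $3\ell-2e$, where $e$ is the number of edges inside $\cL$. Hence $3\ell-2e\le 10-\ell$, that is, $e\ge 2\ell-5$. Because $\Gamma$ has girth $5$, a subgraph on $\ell\le9$ vertices with $e\ge2\ell-5$ edges is very constrained. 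For $\ell=5$ we need $e\ge5$, which forces a $5$-cycle (girth $5$). For $\ell\ge6$, subgraphs of girth $\ge5$ have few edges: on $6$ vertices at most $6$ edges, on $7$ at most $8$, on $8$ at most $10$, on $9$ at most $12$. These are the Zarankiewicz/girth bounds, which I will verify directly in the Petersen graph (deleting one vertex leaves $12$ edges on $9$ vertices). Comparing with $2\ell-5=7,9,11,13$ rules out $\ell\ge6$. The main obstacle is making these extremal bounds rigorous. The cleanest route is to use the explicit structure: removing a vertex set $X$ of size $10-\ell\le4$ from $\Gamma$ leaves $15-3|X|+e(X)$ edges, and $e(X)\le|X|-1$ for $|X|\le4$ (no cycles of length $\le4$), which gives the needed contradiction.

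\textbf{Step 3: small cases.} For $\ell=2$, Step 1 forces an edge, since a disjoint pair would need a third vertex. For $\ell=3$, we need connected, triangle-free, diameter $\le2$, which is the path; Step 1 is consistent because the endpoints' common neighbour is the middle vertex. For $\ell=4$, with $e\ge3$ and no $4$-cycle, the graph is a tree: either the path $P_4$ or the claw. The path $P_4$ fails Step 1, because its endpoints are at distance $3$, so their common neighbour in $\Gamma$ lies outside $\cL$. The claw centred at $E_0$ consists of $E_0$ plus all three neighbours, by Lemma~\ref{lem:petersen}(a). For $\ell=5$ we obtain the pentagon. Finally, each listed configuration actually satisfies the hypothesis; this is checked via Lemma~\ref{lem:petersen}(b),(e), although only the stated necessity is required here.
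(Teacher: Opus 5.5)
Your proof is correct and follows essentially the same route as the paper: Step 1 (the common neighbour of two disjoint components must lie in $\cL$, so $\Gamma_\cL$ is connected with no short cycles) and the cases $\ell\le4$ are the same. For $\ell\ge6$ you, like the paper, find a cycle of length at most $4$ in the complement. The paper gets it from the fact that every outside line has at least two neighbours outside $\cL$, while you get it from the aggregate count $e(X)\ge|X|$. Your inequality $e\ge2\ell-5$ also yields the pentagon at $\ell=5$ directly, whereas the paper first shows the complement is a pentagon and then counts edges.
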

\begin{proof}
If $E,E'\in\cL$ are disjoint, their unique common neighbour $E''$
(Lemma~\ref{lem:petersen}(b)) has $\nu(E'')\ge2$, so $E''\in\cL$. Hence the induced
graph $\Gamma_\cL$ is connected, and it has no cycle of length $\le4$.
For $\ell\le3$ this leaves the first four cases. If $\ell=4$, $\Gamma_\cL$ is a tree,
i.e.\ a claw or a path $E_1-E_2-E_3-E_4$, in the latter case $E_1,E_4$ are disjoint
(no $4$-cycles) and their common neighbour is neither $E_2$ nor $E_3$ (no
triangles), a contradiction. If $\ell\ge5$, let $\mathcal O$ be the set of the
$10-\ell$ lines not in $\cL$, each has $\ge3-1=2$ neighbours in $\mathcal O$. For
$\ell\le9$, $\mathcal O\ne\emptyset$, so $\Gamma_{\mathcal O}$ contains a cycle of
length $\le10-\ell\le5$, by girth $5$ we get $\ell=5$ and $\Gamma_{\mathcal O}$ is a
pentagon. Then $\Gamma$ has $5$ edges inside $\mathcal O$ and $5$ edges between
$\mathcal O$ and $\cL$, so $\Gamma_\cL$ has $15-10=5$ edges on $5$ vertices and no
short cycles, i.e.\ it is a pentagon.
Conversely we check with Lemma~\ref{lem:petersen}(b) that all six configurations
satisfy $\nu\le1$ outside $\cL$.
\end{proof}

\begin{proof}[Proof of Theorem~\ref{thm:main5}]
\emph{Necessity.} Assume $-(K_S+\Delta_\epsilon)$ ample. By
Lemmas~\ref{lem:necessary}(i),(iv) and~\ref{lem:lines5}, $\cL$ is one of the six
configurations, and $\deg N\le(10-\ell)/2$ by Lemma~\ref{lem:necessary}(iii). By (i),
the sets $\Lambda(\cdot)$ of the non-line components are pairwise disjoint outside
$\cL$, contain no line $E\notin\cL$ with $\nu(E)=1$, and every non-line component
meets each line outside $\cL$ at most once. Recall that $\Lambda(F)$ is a maximal set
of pairwise disjoint lines (Lemma~\ref{lem:curves5}), so it cannot be contained in a
set of lines whose induced graph has independence number $\le3$.

\smallskip\noindent\emph{$\ell=0$.} Then $\deg N\le5$ and $r\ge2$, so $N$ consists of
two conics or of a conic and a cubic. Two conics are impossible, since
$\Lambda(F_a)\cap\Lambda(F_b)=I_a\cap I_b\ne\emptyset$ (also for $a=b$). For $F_a$ and
$h_b$ we need $I_a\cap(\Gamma\setminus I_b)=\emptyset$, i.e.\ $a=b$: case~(4).

\smallskip\noindent\emph{$\ell=1$, $\cL=\{E\}$.} Now $\deg N\le4$ and $N\neq0$, so $N$ is
a conic, a cubic, a quartic or two conics. The three neighbours of $E$ satisfy
$\nu=1$, so $\Lambda(N)\subseteq\{E\}\cup R$, where $R$ is the hexagon of lines
disjoint from $E$ (Lemma~\ref{lem:petersen}(e)), and $N\cdot E'\le1$ for $E'\in R$.
A conic needs an independent $4$-set in $\{E\}\cup R$: it must contain $E$ and one of
the two alternating triples of $R$, these give the two conic classes with
$F\cdot E=1$, case~(2). Two conics must then use both triples: case~(7), where
$N\cdot E=2\le3$. A cubic would need an independent $4$-set containing the three
neighbours of $E$, its fourth element would be disjoint from all three, but by
Lemma~\ref{lem:petersen}(b) every line other than $E$ that is disjoint from $E$ meets
exactly one neighbour of $E$, and $E$ meets all three: impossible. For a quartic
$q_{E'}$ the zero set is the set of neighbours of $E'$, which must equal the set of
neighbours of $E$, so $E'=E$ by Lemma~\ref{lem:petersen}(c), and $q_E\cdot E=2\le3$:
case~(3).

\smallskip\noindent\emph{$\ell=2$, $\cL=\{E,E'\}$ an edge.} Now $\deg N\le4$. The four
other neighbours of $E,E'$ have $\nu=1$, so $\Lambda(N)\subseteq\{E,E'\}\cup R$ with
$R$ two disjoint edges (Lemma~\ref{lem:petersen}(e)), this set of six lines is a union
of three disjoint edges, with independence number $3$. So there are no conics, and
no quartic ($|\Lambda(q)|=7$). If $N=h$ is a cubic, then $\Lambda(h)$ is this six-element
set, i.e.\ its complement (the four other neighbours of $E$ and $E'$) must be an
$I_a$, it is indeed independent (no triangles or $4$-cycles), so $h$ is unique, equal
to $-K_S-E-E'$, and $N\cdot E=N\cdot E'=1$ satisfy (ii): case~(6). Otherwise $N=0$:
case~(1).

\smallskip\noindent\emph{$\ell=3$, $\cL$ a path $E_a-E_0-E_b$.} Now $\deg N\le3$. The
five lines adjacent to the path have $\nu=1$, so
$\Lambda(N)\subseteq\{E_a,E_0,E_b,E_x,E_y\}$ in the notation of
Lemma~\ref{lem:petersen}(e). A cubic is impossible ($|\Lambda(h)|=6$). A conic needs an
independent $4$-subset, which must omit $E_0$: this is $\{E_a,E_b,E_x,E_y\}=I_c$, so
$F=F_c\sim-K_S-E_a-E_0-E_b$, (ii) holds: case~(9). Otherwise $N=0$: case~(5).

\smallskip\noindent\emph{$\ell=4$, $\cL$ a claw.} Now $\deg N\le3$. The six remaining lines
all have $\nu=1$, so $\Lambda(N)$ lies in the claw, whose independence number is $3$
and which has $4<6$ elements. Hence $N=0$: case~(8).

\smallskip\noindent\emph{$\ell=5$, $\cL$ a pentagon.} Now $\deg N\le2$, the other five
lines have $\nu=1$, so $\Lambda(N)$ lies in the pentagon (independence number $2$).
Hence $N=0$: case~(10).

\smallskip\noindent\emph{Sufficiency and the weight conditions.} Each of the ten
configurations satisfies $D\cdot E\le1$ for all lines $E\not\subset D$ (this was
checked in the case analysis), so Lemma~\ref{lem:criterion5} applies. Writing
out~\eqref{eq:criterion} for the line components gives the stated inequalities,
after discarding those that hold automatically because one side has a single
weight on the left ($\epsilon_C<1+\epsilon_E$). All the stated inequalities hold for
$m_i=2$ except in the claw, where $m_{E_0}=3$ and $m_{E_a}=m_{E_b}=m_{E_c}=2$ works.
Hence every configuration occurs. The statement about orbits is the output of the
script (\S\ref{sec:computer}).
\end{proof}

\subsection{Computer verification}\label{sec:computer}

We use Python 3 scripts which prove Theorem~\ref{thm:main5} independently of
\S3, using only Lemmas~\ref{lem:ample},~\ref{lem:curves5} and Remark~\ref{rem:degree}.
\begin{enumerate}[label=\arabic*.]
\item It builds $\Pic(S)$, the ten lines and the Weyl group $W(A_4)$ (checking
      order $120$ and that it preserves $K_S$ and the lines), and asserts every
      statement of Lemma~\ref{lem:petersen} and Lemma~\ref{lem:curves5}(i)-(iii).
\item It lists all classes $A$ of degree $\le9$ that are lines, conic classes, or nef
      with $A^2>0$ (a superset of the classes of irreducible curves, the search range
      $0\le b_i\le a\le9$ is exhaustive by the proof of Lemma~\ref{lem:curves5}):
      $346$ classes in $23$ orbits, Table~\ref{tab:dp5-types}.
\item It enumerates the multisets of such classes of total degree $\le9$ with $\ge2$
      members (lines at most once), pruned by Lemma~\ref{lem:necessary}(i),(ii), and
      reduces them modulo $W(A_4)$.
\item For each orbit it decides exactly (Fourier-Motzkin elimination with strict
      inequalities) whether weights in $[\frac12,1)$ exist, after lowering the
      weights of non-line components to $\frac12$ (harmless, as they have
      non-negative intersection with every line), and then exhibits explicit
      multiplicities $m_i$ and the list of non-automatic inequalities.
\end{enumerate}
Output: $10$ orbits pass the integral tests and all $10$ are admissible, with no
unresolved case. The second script examines all $92,602$ multisets of total degree
$\le9$ with $\ge2$ members, runs Fourier-Motzkin in \emph{all} weights, and
obtains the same $10$ orbits. The following tables represent the outputs. 

\begin{longtable}{clcccl}
\caption{$S_5$-orbits of classes of irreducible curves of anticanonical degree $\le 9$ on $S$. The profile lists the intersection numbers $D\cdot E$ with the ten lines, with multiplicities as exponents.}\label{tab:dp5-types}\\
\toprule type & representative & $-K_S\cdot D$ & $D^2$ & $p_a$ & profile $\{D\cdot E\}$ \\ \midrule\endfirsthead
\toprule type & representative & $-K_S\cdot D$ & $D^2$ & $p_a$ & profile $\{D\cdot E\}$ \\ \midrule\endhead
$T_{1}$$^{\ast}$ & $E_{1}$ & 1 & -1 & - & $-1,\,0^{6},\,1^{3}$ \\
$T_{2}$$^{\ast}$ & $H-E_{4}$ & 2 & 0 & 0 & $0^{6},\,1^{4}$ \\
$T_{3}$$^{\ast}$ & $H$ & 3 & 1 & 0 & $0^{4},\,1^{6}$ \\
$T_{4}$$^{\ast}$ & $2H-E_{3}-E_{4}$ & 4 & 2 & 0 & $0^{3},\,1^{6},\,2$ \\
$T_{5}$ & $2H-E_{4}$ & 5 & 3 & 0 & $0^{3},\,1^{4},\,2^{3}$ \\
$T_{6}$ & $3H-E_{1}-E_{2}-E_{3}-E_{4}$ & 5 & 5 & 1 & $1^{10}$ \\
$T_{7}$ & $2H$ & 6 & 4 & 0 & $0^{4},\,2^{6}$ \\
$T_{8}$ & $3H-E_{3}-2E_{4}$ & 6 & 4 & 0 & $0^{3},\,1^{3},\,2^{3},\,3$ \\
$T_{9}$ & $3H-E_{2}-E_{3}-E_{4}$ & 6 & 6 & 1 & $0,\,1^{6},\,2^{3}$ \\
$T_{10}$ & $3H-2E_{4}$ & 7 & 5 & 0 & $0^{3},\,1^{3},\,2,\,3^{3}$ \\
$T_{11}$ & $3H-E_{3}-E_{4}$ & 7 & 7 & 1 & $0^{2},\,1^{3},\,2^{4},\,3$ \\
$T_{12}$ & $4H-E_{1}-E_{2}-E_{3}-2E_{4}$ & 7 & 9 & 2 & $1^{6},\,2^{4}$ \\
$T_{13}$ & $3H-E_{4}$ & 8 & 8 & 1 & $0^{3},\,1,\,2^{3},\,3^{3}$ \\
$T_{14}$ & $4H-E_{3}-3E_{4}$ & 8 & 6 & 0 & $0^{3},\,1^{3},\,3^{3},\,4$ \\
$T_{15}$ & $4H-2E_{3}-2E_{4}$ & 8 & 8 & 1 & $0^{3},\,2^{6},\,4$ \\
$T_{16}$ & $4H-E_{2}-E_{3}-2E_{4}$ & 8 & 10 & 2 & $0,\,1^{4},\,2^{3},\,3^{2}$ \\
$T_{17}$ & $4H-E_{1}-E_{2}-E_{3}-E_{4}$ & 8 & 12 & 3 & $1^{4},\,2^{6}$ \\
$T_{18}$ & $3H$ & 9 & 9 & 1 & $0^{4},\,3^{6}$ \\
$T_{19}$ & $4H-3E_{4}$ & 9 & 7 & 0 & $0^{3},\,1^{3},\,3,\,4^{3}$ \\
$T_{20}$ & $4H-E_{3}-2E_{4}$ & 9 & 11 & 2 & $0^{2},\,1^{2},\,2^{3},\,3^{2},\,4$ \\
$T_{21}$ & $4H-E_{2}-E_{3}-E_{4}$ & 9 & 13 & 3 & $0,\,1^{3},\,2^{3},\,3^{3}$ \\
$T_{22}$ & $5H-E_{1}-E_{2}-E_{3}-3E_{4}$ & 9 & 13 & 3 & $1^{6},\,3^{4}$ \\
$T_{23}$ & $5H-E_{1}-E_{2}-2E_{3}-2E_{4}$ & 9 & 15 & 4 & $1^{3},\,2^{6},\,3$ \\
\bottomrule\end{longtable}

{\small
\begin{longtable}{rp{0.31\textwidth}lcp{0.33\textwidth}}
\caption{Boundaries $D=D_1+\dots+D_r$, $r\ge2$, on a quintic del Pezzo surface admitting weights with $-(K_S+\Delta_\epsilon)$ ample, up to $\mathrm{Aut}(S)\cong S_5$. The last column lists all ampleness conditions that are not automatic for $\epsilon_i\in[\tfrac12,1)$, $\epsilon_i$ is the weight of $D_i$ in the order of the second column. A $\dagger$ marks the cases in which all $m_i=2$ is admissible, \#orbit is the number of multisets of classes in the $S_5$-orbit.}\label{tab:dp5-reducible}\\
\toprule No. & $D_1,\dots,D_r$ & types & \#orbit & conditions on the weights \\ \midrule\endfirsthead
\toprule No. & $D_1,\dots,D_r$ & types & \#orbit & conditions on the weights \\ \midrule\endhead
1$^{\dagger}$ & $E_{1}$,\ $H-E_{1}-E_{4}$ & $T_{1},T_{1}$ & 15 & none \\
2$^{\dagger}$ & $E_{1}$,\ $H-E_{1}$ & $T_{1},T_{2}$ & 20 & none \\
3$^{\dagger}$ & $E_{1}$,\ $3H-2E_{1}-E_{2}-E_{3}-E_{4}$ & $T_{1},T_{4}$ & 10 & $-\epsilon_{1}+2\epsilon_{2}<1$ \\
4$^{\dagger}$ & $H-E_{4}$,\ $2H-E_{1}-E_{2}-E_{3}$ & $T_{2},T_{3}$ & 5 & none \\
5$^{\dagger}$ & $E_{1}$,\ $E_{2}$,\ $H-E_{1}-E_{2}$ & $T_{1},T_{1},T_{1}$ & 30 & $\epsilon_{1}+\epsilon_{2}-\epsilon_{3}<1$ \\
6$^{\dagger}$ & $E_{1}$,\ $H-E_{1}-E_{4}$,\ $2H-E_{1}-E_{2}-E_{3}$ & $T_{1},T_{1},T_{3}$ & 15 & $-\epsilon_{1}+\epsilon_{2}+\epsilon_{3}<1$, $\epsilon_{1}-\epsilon_{2}+\epsilon_{3}<1$ \\
7$^{\dagger}$ & $E_{1}$,\ $H-E_{1}$,\ $2H-E_{1}-E_{2}-E_{3}-E_{4}$ & $T_{1},T_{2},T_{2}$ & 10 & $-\epsilon_{1}+\epsilon_{2}+\epsilon_{3}<1$ \\
8 & $E_{1}$,\ $E_{2}$,\ $H-E_{3}-E_{4}$,\ $H-E_{1}-E_{2}$ & $T_{1},T_{1},T_{1},T_{1}$ & 10 & $\epsilon_{1}+\epsilon_{2}+\epsilon_{3}-\epsilon_{4}<1$ \\
9$^{\dagger}$ & $E_{1}$,\ $E_{2}$,\ $H-E_{1}-E_{2}$,\ $2H-E_{1}-E_{2}-E_{3}-E_{4}$ & $T_{1},T_{1},T_{1},T_{2}$ & 30 & $-\epsilon_{1}+\epsilon_{3}+\epsilon_{4}<1$, $-\epsilon_{2}+\epsilon_{3}+\epsilon_{4}<1$, $\epsilon_{1}+\epsilon_{2}-\epsilon_{3}<1$ \\
10$^{\dagger}$ & $E_{1}$,\ $E_{2}$,\ $H-E_{2}-E_{4}$,\ $H-E_{1}-E_{3}$,\ $H-E_{1}-E_{2}$ & $T_{1},T_{1},T_{1},T_{1},T_{1}$ & 12 & $-\epsilon_{1}+\epsilon_{4}+\epsilon_{5}<1$, $-\epsilon_{2}+\epsilon_{3}+\epsilon_{5}<1$, $\epsilon_{2}-\epsilon_{3}+\epsilon_{4}<1$, $\epsilon_{1}+\epsilon_{3}-\epsilon_{4}<1$, $\epsilon_{1}+\epsilon_{2}-\epsilon_{5}<1$ \\
\bottomrule\end{longtable}}

\bibliographystyle{amsalpha}
\bibliography{ref}

@article{chen2026campana,
  title = {Campana rational connectedness and weak approximation},
  author = {Qile Chen and Brian Lehmann and Sho Tanimoto},
  journal = {Algebraic Geometry},
  month = {8},
  number = {5},
  pages = {705--746},
  doi = {10.14231/AG-2026-021},
  volume = {13},
  year = {2026}
}

@article{Campana2011,
  author  = {Frédéric Campana},
  title   = {Orbifoldes g\'eom\'etriques sp\'eciales et classification bim\'eromorphe des vari\'et\'es k\"ahl\'eriennes compactes},
  journal = {Journal of the Institute of Mathematics of Jussieu},
  volume  = {10},
  number  = {4},
  pages   = {809--934},
  year    = {2011},
  doi     = {10.1017/S1474748010000101},
}

@incollection{Campana2011Survey,
  author    = {Frédéric Campana},
  title     = {Special orbifolds and birational classification: a survey},
  booktitle = {Classification of Algebraic Varieties, EMS Ser. Congr. Rep.},
  pages     = {123--170},
  publisher = {European Mathematical Society, Zürich},
  year      = {2011},
  doi       = {10.4171/007-1/6},
  mrnumber  = {2779470},
}

@inproceedings{campana2004orbifolds,
  title={Orbifolds, special varieties and classification theory},
  author={Frédéric Campana},
  booktitle={Annales de l'institut Fourier},
  volume={54},
  number={3},
  pages={499--630},
  year={2004}
}

@article{dandapat2026classification,
  title={Classification of low degree del Pezzo orbifolds},
  author={Dandapat, Saptarshi},
  journal={arXiv preprint arXiv:2601.21456},
  year={2026}
}

@article{kollar1992rational,
  title={Rational connectedness and boundedness of Fano manifolds},
  author={Koll{\'a}r, J{\'a}nos and Miyaoka, Yoichi and Mori, Shigefumi},
  journal={Journal of Differential Geometry},
  volume={36},
  number={3},
  pages={765--779},
  year={1992},
  publisher={Lehigh University}
}

@article{hacon2007shokurov,
  title={On Shokurov's rational connectedness conjecture},
  author={Hacon, Christopher D and Mckernan, James},
  journal={Duke Math. J.},
  volume={136},
  number={1},
  pages={119--136},
  year={2007}
}

@article{zhang2006rational,
  title={Rational connectedness of log \textbf{Q}-Fano varieties},
  author={Zhang, Q},
  journal={J. reine angew. Math.},
  volume={590},
  pages={131},
  year={2006}
}

@book{keel1999rational,
  title={Rational curves on quasi-projective surfaces},
  author={Keel, Se{\'a}n and McKernan, James},
  volume={669},
  year={1999},
  publisher={American Mathematical Soc.}
}

@article{pieropan2021campana,
  title={Campana points of bounded height on vector group compactifications},
  author={Pieropan, Marta and Smeets, Arne and Tanimoto, Sho and V{\'a}rilly-Alvarado, Anthony},
  journal={Proceedings of the London Mathematical Society},
  volume={123},
  number={1},
  pages={57--101},
  year={2021},
  publisher={Wiley Online Library}
}

@article{graber2003families,
  title={Families of rationally connected varieties},
  author={Graber, Tom and Harris, Joe and Starr, Jason},
  journal={Journal of the American Mathematical Society},
  volume={16},
  number={1},
  pages={57--67},
  year={2003}
}

@ARTICLE{hassett2005, 
  author = {{Hassett}, Brendan and {Tschinkel}, Yuri},
  title = "{Weak approximation over function fields}",
  journal = {Inventiones Mathematicae},
  year = 2006,
  month = jul,
  volume = {163},
  number = {1},
  pages = {171-190},
  doi = {10.1007/s00222-005-0458-8},
  archivePrefix = {arXiv},
  eprint = {math/0411367},
  primaryClass = {math.AG},
}

@article{mitankin2024semi,
  title={Semi-integral Brauer--Manin obstruction and quadric orbifold pairs},
  author={Mitankin, Vladimir and Nakahara, Masahiro and Streeter, Sam},
  journal={Transactions of the American Mathematical Society},
  volume={377},
  number={06},
  pages={4435--4480},
  year={2024}
}

@book{dolgachev2012classical,
  title={Classical algebraic geometry: a modern view},
  author={Dolgachev, Igor V},
  year={2012},
  publisher={Cambridge University Press}
}

@book{manin1986cubic,
  title={Cubic forms: algebra, geometry, arithmetic},
  author={Manin, Yu I},
  volume={4},
  year={1986},
  publisher={Elsevier}
}

@article{dandapat2026campana,
  title={Campana Rational Connectedness and Weak Approximation of Del Pezzo Orbifolds},
  author={Dandapat, Saptarshi},
  journal={arXiv preprint arXiv:2609.11533},
  year={2026}
}

@article{campana2005fibres,
  title={Fibres multiples sur les surfaces: aspects geom{\'e}triques, hyperboliques et arithm{\'e}tiques},
  author={Campana, Fr{\'e}d{\'e}ric},
  journal={manuscripta mathematica},
  volume={117},
  number={4},
  pages={429--461},
  year={2005},
  publisher={Springer}
}

@article{abramovich2009birational,
  title={Birational geometry for number theorists},
  author={Abramovich, Dan},
  journal={Arithmetic geometry},
  volume={8},
  pages={335--373},
  year={2009},
  publisher={Amer. Math. Soc Providence, RI}
}

@article{campana2015special,
  title={Special manifolds, arithmetic and hyperbolic aspects: a short survey},
  author={Campana, Fr{\'e}d{\'e}ric},
  journal={Contemporary Mathematics},
  volume={654},
  year={2015},
  publisher={American Mathematical Society}
}

@article{pieropan2024hyperbola,
  title={Hyperbola method on toric varieties},
  author={Pieropan, Marta and Schindler, Damaris},
  journal={Journal de l’{\'E}cole polytechnique—Math{\'e}matiques},
  volume={11},
  pages={107--157},
  year={2024}
}

@article{maeda1986classification,
  title={Classification of logarithmic Fano threefolds},
  author={Maeda, Hironobu},
  journal={Compositio Mathematica},
  volume={57},
  number={1},
  pages={81--125},
  year={1986}
}

@article{cheltsov2015asymptotically,
  title={Asymptotically log Fano varieties},
  author={Cheltsov, Ivan A and Rubinstein, Yanir A},
  journal={Advances in Mathematics},
  volume={285},
  pages={1241--1300},
  year={2015},
  publisher={Elsevier}
}

@inproceedings{campana1992connexite,
  title={Connexit{\'e} rationnelle des vari{\'e}t{\'e}s de Fano},
  author={Campana, Fr{\'e}d{\'e}ric},
  booktitle={Annales scientifiques de l'{\'E}cole normale sup{\'e}rieure},
  volume={25},
  number={5},
  pages={539--545},
  year={1992}
}

@article{feng2025campana,
  title={Campana separable rational connectedness of toric orbifold},
  author={Feng, Enhao and Mehidi, Sara},
  journal={arXiv preprint arXiv:2511.22545},
  year={2025}
}

@article{campana2016orbifold,
  title={Orbifold slope-rational connectedness},
  author={Campana, Frederic},
  journal={arXiv preprint arXiv:1607.07829},
  year={2016}
}

@article{ChenZhu2019,
url = {https://doi.org/10.1515/crelle-2017-0028},
title = {$\mathbb{A}^1$-curves on log smooth varieties},
author = {Qile Chen and Yi Zhu},
pages = {1--35},
volume = {2019},
number = {756},
journal = {Journal für die reine und angewandte Mathematik (Crelles Journal)},
doi = {doi:10.1515/crelle-2017-0028},
year = {2019},
lastchecked = {2026-09-23}
}

@article{abramovich2018campana,
  title={Campana points, Vojta’s conjecture, and level structures on semistable abelian varieties},
  author={Abramovich, Dan and V{\'a}rilly-Alvarado, Anthony},
  journal={Journal de th{\'e}orie des nombres de Bordeaux},
  volume={30},
  number={2},
  pages={525--532},
  year={2018}
}

@article{shute2024semi,
  title={Semi-integral points of bounded height on toric varieties},
  author={Shute, Alec and Streeter, Sam},
  journal={arXiv preprint arXiv:2410.02039},
  year={2024}
}

@article{xiao2022campana,
  title={Campana points on biequivariant compactifications of the Heisenberg group},
  author={Xiao, Huan},
  journal={European Journal of Mathematics},
  volume={8},
  number={1},
  pages={205--246},
  year={2022},
  publisher={Springer}
}

@article{chow2026campana,
  title={Campana points on wonderful compactifications: D. Chow et al.},
  author={Chow, Dylon and Loughran, Daniel and Takloo-Bighash, Ramin and Tanimoto, Sho},
  journal={Mathematische Annalen},
  volume={394},
  number={3},
  pages={69},
  year={2026},
  publisher={Springer}
}

@article{browning2012sums,
  title={Sums of three squareful numbers},
  author={Browning, Tim D and Valckenborgh, K Van},
  journal={Experimental Mathematics},
  volume={21},
  number={2},
  pages={204--211},
  year={2012},
  publisher={Taylor \& Francis}
}

@article{van2012squareful,
  title={Squareful numbers in hyperplanes},
  author={Van Valckenborgh, Karl},
  journal={Algebra \& Number Theory},
  volume={6},
  number={5},
  pages={1019--1041},
  year={2012},
  publisher={Mathematical Sciences Publishers}
}

@article{browning2021arithmetic,
  title={Arithmetic of higher-dimensional orbifolds and a mixed Waring problem},
  author={Browning, Tim and Yamagishi, Shuntaro},
  journal={Mathematische Zeitschrift},
  volume={299},
  number={1},
  pages={1071--1101},
  year={2021},
  publisher={Springer}
}

@article{streeter2022campana,
  title={Campana points and powerful values of norm forms},
  author={Streeter, Sam},
  journal={Mathematische Zeitschrift},
  volume={301},
  number={1},
  pages={627--664},
  year={2022},
  publisher={Springer}
}

@article{shute2022leading,
  title={On the leading constant in the Manin-type conjecture for Campana points},
  author={Shute, Alec},
  journal={Acta Arithmetica},
  year={2022},
  publisher={Institute of Mathematics}
}

@article{Browning2008ManinsCF,
  title={Manin’s conjecture for quartic Del Pezzo surfaces with a conic fibration},
  author={Tim D. Browning and R{\'e}gis de la Bret{\`e}che},
  journal={Duke Mathematical Journal},
  year={2008},
  volume={160},
  pages={1-69},
  url={https://api.semanticscholar.org/CorpusID:12793059}
}

@article{dlB02,
author = {de la Bretèche, Régis},
year = {2002},
month = {06},
pages = {},
title = {Nombre de points de hauteur bornée sur les surfaces de del Pezzo de degré 5},
volume = {113},
journal = {Duke Mathematical Journal},
doi = {10.1215/S0012-7094-02-11332-5}
}

@article{lehmann2022geometric,
  title={Geometric consistency of Manin's conjecture},
  author={Lehmann, Brian and Sengupta, Akash Kumar and Tanimoto, Sho},
  journal={Compositio Mathematica},
  volume={158},
  number={6},
  pages={1375--1427},
  year={2022},
  publisher={London Mathematical Society}
}

@inproceedings{ellenberg2023heights,
  title={Heights on stacks and a generalized Batyrev--Manin--Malle conjecture},
  author={Ellenberg, Jordan S and Satriano, Matthew and Zureick-Brown, David},
  booktitle={Forum of Mathematics, Sigma},
  volume={11},
  pages={e14},
  year={2023},
  organization={Cambridge University Press}
}

@article{darda2024batyrev,
  title={The Batyrev--Manin conjecture for DM stacks},
  author={Darda, Ratko and Yasuda, Takehiko},
  journal={Journal of the European Mathematical Society},
  volume={28},
  number={6},
  pages={2351--2400},
  year={2024}
}

@article{loughran2024malle,
  title={Malle's conjecture and Brauer groups of stacks},
  author={Loughran, Daniel and Santens, Tim},
  journal={arXiv preprint arXiv:2412.04196},
  year={2024}
}

@misc{chen2026maninsconjecturesemiintegralcurves,
      title={Manin's conjecture for semi-integral curves and $\mathbb{A}^1$-connectedness}, 
      author={Qile Chen and Brian Lehmann and Sho Tanimoto},
      year={2026},
      eprint={2605.19898},
      archivePrefix={arXiv},
      primaryClass={math.AG},
      url={https://arxiv.org/abs/2605.19898}, 
}

@article{faisant2025motivic,
  title={Motivic counting of rational curves with tangency conditions via universal torsors},
  author={Faisant, Lo{\"\i}s},
  journal={arXiv preprint arXiv:2502.11704},
  year={2025}
}

@article{das2025homological,
  title={Homological stability and Manin's conjecture for rational curves on quartic del Pezzo surfaces},
  author={Das, Ronno and Lehmann, Brian and Tanimoto, Sho and Tosteson, Philip},
  journal={arXiv preprint arXiv:2506.17071},
  year={2025}
}

@article{lehmann2019geometric,
  title={Geometric Manin’s conjecture and rational curves},
  author={Lehmann, Brian and Tanimoto, Sho},
  journal={Compositio Mathematica},
  volume={155},
  number={5},
  pages={833--862},
  year={2019},
  publisher={London Mathematical Society}
}

@article{testa2009,
          number = {No.1},
           title = {The irreducibility of the spaces of rational curves on del Pezzo surfaces},
            year = {2009},
          volume = {Vol.18},
           pages = {37--61},
         journal = {Journal of Algebraic Geometry},
       publisher = {University Press, Inc.},
             url = {http://www.ams.org/journals/jag},
          author = {Testa, Damiano},
            issn = {1056-3911}
}

@inproceedings{Tanimoto2025,
  title={The spaces of rational curves on del Pezzo surfaces via conic bundles},
  author={Sho Tanimoto},
  year={2025},
  booktitle={to appear in Simons Symposia, arXiv:2506:18239}
}

@inproceedings{rousseau2021nonspecial,
  title={Nonspecial varieties and generalised Lang--Vojta conjectures},
  author={Rousseau, Erwan and Turchet, Amos and Wang, Julie Tzu-Yueh},
  booktitle={Forum of Mathematics, Sigma},
  volume={9},
  pages={e11},
  year={2021},
  organization={Cambridge University Press}
}

@article{nakahara2024weak,
  title={Weak approximation and the Hilbert property for Campana points},
  author={Nakahara, Masahiro and Streeter, Sam},
  journal={Michigan Mathematical Journal},
  volume={74},
  number={2},
  pages={227--252},
  year={2024},
  publisher={University of Michigan, Department of Mathematics}
}

@article{moerman2024generalized,
  title={Generalized Campana points and adelic approximation on toric varieties},
  author={Moerman, Boaz},
  journal={arXiv preprint arXiv:2407.03048},
  year={2024}
}

@incollection{colliot2003points,
  title={Points rationnels sur les fibrations},
  author={Colliot-Th{\'e}lene, Jean-Louis},
  booktitle={Higher dimensional varieties and rational points},
  pages={171--221},
  year={2003},
  publisher={Springer}
}

@book{colliot2021brauer,
  title={The Brauer-Grothendieck group},
  author={Colliot-Th{\'e}lene, Jean-Louis and Skorobogatov, Alexei N},
  volume={71},
  year={2021},
  publisher={Springer}
}

@article{lehmann2026fano,
  title={Fano orbifolds admit free Campana curves},
  author={Lehmann, Brian and Tanimoto, Sho},
  journal={arXiv preprint arXiv:2609.37513},
  year={2026}
}

\end{document}